\newtheorem{dummy}{anything}[section]
\newtheorem{theorem}[dummy]{Theorem}
\newtheorem{lemma}[dummy]{Lemma}
\theoremstyle{definition}
\newtheorem{remark}[dummy]{Remark}
\newtheorem*{acknowledgements}{Acknowledgements}
\title{Strongly exceptional Legendrian  connected sum of two Hopf links}
\author{Youlin Li}
\address{School of Mathematical Sciences, Shanghai Jiao Tong University, Shanghai 200240, China}
\email{liyoulin@sjtu.edu.cn}
\author{Sinem Onaran}
\address{Department of Mathematics, Hacettepe University, 06800 Beytepe-Ankara, Turkey}
\email{sonaran@hacettepe.edu.tr}
\begin{document}

\maketitle

\begin{abstract}
In this paper, we give a complete coarse classification of strongly exceptional Legendrian realizations of connected sum of two Hopf links in contact 3-spheres. This is the first classification result about exceptional Legendrian representatives for connected sums of link families. 
\end{abstract}

\section{Introduction}
A Legendrian link in an overtwisted contact 3-manifold is \textit{exceptional} (a.k.a. \textit{non-loose}) if its complement is tight. There have been several classification for exceptional Legendrian knots and links in overtwisted contact 3-spheres, including unknots \cite{ef}, \cite{d}, torus knots \cite{go2}, \cite{m}, \cite{emm} and Hopf links \cite{go}.  While there has been very little progress in the classification of Legendrian links with two or more components in either tight or overtwisted contact 3-spheres, a few papers, \cite{det}, \cite{DG1}, \cite{DG2}, \cite{go},  have tackled the problem.

In this paper, we study the classification of Legendrian realizations of connected sum of two Hopf links up to coarse equivalence in any contact $3$-sphere. This is one of the first families of connected sum of links for which a classification is known. Two Legendrian realizations $K_{0}\cup K_{1}\cup K_2$ and $K'_{0}\cup K'_{1}\cup K'_2$ of the connected sum of two Hopf links in some contact $3$-sphere $S^3$ are coarsely equivalent if there is a contactomorphism of $S^3$ sending $K_{0}\cup K_{1}\cup K_2$ to $K'_{0}\cup K'_{1}\cup K'_2$ as an ordered, oriented link.

Let $A_3=K_{0}\cup K_{1}\cup K_2\subset S^3$ be the oriented connected sum of two Hopf links, where $K_0$ is the central component. It is shown in Figure~\ref{figure:Link0}. The orientations of the components are also indicated. We think of $K_1$ and $K_2$ as two oriented meridians of $K_0$.

\begin{figure}[htb] {\small
\begin{overpic}
{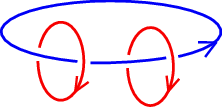}
\put(60, 55){$K_0$}
\put(6, 0){$K_1$}
\put(50, 0){$K_2$}
\end{overpic}}
\caption{The link $A_3=K_{0}\cup K_{1}\cup K_2$ in $S^3$.  }
\label{figure:Link0}
\end{figure}

We consider the Legendrian realizations of $A_3$ in all contact 3-spheres. For $i=0,1,2$,  denote the Thurston-Bennequin invariant of $K_i$ by $t_i$, and the rotation number of $K_{i}$ by $r_{i}$.

Let $(M,\xi)$ be a contact 3-manifold and $[T]$ an isotopy class of embedded tori in $M$. The \textit{Giroux torsion} of $(M,\xi)$ is the supremum of $n\in\mathbb{N}_0$ for which there is a contact embedding of $$(T^2\times [0,1], \ker(\sin(n\pi z)dx+\cos(n\pi z)dy))$$ into $(M,\xi)$, with $T^{2}\times \{z\}$ being in the class $[T]$.

An exceptional Legendrian link in an overtwisted contact 3-manifold is called \textit{strongly exceptional} if its complement has zero Giroux torsion. This paper focuses on the classification of strongly exceptional Legendrian realizations of the $A_3$ link in contact 3-spheres up to coarse equivalence. We use the notation $\xi_{st}$ to refer to the standard tight contact structure on $S^3$. The countably many overtwisted contact structures on $S^3$ are determined by their $d_3$-invariants in  $\mathbb{Z}+\frac{1}{2}$ \cite[Section 2]{go}. If the $d_3$-invariant of an overtwisted contact 3-sphere is $d$, then we denote this contact 3-sphere by $(S^3, \xi_{d})$. Note that the $d_3$-invariant of $\xi_{st}$ is $-\frac{1}{2}$.

We enumerate all the strongly exceptional Legendrian $A_3$ links up to coarse equivalence.

\begin{theorem} \label{Theorem:t_1<0t_2<0} Suppose $t_{1}<0$ and $t_{2}<0$, then the number of strongly exceptional Legendrian $A_3$ links is
\begin{align*}
\begin{split}
\left\{  
\begin{array}{ll}
 2t_{1}t_{2}-2t_{1}-2t_{2}+2, & ~\text{if}~ t_{0}\geq 2,\\
 t_{1}t_{2}-2t_{1}-2t_{2}+2, & ~\text{if}~ t_{0}=1,\\
 -2t_{1}-2t_{2}+2, & ~\text{if}~ t_0=0,\\
-t_{0}t_{1}t_{2}, & ~\text{if}~ t_0\leq-1.
\end{array}
\right.
\end{split}
\end{align*}
Moreover, if $t_0\leq -1$, then the $-t_{0}t_{1}t_{2}$ Legendrian $A_3$ links are in the standard tight contact 3-sphere $(S^3, \xi_{st})$.
\end{theorem}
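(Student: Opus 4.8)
The plan is to reduce the problem to a careful convex-surface analysis of the complement $S^3 \setminus A_3$, which is diffeomorphic to a handlebody (actually, since $K_1,K_2$ are meridians of $K_0$, the complement is $T^2 \times I$ with two vertical annular neighborhoods removed, or equivalently a genus-two handlebody). First I would take standard contact neighborhoods $N(K_i)$ of the three Legendrian components and study the contact structure on the exterior $E = S^3 \setminus (N(K_0) \cup N(K_1) \cup N(K_2))$, whose boundary consists of three tori. On each $\partial N(K_i)$ the characteristic foliation is determined by $t_i$ and the dividing curves have slope determined by $t_i$ in the natural framing; the rotation numbers $r_i$ record the signed count on a chosen bypass/Seifert decomposition. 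Since we are assuming zero Giroux torsion, the classification of tight contact structures on the pieces (a thickened torus and solid tori, glued along annuli, via Honda's gluing and the Legendrian meridian trick) applies without the torsion ambiguity.

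The key step for the $t_0 \le -1$ case is to show that when $t_0 \le -1$, every strongly exceptional realization actually lives in $\xi_{st}$, not in any overtwisted $\xi_d$. For this I would argue as follows. The central component $K_0$ together with one meridian, say $K_1$, forms a Legendrian Hopf link; by the classification of exceptional (and Legendrian) Hopf links in \cite{go}, the relevant parameter region $t_0 \le -1$ forces the ambient contact structure to be tight — more precisely, the complement of $K_0 \cup K_1$ being tight together with the constraint $t_0 \le -1$ means $K_0$ has a standard neighborhood whose complement, after gluing back a neighborhood of $K_1$, cannot support the extra overtwistedness. Concretely, when $t_0 \le -1$ the solid torus $S^3 \setminus N(K_0)$ has a tight contact structure with convex boundary of slope $1/t_0$ (in suitable coordinates) that is uniquely the one coming from $\xi_{st}$ by Honda's classification of tight contact structures on solid tori, because the dividing slope is more negative than $-1$ and the boundary is "standard." Once the ambient $(S^3,\xi)$ is pinned to $(S^3,\xi_{st})$, the count $-t_0 t_1 t_2$ comes from enumerating the tight contact structures on the three-component exterior $E$ compatible with the fixed boundary data, i.e. the Legendrian isotopy classes of $A_3$ inside $(S^3,\xi_{st})$ with the prescribed $t_i$; this is a finite computation with the state traversal / bypass-layer count, giving a product $|t_0| \cdot |t_1| \cdot |t_2| = (-t_0)(-t_1)(-t_2) = -t_0t_1t_2$ since $t_1, t_2 < 0$.

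The main obstacle I expect is the step that rules out overtwisted ambient structures when $t_0 \le -1$: a priori one must verify that no bypass attachment along $\partial N(K_0)$ from the outside can decrease the boundary slope past the point where the solid-torus complement would be forced to contain an overtwisted disk or, conversely, that attaching the "wrong" signs to build an overtwisted $\xi_d$ is obstructed by the tightness of the complement of the whole link $A_3$. The clean way around this is to invoke the Legendrian Hopf link classification of \cite{go} twice — for $K_0 \cup K_1$ and for $K_0 \cup K_2$ — and observe that in the overtwisted $(S^3,\xi_d)$ the exceptional Hopf links have $t_0 \ge 0$ in the relevant orientation convention, so $t_0 \le -1$ is simply incompatible with $\xi$ being overtwisted; then the only surviving case is $\xi = \xi_{st}$. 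After that, the enumeration in $(S^3,\xi_{st})$ is the standard convex-surface count and matches the claimed formula $-t_0t_1t_2$, completing the proof of the ``Moreover'' statement.
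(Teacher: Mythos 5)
Your plan for the ``Moreover'' part contains a genuine gap. You propose to rule out overtwisted ambient spheres when $t_0\leq -1$ by applying the classification of exceptional Legendrian Hopf links to the sublink $K_0\cup K_1$ (and $K_0\cup K_2$). But a sublink of a non-loose link need not be non-loose: the complement of $K_0\cup K_1$ is the complement of the full $A_3$ link with the solid torus $N(K_2)$ glued back in, and this gluing can destroy tightness. So from ``the $A_3$ complement is (appropriate) tight'' you cannot conclude that $K_0\cup K_1$ is an exceptional Hopf link, and hence the constraint ``exceptional Hopf links cannot have both Thurston--Bennequin invariants negative'' never gets to act; the same objection defeats the variant argument that $S^3\setminus N(K_0)$ must be the standard tight solid torus. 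The paper avoids this entirely: for $t_0\leq -1$ it shows (Lemma~\ref{Lemma:tor>0}) that an appropriate tight structure on the exterior $\Sigma\times S^1$ has no $0$-twisting vertical Legendrian circle, then uses the factorization of Lemma~\ref{Lemma:slopes} and Honda's counts to bound the number of such structures by $-t_0t_1t_2$, and finally exhibits exactly $-t_0t_1t_2$ realizations in $(S^3,\xi_{st})$ (stabilizations of the explicit Legendrian $A_3$ link of Figure~\ref{Figure:linkinstd}, distinguished by rotation numbers); matching upper and lower bounds leaves no room for any realization in an overtwisted sphere. If you want to keep your route, you must first prove that looseness of the sublink cannot occur, which is essentially as hard as the paper's direct count.

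Beyond this, the proposal does not engage with the bulk of the theorem. The formulas for $t_0\geq 2$, $t_0=1$ and $t_0=0$ require: the upper-bound analysis with a $0$-twisting vertical circle (Lemma~\ref{vertical}, Lemma~\ref{Lemma:slopes1}), the identification of decorations at $t_0=1$ and the exclusion of same-sign decorations at $t_0=0$ (Lemma~\ref{Lemma:slopes2}), and then realizations attaining these bounds via contact surgery diagrams, local Legendrian meridians and relative Euler class computations, with rotation numbers and $d_3$-invariants distinguishing the links. Your proposal asserts only that ``the classification of tight contact structures on the pieces applies,'' which neither produces the three different counts nor shows the upper bounds are attained; note also that counting tight structures on the exterior only gives an upper bound for Legendrian realizations, so explicit constructions are indispensable. (A small but telling slip: the exterior is $\Sigma\times S^1$ for $\Sigma$ a pair of pants, with three incompressible boundary tori; it is not a genus-two handlebody.)
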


\begin{theorem}\label{Theorem:t_1=t_2=1}
Suppose $t_1=t_2=1$, then the number of strongly exceptional Legendrian $A_3$ links is
\begin{align*}
\begin{split}
\left\{  
\begin{array}{ll}
 8, & ~\text{if}~ t_{0}\geq 6,\\
 7, & ~\text{if}~ t_{0}=5,\\
 6, & ~\text{if}~ t_0=4,\\
4-t_0, & ~\text{if}~ t_0\leq3.
\end{array}
\right.
\end{split}
\end{align*}
\end{theorem}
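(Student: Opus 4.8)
The plan is to carry out the complement-classification program of convex surface theory --- in the style of Giroux, Honda, Etnyre--Honda and, for Legendrian links, Geiges--Onaran \cite{go} --- now for the three-component link $A_3$ under the constraint $t_1=t_2=1$. I would realize $K_0,K_1,K_2$ as Legendrian knots with the prescribed classical invariants and take disjoint standard convex neighborhoods $N_i=\nu(K_i)$, so that each $\partial N_i$ is a convex torus with two dividing curves of slope dictated by $t_i$ (measured against the Seifert framing of the unknot $K_i$), the rotation numbers being recorded by the signs that appear in the ensuing basic-slice decomposition. Since $K_1$ and $K_2$ are disjoint meridians of $K_0$, each isotopic to the core of the solid torus $S^3\setminus\nu(K_0)$, the complement $\mathcal{C}=S^3\setminus(N_0\cup N_1\cup N_2)$ is diffeomorphic to $\Sigma_{0,3}\times S^1$, the product of a pair of pants with a circle. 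A Legendrian $A_3$ link with tight, zero-torsion complement is then the same data as a tight contact structure on $\mathcal{C}$ realizing these three boundary conditions and carrying no boundary-parallel $\pi$-rotating $T^2\times[0,1]$ layer, and coarse equivalence corresponds to a diffeomorphism of $\mathcal{C}$ preserving the boundary data that extends over $S^3$ taking the ordered, oriented $A_3$ to itself. So it suffices to (a) enumerate these tight structures up to isotopy, (b) decide which of them re-glue to a contact $3$-sphere in which $K_0\cup K_1\cup K_2$ is genuinely the link $A_3$, and (c) account for the residual symmetries.

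For step (a), I would cut $\mathcal{C}$ along a convex vertical annulus joining two of the boundary tori, which turns $\Sigma_{0,3}\times S^1$ into a copy of $T^2\times I$; a second cut reduces everything to $T^2\times I$ and solid-torus pieces, to which the Honda--Giroux classifications apply. The zero Giroux torsion hypothesis excludes the boundary-parallel $\pi$-rotating factors, so each candidate is pinned down by finite combinatorial data: a choice of signs of the basic slices along each boundary region, subject to Honda's shuffling relations and to the requirement that no overtwisted disk be created when the pieces are reassembled across the cutting annuli. Here the hypothesis $t_1=t_2=1$ is decisive --- it rigidifies the two meridional ends (a phenomenon parallel to the rigidity of $tb=1$ non-loose unknots \cite{ef}, with the rotation numbers pinned by tightness of the complement), so the only essential freedom lives on the $K_0$-side and is governed by $t_0$. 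This is why the count grows linearly as $t_0\to-\infty$, where the relevant solid-torus piece admits many non-isotopic basic-slice decompositions (in parallel with Theorem~\ref{Theorem:t_1<0t_2<0}), but saturates for $t_0$ large, since only boundedly many of the configurations stay tight and embedded: the maximal number $8$ is reached once $t_0\ge 6$, dropping to $7$ at $t_0=5$ and $6$ at $t_0=4$ because of extra coincidences among basic-slice attachments and tightness obstructions that are special to those small values.

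For step (b), I would re-glue the three solid tori and use the classification of tight contact structures on $S^1\times D^2$ and on lens spaces to isolate exactly those structures from (a) giving $(S^3,\xi)$ in which $K_0\cup K_1\cup K_2$ is the link $A_3$ (rather than, say, a sublink of a link in a lens space), and then compute the $d_3$-invariant to see where each survivor sits --- all in overtwisted $(S^3,\xi_d)$, since $t_1=t_2=1>0$ already forces the ambient to be overtwisted. For step (c), the only possible nontrivial identifications come from the finitely many contactomorphisms of $S^3$ preserving the ordered oriented link $A_3$, and one checks directly whether these merge any pair on the list.

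The main obstacle, I expect, is the bookkeeping at the two places where the convex decomposition is not canonical. First, showing the list from (a) is complete and irredundant: the pair-of-pants factor has richer gluing combinatorics than $T^2\times I$, so one must verify that Honda's shuffling together with the tightness obstructions across the cutting annuli account for \emph{all} identifications among sign-patterns on $\Sigma_{0,3}\times S^1$, and that every admissible pattern really is realized by an embedded tight structure. Second, the small-$t_0$ analysis at $t_0=4,5$ and the passage to $4-t_0$ for $t_0\le 3$: these low-number effects are exactly what separate $8$, $7$, $6$ and the linear regime, and each must be decided by hand --- which configurations become overtwisted, which turn out isotopic, and which fail to reproduce $A_3$ after the solid tori are re-glued --- none of which follows from the generic count.
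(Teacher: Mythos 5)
Your step (a) is essentially the paper's upper-bound argument: the exterior is $\Sigma\times S^1$ with boundary slopes $t_0,-1,-1$, and the counts $8,7,6,4-t_0$ of appropriate tight structures come from Honda's classification (Lemma~\ref{Lemma:t1=t2=1}, citing \cite{h2}). But two of your structural claims would derail the bookkeeping you rely on. First, the hypothesis $t_1=t_2=1$ does \emph{not} rigidify the meridional ends: each of $L'_1,L'_2$ is a basic slice carrying a free sign, so for $t_0\geq 6$ the eight links are exactly the decorations $(\pm)(\pm)(\pm)$, and correspondingly $r_1,r_2$ each range over $\{0,\pm2\}$ (Lemma~\ref{t0>5t1=1t2=1}); the freedom is not ``governed by $t_0$ on the $K_0$-side'' (that side is a single basic slice, two signs, for every $t_0$). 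Second, for $t_0\leq 3$ the $4-t_0$ structures are those \emph{without} a $0$-twisting vertical circle (the ones with such a circle have positive torsion by Lemma~\ref{Lemma:tor>0}); the linear growth is Honda's count for the pair-of-pants piece itself, not a continued-fraction block in a solid-torus piece as in the $t_1,t_2<0$ case. With the freedom located where you put it, you would not reproduce the $8/7/6/(4-t_0)$ pattern.

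The more serious gap is step (b). Re-gluing the three solid tori is not a selection problem: the filling is the standard one, it always returns $S^3$ with the topological link $A_3$ (no lens spaces can occur), and the ambient structure is automatically overtwisted because the $0$-twisting vertical Legendrian circle bounds a disc after filling (Lemma~\ref{Lemma:construction}). What the enumeration of tight structures gives is only an upper bound on the number of links; the actual content of the theorem is the matching lower bound, i.e.\ producing that many strongly exceptional links and proving them pairwise non-equivalent. In the paper this occupies Lemmas~\ref{t0>5t1=1t2=1}--\ref{t0<4t1=1t2=1}: explicit contact surgery diagrams for $t_0\geq6$, Legendrian connected sums of Hopf links (using the classification in \cite{go} via Lemma~\ref{Lemma:connected}) for $t_0=5,4$ and $t_0\leq3$, tightness certificates for the remaining decorations via Lemma~\ref{Lemma:appro}, and computations of rotation numbers and $d_3$-invariants that separate the links and verify strong exceptionality (surgery presentations guarantee zero torsion). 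Your proposal offers no mechanism for any of this -- ``compute $d_3$ to see where each survivor sits'' and ``check by hand which configurations become overtwisted or isotopic'' at $t_0=4,5$ presuppose exactly the constructions and invariant computations that are missing -- nor does it address why distinct isotopy classes of complements cannot collapse to coarsely equivalent links, which the paper settles precisely by exhibiting distinct classical invariants.
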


\begin{theorem}\label{Theorem:t_1>1t_2=1}
Suppose $t_1>1$ and $t_2=1$, then the number of strongly exceptional Legendrian $A_3$ links is
\begin{align*}
\begin{split}
\left\{  
\begin{array}{ll}
 12, & ~\text{if}~ t_0\geq5~\text{and}~ t_1=2,\\
 10, & ~\text{if}~ t_0=4~\text{and}~ t_1=2,\\
 8, & ~\text{if}~ t_0=3~\text{and}~ t_1=2,\\
 16, & ~\text{if}~ t_0\geq5~\text{and}~ t_1\geq3,\\
14, & ~\text{if}~ t_0=4~\text{and}~ t_1\geq3,\\
12, & ~\text{if}~ t_0=3~\text{and}~ t_1\geq4,\\
11, & ~\text{if}~ t_0=t_1=3,\\
6-2t_0, & ~\text{if}~ t_0\leq2.
\end{array}
\right.
\end{split}
\end{align*}
\end{theorem}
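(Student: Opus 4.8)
\emph{Reduction to a count on the complement.} The plan is to turn the problem into a classification of tight contact structures on the link complement and then count. Let $M=S^3\setminus\nu(A_3)$. Since $K_1$ and $K_2$ are oriented meridians of the unknot $K_0$, $M$ is diffeomorphic to $P\times S^1$ with $P$ a pair of pants, the product $S^1$-fibre being a meridian of $K_0$ and a Seifert longitude of each of $K_1,K_2$; in particular the product framing on $\partial\nu(K_0)$ interchanges the roles of meridian and longitude of $K_0$, which is why $t_0$ will enter the formulas ``reciprocally'' to $t_1,t_2$. Removing standard convex neighborhoods of the three components, a strongly exceptional Legendrian $A_3$ link with invariants $(t_0,t_1,t_2)$ becomes precisely a tight, zero Giroux torsion contact structure on $M$ whose boundary tori are convex with two dividing curves of slopes determined by $t_0,t_1,t_2$; the Legendrian Realization Principle recovers the link, and it is the zero-torsion hypothesis that forces minimal vertical twisting and hence makes the list finite rather than infinite. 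Two such structures give coarsely equivalent links iff they are related by a self-diffeomorphism of the pair $(S^3,A_3)$; since $t_1\ne t_2$ here, the only relevant one is the conjugation symmetry, which fixes all $t_i$ and negates all $r_i$. Thus Theorem~\ref{Theorem:t_1>1t_2=1} reduces to counting these structures modulo conjugation in the range $t_1>1$, $t_2=1$. I would carry out this classification uniformly in $(t_0,t_1,t_2)$ and then specialize (Theorems~\ref{Theorem:t_1<0t_2<0} and \ref{Theorem:t_1=t_2=1} being other specializations).

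\emph{Counting the tight structures.} Using Honda's convex-surface and bypass techniques, make the boundary convex with minimal twisting and cut $M=P\times S^1$ along a convex vertical annulus joining two boundary tori; the result is $T^2\times I$ --- which places the problem in the well-understood $T^2\times I$ setting and reflects the connected-sum structure of $A_3$ as two Hopf links --- and a second vertical cut reduces further to a solid torus $V$. The dividing configuration on $\partial V$ is pinned down by the boundary slopes together with the finitely many (thanks to minimal twisting) bypass choices along the two cutting annuli, and Honda's classification attaches to each admissible configuration a number of tight structures equal to a product of absolute values of continued-fraction coefficients of the relevant slope; the state-traversal criterion discards the configurations that are overtwisted or carry Giroux torsion. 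Summing over admissible choices gives a first integer $N=N(t_0,t_1,t_2)$. These continued fractions expand in terms of the Farey distances from the $t_0$- and $t_1$-slopes to the fibre slope and the $t_2=1$-slope, which is how $t_0$ and $t_1$ enter the eight-line formula, the thresholds $t_0\in\{3,4,5\}$ and $t_1\in\{2,3\}$ occurring precisely where one such coefficient degenerates to $1$.

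\emph{Invariants, symmetry, and extraction.} For each surviving structure I would compute $(r_0,r_1,r_2)$ from the relative Euler class --- a signed count of bypasses, equivalently of continued-fraction digits --- and the $d_3$-invariant of the closed contact sphere obtained by regluing the three standard solid tori, using additivity and the algorithm of \cite[Section~2]{go}; this determines which links lie in $(S^3,\xi_{st})$ and which in each $(S^3,\xi_d)$, and it shows that the $N$ structures are pairwise non-isotopic. Passing to coarse classes means quotienting by conjugation: the fixed structures are exactly those with $r_0=r_1=r_2=0$, each of which one checks is individually conjugation-invariant, so the coarse count is $\tfrac12(N+F)$ with $F$ the number of such structures. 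Evaluating $N$ and $F$ in the range $t_1>1,\ t_2=1$ yields the stated numbers; the drop $16\to14\to12$ as $t_0$ decreases through $\ge5,\,4,\,3$, the parallel $12\to10\to8$ pattern when $t_1=2$, and the single extra coincidence at $t_0=t_1=3$ all come from those coefficients hitting $1$.

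\emph{Main obstacle.} The delicate part is precisely this boundary-case bookkeeping. Away from the thresholds the answer is a clean product of Farey distances, but when $t_0\in\{3,4,5\}$ or $t_1\in\{2,3\}$ the relevant slopes sit on or next to distinguished Farey vertices, so some bypass attachments along the cutting annuli either fail to exist, become mutually isotopic, or would force positive torsion; each such coincidence must be checked directly, typically by a state-traversal argument on a $T^2\times I$ layer, and one must simultaneously certify that no tight structure on $M$ with the prescribed boundary has been overlooked and that $(r_i)$ together with the ambient $d_3$ still separates whatever remains. Getting the interaction of the two independent thresholds right --- in particular the asymmetric behaviour at $t_0=t_1=3$ versus $t_1\ge4$, $t_0=3$ --- is the crux; everything else is routine convex-surface and $d_3$ computation.
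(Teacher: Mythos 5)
Your upper-bound strategy (convex boundary, cutting the pair-of-pants times $S^1$ along vertical annuli, Farey/bypass bookkeeping with the thresholds at $t_0\in\{3,4,5\}$, $t_1\in\{2,3\}$) is in the same spirit as the paper's Lemma~\ref{Lemma:t_1>1t_2=1}, which instead factors the exterior as $L'_0\cup L'_1\cup L'_2\cup\Sigma'\times S^1$ and counts signed basic slices. But your final extraction step contains a genuine error: you pass to coarse classes by quotienting by a ``conjugation'' symmetry and assert the answer is $\tfrac12(N+F)$. That halving contradicts the statement you are proving: for $t_0\geq5$, $t_1\geq3$, $t_2=1$ the count of decorated structures is $N=2\times4\times2=16$ and the theorem's answer is $16$, not $8$; similarly $12$, not $6$, for $t_1=2$. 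Coarse classes of strongly exceptional links with fixed $(t_0,t_1,t_2)$ correspond to isotopy classes of appropriate tight structures on the exterior, and the links with rotation numbers $(r_0,r_1,r_2)$ and $(-r_0,-r_1,-r_2)$ are \emph{distinct} coarse classes (they are distinguished by their classical invariants, cf.\ Theorem~\ref{classification} and the $\pm$-pairs listed in Lemmas~\ref{t0>4t1=2t2=1}--\ref{t0=3t1>2t2=1}); no contactomorphism of the ambient overtwisted $S^3$ realizes your conjugation while carrying the ordered, oriented link to itself, and the paper performs no such quotient. The only identification of decorations that does occur is the isolated one at $t_0=t_1=3$, where $(+)((-)(+))(+)$ and $(-)((+)(-))(-)$ give isotopic structures (proved in Lemma~\ref{Lemma:t_1>1t_2=1} by a continued-fraction-block shuffling argument), which is why $12$ drops to $11$ there; your formula would not reproduce this.

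The second gap is that your argument only ever produces an upper bound. The bypass count plus ``state traversal discards overtwisted configurations'' does not certify that the surviving candidates are tight, have zero Giroux torsion, and actually arise as exteriors of Legendrian $A_3$ links in overtwisted spheres with the claimed $d_3$. This realization half is where most of the paper's work lies: explicit contact $(\pm1)$-surgery diagrams (Figures~\ref{Figure:link21inot}, \ref{Figure:link22inot}), Legendrian connected sums of Hopf links from the Geiges--Onaran classification via Lemma~\ref{Lemma:connected}, the constructions of Lemmas~\ref{Lemma:tight}, \ref{Lemma:construction} and \ref{Lemma:appro}, and embedding/gluing arguments (e.g.\ embedding the exteriors with decoration $\pm(+)((-)(+))(-)$ into a larger appropriate tight $\Sigma\times S^1$) are exactly what settle the delicate cases $t_0=3,4$ and pin down the rotation numbers and $d_3$-invariants that separate the $N$ structures. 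Without supplying this mechanism (or an equivalent one), the thresholds you describe remain conjectural bookkeeping, and the stated values, in particular $11$ at $t_0=t_1=3$ versus $12$ for $t_0=3$, $t_1\geq4$, are not established.
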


\begin{theorem} \label{Theorem:t_1>1t_2>1} Suppose $t_{1}>1$ and $t_{2}>1$, the number of strongly exceptional Legendrian $A_3$ links is
\begin{align*}
\begin{split}
\left\{  
\begin{array}{ll}
 18, & ~\text{if}~ t_0\geq4~\text{and}~ t_1=t_2=2,\\
 14, & ~\text{if}~ t_0=3~\text{and}~ t_1=t_2=2,\\
 10, & ~\text{if}~ t_0=2~\text{and}~ t_1=t_2=2,\\
24, & ~\text{if}~ t_0\geq4, t_1\geq3 ~\text{and}~ t_{2}=2,\\
20, & ~\text{if}~ t_0=3, t_1\geq3 ~\text{and}~ t_{2}=2,\\
16, & ~\text{if}~ t_0=2, t_1\geq3 ~\text{and}~ t_{2}=2,\\
 32, & ~\text{if}~ t_0\geq4, t_1\geq3 ~\text{and}~ t_{2}\geq3,\\
28, & ~\text{if}~ t_0=3, t_1\geq3 ~\text{and}~ t_{2}\geq3,\\
24, & ~\text{if}~ t_0=2, t_1\geq3~\text{and}~ t_{2}\geq3,\\
8-4t_0, & ~\text{if}~ t_0\leq1.
\end{array}
\right.
\end{split}
\end{align*}
\end{theorem}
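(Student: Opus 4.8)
The plan is to convert the classification into one about tight contact structures on the link complement and then run convex surface theory, by the same method used for Theorems~\ref{Theorem:t_1<0t_2<0}--\ref{Theorem:t_1>1t_2=1}. First I would identify the complement $M = S^3\setminus\nu(A_3)$: since $K_1$ and $K_2$ are two disjoint meridians of the unknot $K_0$, they are isotopic to two parallel (product--framed) copies of the core of the solid torus $S^3\setminus\nu(K_0)$, so $M\cong \Sigma_{0,3}\times S^1$, a pair of pants times a circle, with boundary tori $T_0,T_1,T_2$. A strongly exceptional Legendrian $A_3$ with invariants $(t_i,r_i)$ is then the same datum as a tight, Giroux--torsion--free contact structure on $M$ with convex boundary, $\#\Gamma_{T_i}=2$, and dividing slope on $T_i$ fixed by $t_i$ (in the basis of $\partial\nu(K_i)$ given by the meridian and the Seifert longitude), together with the reattachment of a standard tight solid torus along each $T_i$; the smooth gluing is forced by the smooth link type, the rotation numbers $r_i$ are read off from the relative Euler class, and the $d_3$--invariant of the resulting contact $3$--sphere from the usual additivity formula. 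So, up to the easily handled symmetries of the pair $(S^3,A_3)$, counting strongly exceptional Legendrian $A_3$ links with given $(t_0,t_1,t_2)$ is the same as counting these contact structures on $M$.

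Next I would classify those contact structures. One convenient organizing principle, since $K_0\cup K_1$ is itself a Hopf link, is to start from the Geiges--Onaran classification of exceptional Legendrian Hopf links \cite{go} and then classify the Legendrian realizations of the remaining meridian $K_2$ of $K_0$ inside the resulting tight $T^2\times I$. More directly, one cuts $M$ along a convex vertical annulus joining two boundary tori: cutting $\Sigma_{0,3}$ along the corresponding arc yields an annulus, so $M$ cut along it is $T^2\times I$, and a further cut along a second vertical annulus gives a solid torus. After a bypass argument to control the dividing set on the cutting annuli, followed by edge--rounding, the hypotheses of tightness and zero Giroux torsion reduce everything to Honda's classification of minimally twisting tight contact structures on $T^2\times I$ with prescribed boundary slopes (whose number is governed by the continued--fraction expansions of the slopes) together with the classification of tight contact structures on the neighbourhood solid tori. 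The boundary slopes on $T_0,T_1,T_2$ are dictated by $t_0,t_1,t_2$, and the remaining discrete choices --- the signs of the basic slices --- are what the count enumerates; in particular $t_1,t_2>1$ governs how many basic slices the two side $T^2\times I$ pieces carry, which is what makes the stable counts here ($18$, $24$, $32$) larger than in the earlier cases.

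Then I would assemble the count. For each admissible configuration of basic slices one must (a) verify, via Honda's gluing theorems, that the glued--up contact structure on $M$ is tight and torsion--free, discarding configurations that create a convex torus bounding a Giroux torsion layer or a boundary--parallel dividing curve forcing overtwistedness; (b) compute its relative Euler class to recover $(r_0,r_1,r_2)$ and the $d_3$--invariant, thereby checking that distinct admissible configurations yield non--isotopic contact structures on $M$ and hence non--coarsely--equivalent links; and (c) sum. The piecewise form of the answer emerges from this bookkeeping: the alternatives $t_1=2$ vs.\ $t_1\ge 3$ and $t_2=2$ vs.\ $t_2\ge 3$ change how the dividing set rounds after cutting, while the thresholds $t_0\ge 4$, $t_0=3$, $t_0=2$ mark where certain basic--slice configurations collapse together or become overtwisted; for $t_0\le 1$ the analysis collapses to a short list, enumerated directly, giving $8-4t_0$.

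The main obstacle is exactly the bookkeeping in the previous paragraph: pinning down which sign choices on the $T^2\times I$ pieces glue to a tight, torsion--free structure on all of $M$, and which of the survivors become isotopic once the solid tori are glued back --- so as to avoid both overcounting (from bypasses that can be slid across a vertical annulus, and from the orientation/relabelling symmetries of $A_3$) and undercounting. The delicate behaviour at $t_i=2$ and at the small values of $t_0$ is where this is hardest; once it is settled, the continued--fraction counts combine to give the stated numbers.
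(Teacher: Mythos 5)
Your outline reproduces the paper's \emph{upper bound} argument faithfully: identify the exterior with $\Sigma\times S^1$, use convex vertical annuli, bypasses and edge--rounding to reduce to Honda's classification on $T^2\times I$, and count sign configurations of basic slices, with the identifications at $t_0=3$ and the overtwisted configurations at $t_0=2$ (and the separate no--$0$--twisting analysis giving $8-4t_0$ for $t_0\le1$). That is exactly Lemmas~\ref{Lemma:t_1>1t_2>1}, \ref{Lemma:slopes1}, \ref{Lemma:slopes2}, \ref{Lemma:tor>0} and \ref{Lemma:slopes}. But this only bounds the count from above. The gap is in your step (a): you assert that for each admissible decoration one can ``verify, via Honda's gluing theorems, that the glued--up contact structure on $M$ is tight and torsion--free.'' Gluing tight pieces along tori does not in general preserve tightness, and Honda's gluing results do not certify tightness of all $18$, $24$, resp.\ $32$ candidate decorations; establishing that each candidate is genuinely (appropriately) tight is the substantive half of the theorem, and your proposal gives no mechanism for it beyond the assertion.

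The paper attains the upper bounds by explicit constructions rather than abstract gluing: for $t_0\ge4$ it exhibits contact surgery diagrams (Figures~\ref{Figure:link23inot}, \ref{Figure:link24inot}, \ref{Figure:link30inot}) whose complements are tight because a further contact $(-1)$-surgery cancels the $(+1)$-surgeries, and a Giroux torsion domain would produce an overtwisted disc after surgery; for $t_0=2,3$ it builds most representatives as Legendrian connected sums of exceptional Legendrian Hopf links from the Geiges--Onaran classification (Lemma~\ref{Lemma:connected}), and realizes the remaining decorations, e.g.\ $\pm(+)((-)(-))((-)(-))$ and $\pm(+)((-)(+))((-)(+))$, via Lemmas~\ref{Lemma:tight}, \ref{Lemma:construction}, \ref{Lemma:appro}, together with an embedding argument that adds basic slices $(T^2\times[0,1],-\frac{1}{t_i},-\frac{1}{t_i-1}),\dots$ to land inside an already--verified appropriate tight structure (this is the only place Honda's gluing theorem is used, and in the ``embed into a known tight manifold'' direction, not to glue candidates together); for $t_0\le1$ the $8-4t_0$ links come from connected sums with stabilized Hopf links, matching the no--$0$-twisting upper bound. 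The rotation numbers and $d_3$-invariants, computed from the surgery diagrams and from relative Euler classes, then distinguish the representatives, which is your step (b). So to complete your proof you would have to replace the appeal to gluing theorems in step (a) by constructions of this kind (or some other tightness certificate such as fillability of an associated surgered manifold); as written, your argument proves only the inequalities ``$\le$'' in the stated formulas.
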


\begin{theorem} \label{Theorem:t_1<0t_2=1}
Suppose $t_{1}<0$ and $t_{2}=1$, then the number of strongly exceptional Legendrian $A_3$ links is
\begin{align*}
\begin{split}
\left\{  
\begin{array}{ll}
4-4t_1, & ~\text{if}~ t_0\geq4,\\
4-3t_1, & ~\text{if}~ t_0=3,\\
4-2t_1, & ~\text{if}~ t_0=2,\\
t_0t_1-2t_1, & ~\text{if}~ t_0\leq1.
\end{array}
\right.
\end{split}
\end{align*}
\end{theorem}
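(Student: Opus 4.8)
The plan is to adapt the same convex-surface and contact-surgery machinery used for the four previous theorems (the cases $t_1,t_2<0$; $t_1=t_2=1$; $t_1>1,t_2=1$; $t_1>1,t_2>1$) to the mixed-sign regime $t_1<0$, $t_2=1$. Concretely, I would realize a Legendrian $A_3$ link by placing $K_0$ on a convex torus $T$ standardly embedded in $S^3$, so that its complement decomposes as two solid tori $V_1,V_2$ (neighborhoods of the two cores) into which $K_1$ and $K_2$ are pushed as Legendrian meridional rulings. The Thurston--Bennequin invariants $t_1,t_2$ then prescribe the dividing slopes on $\partial V_1,\partial V_2$, and the strongly-exceptional (tight, zero Giroux torsion) hypothesis on the complement forces, via Honda's classification of tight contact structures on solid tori and thickened tori, a finite list of possible dividing configurations and bypass attachments.

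First I would set up the correspondence between coarse-equivalence classes of strongly exceptional Legendrian $A_3$ links and tight contact structures on $S^3$ cut along the link, keeping track of the three pairs $(t_i,r_i)$ as boundary data; this is presumably already established in the portion of the paper preceding this theorem. Next I would enumerate, for the central component $K_0$ with given $t_0$, the tight contact structures on the complement of $K_0$ (a thickened torus once $K_1,K_2$ are also removed), organizing the count by the relative Euler class / rotation numbers $r_0$ subject to the Bennequin-type inequalities; the $t_0$-dependent thresholds $t_0\ge 4$, $t_0=3$, $t_0=2$, $t_0\le 1$ in the statement should emerge exactly as the points where the convex-torus dividing slope $-1/t_0$ (or the continued-fraction length of $t_0$) changes the number of admissible bypass layers. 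Then, because $t_2=1$ behaves like the ``tight Hopf sublink'' boundary case while $t_1<0$ contributes a factor linear in $|t_1|$ (the $-2t_1$, $-3t_1$, $-4t_1$ pattern mirroring the $-2t_i$ factors in Theorem~\ref{Theorem:t_1<0t_2<0}), I would count the $K_1$-side contributions using the stabilization/rotation-number bookkeeping for a Legendrian meridian of tb $t_1<0$, and the $K_2$-side contributions using the rigid $t_2=1$ analysis from Theorem~\ref{Theorem:t_1=t_2=1} and Theorem~\ref{Theorem:t_1>1t_2=1}. Finally I would multiply/assemble these independent contributions and check against small cases ($t_0=4,3,2$ and $t_0\le 1$) to pin down the additive constants, and separately verify which of the resulting links lie in $\xi_{st}$ versus overtwisted $(S^3,\xi_d)$ by computing $d_3$.

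The main obstacle I anticipate is controlling the interaction between the two sides when $t_0$ is small (the $t_0\le 1$ branch, giving $t_0t_1-2t_1=t_1(t_0-2)$): here the complement of $K_0$ no longer contains a ``thick'' enough toric layer to make the $K_1$ and $K_2$ contributions independent, so bypasses that would be attached near $K_1$ can be moved across to $K_2$, creating identifications among candidate links that must be quotiented out. Handling this requires a careful analysis of which contactomorphisms of the complement permute the decompositions — essentially the same subtlety that produces the non-multiplicative small-$t_0$ formulas in the earlier theorems — and getting the over-counting exactly right (rather than off by a small additive error) is where the real work lies. A secondary difficulty is ensuring no Giroux torsion sneaks in along $\partial V_1$ when $t_1$ is very negative; this should follow from the same ``no torsion'' argument used previously, but it must be re-checked because the sign pattern of the meridional slopes differs from the $t_1,t_2<0$ and $t_1,t_2>1$ cases.
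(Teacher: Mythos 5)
There is a genuine gap, starting with the topological setup. The exterior of the $A_3$ link is $\Sigma\times S^1$ with $\Sigma$ a pair of pants (three boundary tori with slopes $t_0$, $-\frac{1}{t_1}$, $-1$), not a thickened torus: the complement of $K_0$ alone is a single solid torus, and removing the two meridians $K_1,K_2$ from it produces pants$\times S^1$. Your proposed splitting of $S^3$ along a convex torus containing $K_0$ into two solid tori $V_1,V_2$, with $K_1$ and $K_2$ treated on separate sides and their contributions ``multiplied,'' does not reflect this; in the paper the count is a count of appropriate tight structures on pants$\times S^1$, obtained (for $t_0\geq 2$, where Lemma~\ref{vertical} forces a $0$-twisting vertical Legendrian circle) from the factorization $L'_0\cup L'_1\cup L'_2\cup\Sigma'\times S^1$ of Lemma~\ref{Lemma:slopes1} and the sign count $2\times(1-t_1)\times 2$ of basic slices, with the corrections at $t_0=3$ and $t_0=2$ coming from Lemma~\ref{Lemma:slopes2} (duplications, respectively overtwistedness, when all three signs agree), and for $t_0\leq 1$ from the torsion obstruction (Lemma~\ref{Lemma:tor>0}) plus the no-twisting factorization of Lemma~\ref{Lemma:slopes}, giving $(2-t_0)(-t_1)$. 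Your plan gestures at ``thresholds emerging'' from dividing-slope arithmetic but supplies no mechanism that would produce exactly the cutoffs $t_0\geq4$, $t_0=3$, $t_0=2$, $t_0\leq1$ and the specific deletions $-4t_1\to-3t_1\to-2t_1$; the interaction you flag as a difficulty is precisely what Lemma~\ref{Lemma:slopes2} resolves, and it occurs already at $t_0=3,2$, not only in the $t_0\leq1$ branch.

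The second, larger gap is the realization half of the theorem. The convex-surface analysis only yields an upper bound; equality requires exhibiting that many pairwise non-equivalent strongly exceptional representatives with the stated invariants. In the paper this is done by explicit constructions: the contact surgery diagrams of Figure~\ref{Figure:link15inot} for $t_0\geq4$ (with rotation numbers and $d_3$ computed via \cite{loss} and \cite{DGS}); adding a local Legendrian meridian $K_1$ to the classified Legendrian Hopf links $K_0\cup K_2$ of \cite{go} for $t_0=3,2$ and $t_0\leq1$ (Lemma~\ref{Lemma:meridian}); and the extension construction of Lemmas~\ref{Lemma:tight}/\ref{Lemma:3.3.1}, \ref{Lemma:construction} and \ref{Lemma:appro} to realize the remaining decorations such as $\pm(+)(\underbrace{-\cdots-}_{-t_1})(-)$, whose appropriateness of tightness is then transferred by Lemma~\ref{Lemma:appro}. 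Your proposal replaces all of this with ``multiply/assemble these independent contributions and check against small cases to pin down the additive constants,'' which cannot establish the lower bound, nor the coarse classification by $(t_i,r_i)$ and $d_3$ that the theorem's proof actually delivers. As written, the argument would at best reproduce (loosely) the upper-bound lemma, and even there it rests on an incorrect identification of the complement.
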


\begin{theorem} \label{Theorem:t_1<0t_2>1} Suppose $t_{1}<0$ and $t_{2}>1$, then the number of strongly exceptional Legendrian $A_3$ links is
\begin{align*}
\begin{split}
\left\{  
\begin{array}{ll}
6-6t_1, & ~\text{if}~ t_0\geq3, t_2=2, \\
6-4t_1, & ~\text{if}~ t_0=2, t_2=2, \\
6-2t_1, & ~\text{if}~ t_0=1, t_2=2, \\
8-8t_1, & ~\text{if}~ t_0\geq3, t_2\geq3, \\
8-6t_1, & ~\text{if}~ t_0=2, t_2\geq3, \\
8-4t_1, & ~\text{if}~ t_0=1, t_2\geq4, \\
8-3t_1,  & ~\text{if}~ t_0=1, t_2=3, \\
2t_0t_1-2t_1, & ~\text{if}~ t_0\leq0.
\end{array}
\right.
\end{split}
\end{align*} 
\end{theorem}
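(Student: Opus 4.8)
The plan is to reduce the count to a convex-integer-point enumeration on the complement, exactly as in the cases already treated (Theorems \ref{Theorem:t_1<0t_2<0}--\ref{Theorem:t_1<0t_2=1}), but now with the asymmetry $t_1<0$, $t_2>1$ handled by combining the ``negative meridian'' bookkeeping from Theorem \ref{Theorem:t_1<0t_2=1} with the ``large positive meridian'' bookkeeping from Theorems \ref{Theorem:t_1>1t_2=1} and \ref{Theorem:t_1>1t_2>1}. First I would fix a strongly exceptional Legendrian $A_3$ link $K_0\cup K_1\cup K_2$ and cut $S^3$ along convex tori that are boundary-parallel in the solid-torus neighborhoods $\nu(K_i)$; since the Giroux torsion of the complement is zero and the complement is tight, each piece (two solid tori $\nu(K_1),\nu(K_2)$, the solid torus $\nu(K_0)$, and a thickened torus or $S^1\times S^2$-minus-balls-type region recording how the meridional pieces attach to $\nu(K_0)$) carries a finite, explicitly classifiable set of tight contact structures with the prescribed dividing-curve/ruling data. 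The invariants $(t_i,r_i)$ translate, via the standard edge-rounding and the $\pm1$-surgery description of $A_3$, into slope and sign constraints on the bypass layers; the count of strongly exceptional links with fixed $(t_i)$ is then the number of admissible sign-tuples, i.e. lattice points in an explicit polytope whose shape depends on how $t_0$ compares to the thresholds coming from $t_1$ (negative) and $t_2$ (large positive).

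Concretely, the key steps in order: (1) Normalize so that $K_1$ is a Legendrian meridian with $t_1<0$ and $K_2$ a Legendrian meridian with $t_2>1$, and record the contact structure on each $\nu(K_i)$ by its dividing slope and the rotation/relative-Euler contributions; here the $t_2>1$ case forces the meridional solid torus $\nu(K_2)$ to contain a nontrivial number of basic slices, contributing the ``$\geq3$ vs $=2$'' dichotomy in $t_2$, while $t_1<0$ contributes the linear-in-$t_1$ factors. (2) Glue along the central region and use Honda's classification of tight contact structures on thickened tori and solid tori (and the convex-surface/bypass algebra) to determine, for each value of $t_0$, the set of compatible basic-slice sign choices; the $t_0\ge3$, $t_0=2$, $t_0=1$, $t_0\le0$ breakpoints arise because increasing $t_0$ past these values either frees up an extra sign choice or destabilizes one, and the special line $t_0=1,t_2=3$ is where two of these thresholds collide (hence the separate ``$8-3t_1$'' entry). (3) Check which of the resulting candidate links actually embed in a contact $S^3$ with zero Giroux torsion and are genuinely distinct up to coarse equivalence — ruling out overtwisted-complement or torsion-creating gluings, and accounting for the symmetry exchanging bypass orientations — and verify the $d_3$-invariant to confirm these live in overtwisted $(S^3,\xi_d)$ (rather than $\xi_{st}$) when $t_0\le 0$, mirroring the final sentence of Theorem \ref{Theorem:t_1<0t_2<0} and its analogue here giving $2t_0t_1-2t_1$.

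I expect the main obstacle to be the bookkeeping at the small-$t_0$ boundary, specifically proving the uniform formula $2t_0t_1-2t_1$ for all $t_0\le 0$ and its matching with the $t_0=1$ cases at the junction $t_2=3$: one must show that for $t_0\le 0$ no new exceptional realizations appear beyond those detectable in $(S^3,\xi_{st})$, which requires a convexity/tightness argument (via the Giroux criterion and Legendrian surgery) that the only tight fillings of the relevant cut-open manifold are the ``product-like'' ones, and simultaneously that the $t_0=1,t_2=3$ coincidence is not an artifact of overcounting. A secondary difficulty is ensuring no double-counting across the $t_2=2$ versus $t_2\ge3$ split when $t_0$ is large: the jump from $6-6t_1$ to $8-8t_1$ (respectively the intermediate $8-6t_1$, $8-4t_1$) must be shown to come from exactly one genuinely new basic slice in $\nu(K_2)$ and not from an accidental contact isotopy identifying two of the sign choices, which again reduces to a careful application of Honda's gluing and the classification of tight contact structures on solid tori with the given boundary data.
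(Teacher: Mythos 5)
Your outline captures only the upper-bound half of the argument, and even there only schematically. The paper's upper bound (Lemma~\ref{Lemma:t_1<0t_2>1}) does proceed by convex surface theory on the link exterior, but note that the exterior is a pair of pants times $S^1$, not the ``thickened torus or $S^1\times S^2$-minus-balls'' region you describe; the count comes from factoring $\Sigma\times S^1$ as $L'_0\cup L'_1\cup L'_2\cup\Sigma'\times S^1$ (when a $0$-twisting vertical Legendrian circle exists, i.e.\ $t_0\geq 1$ here) or as $\Sigma''\times S^1\cup L''_1\cup L''_2$ (when $t_0\leq 0$), and counting basic-slice signs after deleting duplications and overtwisted configurations. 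The genuine gap is the realization half: an upper bound on tight structures on the exterior does not prove the theorem, and ``check which candidates embed in a contact $S^3$'' is not a mechanism. The paper attains the bounds by explicit contact surgery diagrams (Figures~\ref{Figure:link17inot}, \ref{Figure:link16inot}, where tightness of the complement follows from cancelling the contact $(+1)$-surgeries), by taking local Legendrian meridians of the classified exceptional Legendrian Hopf links of Geiges--Onaran (Lemma~\ref{Lemma:meridian}), by Legendrian connected sums (Lemma~\ref{Lemma:connected}), and by extending explicitly decorated tight structures (Lemmas~\ref{Lemma:tight}, \ref{Lemma:construction}, \ref{Lemma:appro}), including a Honda-gluing/embedding argument (adding basic slices $(T^2\times[0,1],-\frac1{t_2},-\frac1{t_2-1}),\dots$) to certify that the mixed-sign decorations are appropriate tight; distinctness is then read off from rotation numbers (relative Euler class or surgery-diagram formulas) and $d_3$-invariants. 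None of this is supplied or replaced by your plan, and your remark that for $t_0\leq 0$ the links are ``detectable in $(S^3,\xi_{st})$'' is incorrect for this theorem: the $2t_0t_1-2t_1$ realizations live in $(S^3,\xi_{\frac12})$, built as local meridians on Hopf links there.

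A second concrete gap concerns the exceptional entry $8-3t_1$ at $t_0=1$, $t_2=3$. This is not a collision of thresholds: the naive sign count gives $8-4t_1$, and the reduction happens because $-t_1$ pairs of decorations, namely $(+)(\underbrace{+\cdots+}_{l}\underbrace{-\cdots-}_{k})((-)(+))$ and $(-)(\underbrace{-\cdots-}_{k+1}\underbrace{+\cdots+}_{l-1})((+)(-))$, are shown to be isotopic by cutting $L'_0\cup B'_1\cup L'_2\cup\Sigma'\times S^1$ along a vertical annulus so that the two oppositely signed basic slices form a continued fraction block and can be shuffled (the argument in the proof of Lemma~\ref{Lemma:t_1<0t_2>1}, modeled on Lemma~\ref{Lemma:t_1>1t_2=1}); one must also prove, via the realizations and their invariants, that no further identifications occur for $t_2\geq4$ or $t_0\geq2$. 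Your proposal neither identifies this mechanism nor offers a substitute, so as written it would either overcount that case or be unable to justify the identification.
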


\begin{theorem} \label{Theorem:t1=0} Suppose $t_1=0$, then the number of strongly exceptional Legendrian $A_3$ links is
\begin{align*}
\begin{split}
\left\{  
\begin{array}{ll}
2-2t_{2}, & ~\text{if}~ t_2\leq0, \\
4, & ~\text{if}~ t_2=1, \\
6, & ~\text{if}~ t_2=2, \\
8, & ~\text{if}~ t_2\geq3.
\end{array}
\right.
\end{split}
\end{align*}
\end{theorem}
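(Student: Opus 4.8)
\textbf{Proof proposal for Theorem~\ref{Theorem:t1=0}.}

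The plan is to reduce the case $t_1=0$ to the cases already classified in Theorems~\ref{Theorem:t_1<0t_2<0}--\ref{Theorem:t_1<0t_2>1} by a stabilization/destabilization argument on the component $K_1$. Since $K_1$ is a Legendrian meridian of $K_0$ with $t_1=0$, its standard neighborhood has a convex boundary torus whose dividing curves have slope $0$ (in the natural framing), which is exactly the borderline case between the negative-$t_1$ regime and the positive-$t_1$ regime. First I would fix a convex torus $T_0$ bounding a tight solid-torus neighborhood $N(K_0)$ of the central component, and cut the complement $S^3\setminus K_0$ along $T_0$; the exceptional region is controlled by the relative Euler class data on the two sides, and the classification of tight contact structures on $S^1\times D^2$ with the prescribed boundary (via Honda's work, as presumably used earlier in the paper) gives the bookkeeping. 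The key point is that when $t_1=0$, the meridian $K_1$ sits on a convex torus $T_1$ parallel to $T_0$, and $K_1$ admits a Legendrian destabilization in exactly one of the two "directions," or equivalently the Legendrian $K_1$ is determined by its position relative to the bypass layers between $T_1$ and $T_0$.

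The key steps, in order, are: (1) set up the convex surface decomposition of the complement of $A_3$ into $N(K_0)$, the two meridional solid tori $N(K_1)$ and $N(K_2)$, and the remaining piece, and record the admissible boundary slopes forced by $t_1=0$; (2) enumerate the tight contact structures on each piece with zero Giroux torsion using the gluing and classification results invoked earlier in the paper, keeping careful track of rotation numbers via relative Euler classes; (3) observe that with $t_1=0$ the contribution of the $K_1$-side is independent of how negative or positive we imagine $t_1$ to be — it contributes a fixed factor of $2$ to the count (the two choices coming from the single bypass choice / sign of the half-Euler-class), which explains why the answer depends only on $t_2$; (4) split into the subcases $t_2\le 0$, $t_2=1$, $t_2=2$, $t_2\ge 3$, matching each against the $K_2$-side count already established, and read off $2-2t_2$, $4$, $6$, $8$ respectively; (5) finally verify no two of these Legendrian links are coarsely equivalent, using the invariants $(t_0,t_1,t_2,r_0,r_1,r_2)$ together with the $d_3$-invariant of the ambient contact structure, and that every combinatorial type produced is actually realized (no overtwisted-complement or nonzero-torsion possibilities sneak in).

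I expect the main obstacle to be step (3): making precise the claim that the $t_1=0$ layer contributes exactly a factor of $2$ and does \emph{not} interact with the $K_2$-data through the central solid torus. Concretely, one must rule out extra identifications coming from contactomorphisms of $S^3$ that permute bypass layers on the $K_1$-side while simultaneously acting nontrivially on the $K_0$- or $K_2$-side — i.e., one must show the relevant mapping-class/contactomorphism group acts on the two sides independently once $t_1=0$ is fixed. This is where the "strongly exceptional'' (zero Giroux torsion) hypothesis is essential, since it bounds the number of layers and hence makes the enumeration finite; I would handle it by the same Euler-class obstruction used in the earlier theorems, checking that the continued-fraction expansions governing the $K_1$-side degenerate to a single term when $t_1=0$. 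The boundary cases $t_2=1$ and $t_2=2$ will, as in the earlier theorems, require separate small-case analysis because the generic formula $8-8\cdot 0=8$ overcounts there due to extra symmetry among stabilizations of $K_2$.
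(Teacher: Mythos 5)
Your overall strategy — reduce $t_1=0$ to the already-classified $t_1\neq 0$ cases by (de)stabilizing $K_1$ — is not the paper's route, and as stated it has a genuine gap. First, a structural point: when $t_1=0$ the boundary torus $T_1$ of the link exterior has dividing slope $\infty$ (vertical dividing curves), so in the factorization there is \emph{no} layer at all on the $K_1$-side; the exterior decomposes as $L'_0\cup L'_2\cup\Sigma'\times S^1$ (Lemma~\ref{Lemma:t1eq0}), and the factor of $2$ in the counts $2-2t_2,4,6,8$ comes from the two signed basic-slice choices in $L'_0$, i.e.\ from the $K_0$-layer, not from a ``single bypass choice on the $K_1$-side'' as your step (3) asserts. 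The independence from $t_0$ is a consequence of $s_1=\infty$ forcing a $0$-twisting vertical Legendrian circle for every $t_0$, so the $t_0$-dependent ``no $0$-twisting circle'' regimes and the small-$t_0$ deletions of duplicated or overtwisted structures that appear in the other theorems simply never arise. More seriously, the proposed reduction itself is unsupported: when $t_1=0$ the component $K_1$ is in general \emph{not} destabilizable (Lemma~\ref{Lemma:stabilization} explicitly requires $t_i\neq 0$), and going the other way, stabilizing the $t_1=\pm1$ links at $K_1$ produces candidates among which you must decide which remain strongly exceptional and which become coarsely equivalent — note the counts do not match (e.g.\ for $t_2\geq 3$ and $t_0$ large one has $16$ links at $t_1=\pm 1$ but only $8$ at $t_1=0$), so a genuine two-to-one collapsing must be proved, and that is exactly the mountain-range information the paper extracts \emph{after} the classification (Section~\ref{Section:Stabilizations}); using it to derive the classification would be circular without an independent bypass argument on $T_1$.

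Second, your proposal stops at the counting heuristic and never addresses the realization half, which is the bulk of the paper's proof of Theorem~\ref{Theorem:t1=0}: one must exhibit, for every $t_0$, Legendrian $A_3$ links attaining the upper bound — via explicit contact surgery diagrams (Figures~\ref{Figure:link5inot}, \ref{Figure:link15inot}, \ref{Figure:link17inot}, \ref{Figure:link16inot} with $k_1=l_1=0$), Legendrian connected sums with Hopf links (Lemma~\ref{Lemma:connected}, Lemma~\ref{Lemma:3.4.2}), the extension construction and relative Euler class computation (Lemmas~\ref{Lemma:construction}, \ref{Lemma:3.4.1}, \ref{Lemma:appro}), and, for the mixed-sign decorations $\pm(+)((-)(+))$, an embedding/gluing argument showing the exterior is appropriate tight — together with the rotation numbers and $d_3$-invariants that separate the links and prove Theorem~\ref{classification}. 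Without this constructive step the numbers $2-2t_2,4,6,8$ are only upper bounds.
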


By exchange of the roles of $K_1$ and $K_2$ as necessary, we have covered all cases. Therefore, we have completely classified strongly exceptional Legendrian $A_3$ links. 
The reader can look up the explicit rotation numbers and corresponding $d_3$-invariants in Lemmas~\ref{t0>1t1<0t2<0}-\ref{t0<0t1<0t2<0},  \ref{t0>5t1=1t2=1}-\ref{t0<2t1>1t2>1}, \ref{t0>3t1<0t2=1}-\ref{t0<1t1<0t2>1}, \ref{t1=0t2<1}-\ref{t1=0t2>2} of
Section~\ref{Section:realizations}. In particular, we have:

\begin{theorem}\label{classification}
The strongly exceptional Legendrian $A_3$ links are determined up to coarse equivalence by their Thurston-Bennequin invariants and rotation numbers.
\end{theorem}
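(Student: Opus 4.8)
The plan is to deduce Theorem~\ref{classification} as a formal consequence of the enumeration Theorems~\ref{Theorem:t_1<0t_2<0}--\ref{Theorem:t1=0} together with the explicit lists of invariants assembled in the lemmas of Section~\ref{Section:realizations}. The statement to be proved is that the pair (Thurston--Bennequin invariants, rotation numbers) $(t_0,t_1,t_2,r_0,r_1,r_2)$ is a complete coarse invariant of strongly exceptional Legendrian $A_3$ links; since these numerical quantities are manifestly preserved by any ordered, oriented contactomorphism, the content is the reverse implication: two such links with equal invariants are coarsely equivalent. The natural strategy is a counting argument --- show that for each fixed admissible triple $(t_0,t_1,t_2)$, the number of realized invariant tuples equals the number of coarse equivalence classes given by the appropriate case of Theorems~\ref{Theorem:t_1<0t_2<0}--\ref{Theorem:t1=0}, so that no two distinct classes can share the same invariants.

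Concretely, I would proceed as follows. First, observe that the set of $(t_0,t_1,t_2)$ with $t_1,t_2$ not equal to $0$ splits, after possibly swapping $K_1$ and $K_2$, into exactly the cases enumerated: $t_1,t_2<0$; $t_1=t_2=1$; $t_1>1,t_2=1$; $t_1,t_2>1$; $t_1<0,t_2=1$; $t_1<0,t_2>1$; and $t_1=0$ (any $t_2$). Second, for each such case, invoke the corresponding lemma from Section~\ref{Section:realizations}, which lists the realized tuples $(r_0,r_1,r_2)$ together with the $d_3$-invariant of the ambient $(S^3,\xi_d)$ (or $\xi_{st}$); these lemmas by construction exhibit, for every tuple in the list, a Legendrian $A_3$ link realizing it, and conversely show every strongly exceptional realization has invariants on the list. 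Third, count the entries of each list and check that the total agrees with the number in Theorems~\ref{Theorem:t_1<0t_2<0}--\ref{Theorem:t1=0}; this is exactly the bookkeeping those theorems package. Fourth, and this is the crux, verify that within each list the map sending a coarse class to its invariant tuple is injective: one must confirm that no two of the listed tuples coincide (including the $d_3$ label, though in fact the classification shows $(t_i,r_i)$ alone already separate them), and that each listed tuple is realized by a \emph{unique} coarse class --- the uniqueness half is precisely what the classification lemmas of Section~\ref{Section:realizations} establish via convex surface theory and the tight-complement analysis.

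The main obstacle is the last point: ensuring that the correspondence between invariant tuples and coarse classes is a genuine bijection rather than merely an equality of cardinalities. Equality of two finite counts does not by itself give that the invariants separate classes; one needs the structural input that each realizable tuple comes from exactly one class. This is supplied by the uniqueness statements inside the Section~\ref{Section:realizations} lemmas, which in turn rest on the standard classification machinery --- convex surface theory, the Legendrian neighbourhood theorem, Kanda's and Honda's results on tight contact structures on solid tori and thickened tori, and the vanishing Giroux torsion hypothesis forcing a rigid decomposition of the complement. So the proof of Theorem~\ref{classification} is essentially an assembly argument: I would write it as a short paragraph stating that injectivity of the invariant map on each case-list is contained in the cited lemmas, that the seven cases (up to the $K_1\leftrightarrow K_2$ symmetry) exhaust all admissible $(t_0,t_1,t_2)$, and that therefore the six-tuple of classical invariants is a complete coarse invariant. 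I would close by remarking that, as a by-product, the $d_3$-invariant of the ambient overtwisted sphere is itself determined by $(t_i,r_i)$, so the classification is insensitive to which $(S^3,\xi_d)$ one works in --- a point already visible in the final clause of Theorem~\ref{Theorem:t_1<0t_2<0}.
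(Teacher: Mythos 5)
Your proposal is correct and matches the paper's argument: the paper proves Theorem~\ref{classification} exactly by this assembly, citing the proofs of Theorems~\ref{Theorem:t_1<0t_2<0}--\ref{Theorem:t1=0}, where the Section~2 upper bounds on appropriate tight structures on the exterior cap the number of coarse classes and the Section~\ref{Section:realizations} lemmas realize that many links with pairwise distinct rotation-number tuples, forcing the invariant map to be a bijection. Your only slight misframing is attributing the ``uniqueness'' input to the realization lemmas themselves; it is really the convex-surface-theory upper bounds of Section~2 that, combined with the matching count of distinct realized tuples, automatically yield injectivity.
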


\begin{remark}  Strongly exceptional Legendrian $A_3$ links exist only in overtwisted contact 3-spheres with $d_3 = -\frac{3}{2}, -\frac{1}{2}, \frac{1}{2}, \frac{3}{2}, \frac{5}{2}$. 
\end{remark}

\begin{remark}\label{Remark:Stabilization}
Suppose $t_1, t_2\neq0$. If $t_{0}+\lceil-\frac{1}{t_1}\rceil+\lceil-\frac{1}{t_2}\rceil\geq2$, then any strongly exceptional Legendrian $A_3$ link can be destabilized at the component $K_0$  to another strongly exceptional one. If $t_{0}+\lceil-\frac{1}{t_1}\rceil+\lceil-\frac{1}{t_2}\rceil<1$, then any strongly exceptional Legendrian $A_3$ link can be destabilized at the component $K_0$ to a strongly exceptional Legendrian link with $t_{0}+\lceil-\frac{1}{t_1}\rceil+\lceil-\frac{1}{t_2}\rceil=1$. In the cases either $t_{1}=0$ or $t_{2}=0$, any strongly exceptional Legendrian $A_3$ link can be destabilized at the component $K_0$ to another strongly exceptional one. Furthermore, if $t_{1}=0$, then any strongly exceptional Legendrian $A_3$ link can be destabilized at the component $K_2$ to another strongly exceptional one unless $t_2=0$. On the other hand, a positive (or negative) stabilization at the component $K_0$ (and $K_2$ in the case $t_1=0$) of a strongly exceptional Legendrian $A_3$ link is strongly exceptional if and only if the resulted rotation numbers are indeed the rotation numbers of a strongly exceptional Legendrian $A_3$ link. Therefore, one can read out the mountain ranges of $K_0$ (and $K_2$ in the case $t_1=0$) through the Thurston-Bennequin invariants, rotation numbers and $d_3$-invariants shown in Section~\ref{Section:realizations}. Section~\ref{Section:Stabilizations} explains how strongly exceptional Legendrian representatives relate to each other.  Detailed analysis of the (de)stabilizations, as well as detailed analysis of the mountain range of $K_2$ for the links in Theorem~\ref{Theorem:t1=0}, will be presented in Section~\ref{Section:Stabilizations}. 
\end{remark}

The following is the structure of this paper. Section 2 presents upper bounds for appropriate tight contact structures on $\Sigma\times S^1$. In section 3, we discuss various methods to realize the strongly exceptional Legendrian  $A_3$ links. Section 4 focuses on the realization of  the strongly exceptional Legendrian  $A_3$ links, including the calculation of their rotation numbers and the $d_3$-invariants of their ambient contact $S^3$. In section 5, we explore the stabilizations among the strongly exceptional Legendrian $A_3$ links. Finally, the last section provides a detailed computation as a sample, showcasing the calculation of rotation numbers and $d_3$-invariants.

\medskip
\begin{acknowledgements}
The authors would like to thank John Etnyre for some correspondence. They are also grateful to the
referee for valuable suggestions. The first author was partially supported by Grants No. 12271349 of the National Natural Science Foundation of China. The second author was partially supported by the Turkish Fulbright Commission, IMSA Visitor Program, T\"UB\.ITAK 2219 and T\"UB\.ITAK Grant No. 119F411.
\end{acknowledgements}

\section{Tight contact structures on $\Sigma\times S^1$}

For $i=0,1,2$, let $N(K_i)$ be the standard neighborhood of $K_i$ in a contact 3-sphere.  The Seifert longitude and meridian of $K_i$ are denoted by $\lambda_i$ and $\mu_i$, respectively.  The exterior of the link $A_3=K_{0}\cup K_{1}\cup K_{2}$, $\overline{S^{3}\setminus (N(K_0)\cup N(K_1)\cup N(K_2))}$, is diffeomorphic to $\Sigma\times S^1$, where $\Sigma$ is a pair of pants. Suppose $\partial\Sigma=c_0\cup c_1\cup c_2$ as shown in Figure~\ref{Figure:Sigma}.  Let $h$ denote the $S^1$ factor, namely the vertical circle. Then $\lambda_0=c_0$, $\lambda_1=\lambda_2=h$, $\mu_0=h$, $\mu_1=-c_1$, $\mu_2=-c_2$. Suppose $\partial(\Sigma\times S^1)=T_0\cup T_1\cup T_2$, where $T_{i}=c_{i}\times S^{1}$. Then the dividing set of $T_0$ has slope $t_0$, i.e., has the homology $[c_{0}]+t_{0}[h]$, and the dividing set of $T_i$ has slope $-\frac{1}{t_i}$, i.e., has the homology $-t_{i}[c_{i}]+[h]$, for $i=1,2$. Furthermore, each boundary torus of the exterior of a Legendrian $A_3$ link is minimal convex. Namely, its dividing set consists of exactly two parallel simple closed curves.

\begin{figure}[htb]
\begin{overpic}
{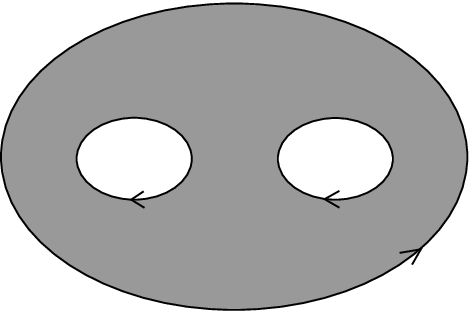}
\put(190, 12){$c_0$}
\put(52, 60){$c_1$}
\put(150, 60){$c_2$}
\end{overpic}

\caption{A pair of pants $\Sigma$. }
\label{Figure:Sigma}
\end{figure}

Following \cite{wu}, we say that a tight contact structure $\xi$ on $\Sigma\times S^1$ with minimal convex boundary is \textit{appropriate} if there is no contact embedding of $$(T^2\times [0,1], \ker(\sin(\pi z)dx+\cos(\pi z)dy))$$ into $(M,\xi)$, where $T^{2}\times \{0\}$ is isotopic to a boundary component of $\Sigma\times S^1$. A Legendrian representation of the $A_3$ link in an overtwisted contact 3-sphere is strongly exceptional if and only if its exterior is an appropriate tight contact $\Sigma\times S^1$. 

In this section, we study the appropriate tight contact structures on $\Sigma\times S^1$ with minimal convex boundary. The  boundary slopes are $s_0=s(T_0)=t_0$, $s_1=s(T_1)=-\frac{1}{t_1}$ and $s_2=s(T_2)=-\frac{1}{t_2}$, where $t_0, t_1, t_2$ are integers.

\begin{figure}[htb]
\begin{overpic}
{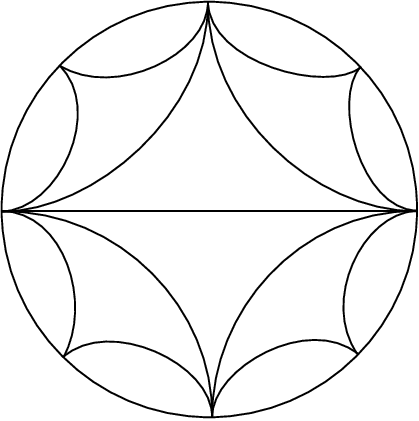}
\put(205, 100){$\frac{0}{1}$}
\put(-10, 100){$\frac{1}{0}$}
\put(95, -10){$-1$}
\put(97, 205){$1$}
\put(20, 170){$2$}
\put(180, 170){$\frac{1}{2}$}
\put(172, 20){$-\frac{1}{2}$}
\put(18, 20){$-2$}
\end{overpic}
\caption{Farey graph on the Poincare disk $\mathbb{H}^2$. }
\label{Figure:farey}
\end{figure}

\begin{lemma}\cite{h1}\label{Lemma:bypass}
Let $T^2$ be a convex surface in a contact 3-manifold with $\# \Gamma_{T^2}=2$ and slope $s$. If a bypass $D$ is attached to $T^2$ from the front (the back, resp.) along a Legendrian ruling curve of slope $r\neq s$, then the resulting convex surface $\tilde{T}^2$ will have $\# \Gamma_{\tilde{T}^2}=2$ and the slope $s'$
which is obtained as follows: Take the arc $[r,s]\subset\partial \mathbb{H}^2$ obtained by starting from $r$  and moving counterclockwise (clockwise, resp.) until we hit $s$, where $\mathbb{H}^2$ is the Poincare disk shown in Figure~\ref{Figure:farey}. On this arc, let $s'$ be the point that is closest to $r$ and has an edge from $s'$ to $s$.

\end{lemma}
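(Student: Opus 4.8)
The plan is to reduce the statement to a two--dimensional computation of how the dividing set of a convex torus transforms under a single bypass attachment, and then to identify the outcome with the Farey--graph recipe using the $SL(2,\Z)$--equivariance of the whole picture.

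\textbf{Setup and normalization.} Work in an $I$--invariant collar $T^2\times[0,\epsilon)$ of $T^2 = T^2\times\{0\}$, and after a small perturbation put its characteristic foliation in standard form: two Legendrian divides of slope $s$ together with a ruling by closed Legendrian curves of slope $r$ (possible since $r\neq s$), so $r\in\mathbb{Q}$. Identify $T^2=\R^2/\Z^2$; an element of $SL(2,\Z)$ changes the basis, acts on slopes by the corresponding M\"obius transformation, and is an automorphism of the Farey graph. Hence without loss of generality we may place $(s,r)$ in any convenient position, and it suffices to treat the bypass attached from the front: attaching from the back is obtained by an orientation--reversing change of coordinates, which exchanges clockwise and counterclockwise on $\partial\mathbb{H}^2$ and so turns the recipe into its mirror.

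\textbf{Local model for the bypass move.} Recall from the work of Giroux and Honda that the Legendrian attaching arc $\gamma$ of the bypass meets $\Gamma_{T^2}$ in exactly three points, and that pushing $T^2$ across the bypass half--disk $D$ yields a convex torus $T'$ whose dividing set agrees with $\Gamma_{T^2}$ away from a bigon neighbourhood of $\gamma$ and, inside, is obtained by the standard bypass move: the middle of the three dividing arcs crossing $\gamma$ is deleted and the two outer ones are reconnected across it. Inspecting this local picture in the case at hand — two parallel essential dividing curves of slope $s$ and an attaching arc lying along a ruling curve — one sees the reconnected dividing set is again two parallel essential curves, so $\#\Gamma_{T'}=2$; it remains only to compute the new slope $s'$.

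\textbf{Computing the slope.} Because $\gamma$ lies along a ruling curve of slope $r$ but must nonetheless accumulate three intersections with the divides of slope $s$, it is forced to wind around $T^2$; the number of windings, and hence the resulting slope, is governed by the continued--fraction expansion of $r$ relative to $s$, equivalently by the length of the geodesic from $r$ to $s$ in the Farey graph. I would organize the computation by induction along that geodesic: one shows that, up to isotopy of attaching arcs through Legendrian arcs on $T^2$, a bypass along a ruling curve of slope $r$ is equivalent to one along a ruling curve whose slope is the Farey neighbour $s'$ of $s$ on the counterclockwise arc from $r$ to $s$ that is closest to $r$; the base case $\Delta(s',s)=1$ is then an elementary explicit picture in $\R^2/\Z^2$ in which the reconnected curves visibly carry the class of slope $s'$. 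Undoing the normalization of the first step returns the formula in the statement.

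\textbf{Main difficulty.} The first two steps are routine convex--surface theory; the real content is the third, namely the honest bookkeeping of how the two dividing curves reconnect once the attaching arc must wrap several times around $T^2$, together with the verification that the primitive class produced is exactly the Farey neighbour of $s$ \emph{closest to} $r$ and not some other neighbour of $s$. The secondary subtlety is keeping all orientation conventions — front versus back, clockwise versus counterclockwise as in Figure~\ref{Figure:farey} — consistent throughout.
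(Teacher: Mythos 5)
You are not being compared against an argument in the paper: Lemma~\ref{Lemma:bypass} is quoted from Honda \cite{h1} with no proof given here, so your task is in effect to reconstruct Honda's argument. Your first two steps --- reduction by $SL(2,\Z)$, handling back attachments by reversing the coorientation (which mirrors the disk in Figure~\ref{Figure:farey}), and the local model for how a bypass modifies the dividing set --- are the standard framework and are fine as a sketch. The gap is in your third step, which you yourself call ``the real content'': it is never carried out, and the plan you give for it does not work as stated.

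Concretely, the inductive step you propose --- that, up to isotopy of attaching arcs, a bypass along a ruling curve of slope $r$ is ``equivalent'' to one along a ruling curve of slope $s'$ --- is essentially a restatement of the lemma, so using it as the engine of the induction begs the question, and no justification is offered. It is also precisely where the subtlety sits: an admissible isotopy must keep the attaching arc transverse to $\Gamma$ with exactly three intersection points, and the quantity preserved by such isotopies is the winding of the arc in the direction of the dividing curves (for $s=\infty$, the integer part of the total rise of the arc); this winding is exactly what remembers $r$ and determines $s'$, so any ``replace the arc by a simpler one'' move has to be argued by tracking it, not asserted. Your base case is also not available in the form you describe: when $\Delta(r,s)=1$ a ruling curve of slope $r$ meets the two dividing curves in only two points, so it contains no admissible attaching arc, and the ``elementary explicit picture'' must be set up differently. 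Finally, that $\#\Gamma_{\tilde T^2}=2$ (no contractible dividing curve appears) is part of the same computation, not something one ``sees'' before doing it. The efficient completion --- in essence what is done in \cite{h1} --- is direct rather than inductive: normalize by $SL(2,\Z)$ so that $s=\infty$, draw the attaching arc as a subarc of a ruling curve wrapping once around the torus and crossing $\Gamma$ in three points, apply the local bypass move to those three strands, and read off that the new dividing set consists of two curves of integer slope ($\lceil r\rceil$ or $\lfloor r\rfloor$ according to the side of attachment), which is exactly the Farey neighbour of $\infty$ closest to $r$ on the prescribed arc of $\partial\mathbb{H}^2$; $SL(2,\Z)$-equivariance of the Farey graph then yields the general statement and the back-attachment case.
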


Every vertical circle in a contact $\Sigma\times S^1$ has a canonical framing  that arises from the product structure. Let $\gamma$ be a Legendrian circle that lies in the vertical direction. The twisting number $t(\gamma)$ of $\gamma$ measures the amount by which the contact framing of $\gamma$ deviates from the canonical framing.  If $t(\gamma)=0$, then we say that $\gamma$ a \textit{$0$-twisting vertical Legendrian circle}.

\begin{lemma}\label{vertical}
Suppose $\xi$ is an appropriate tight contact structure on $\Sigma\times S^1$ with boundary slopes $s_0=t_0$, $s_i=-\frac{1}{t_i}$ for $i=1,2$. If $t_{1}, t_{2}\neq0$ and $t_{0}+\lceil-\frac{1}{t_1}\rceil+\lceil-\frac{1}{t_2}\rceil\geq2$, then $\xi$ has a $0$-twisting vertical Legendrian circle.
\end{lemma}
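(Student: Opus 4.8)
The plan is to produce a $0$-twisting vertical Legendrian circle by first arranging, after attaching bypasses to the boundary tori, that one of the boundary components has dividing slope equal to the framing of the $S^1$-fibers, and then extracting a vertical Legendrian curve of twisting zero from the product structure in a collar of that torus. So the first step is to record how the fiber framing $h$ appears on each boundary torus in the Farey-graph coordinates set up above: on $T_0$ the fiber slope is $\infty = \frac{1}{0}$ (since $\lambda_0=c_0$, $\mu_0=h$), while on $T_i$ for $i=1,2$ the current boundary slope is $s_i=-\frac{1}{t_i}$ and the fiber $h$ corresponds to the slope $0$ there (since $\lambda_i=h$). The hypothesis $t_1,t_2\neq 0$ guarantees $s_i=-\frac 1{t_i}\neq 0$, so on each of $T_1,T_2$ the boundary slope differs from the fiber slope and there is room to fold/peel bypasses toward it.

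Next I would invoke the Legendrian realization principle and Lemma~\ref{Lemma:bypass} to modify the boundary. The key point is that a $0$-twisting vertical Legendrian circle exists precisely when, after thinning $\Sigma\times S^1$ by a suitable collar, some convex vertical annulus between (copies of) boundary tori has a dividing configuration carrying a boundary-parallel dividing curve of the fiber slope, equivalently when we can push a boundary torus to slope $0$ (the fiber slope) on the $T_1$ or $T_2$ side, or to $\infty$ on the $T_0$ side, without creating overtwisting. Using the bypass-attachment rule I would compute which boundary slopes are reachable from $s_0=t_0$, $s_1=-\frac1{t_1}$, $s_2=-\frac1{t_2}$ by a sequence of bypasses coming from the interior. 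The arithmetic hypothesis $t_0+\lceil-\frac1{t_1}\rceil+\lceil-\frac1{t_2}\rceil\geq 2$ is exactly the numerical condition ensuring that the "total amount of twisting available" across the pair of pants is large enough: writing $k_i=\lceil -\frac{1}{t_i}\rceil$ for $i=1,2$ (which is $0$ if $t_i<0$ and $1$ if $t_i>0$... more precisely $k_i\in\{0,1\}$ for $|t_i|\ge 1$), the condition says $t_0+k_1+k_2\ge 2$, which I would translate into: after maximally peeling collar layers off $T_1$ and $T_2$, the effective boundary slope one can install on $T_0$ reaches or passes $\infty$, i.e. the fibers become Legendrian with zero twisting. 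Concretely, I expect to argue by contradiction: if there were no $0$-twisting vertical Legendrian circle, then every convex vertical annulus has only $\partial$-parallel dividing arcs of non-fiber slope, the torus $T_0\times\{pt\}$ can be isotoped inward keeping slope in a controlled arc of the Farey graph bounded away from $\infty$, and then a direct count (again via the bypass rule and the classification of tight contact structures on $T^2\times I$ and on solid tori) shows the boundary data is inconsistent with $t_0+k_1+k_2\ge 2$ — or else one finds a contact embedding of the $\pi$-torsion piece, contradicting appropriateness.

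The main obstacle, and where the real work lies, is the bookkeeping of continued-fraction/Farey distances: one has to show that the quantity $t_0+\lceil-\frac1{t_1}\rceil+\lceil-\frac1{t_2}\rceil\geq 2$ is the sharp threshold, and that below it the appropriateness hypothesis (no $\pi$-torsion collar) is what blocks pushing to the fiber slope, while at or above it the push always succeeds. I would handle this by decomposing $\Sigma\times S^1$ along a vertical convex annulus from $T_0$ to $T_1\cup T_2$ into a thickened torus plus a solid-torus-with-two-boundary piece (or by the standard "pair of pants times $S^1$ = $T^2\times I$ glued to a neighborhood of a regular fiber" decomposition), and then reduce everything to Honda's classification and the gluing/bypass calculus. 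A secondary technical point is to make sure that all bypasses used are genuinely attached from the interior so that the modifications stay within the given appropriate tight structure; this is routine via the Legendrian realization principle applied to ruling curves on the (minimal convex) boundary tori, once the ruling slopes are chosen generically. Finally, having arranged a boundary torus at the fiber slope, a vertical Legendrian ruling curve on it is the desired $0$-twisting vertical Legendrian circle, completing the proof.
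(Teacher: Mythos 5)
Your overall toolbox is the right one (vertical convex annuli, bypasses via Lemma~\ref{Lemma:bypass}, edge-rounding, reduction to thickened tori), but as written the proposal has a concrete error and, more importantly, omits the part that actually constitutes the proof. First, the coordinate claim is wrong: in the paper's convention the dividing slope of $T_i$ is $-\frac{1}{t_i}$, meaning homology $-t_i[c_i]+[h]$, so the fiber class $[h]$ has slope $\infty$ on \emph{all three} boundary tori, not slope $0$ on $T_1,T_2$. You are mixing two reciprocal conventions (dividing slope $-\frac{1}{t_i}$ but fiber slope $0$ cannot both hold), and since your whole plan is to ``push a boundary torus to the fiber slope'', the target of the Farey-graph analysis is misidentified. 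Second, your characterization ``a $0$-twisting vertical Legendrian circle exists precisely when some vertical annulus carries a boundary-parallel dividing curve of the fiber slope'' is not correct as stated: a boundary-parallel dividing arc on a vertical annulus yields a \emph{bypass} (which changes a boundary slope according to Lemma~\ref{Lemma:bypass}); what yields the $0$-twisting circle immediately is the situation where \emph{all} dividing curves of the annulus are boundary-parallel, so that a core curve misses the dividing set and can be Legendrian realized with twisting zero.

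The substantive gap is that the reduction you defer (``a direct count \dots shows the boundary data is inconsistent'') is exactly the content to be proved, and it cannot be done uniformly. The paper's argument takes a convex vertical annulus $A$ from $T_1$ to $T_2$, removes boundary-parallel arcs by bypasses attached from the back, tracks case by case (both $t_i<0$; $t_1>1,t_2=1$; $t_1,t_2>1$ with the subcase where no boundary-parallel arcs exist and $t_1=t_2$; $t_1<0,t_2=1$; $t_1<0,t_2>1$ with two subcases) what slopes $\tilde T_1,\tilde T_2$ can acquire, rounds edges of a neighborhood of $\tilde T_1\cup\tilde T_2\cup\tilde A$, and verifies in each case that the complementary thickened torus bounded by $T_0$ has boundary slopes positioned so that a convex torus of slope $\infty$ (hence a $0$-twisting vertical circle) must occur inside; the hypothesis $t_0+\lceil-\frac{1}{t_1}\rceil+\lceil-\frac{1}{t_2}\rceil\geq2$ enters through a different inequality in each case, and the case $t_1=t_2=1$ is not handled by this mechanism at all but by quoting Honda's Lemma 5.1 for $\Sigma\times S^1$ with slopes $t_0,-1,-1$. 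None of this bookkeeping appears in your sketch, and your contradiction scheme (isotoping $T_0$ inward ``bounded away from $\infty$'' plus an unspecified count, or an appeal to torsion) is a placeholder; note also that at or above the threshold tightness alone forces the $\infty$-slope torus once the thickened torus is produced---appropriateness is only needed for the complementary regime treated in Lemma~\ref{Lemma:tor>0}, so sharpness of the threshold is not something this lemma requires you to establish.
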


\begin{proof}
We assume the Legendrian rulings on $T_{1}$ and $T_{2}$ to have infinite slopes. Consider a convex vertical annulus $A$ such that the boundary consists of a Legendrian ruling on $T_1$ and a Legendrian ruling on $T_2$. The dividing set of $A$ intersects $T_i$, $i=1,2$, in exactly $2|t_{i}|$ points. If every dividing curve of $A$ is boundary parallel, then there exists a $0$-twisting vertical Legendrian circle in $A$. So we assume that there exist dividing arcs on $A$, which connect the two boundary components of $A$. If there is a boundary parallel dividing curve on $A$, then we perform a bypass attachment (attached from the back of $T_i$) to eliminate it. 

(1) Suppose $t_{1}<0$ and $t_{2}<0$. By Lemma~\ref{Lemma:bypass}, we can obtain a submanifold $\tilde{\Sigma}\times S^{1}$ of $\Sigma\times S^{1}$ whose boundary is $T_{0}\cup \tilde{T}_{1}\cup \tilde{T}_{2}$, where both $\tilde{T}_{1}$ and $\tilde{T}_{2}$ have slopes $-\frac{1}{t_3}$ for some integer $t_{3}\in [\max\{t_{1},t_{2}\},-1]$. Moreover, each dividing curve on $\tilde{A}=A\cap(\tilde{\Sigma}\times S^{1})$ connects the two boundary components. Let $N$ be a neighborhood of $\tilde{T}_{1}\cup\tilde{T}_{2}\cup \tilde{A}$, and $\partial N=\tilde{T}_{1}\cup\tilde{T}_{2}\cup \tilde{T}$. Then, by edge-rounding, $\tilde{T}$ has slope $\frac{1}{t_3}+\frac{1}{t_3}+\frac{1}{-t_3}=\frac{1}{t_3}$ (as seen form $T_0$). Therefore, the thickened torus $\tilde{\Sigma}\times S^{1}\setminus N$ has boundary slopes $t_0$ and $\frac{1}{t_3}$. Since $t_0\geq0>\frac{1}{t_3}$, there must exists a $0$-twisting vertical Legendrian circle in this thickened torus, and hence in $\Sigma\times S^{1}$.

(2) Suppose $t_{1}=1$ and $t_{2}=1$. It follows from \cite[Lemma 5.1]{h2}.

(3) Suppose $t_{1}>1$ and $t_{2}=1$. By Lemma~\ref{Lemma:bypass}, we can obtain a submanifold $\tilde{\Sigma}\times S^{1}$ of $\Sigma\times S^{1}$ whose boundary is $T_{0}\cup \tilde{T}_{1}\cup T_{2}$, where $\tilde{T}_{1}$ has slope $0$. Moreover, each dividing curve on $\tilde{A}=A\cap(\tilde{\Sigma}\times S^{1})$ connects the two boundary components. Let $N$ be a neighborhood of $\tilde{T}_{1}\cup T_{2}\cup \tilde{A}$, and $\partial N=\tilde{T}_{1}\cup T_{2}\cup \tilde{T}$. Then, by edge-rounding, $\tilde{T}$ has slope $0+1+1=2$ (as seen form $T_0$). Therefore, the thickened torus $\tilde{\Sigma}\times S^{1}\setminus N$ has boundary slopes $t_0$ and $2$. Since $t_0\geq3>2$, there must exist a $0$-twisting vertical Legendrian circle in this thickened torus, and hence in $\Sigma\times S^{1}$. 

(4) Suppose $t_{1}>1$ and $t_{2}>1$. We divide this case into two subcases:

(i) There exist boundary parallel dividing curves on $A$. By Lemma~\ref{Lemma:bypass}, we can obtain a submanifold $\tilde{\Sigma}\times S^{1}$ of $\Sigma\times S^{1}$ whose boundary is $T_{0}\cup \tilde{T}_{1}\cup \tilde{T}_{2}$, where both $\tilde{T}_{1}$ and $\tilde{T}_{2}$ have slopes $0$. Moreover, each dividing curve on $\tilde{A}=A\cap(\tilde{\Sigma}\times S^{1})$ connects the two boundary components. Let $N$ be a neighborhood of $\tilde{T}_{1}\cup\tilde{T}_{2}\cup \tilde{A}$, and $\partial N=\tilde{T}_{1}\cup\tilde{T}_{2}\cup \tilde{T}$. Then, by edge-rounding, $\tilde{T}$ has slope $0+0+1=1$ (as seen form $T_0$). Therefore, the thickened torus $\tilde{\Sigma}\times S^{1}\setminus N$ has boundary slopes $t_0$ and $1$. Since $t_0\geq2>1$, there must exist a $0$-twisting vertical Legendrian circle in this thickened torus, and hence in $\Sigma\times S^{1}$. 

(ii) There exists no boundary parallel dividing curve on $A$. Then $t_1=t_2$ and all dividing curves on $A$ connect the two boundary components of $A$. Let $N$ be a neighborhood of $T_{1}\cup T_{2}\cup \tilde{A}$, and $\partial N=T_{1}\cup T_{2}\cup \tilde{T}$.  Then, by edge-rounding, $\tilde{T}$ has slope $\frac{1}{t_1}+\frac{1}{t_1}+\frac{1}{t_1}=\frac{3}{t_1}$ (as seen form $T_0$). Therefore, the thickened torus $\Sigma\times S^{1}\setminus N$ has boundary slopes $t_0$ and $\frac{3}{t_1}$. Since $t_0\geq2>\frac{3}{t_1}$, there must exist a $0$-twisting vertical Legendrian circle in this thickened torus, and hence in $\Sigma\times S^{1}$. 

(5) Suppose $t_{1}<0$ and $t_{2}=1$. There are boundary parallel dividing curves on $A$. By Lemma~\ref{Lemma:bypass}, we can obtain a submanifold $\tilde{\Sigma}\times S^{1}$ of $\Sigma\times S^{1}$ whose boundary is $T_{0}\cup \tilde{T}_{1}\cup T_{2}$, where both $\tilde{T}_{1}$  have slopes $1$. Moreover, each dividing curve on $\tilde{A}=A\cap(\tilde{\Sigma}\times S^{1})$ connects the two boundary components. Let $N$ be a neighborhood of $\tilde{T}_{1}\cup T_{2}\cup \tilde{A}$, and $\partial N=\tilde{T}_{1}\cup T_{2}\cup \tilde{T}$. Then, by edge-rounding, $\tilde{T}$ has slope $1+(-1)+1=1$ (as seen form $T_0$). Therefore, the thickened torus $\tilde{\Sigma}\times S^{1}\setminus N$ has boundary slopes $t_0$ and $1$. Since $t_0\geq2>1$, there must exist a $0$-twisting vertical Legendrian circle in this thickened torus, and hence in $\Sigma\times S^{1}$. 

(6) Suppose $t_{1}<0$ and $t_{2}>1$. We divide this case into two subcases.

(i) If there exist boundary parallel dividing curves on $A$ whose boundary points belong to $A\cap T_2$, we can use Lemma~\ref{Lemma:bypass} to obtain a submanifold $\tilde{\Sigma}\times S^{1}$ of $\Sigma\times S^{1}$ whose boundary is 
$T_{0}\cup \tilde{T}_{1}\cup \tilde{T}_{2}$, where $\tilde{T}_{1}$ has slope $1$ and $\tilde{T}_{2}$ has slope $0$. Furthermore, each dividing curve on $\tilde{A}=A\cap(\tilde{\Sigma}\times S^{1})$ connects the two boundary components. Let $N$ be a neighborhood of $\tilde{T}_{1}\cup\tilde{T}_{2}\cup \tilde{A}$, and $\partial N=\tilde{T}_{1}\cup\tilde{T}_{2}\cup \tilde{T}$. By performing edge-rounding, $\tilde{T}$ will have slope $-1+0+1=0$ (as seen form $T_0$). Therefore, the thickened torus $\tilde{\Sigma}\times S^{1}\setminus N$ has boundary slopes $t_0$ and $1$. Since $t_0\geq1>0$, there must exist a $0$-twisting vertical Legendrian circle in this thickened torus, and hence in $\Sigma\times S^{1}$. 

(ii) If there are no boundary parallel dividing curves on $A$ whose boundary points belong to $A\cap T_2$, we can use Lemma~\ref{Lemma:bypass} to obtain a submanifold $\tilde{\Sigma}\times S^{1}$ of $\Sigma\times S^{1}$ whose boundary is $T_{0}\cup \tilde{T}_{1}\cup T_{2}$, where $\tilde{T}_{1}$ has slope $\frac{1}{t_2}$. Furthermore, each dividing curve on $\tilde{A}=A\cap(\tilde{\Sigma}\times S^{1})$ connects the two boundary components. Let $N$ be a neighborhood of $\tilde{T}_{1}\cup T_{2}\cup \tilde{A}$, and $\partial N=\tilde{T}_{1}\cup T_{2}\cup \tilde{T}$. By performing edge-rounding, $\tilde{T}$ will have slope $-\frac{1}{t_2}+\frac{1}{t_2}+\frac{1}{t_2}=\frac{1}{t_2}$ (as seen form $T_0$). Therefore, the thickened torus $\tilde{\Sigma}\times S^{1}\setminus N$ has boundary slopes $t_0$ and $1$. Since $t_0\geq1>\frac{1}{t_2}$, there must exist a $0$-twisting vertical Legendrian circle in this thickened torus, and hence in $\Sigma\times S^{1}$.
\end{proof}

\begin{lemma} \label{Lemma:slopes1} If $\xi$ is a tight contact structure on $\Sigma\times S^1$ with boundary slopes $s_0=t_0$, $s_i=-\frac{1}{t_i}$ for $i=1,2$, has a $0$-twisting vertical Legendrian circle, where $t_1,t_2\neq0$. Then it admits a factorization $\Sigma\times S^1= L'_{0}\cup L'_{1}\cup L'_{2}\cup \Sigma'\times S^1$, where $L'_{i}$ are disjoint thickened tori with minimal twisting and minimal convex boundary $\partial L'_{i}=T_{i}-T'_{i}$, and all the components of $\partial \Sigma'\times S^1=T'_{0}\cup T'_{1}\cup T'_{2}$ have boundary slopes $\infty$.
\end{lemma}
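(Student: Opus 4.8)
The plan is to dig a collar of each boundary torus $T_i$ inward, using the given $0$-twisting vertical Legendrian circle $\gamma$ as a ``target'' of slope $\infty$; the dug-out collars will be the $L'_i$ and what remains will be $\Sigma'\times S^1$.

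Fix $i\in\{0,1,2\}$. Since $t_0\in\mathbb Z$ and $t_1,t_2\neq 0$, the slope $s_i$ is never $\infty$, so after adjusting the Legendrian ruling of $T_i$ I may take a convex vertical annulus $A_i$ one of whose boundary components is a slope-$\infty$ ruling $\delta_i$ of $T_i$ and the other of which is $\gamma$. I make $A_i$ convex relative to $\partial A_i$. Because $A_i$ is vertical it induces on $\gamma$ the canonical product framing, so $\#(\Gamma_{A_i}\cap\gamma)=2|t(\gamma)|=0$; hence no dividing curve of $A_i$ meets $\gamma$, and every component of $\Gamma_{A_i}$ is either closed or a $\partial$-parallel arc with both endpoints on $\delta_i$. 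An innermost closed dividing curve bounds, together with $\gamma$, a convex sub-annulus with no other dividing curve, across which $\gamma$ may be Legendrian-isotoped while keeping $t(\gamma)=0$; iterating, I arrange that $\Gamma_{A_i}$ consists solely of $\partial$-parallel arcs, and then, after an isotopy of $A_i$ rel $\partial$, that their number $k_i$ is minimal.

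Now peel these arcs off $T_i$, outermost first. Each one exhibits a bypass for $T_i$ attached from the $\gamma$-side along a slope-$\infty$ ruling, so by Lemma~\ref{Lemma:bypass} the successive boundary slopes, starting at $s_i$, trace a Farey edge-path; minimality of $k_i$ forbids backtracking, so the union $L'_i$ of the $k_i$ bypass layers is a chain of basic slices and hence a minimally twisting $T^2\times I$ with minimal convex boundary $T_i-T'_i$. After all $k_i$ arcs are removed, the leftover convex annulus joining $T'_i$ to $\gamma$ has empty dividing set, so its $T'_i$-boundary is Legendrian-isotopic to $\gamma$ through this annulus and is therefore a $0$-twisting vertical Legendrian curve on $T'_i$, which forces $T'_i$ to have dividing slope $\infty$. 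Performing this excavation for $i=0$, then $i=1$, then $i=2$ — at each stage $\gamma$ still lies in the part of $\Sigma\times S^1$ not yet removed, so the next annulus can be chosen disjoint from the collars already dug — yields pairwise disjoint thickened tori $L'_0,L'_1,L'_2$ with the stated boundaries, and the complement $\Sigma'\times S^1:=\Sigma\times S^1\setminus(L'_0\cup L'_1\cup L'_2)$ is again a pair-of-pants times $S^1$ whose three boundary tori all have slope $\infty$.

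The delicate point is the middle step: proving that exhausting the $\partial$-parallel arcs drives the boundary slope exactly to $\infty$, and that the resulting $L'_i$ carries no Giroux torsion. The first follows from the empty-dividing-set argument above once one checks, using tightness of $\xi$, that the arcs on $A_i$ can in fact be completely absorbed from the $T_i$ side; the second is the standard fact \cite{h1} that a stack of basic slices whose slopes run monotonically along a non-backtracking Farey edge-path is minimally twisting, the non-backtracking being precisely what the minimality of $\#\Gamma_{A_i}$ buys. Pinning down the convex-surface normal forms and the bypass bookkeeping is where the work lies; the topological decomposition itself is immediate once the pieces are in place.
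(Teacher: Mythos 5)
Your argument is essentially the paper's: the authors simply cite \cite[Lemma 5.1, Part 1]{h2}, and your bypass-peeling of convex vertical annuli running from the $0$-twisting circle $\gamma$ to each $T_i$ is exactly that argument, terminating at slope $\infty$ because the arcs of $\Gamma_{A_i}$ can only end on the $T_i$ side. Two small points to tighten: closed dividing curves on $A_i$ need not (and cannot easily) be removed by Legendrian-isotoping $\gamma$ across them --- by tightness they are core-parallel and harmless, since the slope-$\infty$ conclusion for $T'_i$ only needs that no arcs of the remaining annulus end on $T'_i$ --- and the non-backtracking/minimal-twisting claim is supplied by Lemma~\ref{Lemma:bypass} itself (each bypass attached along a slope-$\infty$ ruling strictly moves the slope toward $\infty$ within one arc of the Farey circle), not by minimality of the number of dividing arcs.
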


\begin{proof}
The proof is similar to that of \cite[Lemma 5.1, Part 1]{h2}.
\end{proof}

Let  $\xi$ be a contact structure on $\Sigma\times S^1$ with boundary slopes $s_0=t_0$, $s_i=-\frac{1}{t_i}$ for $i=1,2$, where $t_1, t_2\neq 0$. Assume it admits a factorization $\Sigma\times S^1= L'_{0}\cup L'_{1}\cup L'_{2}\cup \Sigma'\times S^1$, where $L'_{i}$ are disjoint thickened tori with minimal twisting and minimal convex boundary $\partial L'_{i}=T_{i}-T'_{i}$, and all the components of $\partial \Sigma'\times S^1=T'_{0}\cup T'_{1}\cup T'_{2}$ have boundary slopes $\infty$.  
Then, in the thickened torus $L'_i$, $i=1,2$, there exists a basic slice $B'_i$ with one boundary component $T'_{i}$ and another boundary slope $\lceil-\frac{1}{t_i}\rceil$. This is because $\lceil-\frac{1}{t_i}\rceil$ is counter-clockwise of $-\frac{1}{t_i}$ and clockwise of $\infty$ in the Farey graph shown in Figure~\ref{Figure:farey}.  Let $C'_i$, $i\in\{1,2\}$, be the continued fraction block in $L'_i$ that contains $B'_i$. The basic slices in $C'_i$ can be shuffled. Namely, any basic slice in $C'_i$ can be shuffled to be $B'_i$. 

\begin{lemma} \label{Lemma:slopes2} (1) Suppose $t_{0}+\lceil-\frac{1}{t_1}\rceil+\lceil-\frac{1}{t_2}\rceil=3$. If the signs of $L'_0$, $B'_1$ and $B'_2$ are the same, then the restriction of $\xi$ to $L'_0 \cup B'_1\cup B'_2 \cup \Sigma'\times S^{1}$ remains unchanged if we change the three signs simultaneously.\\
(2) Suppose $t_{0}+\lceil-\frac{1}{t_1}\rceil+\lceil-\frac{1}{t_2}\rceil\leq2$. If the signs of $L'_0$, $B'_1$, and $B'_2$ are the same, then $\xi$ is overtwisted.
\end{lemma}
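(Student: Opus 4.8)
The plan is to work entirely inside $W:=L'_0\cup B'_1\cup B'_2\cup(\Sigma'\times S^1)$, which is a copy of $\Sigma\times S^1$ with convex boundary carrying two dividing curves on each component, of slopes $t_0$, $\lceil-\tfrac1{t_1}\rceil$, $\lceil-\tfrac1{t_2}\rceil$; both assertions concern only $\xi|_W$. Set $n:=t_0+\lceil-\tfrac1{t_1}\rceil+\lceil-\tfrac1{t_2}\rceil$. The first reduction is to observe that reversing the co-orientation of $\xi|_W$ fixes all three boundary dividing sets (hence the boundary identification) but reverses the sign of each of the three basic slices $L'_0$, $B'_1$, $B'_2$; so the contact structure whose three signs are all negative is the conjugate of the one whose three signs are all positive. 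Thus in (1) I must show that, when $n=3$, an all-positive $\xi|_W$ is isotopic rel $\partial$ to its conjugate; and in (2) it suffices to treat the all-positive case, since the all-negative one is then overtwisted as well (overtwistedness is preserved by conjugation).

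The second reduction is to cut $W$ open along a convex vertical annulus $A=\delta\times S^1$, where $\delta\subset\Sigma$ is a properly embedded arc joining two of the boundary circles and $\partial A$ consists of Legendrian rulings of slope $\infty$; after the bypass moves of Lemma~\ref{Lemma:bypass} we may take $A$ boundary-parallel-free. Since a pair of pants cut along such an arc is an annulus, rounding edges makes $W|A$ a thickened torus $N\cong T^2\times I$ whose two boundary slopes are computed by edge-rounding from $t_0$, $\lceil-\tfrac1{t_1}\rceil$, $\lceil-\tfrac1{t_2}\rceil$ and the dividing data of $\Sigma'\times S^1$; the three (equal) basic-slice signs, together with the number of dividing arcs on $A$, then determine the tight structure on $N$, equivalently (by Honda's classification \cite{h1,h2}) its sign word along a chain of basic slices joining its two slopes in the Farey graph. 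The role of $n$ is that it governs the signed length of this chain, with $n=3$ the borderline value and $n\le 2$ forcing the chain to overshoot the fiber slope $\infty$. For part (2), with all signs equal and $n\le 2$, I would exhibit a further convex vertical annulus in $N$ whose dividing set is forced --- because each bypass rotates the boundary slope in the same direction and the total rotation is too large --- to contain a homotopically trivial closed curve; by Giroux's criterion that annulus then has an overtwisted neighborhood, so $\xi|_W$, and hence $\xi$, is overtwisted. (Equivalently, one computes the relative Euler class of $\xi|_W$ for this sign choice and checks that it violates the Milnor--Wood bound $|\langle e(\xi|_W),[\Sigma\times\mathrm{pt}]\rangle|\le-\chi(\Sigma)=1$, which already forces overtwistedness.)

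For part (1), when $n=3$ the chain for $N$ has the minimal signed length compatible with all three signs being equal, and the common sign of $L'_0$, $B'_1$, $B'_2$ is exactly the datum that can be ``shuffled'' around the pair of pants; this is the analogue of the shuffling/rotation arguments of \cite[Lemma~5.1]{h2} and of \cite{wu}. Concretely, using the $0$-twisting vertical Legendrian circle supplied by the factorization, one reinterprets the sign of $B'_i$ (respectively $L'_0$) from the far side of the slope-$\infty$ torus $T'_i$ (respectively $T'_0$); performing this across all three tori and composing the moves realizes precisely the simultaneous sign reversal and leaves $\xi|_W$ unchanged rel $\partial$. The hypothesis $n=3$ enters twice: it makes the relevant intermediate slopes Farey-adjacent, so that each reinterpretation is available, and it is exactly the condition under which the three reinterpretations compose to the global sign reversal rather than to a nontrivial modification --- which is why flipping only one or two of the signs is not permitted.

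The hard part will be the bookkeeping in the second paragraph: tracking precisely, through the two edge-roundings and the contribution of the background $\Sigma'\times S^1$, how the three independent basic-slice signs assemble into the sign word of $N$, and, for (1), verifying that at $n=3$ the resulting configuration is genuinely invariant under global sign reversal rather than merely carrying the opposite relative Euler class. I would also run the argument separately in the sub-cases distinguished by the signs of $t_1$ and $t_2$, since these change $\lceil-\tfrac1{t_i}\rceil$, the most convenient arc $\delta$, and the intermediate slopes.
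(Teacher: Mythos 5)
Your proposal does not follow the paper's route, and as written it has real gaps. The paper's proof is essentially one line: the submanifold $W=L'_0\cup B'_1\cup B'_2\cup\Sigma'\times S^1$ is itself a pair of pants times $S^1$ whose boundary slopes are the integers $t_0$, $\lceil-\frac{1}{t_1}\rceil$, $\lceil-\frac{1}{t_2}\rceil$, and both assertions are exactly the corresponding statements of \cite[Lemma 5.1]{h2} applied to $W$. Your first two reductions (restricting to $W$, and using conjugation to relate the all-positive and all-negative decorations) are fine and are consistent with that viewpoint, but from there you attempt to re-derive the content of Honda's lemma rather than apply it, and you explicitly defer the decisive steps (``the hard part will be the bookkeeping''): for (1), the verification that your ``reinterpretations'' across the three slope-$\infty$ tori are actually available (Farey adjacency of the intermediate slopes) and compose to the simultaneous flip of all three signs without changing the isotopy class; for (2), the actual production of an overtwisted disk. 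What remains is a plan, not a proof.

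Moreover, the mechanism you sketch for (2) is not sound as stated: ``each bypass rotates the boundary slope in the same direction and the total rotation is too large'' does not by itself force overtwistedness, since a thickened torus whose dividing slopes rotate arbitrarily far is still tight --- it merely has Giroux torsion (compare Lemma~\ref{Lemma:tor>0}, which under the hypothesis $t_{0}+\lceil-\frac{1}{t_1}\rceil+\lceil-\frac{1}{t_2}\rceil\leq1$ and with no sign assumption concludes only ``not appropriate tight,'' not overtwisted). The overtwistedness in (2) genuinely uses the pair-of-pants topology together with the sign condition; your parenthetical Euler-class/Milnor--Wood remark points in the right direction, but it requires exhibiting a convex surface isotopic to $\Sigma\times\{pt\}$ with Legendrian boundary, computing the relative Euler class of the all-same-sign decoration, and establishing the precise inequality violated --- none of which is carried out, and doing so amounts to reproving \cite[Lemma 5.1]{h2}. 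Likewise for (1), the standard way to obtain the identification (used elsewhere in this paper, cf. the exchange of $(+)((-)(+))(+)$ and $(-)((+)(-))(-)$ in the proof of Lemma~\ref{Lemma:t_1>1t_2=1}) is to cut $W$ along a vertical annulus with no boundary-parallel dividing arcs, check that the complementary thickened torus is a continued fraction block precisely because the slope sum equals $3$, and shuffle; your outline never performs this computation nor shows that the shuffle induces the triple sign change. The shortest correct completion of your argument is simply to cite \cite[Lemma 5.1]{h2} for $W$, which is what the paper does.
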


\begin{proof} The restriction of $\xi$ on $L'_0\cup B'_1\cup B'_2\cup \Sigma'\times S^1$ has boundary slopes $t_0$, $\lceil-\frac{1}{t_1}\rceil$ and $\lceil-\frac{1}{t_2}\rceil$. So the lemma follows by applying \cite[Lemma 5.1]{h2} to $L'_0\cup B'_1\cup B'_2\cup \Sigma'\times S^1$. 
\end{proof}

\begin{lemma} \cite{emm}\label{Lemma:slopes3}
There is a unique appropriate tight contact structure on $\Sigma\times S^1$ whose three boundary slopes are all $\infty$ up to isotopy (not fixing the boundary point-wise, but preserving it set-wise).
\end{lemma}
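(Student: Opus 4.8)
The plan is to establish existence and uniqueness separately, with uniqueness carrying essentially all the weight. For existence I would write down an explicit $S^1$-invariant model: choose a properly embedded $1$-manifold $\Gamma\subset\Sigma$ consisting of three arcs, one joining each pair of boundary circles, arranged so that $\#(\Gamma\cap c_i)=2$ for $i=0,1,2$ (so $\Sigma\setminus\Gamma$ is a union of two disks, each meeting $\partial\Sigma$). The associated $S^1$-invariant contact structure on $\Sigma\times S^1$ is tight by Giroux's criterion (as $\Gamma$ has no closed, hence no contractible, component), its boundary is minimal convex with all three dividing slopes equal to $\infty$, and it carries no Giroux torsion along any vertical torus: every essential simple closed curve of $\Sigma$ is boundary-parallel, the dividing set on the corresponding vertical torus is two parallel copies of the $S^1$-fiber, and $S^1$-invariance forces a neighborhood of such a torus to be $I$-invariant. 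Hence this contact structure is appropriate.

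For uniqueness, let $\xi$ be any appropriate tight contact structure on $\Sigma\times S^1$ with the stated boundary. First I would record that $\xi$ admits a $0$-twisting vertical Legendrian circle: take a vertical Legendrian circle of maximal twisting number $n\le 0$; if $n<0$, the standard annulus-and-thickening argument used in the proof of Lemma~\ref{vertical} (bypasses from a boundary torus of slope $\infty$ along a vertical annulus) either increases $n$ or exhibits a Giroux torsion layer isotopic to a boundary component, contradicting appropriateness, so $n=0$.

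Next, make the $T_i$ standard, fix Legendrian rulings of integer slope on them, and take a convex vertical annulus $A=\gamma\times S^1$ with $\gamma$ an arc in $\Sigma$ from $c_0$ to $c_1$, disjoint from the $0$-twisting vertical circle, and with $\partial A$ consisting of ruling curves. I would then remove all boundary-parallel dividing arcs of $\Gamma_A$ by bypass attachments from the boundary tori, exactly as in the proof of Lemma~\ref{vertical}: a surviving boundary-parallel arc gives, after edge-rounding, a $T^2\times[0,1]$ with one boundary slope $\infty$ and the other a bypass-shifted slope $\ne\infty$ on a torus isotopic to a boundary component, and iterating this digging process would eventually force either an overtwisted disk or a boundary-parallel torsion layer, contradicting appropriateness. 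After this reduction $\Gamma_A$ consists only of arcs running from one boundary circle of $A$ to the other; since each $T_i$ carries exactly two dividing curves, $\Gamma_A$ has exactly two such arcs, so the dividing configuration of $A$ is determined up to isotopy.

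Finally, cut $\Sigma\times S^1$ along $A$ and round edges. Since $\Sigma\setminus\gamma$ is an annulus, the result is a $T^2\times[0,1]$; one boundary torus is $T_2$ (slope $\infty$), and the other, obtained by edge-rounding from $T_0\cup A\cup T_1$, again has slope $\infty$ because every dividing curve and arc in sight is parallel to the $S^1$-fiber. This $T^2\times[0,1]$ is tight, it contains the $0$-twisting vertical circle, hence is minimally twisting, and any torsion layer inside it would be visible in $\Sigma\times S^1$ along a torus isotopic to $T_2$, again contradicting appropriateness; so by Honda's classification of tight contact structures on $T^2\times[0,1]$ \cite{h1} it is the unique minimally twisting tight structure with both boundary slopes $\infty$. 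Regluing along $A$, whose dividing data has been pinned down, then recovers $\xi$ uniquely. I expect the main obstacle to be the step eliminating boundary-parallel dividing arcs on $A$ — keeping careful track of bypass signs and verifying that appropriateness genuinely excludes every intermediate configuration that could arise — together with the bookkeeping needed to confirm that the resulting $T^2\times[0,1]$ is itself appropriate, so that the $T^2\times[0,1]$ classification applies.
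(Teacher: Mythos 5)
First, note that the paper itself does not prove this lemma: it is quoted from \cite{emm}, so your attempt has to stand on its own. It does not, because the central cutting step is set up for the wrong boundary behaviour. You take a \emph{vertical} annulus $A=\gamma\times S^1$ from $T_0$ to $T_1$ ``with $\partial A$ consisting of ruling curves'' of integer slope -- but the boundary of a vertical annulus consists of vertical circles, and you cannot make the rulings vertical here because the dividing slope on every $T_i$ is already $\infty$ and rulings must have slope different from the dividing slope. The correct choice when all slopes are $\infty$ is to take $\partial A$ on Legendrian divides (twisting number $0$), and then $\Gamma_A\cap\partial A=\emptyset$: the dividing set of $A$ consists of \emph{closed} core-parallel curves, not arcs. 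Your claim that ``since each $T_i$ carries exactly two dividing curves, $\Gamma_A$ has exactly two arcs'' is also a non sequitur even in the arc picture -- the number of endpoints of $\Gamma_A$ on a Legendrian boundary component is governed by its twisting, not by the number of dividing curves of the ambient torus. As a result, the bypass-removal argument you import from the proof of Lemma~\ref{vertical} (which eliminates boundary-parallel \emph{arcs}) has nothing to act on, and the actual difficulty of the lemma -- showing that zero Giroux torsion forces the closed vertical dividing curves on such spanning annuli to be absent or removable, which is precisely where both potential non-uniqueness and hidden torsion live -- is never addressed. This is the missing idea, not a bookkeeping issue.

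The same gap propagates into your final step: the edge-rounded torus obtained from $T_0\cup A\cup T_1$ has slope $\infty$ and two dividing curves only if you already know $\Gamma_A$ is trivial; if $A$ carries closed dividing curves the rounded boundary torus of the cut-open $T^2\times[0,1]$ can have more than two dividing curves, and Honda's classification in the form you invoke does not apply. Moreover, even in the good case, ``the unique minimally twisting tight structure with both boundary slopes $\infty$'' is the non-rotative, equal-slope case of \cite{h1}, where uniqueness holds only up to isotopy moving the boundary (holonomy is a rel-boundary invariant), and regluing along $A$ requires an argument that the contact germ near $A$ and the gluing are standard -- points you assert rather than prove. Your existence construction (the $S^1$-invariant model with three boundary-to-boundary arcs as dividing set, tight by Giroux's criterion and torsion-free) is essentially fine, and your first step is actually easier than you make it: with all boundary slopes $\infty$, a Legendrian divide on a convex torus parallel to a boundary component is already a $0$-twisting vertical Legendrian circle, so no maximal-twisting/bypass argument is needed. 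But as written the uniqueness half has a genuine hole at the annulus step.
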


\begin{lemma}\label{Lemma:tor>0}
Let $\xi$ be a contact structure on $\Sigma\times S^1$. Assume that each  $T_i$ is minimal convex with dividing curves of finite slope $t_0$, $-\frac{1}{t_1}$ and $-\frac{1}{t_2}$. If $\xi$ has $0$-twisting vertical Legendrian circles and $t_{0}+\lceil-\frac{1}{t_1}\rceil+\lceil-\frac{1}{t_2}\rceil\leq1$, then $\xi$ is not appropriate tight.
\end{lemma}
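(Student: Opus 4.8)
The plan is to argue by contradiction: assume $\xi$ is appropriate tight, and use the hypothesis $t_0 + \lceil -\frac{1}{t_1}\rceil + \lceil -\frac{1}{t_2}\rceil \le 1$ together with the existence of a $0$-twisting vertical Legendrian circle to produce a contact embedding of $(T^2\times[0,1], \ker(\sin(\pi z)dx + \cos(\pi z)dy))$ with $T^2\times\{0\}$ isotopic to a boundary component, contradicting appropriateness. First I would invoke Lemma~\ref{Lemma:slopes1}: since $\xi$ has a $0$-twisting vertical Legendrian circle and $t_1,t_2\neq 0$ (note $\lceil -\frac{1}{t_i}\rceil$ is only defined when $t_i\neq 0$, and the $t_i=0$ case is excluded since then the slope would be $\infty$, not finite; if the lemma is meant to include $t_i=0$ one handles it separately using that $s_i=\infty$ already), we get the factorization $\Sigma\times S^1 = L'_0 \cup L'_1 \cup L'_2 \cup \Sigma'\times S^1$ with all three inner boundary tori $T'_0, T'_1, T'_2$ of slope $\infty$, where each $L'_i$ is a minimally twisting thickened torus.

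Next, inside each $L'_i$ for $i=1,2$ I would locate the basic slice $B'_i$ with boundary slopes $\lceil -\frac{1}{t_i}\rceil$ and $\infty$ (as the paragraph before Lemma~\ref{Lemma:slopes2} explains, this exists because $\lceil -\frac{1}{t_i}\rceil$ sits between $-\frac{1}{t_i}$ and $\infty$ in the Farey graph), and inside $L'_0$ the relevant piece adjacent to $T'_0$. Using the shuffling of basic slices within a continued fraction block, I can arrange the signs of $L'_0$, $B'_1$, $B'_2$ to all agree. Then the hypothesis $t_0 + \lceil -\frac{1}{t_1}\rceil + \lceil -\frac{1}{t_2}\rceil \le 1$ puts us exactly in the regime of Lemma~\ref{Lemma:slopes2}(2) (which covers $\le 2$), so the restriction of $\xi$ to $L'_0\cup B'_1\cup B'_2\cup\Sigma'\times S^1$ — a contact $\Sigma\times S^1$ with boundary slopes $t_0, \lceil -\frac{1}{t_1}\rceil, \lceil -\frac{1}{t_2}\rceil$ — is overtwisted, hence so is $\xi$. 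An overtwisted contact structure is certainly not appropriate tight, giving the contradiction.

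The subtlety I would watch carefully is the sign-matching step: a priori the three signs of $L'_0$, $B'_1$, $B'_2$ need not agree, and one is only free to shuffle basic slices \emph{within} a single continued fraction block, not across the block boundaries. So I must check that in the regime $t_0 + \lceil-\frac{1}{t_1}\rceil + \lceil-\frac{1}{t_2}\rceil \le 1$ either the signs can in fact always be matched, or — more likely — that whatever the signs, we can still extract an overtwisted piece: if $L'_0$ has some thickness beyond a single basic slice, or if $C'_i$ is a genuine continued fraction block, there is enough room to choose a sub-basic-slice of each color, and among $L'_0, B'_1, B'_2$ we can always pick three of a common sign when the total "slope budget" is this small. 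Concretely, I expect one reduces to the worst case where each $L'_i$ ($i=1,2$) is itself a single basic slice and $L'_0$ is a single basic slice or empty, and then a direct appeal to \cite[Lemma 5.1]{h2} for $\Sigma\times S^1$ with boundary slopes summing to $\le 1$ forces overtwistedness regardless of signs — this last reduction, making precise why the signs cannot conspire to keep it tight, is the main obstacle and the place where the hypothesis $\le 1$ (rather than $\le 2$) is genuinely used.
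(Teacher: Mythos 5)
There is a genuine gap, and it sits exactly at the step you flagged: the hope that ``whatever the signs, we can still extract an overtwisted piece'' is false. When the sign of $L'_0$ is opposite to the signs of the basic slices $B'_1$ and $B'_2$ (e.g.\ $L'_1$ and $L'_2$ are entirely negative continued fraction blocks while $L'_0$ is positive), no shuffling can align the three signs, and Lemma~\ref{Lemma:slopes2}(2) simply does not apply. Worse, in that mixed-sign situation the contact structure can be genuinely \emph{tight} for every value of $t_0$: this is precisely the content of Lemma~\ref{Lemma:tight}, which embeds such structures into a universally tight $\Sigma\times S^1$ with all boundary slopes $\infty$. (The paper later uses these to produce exceptional Legendrian links that are \emph{not} strongly exceptional when $t_0$ is small.) So your proposed reduction to ``overtwisted regardless of signs'' cannot be made to work; the lemma's conclusion is deliberately only ``not appropriate tight,'' which is strictly weaker than overtwisted, and the mixed-sign case is exactly where the distinction matters.

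The paper's proof avoids signs altogether and instead exhibits Giroux torsion along a boundary-parallel torus: using the $0$-twisting vertical Legendrian circle one finds convex tori $T'_i$ parallel to $T_i$ with integer slope $\lceil -\tfrac{1}{t_i}\rceil$ for $i=1,2$, takes a convex vertical annulus joining rulings on $T'_1$ and $T'_2$, and edge-rounds a neighborhood $N$ of $T'_1\cup T'_2\cup\tilde A$ to obtain a torus $\tilde T$ of slope $-\lceil -\tfrac{1}{t_1}\rceil-\lceil -\tfrac{1}{t_2}\rceil+1$ as seen from $T_0$. The complementary thickened torus has boundary slopes $t_0$ and this value, and it also contains a boundary-parallel convex torus of slope $\infty$ coming from the $0$-twisting circle; the hypothesis $t_0\le -\lceil -\tfrac{1}{t_1}\rceil-\lceil -\tfrac{1}{t_2}\rceil+1$ then forces the slopes to wrap all the way around the Farey circle, so the Giroux torsion of this thickened torus is at least $1$, contradicting appropriateness directly. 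Your first paragraph (factorization via Lemma~\ref{Lemma:slopes1} plus Lemma~\ref{Lemma:slopes2}(2)) is a valid argument only in the sub-case where the three relevant signs agree; to complete the proof you would still need the torsion argument (or something equivalent) for the mixed-sign case, which is the actual content of the lemma.
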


\begin{proof} As there is a $0$-twisting vertical Legendrian circle, 
there exists a minimal convex torus $T'_i$, parallel to $T_i$, with slope  $\lceil-\frac{1}{t_i}\rceil$, $i=1,2$.   Consider a convex annulus $\tilde{A}$ with a boundary consisting of a Legendrian ruling on $T'_1$ and a Legendrian ruling on $T'_2$. Let $N$ be a neighborhood of $T'_{1}\cup T'_{2}\cup \tilde{A}$, and $\partial N=T'_{1}\cup T'_{2}\cup \tilde{T}$. Then through edge-rounding, $\tilde{T}$ has slope $-\lceil-\frac{1}{t_1}\rceil-\lceil-\frac{1}{t_2}\rceil+1$ (as seen form $T_0$). We obtain a thickened torus with boundary slopes $t_0$ and $-\lceil-\frac{1}{t_1}\rceil-\lceil-\frac{1}{t_2}\rceil+1$, and a boundary parallel convex torus with slope $\infty$.  Thus, from $t_0\leq -\lceil-\frac{1}{t_1}\rceil-\lceil-\frac{1}{t_2}\rceil+1$, it follows that the Giroux torsion of this thickened torus is at least $1$. Hence the Lemma holds.  
\end{proof}

\begin{lemma}\label{Lemma:slopes}
Let $\xi$ be an appropriate tight contact structure on $\Sigma\times S^1$. Assume that each $T_i$ is
minimal convex with dividing curves of finite slope $t_0$, $-\frac{1}{t_1}$ and $-\frac{1}{t_2}$. Suppose $\xi$ has no $0$-twisting vertical Legendrian circle. Then there exist collar neighborhoods $L''_i$ of 
$T_i$  for  $i=1,2$ satisfying that $\Sigma\times S^1=\Sigma''\times S^1\cup L''_1\cup L''_2$ and the boundary slope of $\Sigma''\times S^1$ are $t_0$, $\lceil{-\frac{1}{t_1}}\rceil$ and $\lceil{-\frac{1}{t_2}}\rceil$.
\end{lemma}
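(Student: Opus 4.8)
The plan is to enlarge neighborhoods of $T_1$ and $T_2$ one bypass at a time, driving their dividing slopes from $-\frac{1}{t_i}$ toward $\infty$ along the Farey graph, and to observe that this process first reaches an integral slope exactly at $\lceil-\frac{1}{t_i}\rceil$; the thickened tori so produced are the required collars $L''_i$. The hypothesis that $\xi$ has no $0$-twisting vertical Legendrian circle is precisely what makes this go through: it forces the vertical convex annuli used to extract the bypasses to carry no closed dividing curves.

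First I would arrange the Legendrian rulings on $T_0,T_1,T_2$ to all have slope $\infty$ and take a convex vertical annulus $A$ with one boundary an $\infty$-ruling on $T_1$ and the other an $\infty$-ruling on $T_0$ (it exists since the pair of pants contains a properly embedded arc from $c_1$ to $c_0$). Tightness forbids contractible closed curves in $\Gamma_A$, while a closed component of $\Gamma_A$ parallel to the core of $A$ would produce a Legendrian fiber of twisting $0$ (the framing $A$ induces on a core-parallel curve coincides with the product framing), contradicting the hypothesis; hence $\Gamma_A$ consists of arcs only. Since $t_0$ is an integer, $\Gamma_A$ meets $T_0$ in exactly $2$ points, and it meets $T_1$ in $2|t_1|$ points. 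If $t_1\neq\pm1$ then $2|t_1|>2$, so by the imbalance principle there is a boundary-parallel dividing arc on the $T_1$-side of $A$, i.e.\ a bypass attached to $T_1$ from the back along an $\infty$-ruling. By Lemma~\ref{Lemma:bypass} together with an inspection of the Farey graph in Figure~\ref{Figure:farey}, attaching this bypass to a parallel copy of $T_1$ changes its dividing slope from $-\frac{1}{t_1}$ to the Farey neighbor of $-\frac{1}{t_1}$ closest to $\infty$ on the clockwise arc from $\infty$ --- namely $0$ if $t_1>0$, and $\frac{1}{|t_1|-1}$ if $t_1<0$.

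Iterating (each step with a fresh convex vertical annulus from the torus just produced to $T_0$, inside the successively smaller pair-of-pants times $S^1$, where the same argument applies and the imbalance persists as long as the slope is non-integral) gives a sequence of slopes whose denominators strictly decrease, so it terminates at the first integral slope reached: $0=\lceil-\frac{1}{t_1}\rceil$ when $t_1>0$, and, passing through $\frac{1}{|t_1|}\to\frac{1}{|t_1|-1}\to\cdots\to\frac{1}{2}\to 1$, the value $1=\lceil-\frac{1}{t_1}\rceil$ when $t_1<0$. When $t_1=\pm1$ the slope $-\frac{1}{t_1}$ is already integral and equal to $\lceil-\frac{1}{t_1}\rceil$, and nothing is done. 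Let $T''_1$ be the resulting convex torus, of slope $\lceil-\frac{1}{t_1}\rceil$ and isotopic to $T_1$; since every bypass was attached inside a collar of $T_1$, the $T^2\times[0,1]$ bounded by $T_1$ and $T''_1$ is such a collar $L''_1$, with minimal convex boundary. The complement $\Sigma\times S^1\setminus L''_1$ is again $\Sigma\times S^1$ for a pair of pants $\Sigma$, still tight and without a $0$-twisting vertical Legendrian circle, with boundary slopes $t_0$, $\lceil-\frac{1}{t_1}\rceil$, $-\frac{1}{t_2}$; repeating the argument there on $T_2$ (using vertical annuli from $T_2$ to $T_0$) yields a collar $L''_2$ of $T_2$, disjoint from $L''_1$, with outer slope $\lceil-\frac{1}{t_2}\rceil$. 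Then $\Sigma''\times S^1:=\Sigma\times S^1\setminus(L''_1\cup L''_2)$ has the three boundary slopes $t_0$, $\lceil-\frac{1}{t_1}\rceil$, $\lceil-\frac{1}{t_2}\rceil$, as claimed.

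The step to watch is the Farey-graph bookkeeping that pins down which slopes appear: one must verify that the process does not stall before reaching an integer --- guaranteed by the imbalance principle, which keeps producing bypasses while the slope is non-integral --- and does not overshoot --- guaranteed by Lemma~\ref{Lemma:bypass}, which makes each slope change deterministic --- so that the first integral slope is exactly $\lceil-\frac{1}{t_i}\rceil$. The other delicate point, already isolated above, is that excluding closed dividing curves on the vertical annuli is exactly where the no-$0$-twisting hypothesis enters.
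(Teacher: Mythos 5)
Your proposal is correct and follows essentially the same route as the paper's proof: a convex vertical annulus from $T_i$ to $T_0$, the no-$0$-twisting hypothesis ruling out the degenerate dividing-curve configurations, the imbalance principle forcing boundary-parallel arcs only on the $T_i$ side, and bypasses attached from the back along $\infty$-rulings whose effect on slopes is read off from Lemma~\ref{Lemma:bypass}, yielding collars with outer slopes $\lceil-\frac{1}{t_i}\rceil$. Your explicit Farey bookkeeping (slope $-\frac{1}{t_1}\to 0$ for $t_1\geq 2$, and $\frac{1}{|t_1|}\to\frac{1}{|t_1|-1}\to\cdots\to 1$ for $t_1<0$) agrees with the convention used in the paper and simply spells out what the paper leaves implicit.
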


\begin{proof}
We modify the Legendrian rulings on $T_{0}$ and $T_{i}$ to have infinite slopes. Consider a convex vertical annulus $A$ whose boundary consists of Legendrian rulings on $T_0$ and $T_i$. The dividing set of $A$ intersects $T_0$ in exactly $2$ points. The dividing set of $A$ intersects $T_i$, $i=1,2$, in exactly $2|t_{i}|$ points. As $\xi$ has no $0$-twisting vertical Legendrian circle, there exist dividing arcs on $A$ that connect the two boundary components of $A$. If there is a boundary parallel dividing curve on $A$, then its endpoints must belong to $A\cap T_i$ for some $i=1,2$. We perform a bypass (attached from the back of $T_i$) to eliminate it. Applying Lemma~\ref{Lemma:bypass}, we obtain a thickened torus $L''_{i}$ for $i=1,2$ that satisfies $\Sigma\times S^1=\Sigma''\times S^1\cup L''_1\cup L''_2$ and the boundary slope of $\Sigma''\times S^1$ are $t_0$, $\lceil{-\frac{1}{t_1}}\rceil$ and $\lceil{-\frac{1}{t_2}}\rceil$.
\end{proof}

Now we present upper bounds for appropriate
tight contact structures on $\Sigma\times S^1$.

\subsection{$t_{1}<0$ and $t_{2}<0$.}
\begin{lemma}\label{Lemma:t1<0t2<0}
Suppose $t_{1}<0$ and $t_{2}<0$, then there are at most
\begin{align*}
\begin{split}
\left\{  
\begin{array}{ll}
2t_{1}t_{2}-2t_{1}-2t_{2}+2, & ~\text{if}~ t_{0}\geq 2,\\
t_{1}t_{2}-2t_{1}-2t_{2}+2, & ~\text{if}~ t_{0}=1,\\
-2t_{1}-2t_{2}+2, & ~\text{if}~ t_0=0,\\
-t_{0}t_{1}t_{2}, & ~\text{if}~ t_0\leq-1,
\end{array}
\right.
\end{split}
\end{align*}
appropriate tight contact structures on $\Sigma\times S^1$ with the given boundary slopes.
\end{lemma}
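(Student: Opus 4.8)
The plan is to split into the four cases $t_0\geq 2$, $t_0=1$, $t_0=0$, and $t_0\leq -1$, and in each case reduce the counting problem to counting tight contact structures on a union of basic slices and a ``model'' piece $\Sigma'\times S^1$ with all boundary slopes $\infty$, using the machinery already in place. By Lemma~\ref{vertical}, when $t_0+\lceil-\tfrac{1}{t_1}\rceil+\lceil-\tfrac{1}{t_2}\rceil\geq 2$ (which for $t_1,t_2<0$ means $t_0+(-t_1 \text{ term})$; note $\lceil-\tfrac{1}{t_i}\rceil=1$ since $-\tfrac{1}{t_i}\in(0,1)$, so the condition is $t_0\geq 0$), every appropriate tight $\xi$ has a $0$-twisting vertical Legendrian circle, so Lemma~\ref{Lemma:slopes1} applies and gives the factorization $\Sigma\times S^1 = L'_0\cup L'_1\cup L'_2\cup\Sigma'\times S^1$ with $\partial(\Sigma'\times S^1)$ having all slopes $\infty$. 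When $t_0\leq -1$ the condition fails; by Lemma~\ref{Lemma:tor>0} there can be no $0$-twisting vertical circle (else $\xi$ is not appropriate, since $t_0\leq 1 = -\lceil-\tfrac1{t_1}\rceil-\lceil-\tfrac1{t_2}\rceil+1$... actually $-1-1+1=-1$, so $t_0\leq -1$ triggers it), so Lemma~\ref{Lemma:slopes} applies instead and gives $\Sigma\times S^1=\Sigma''\times S^1\cup L''_1\cup L''_2$ with the middle piece having slopes $t_0,1,1$.

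For the generic cases ($t_0\geq 0$), I would count as follows. The thickened torus $L'_i$ ($i=1,2$) has boundary slopes $-\tfrac1{t_i}$ and $\infty$; by the Farey-graph continued-fraction analysis, the number of tight structures on it with minimal twisting is the standard count for a thickened torus between those slopes, and the relevant continued fraction block $C'_i$ containing the basic slice $B'_i$ adjacent to $\infty$ has a length determined by $t_i$. Here the factor $|t_i|$ or $|t_i|-1$ type quantities enter: the path in the Farey graph from $-\tfrac1{t_i}$ to $\infty$ passes through $-\tfrac1{t_i}, -\tfrac1{t_i+1},\dots,-1,\infty$ — wait, more carefully, the continued fraction expansion of $-\tfrac1{t_i}$ relative to $\infty$ has length roughly $|t_i|$, contributing a sign-choice count. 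The piece $L'_0$ (slopes $t_0$ and $\infty$) likewise contributes a count growing with $t_0$: when $t_0\geq 2$ there are genuinely $\geq 2$ basic slices so both signs are available independently, when $t_0=1$ there is one basic slice, and when $t_0=0$ the slopes $0$ and $\infty$ are Farey-adjacent so $L'_0$ is (essentially) trivial. The model piece $\Sigma'\times S^1$ is unique by Lemma~\ref{Lemma:slopes3}. Then one multiplies the counts and subtracts for the overcounting coming from (a) shuffling within each continued fraction block $C'_i$ (Lemma~\ref{Lemma:slopes2}(2): when all three signs agree and $t_0+\lceil\cdot\rceil+\lceil\cdot\rceil\leq 2$, i.e.\ $t_0\leq 0$, the structure is overtwisted, killing some would-be tight structures), and (b) the simultaneous sign-flip symmetry of Lemma~\ref{Lemma:slopes2}(1) when $t_0+\lceil\cdot\rceil+\lceil\cdot\rceil=3$, i.e.\ $t_0=1$, which identifies pairs. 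Tracking these corrections should yield precisely $2t_1t_2-2t_1-2t_2+2$ for $t_0\geq 2$, $t_1t_2-2t_1-2t_2+2$ for $t_0=1$, and $-2t_1-2t_2+2$ for $t_0=0$. For $t_0\leq -1$: using Lemma~\ref{Lemma:slopes}, count tight structures on $\Sigma''\times S^1$ (boundary slopes $t_0,1,1$) times the counts on the collars $L''_1,L''_2$ (each with boundary slopes $-\tfrac1{t_i}$ and $1$, whose Farey distance is $|t_i|$), the product giving something proportional to $|t_0|\cdot|t_1|\cdot|t_2| = -t_0t_1t_2$, with no further identifications since the sign-flip symmetry and overtwisting phenomena don't cut the count here (or cut it in a way already reflected in the clean product formula).

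The main obstacle I expect is the bookkeeping of the sign/shuffling corrections — making sure that Lemma~\ref{Lemma:slopes2}(1) and (2) are applied with exactly the right multiplicities and that the continued-fraction-block lengths for the slopes $-\tfrac1{t_i}\to\infty$ (and $-\tfrac1{t_i}\to 1$) are computed correctly, including the edge cases $t_i=-1$ where the path degenerates. A secondary subtlety is ensuring that distinct sign-assignments really do give non-isotopic contact structures up to the set-wise boundary symmetry, i.e.\ that the upper bound is not accidentally loose; but since the statement only claims ``at most,'' I can afford to be generous and only need each genuine tight structure to arise from one of the enumerated combinatorial types — the sharpness will be established separately in Section~\ref{Section:realizations}. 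So concretely: first fix conventions on Farey distances and continued fraction blocks; second, handle $t_0\geq 2$ via Lemmas~\ref{vertical},~\ref{Lemma:slopes1},~\ref{Lemma:slopes3} plus the basic-slice count, applying no identification; third, handle $t_0=1$ the same way but quotient by Lemma~\ref{Lemma:slopes2}(1); fourth, handle $t_0=0$ with the extra overtwisted-exclusion from Lemma~\ref{Lemma:slopes2}(2); fifth, handle $t_0\leq -1$ via Lemmas~\ref{Lemma:tor>0},~\ref{Lemma:slopes} and a direct product count. The $t_0\leq -1$ case, landing in the standard tight $S^3$, is where I'd be most careful that the ``appropriate'' hypothesis is fully used (via Lemma~\ref{Lemma:tor>0}) to forbid Giroux torsion sneaking in.
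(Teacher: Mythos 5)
Your skeleton is the same as the paper's: for $t_0\geq 0$ (where, as you note, $\lceil-\frac{1}{t_i}\rceil=1$) use Lemma~\ref{vertical} and Lemma~\ref{Lemma:slopes1} to factor $\Sigma\times S^1=L'_0\cup L'_1\cup L'_2\cup\Sigma'\times S^1$ with Lemma~\ref{Lemma:slopes3} making the last piece unique, multiply the sign counts on the $L'$-pieces, and correct via Lemma~\ref{Lemma:slopes2}(1) at $t_0=1$ and Lemma~\ref{Lemma:slopes2}(2) at $t_0=0$; for $t_0\leq-1$ use Lemma~\ref{Lemma:tor>0} and Lemma~\ref{Lemma:slopes} and take a product of counts. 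However, there is a concrete error in your treatment of $L'_0$: you claim its contribution ``grows with $t_0$,'' with at least two basic slices when $t_0\geq 2$ and with $L'_0$ essentially trivial when $t_0=0$. In fact every integer $t_0$ is Farey-adjacent to $\infty$, so $L'_0$ (boundary slopes $\infty$ and $t_0$) is a single basic slice carrying exactly $2$ minimally twisting tight structures for \emph{every} $t_0$; this $t_0$-independent factor $2$ is precisely why the bound $2(1-t_1)(1-t_2)=2t_1t_2-2t_1-2t_2+2$ does not depend on $t_0$ for $t_0\geq 2$. With your reading, the $t_0\geq2$ bound would grow with $t_0$ and the $t_0=0$ bound would lose the factor $2$, so the stated formulas would not come out of the argument as you describe it.

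A second, related gap is that the decisive numerics are left as ``should yield.'' You never establish that $L'_i$ ($i=1,2$) is a \emph{single} continued fraction block with exactly $-t_i$ basic slices (the paper does this by an explicit $SL_2(\mathbb Z)$ change of coordinates), hence exactly $-t_i+1$ tight structures after shuffling; ``length roughly $|t_i|$'' does not distinguish one block ($-t_i+1$ structures) from several blocks (strictly more), and the block structure is also what legitimizes the shuffling needed before applying Lemma~\ref{Lemma:slopes2}. Similarly, at $t_0=1$ and $t_0=0$ one must count that exactly $t_1t_2$ sign patterns are identified (resp.\ exactly $2t_1t_2$ are excluded as overtwisted), which is what turns $2(1-t_1)(1-t_2)$ into $t_1t_2-2t_1-2t_2+2$ and $-2t_1-2t_2+2$; and for $t_0\leq-1$ the paper invokes Honda's classifications to get exactly $-t_0$ structures on $\Sigma''\times S^1$ (slopes $t_0,1,1$, no $0$-twisting vertical circle) and exactly $-t_i$ on each $L''_i$, whereas you only assert a product ``proportional to'' $-t_0t_1t_2$. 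Once the $L'_0$ miscount is repaired and these exact counts are supplied, your argument coincides with the paper's proof.
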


\begin{proof} By Lemma~\ref{vertical}, if $t_{0}\geq0$, then the tight contact structures on $\Sigma\times S^1$ always exist $0$-twisting vertical Legendrian circles.  

If an appropriate contact structure $\xi$ on $\Sigma\times S^1$ has a $0$-twisting vertical Legendrian circle, then Lemma~\ref{Lemma:slopes1} tells us that   $\Sigma\times S^1$ can be factored into $L'_{0}\cup L'_{1}\cup L'_{2} \cup \Sigma'\times S^1$, where the boundary slopes of $\Sigma'\times S^1$ are all $\infty$, the boundary slopes of $L'_{0}$ are $\infty$ and $t_0$ the boundary slopes of $L'_{i}$ are $\infty$ and $-\frac{1}{t_i}$ for $i=1,2$. Moreover, There are $2$ minimally twisting tight contact structures on $L'_{0}$.

If $t_{i}< 0$, $i=1,2$, we have
$$\begin{bmatrix} 
0 & -1 \\
1 & 1
\end{bmatrix}
\begin{bmatrix} 
0  \\
1 
\end{bmatrix}=\begin{bmatrix} 
-1  \\
1
\end{bmatrix}, 
\begin{bmatrix} 
0 & -1 \\
1 & 1
\end{bmatrix}
\begin{bmatrix} 
-t_{i}  \\
1 
\end{bmatrix}=\begin{bmatrix} 
-1  \\
-t_{i}+1
\end{bmatrix}.$$
The thickened torus $L_i$ is a continued fraction block with $-t_{i}$ basic slices, and therefore admits $-t_{i}+1$ minimally twisting  tight contact structures.

By applying Lemma~\ref{Lemma:slopes3}, we can conclude that there are at most $2t_{1}t_{2}-2t_{1}-2t_{2}+2$ appropriate tight contact structures on $\Sigma\times S^1$ if $t_0\geq2$. If $t_0=1$ and there are  basic slices in $L'_i$ which have the same signs as that of $L'_0$ for $i=1,2$, then after shuffling, we can assume that $L'_0$, $B'_1$ and $B'_2$ have the same signs. According to Lemma~\ref{Lemma:slopes2}, a tight contact structure that has positive basic slices in $L'_{i}$ for $i=0,1,2$ are isotopic to a tight contact structure which is obtained by changing a positive basic slice in $L'_{i}$ for $i=0,1,2$ to a negative basic slice. Therefore, there are at most $t_{1}t_{2}-2t_{1}-2t_{2}+2$ appropriate tight contact structures on $\Sigma\times S^1$ if $t_0=1$.
If $t_0=0$, then by Lemma~\ref{Lemma:slopes2}, a contact structure which has positive basic slices in $L'_{i}$ for $i=0,1,2$ are overtwisted. Thus, there are at most $-2t_{1}-2t_{2}+2$ appropriate tight contact structures on $\Sigma\times S^1$ if $t_0=0$.

Suppose $t_0\leq-1$. By Lemma~\ref{Lemma:tor>0}, there are no  appropriate tight contact structures having $0$-twisting vertical Legendrian circle. We consider the appropriate tight contact structures without a $0$-twisting vertical Legendrian circle. By Lemma~\ref{Lemma:slopes}, we can factorize $\Sigma\times S^1=\Sigma''\times S^1\cup L''_{1}\cup L''_{2}$, where the boundary slopes of $\Sigma''\times S^1$ are $t_0$, $1$ and $1$, the boundary slopes of $L''_{i}$ are $1$ and $-\frac{1}{t_i}$ for $i=1,2$. Since $t_0<0$, by \cite[Lemma 5.1]{h2}, there are exactly $-t_0$ tight contact structures on $\Sigma''\times S^1$ without any $0$-twisting  vertical Legendrian circle. By \cite[Theorem 2.2]{h1}, there are $-t_{i}$ minimally twisting tight contact structures on $L''_{i}$ for $i=1,2$. Therefore, there are at most $-t_{0}t_{1}t_{2}$ tight contact structures on $\Sigma\times S^1$ without any $0$-twisting vertical Legendrian curve and with boundary slopes $s_0=t_0$, $s_i=-\frac{1}{t_i}$ for $i=1,2$.
\end{proof}

To denote the $2t_{1}t_{2}-2t_{1}-2t_{2}+2$ contact structures on $\Sigma\times S^1$ with $0$-twisting vertical Legendrian circle, We use the decorations $(\pm)(\underbrace{\pm \cdots \pm}_{-t_{1}})(\underbrace{\pm \cdots \pm}_{-t_{2}})$. See Figure~\ref{Figure:Sigma1} for an example. The sign in the first bracket corresponds to the sign of the basic slice $L'_0$, while the signs in the second and the third brackets correspond to the signs of the basic slices in $L'_1$ and $L'_2$, respectively. We order the basic slices in $L'_1$ and $L'_2$ from the innermost boundary to the outmost boundary. As both $L'_1$ and $L'_2$ are continued fraction blocks, the signs  in the second and the third brackets can be shuffled. For example, the decorations $(+)(+--)(--)$ and $(+)(--+)(--)$ denote the same contact structures.

\begin{figure}[htb]
\begin{overpic}
{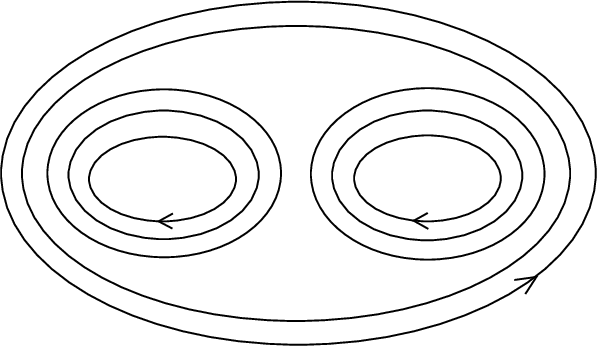}
\put(1, 80){$+$}
\put(23, 80){$-$}
\put(33, 80){$-$}
\put(150, 80){$-$}
\put(160, 80){$-$}
\put(110, 145){$\infty$}
\put(110, 156){$t_0$}
\put(80, 128){$\infty$}
\put(80, 115){$1$}
\put(75, 104){$1/2$}
\put(200, 128){$\infty$}
\put(200, 115){$1$}
\put(195, 104){$1/2$}
\end{overpic}

\caption{A pair of pants $\Sigma$, where $t_{0}=0$, $t_{1}=t_{2}=-2$.}
\label{Figure:Sigma1}
\end{figure}

\subsection{$t_{1}>0$ and $t_{2}>0$.}
\begin{lemma}\label{Lemma:t1=t2=1}
Suppose $t_1=t_2=1$; then there are exactly
\begin{align*}
\begin{split}
\left\{  
\begin{array}{ll}
8, & ~\text{if}~ t_{0}\geq 6,\\
7, & ~\text{if}~ t_{0}=5,\\
6, & ~\text{if}~ t_0=4,\\
4-t_0, & ~\text{if}~ t_0\leq3,
\end{array}
\right.
\end{split}
\end{align*}
appropriate tight contact structures on $\Sigma\times S^1$ with the given boundary slopes. 
\end{lemma}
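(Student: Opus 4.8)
The plan is to combine the structural results already established in this section with Honda's classification of tight contact structures on solid tori and on $\Sigma\times S^1$ with boundary slopes $\infty$. I would split the count according to whether the contact structure admits a $0$-twisting vertical Legendrian circle, exactly as in the proof of Lemma~\ref{Lemma:t1<0t2<0}. When $t_1=t_2=1$ we have $\lceil-\tfrac1{t_1}\rceil=\lceil-\tfrac1{t_2}\rceil=-1$, so the threshold quantity is $t_0+\lceil-\tfrac1{t_1}\rceil+\lceil-\tfrac1{t_2}\rceil=t_0-2$. By Lemma~\ref{vertical}, when $t_0\geq4$ (so $t_0-2\geq2$) every appropriate tight structure has a $0$-twisting vertical Legendrian circle, and by Lemma~\ref{Lemma:tor>0}, when $t_0\leq3$ (so $t_0-2\leq1$) no appropriate tight structure with such a circle exists; thus the two regimes are cleanly separated at $t_0=3\mid 4$.

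For the case $t_0\geq4$: by Lemma~\ref{Lemma:slopes1} we factor $\Sigma\times S^1=L'_0\cup L'_1\cup L'_2\cup\Sigma'\times S^1$ with $\Sigma'\times S^1$ having all boundary slopes $\infty$, $L'_0$ interpolating between $\infty$ and $t_0$, and $L'_i$ interpolating between $\infty$ and $-1$. Each $L'_i$ ($i=1,2$) is a single basic slice (the edge from $-1$ to $\infty$ in the Farey graph), so it carries $2$ minimally twisting structures; $L'_0$ also carries $2$ by the minimal-twisting count used in Lemma~\ref{Lemma:t1<0t2<0}; and $\Sigma'\times S^1$ is unique by Lemma~\ref{Lemma:slopes3}. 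This gives at most $2\cdot2\cdot2=8$ candidates. The refinement for small $t_0$ comes from Lemma~\ref{Lemma:slopes2}: here $t_0-2=3$ precisely when $t_0=5$, in which case the all-same-sign configuration has a sign-reversal symmetry, cutting one from the $8$ (the two all-positive and all-negative decorations coincide), giving $7$; and $t_0-2=2$ precisely when $t_0=4$, where Lemma~\ref{Lemma:slopes2}(2) kills the all-same-sign structure outright — but one must be careful that only one of the two same-sign decorations (say all-$+$) is forced overtwisted while by the symmetry it is identified with all-$-$, so exactly $8-2=6$ survive. Honda's gluing/state-traversal arguments (as in \cite{h2}) show these bounds are attained, hence the counts are exact.

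For the case $t_0\leq3$: by Lemma~\ref{Lemma:slopes} we factor $\Sigma\times S^1=\Sigma''\times S^1\cup L''_1\cup L''_2$ where $\Sigma''\times S^1$ has boundary slopes $t_0,-1,-1$ and each $L''_i$ interpolates between $-1$ and $-\tfrac1{t_i}=-1$, so $L''_i$ is trivial and contributes nothing. Thus the count reduces to the number of appropriate tight structures on $\Sigma''\times S^1$ with slopes $t_0,-1,-1$; after applying an orientation-preserving diffeomorphism of $S^1\times S^1$ to normalize two of the slopes to $\infty$ (and the third correspondingly), this is governed by \cite[Lemma 5.1]{h2}, which yields $4-t_0$ for $t_0\leq3$. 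I would double-check the normalization arithmetic: sending slopes $(-1,-1)$ to $(\infty,\infty)$ sends slope $t_0$ to something like $\tfrac{1}{t_0-?}$, and the relevant Honda count is the number of tight structures on $T^2\times[0,1]$-like pieces realized inside, matching $4-t_0$.

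The main obstacle I expect is the boundary cases $t_0=4$ and $t_0=5$: one has to argue simultaneously that (a) the naive upper bound of $8$ is correct via the factorization and minimal-twisting counts, (b) the sign-change relations of Lemma~\ref{Lemma:slopes2} identify or eliminate exactly the right decorations (being careful about the interplay between shuffling within a continued fraction block and the simultaneous sign change, so as not to over- or under-count), and (c) all the surviving decorations are genuinely realized by distinct appropriate tight structures — the realization direction needs the explicit constructions of Section~\ref{Section:realizations}, and the non-isotopy direction needs that the decorations are complete invariants, which is where the bulk of Honda-style bypass bookkeeping lives. The clean interior cases $t_0\geq6$ and $t_0\leq3$ should be routine once the two structural lemmas are invoked.
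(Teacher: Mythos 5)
Your division into the two regimes is right (with $t_1=t_2=1$ the threshold $t_0+\lceil-\tfrac1{t_1}\rceil+\lceil-\tfrac1{t_2}\rceil=t_0-2$, so Lemma~\ref{vertical} and Lemma~\ref{Lemma:tor>0} separate the cases at $t_0=3\mid4$), and your decoration count $2\cdot2\cdot2=8$ via Lemmas~\ref{Lemma:slopes1} and~\ref{Lemma:slopes3}, refined by Lemma~\ref{Lemma:slopes2} to $7$ at $t_0=5$ and $6$ at $t_0=4$, reproduces the correct upper bounds. The genuine gap is that the lemma asserts \emph{exact} counts, and for $t_0\geq4$ your argument only produces upper bounds: the attainment and pairwise non-isotopy of the surviving decorations is deferred to an unspecified ``Honda gluing/state-traversal'' argument, which you yourself flag as the main obstacle. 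The paper has nothing to fill in here because \cite[Lemma 5.1]{h2} is, for boundary slopes $(t_0,-1,-1)$ --- i.e.\ precisely the case $t_1=t_2=1$ --- already the complete classification, giving existence, distinctness and the counts $8,7,6$ and $4-t_0$ in both regimes; the paper's proof is essentially that citation. To close your version you must either invoke Honda's lemma in this stronger role, or establish the lower bound independently, e.g.\ from the explicit realizations in Section~\ref{Section:realizations} (Lemmas~\ref{t0>5t1=1t2=1}--\ref{t0<4t1=1t2=1}), whose exteriors realize the distinct decorations; as written, exactness is not proved.

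Two smaller points. At $t_0=4$ your bookkeeping is off: Lemma~\ref{Lemma:slopes2}(2) applies to \emph{any} structure whose three signs agree, so it discards both the all-$+$ and the all-$-$ decoration outright; the simultaneous sign-change identification of part (1) is only stated when the sum equals $3$ and is neither available nor needed there (your total $8-2=6$ is correct, but not for the reason given). And in the $t_0\leq3$ case no normalization of slopes is required: with $t_1=t_2=1$ the boundary slopes are already $(t_0,-1,-1)$, so \cite[Lemma 5.1]{h2} applies verbatim to give exactly $4-t_0$ appropriate tight structures without a $0$-twisting vertical Legendrian circle, and Lemma~\ref{Lemma:tor>0} shows there are no others.
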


\begin{proof}
The boundary slopes of $\Sigma\times S^1$ are $t_0$, $-1$ and $-1$. If $t_0\leq3$, according to  \cite[Lemma 5.1]{h2}, there are exactly $4-t_0$ appropriate tight contact structures on $\Sigma\times S^1$ without $0$-twisting vertical Legendrian circle. By Lemma~\ref{Lemma:tor>0}, there are no appropriate tight contact structures on $\Sigma\times S^1$ with $0$-twisting 
 vertical Legendrian circle.  If $t_0\geq4$, then any tight contact structure on $\Sigma\times S^1$ has a $0$-twisting vertical Legendrian circle. By applying \cite[Lemma 5.1]{h2} again, we can conclude that when $t_0=4$,  there are exactly $6$ appropriate tight contact structures on $\Sigma\times S^1$.  When $t_0=5$, there are exactly $7$ appropriate tight contact structures on $\Sigma\times S^1$. When $t_0\geq6$, there are exactly $8$ appropriate tight contact structures on $\Sigma\times S^1$. 
\end{proof}

We use the decorations $(\pm)(\pm)(\pm)$ to denote the $8$ contact structures on $\Sigma\times S^1$ with a $0$-twisting vertical Legendrian circle.

\begin{lemma}\label{Lemma:t_1>1t_2=1}
Suppose $t_1>1$ and $t_2=1$, then there are at most
\begin{align*}
\begin{split}
\left\{  
\begin{array}{ll}
12, & ~\text{if}~ t_0\geq5~\text{and}~ t_1=2,\\
10, & ~\text{if}~ t_0=4~\text{and}~ t_1=2,\\
8, & ~\text{if}~ t_0=3~\text{and}~ t_1=2,\\
16, & ~\text{if}~ t_0\geq5~\text{and}~ t_1\geq3,\\
14, & ~\text{if}~ t_0=4~\text{and}~ t_1\geq3,\\
12, & ~\text{if}~ t_0=3~\text{and}~ t_1\geq4,\\
11, & ~\text{if}~ t_0=t_1=3,\\
6-2t_0, & ~\text{if}~ t_0\leq2,
\end{array}
\right.
\end{split}
\end{align*}
appropriate tight contact structures on $\Sigma\times S^1$ with the given boundary slopes. 
\end{lemma}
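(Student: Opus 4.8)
The plan is to mirror the case analysis of Lemma~\ref{Lemma:t1<0t2<0}, splitting according to whether $\xi$ has a $0$-twisting vertical Legendrian circle, but now with the continued-fraction arithmetic adapted to the positive slopes $s_1=-\frac{1}{t_1}$ (with $t_1>1$) and $s_2=-1$. First I would treat the case $t_0\le 2$. Here $t_0+\lceil-\frac{1}{t_1}\rceil+\lceil-\frac{1}{t_2}\rceil = t_0 + 0 + 0 \le 2$, so by Lemma~\ref{Lemma:tor>0} an appropriate tight $\xi$ has no $0$-twisting vertical Legendrian circle. Applying Lemma~\ref{Lemma:slopes}, one factors $\Sigma\times S^1 = \Sigma''\times S^1 \cup L''_1\cup L''_2$ where $\Sigma''\times S^1$ has boundary slopes $t_0$, $\lceil-\frac{1}{t_1}\rceil = 0$ and $\lceil-1\rceil = -1$, and $L''_i$ are minimally twisting collars. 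Then $L''_2$ is trivial (slope already $-1$) and $L''_1$ carries a bounded number of minimally twisting structures via \cite[Theorem 2.2]{h1}; the count $4-t_0$ tight structures on $\Sigma''\times S^1$ with no vertical Legendrian circle comes from \cite[Lemma 5.1]{h2}, and one checks the product gives the stated $6-2t_0$.

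Next, for $t_0\ge 3$, Lemma~\ref{vertical} guarantees every tight $\xi$ has a $0$-twisting vertical Legendrian circle, so Lemma~\ref{Lemma:slopes1} applies and $\Sigma\times S^1$ factors as $L'_0\cup L'_1\cup L'_2\cup \Sigma'\times S^1$ with all $\Sigma'$-boundary slopes $\infty$. The thickened torus $L'_0$ has boundary slopes $\infty$ and $t_0$, contributing $2$ minimally twisting structures; $L'_2$ has slopes $\infty$ and $-1$, also contributing $2$; and $L'_1$ has slopes $\infty$ and $-\frac{1}{t_1}$. For $L'_1$ I would run the change-of-basis computation — applying the matrix $\begin{bmatrix}0 & -1\\ 1 & 1\end{bmatrix}$ as in the proof of Lemma~\ref{Lemma:t1<0t2<0} — to see that $L'_1$ is a continued fraction block with $t_1 - 1$ basic slices when $t_1\ge 2$ (note $-\frac{1}{t_1}$ sits between $-1$ and $0$, so the relevant edge-path from $\infty$ to $-\frac{1}{t_1}$ passes through $-1$, hence splits into the single-slice piece from $\infty$ to $-1$ already absorbed by the factorization convention and a block of length $t_1-1$), giving $t_1$ minimally twisting structures. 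Multiplying, one gets $2\cdot 2\cdot t_1 = 4t_1$ as the naive upper bound when $t_0$ is large. This yields $12$ for $t_1=2$ and $16$ for $t_1\ge 3$, matching the two "$t_0\ge 5$" rows. The subtlety distinguishing $t_1=2$ from $t_1\ge 3$ is exactly whether $L'_1$ has $1$ or $\ge 2$ basic slices, which governs how much shuffling is available in Lemma~\ref{Lemma:slopes2}.

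The remaining rows — $t_0=3,4$ — are the refinements coming from Lemma~\ref{Lemma:slopes2}. When $t_0+\lceil-\frac{1}{t_1}\rceil+\lceil-\frac{1}{t_2}\rceil = 3$, i.e. $t_0 = 3$, part (1) of Lemma~\ref{Lemma:slopes2} says that a structure with $L'_0$, $B'_1$, $B'_2$ all of the same sign is isotopic to the one with all three signs reversed, collapsing pairs of decorations; the number of decorations killed depends on how many sign-patterns have a monochromatic $(L'_0, B'_1, B'_2)$ achievable after shuffling inside the continued fraction block $C'_1\supseteq B'_1$, which is where $t_1=3$ (yielding $11$) differs from $t_1\ge 4$ (yielding $12$). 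When $t_0 = 4$ one is in the regime $t_0+\cdots = 4 > 3$ but the vertical circle may not yet allow the full $L'_0$ to be "absorbed," so the count drops by $2$ from the $t_0\ge 5$ value, giving $10$ (for $t_1=2$) and $14$ (for $t_1\ge 3$); I would get this by the same \cite[Lemma 5.1]{h2} analysis applied to $L'_0\cup B'_1\cup B'_2\cup\Sigma'\times S^1$ with boundary slopes $4$, $0$, $-1$ as in the proof of Lemma~\ref{Lemma:t1=t2=1}. The main obstacle I anticipate is bookkeeping the shuffling in Lemma~\ref{Lemma:slopes2}(1) correctly for $t_0=t_1=3$: one must verify that precisely the right number of decoration classes coincide, since the continued fraction block $C'_1$ of length $2$ inside $L'_1$ allows $B'_1$ to be shuffled but the $L'_0$ slice (length $1$, no shuffling) and the single $B'_2$ slice constrain which monochromatic triples actually arise — an off-by-one here is the difference between $11$ and $12$, and between a correct and an incorrect statement.
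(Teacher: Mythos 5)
Your overall strategy (dichotomy on the existence of a $0$-twisting vertical Legendrian circle, factorization via Lemmas~\ref{Lemma:slopes1} and~\ref{Lemma:slopes}, counting basic slices, then correcting with Lemma~\ref{Lemma:slopes2}) is the paper's strategy, but several of your counts are wrong in ways that break the argument. The central error is your description of $L'_1$: the minimal Farey path from $\infty$ to $-\frac{1}{t_1}$ goes through $0$, not through $-1$ (for $t_1\geq3$ there is no edge from $-\frac{1}{t_1}$ to $-1$ at all), so $L'_1$ always consists of exactly two basic slices with slopes $\infty,0,-\frac{1}{t_1}$. The change of basis in the paper shows these form a single continued fraction block when $t_1=2$ (hence $3$ tight structures) and two separate blocks when $t_1\geq3$ (hence $4$ tight structures) --- not a single block of $t_1-1$ slices carrying $t_1$ structures as you claim. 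Your product $2\cdot2\cdot t_1=4t_1$ does not "yield $12$ for $t_1=2$ and $16$ for $t_1\geq3$": it gives $8$ and an unbounded number, so the headline rows are not actually derived. The correct products are $2\cdot3\cdot2=12$ and $2\cdot4\cdot2=16$, and the very dichotomy $t_1=2$ versus $t_1\geq3$ that the statement encodes is exactly the one-block/two-block distinction you have misidentified.

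Two further errors propagate through your case analysis. First, $\lceil-\frac{1}{t_2}\rceil=\lceil-1\rceil=-1$, not $0$, so $t_0+\lceil-\frac{1}{t_1}\rceil+\lceil-\frac{1}{t_2}\rceil=t_0-1$; the sum equals $3$ when $t_0=4$ (where Lemma~\ref{Lemma:slopes2}(1) removes $2$ duplications) and is $\leq2$ when $t_0=3$ (where part (2) kills $4$ same-sign, overtwisted configurations) and Lemma~\ref{Lemma:tor>0} applies only for $t_0\leq2$. You have attached part (1) to $t_0=3$ and left $t_0=4$ to a vague "absorption" heuristic, an off-by-one in $t_0$ traceable to the ceiling mistake. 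Second, in the case $t_0\leq2$ the piece $\Sigma''\times S^1$ has slopes $t_0,0,-1$ and carries $3-t_0$ (not $4-t_0$) tight structures without a vertical $0$-twisting circle; only then does $2\cdot(3-t_0)=6-2t_0$ come out, whereas your stated numbers cannot multiply to $6-2t_0$. Finally, the value $11$ at $t_0=t_1=3$ is not obtained by shuffling inside $C'_1$ (which is a single basic slice when $t_1\geq3$, so no shuffling is available there); the paper proves it by cutting along a vertical annulus between $T_0$ and $T_2$, observing that the resulting two basic slices with slopes $-\frac{1}{3},0$ and $0,-1$ form a continued fraction block, and thereby identifying the decorations $(+)((-)(+))(+)$ and $(-)((+)(-))(-)$; this specific identification is missing from your proposal.
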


\begin{proof} 
The boundary slopes of $\Sigma\times S^1$ are $s_0=t_0$, $s_{1}=-\frac{1}{t_1}$ and $s_{2}=-1$. 

If $t_{0}\geq3$, then the tight contact structures on $\Sigma\times S^1$ always exist $0$-twisting vertical Legendrian circles.

If $t_{1}>1$, we have
$$\begin{bmatrix} 
1 & 1 \\
-2 & -1
\end{bmatrix}
\begin{bmatrix} 
0  \\
1 
\end{bmatrix}=\begin{bmatrix} 
1  \\
-1
\end{bmatrix}, 
\begin{bmatrix} 
1 & 1 \\
-2 & -1
\end{bmatrix}
\begin{bmatrix} 
t_{1}  \\
-1 
\end{bmatrix}=\begin{bmatrix} 
t_{1}-1  \\
-2t_{1}+1
\end{bmatrix},$$
$$\frac{-2t_{1}+1}{t_{1}-1}=[-3,\underbrace{-2,\cdots,-2}_{t_{1}-2}].$$ 
If $t_{1}=2$, then $L'_{1}$ is a continued fraction block with two basic slices with slopes $-\frac{1}{2}$, $0$ and $\infty$, and thus admits exactly $3$ tight contact structures. If $t_{1}\geq3$, then $L'_{1}$ consists of two continued fraction blocks, each of which has one basic slice. The slopes are $-\frac{1}{t_1}$, $0$ and $\infty$. Therefore, it admits exactly $4$ tight contact structures. 

If $t_0\geq5$ and $t_1=2$, then there are at most $2\times 3\times 2=12$ tight contact structures. The number of such contact structures depends on the signs of the basic slices in $L'_{i}$ for $i=0,1,2$. If $t_0=4$ and $t_1=2$, then there are at most $10$ tight contact structures by deleting  $2$ duplications. If $t_0\leq3$ and $t_1=2$, then there are at most $8$ tight contact structures by deleting  $4$ overtwisted cases. 

If $t_0\geq5$ and $t_1\geq3$, then there are at most $2\times 4\times 2=16$ tight contact structures. The number of such contact structures depends depend on the signs of the basic slices in $L'_{i}$ for $i=0,1,2$. If $t_0=4$ and $t_1\geq3$, then there are at most $14$ tight contact structures by deleting  $2$ duplications. If $t_0\leq3$ and $t_1\geq3$, then there are at most $12$ tight contact structures by deleting  $4$ overtwisted cases. 

Suppose $t_0\leq2$. By Lemma~\ref{Lemma:tor>0}, there are no appropriate tight contact structures with a $0$-twisting vertical Legendrian circle. We consider the appropriate tight contact structures without $0$-twisting vertical Legendrian circle.  By Lemma~\ref{Lemma:slopes}, we can factorize  $\Sigma\times S^1=\Sigma''\times S^1\cup L''_{1}$, where the boundary slopes of $\Sigma''\times S^1$ are $t_0$, $0$ and $-1$, and the boundary slopes of $L''_{1}$ are $0$ and $-\frac{1}{t_1}$. Since $t_0<3$,  according to \cite[Lemma 5.1]{h2},  there are exactly $3-t_0$ tight contact structures on $\Sigma''\times S^1$ without $0$-twisting vertical Legendrian circle. There are $2$ minimally twisting tight contact structures on $L''_{1}$. Therefore,  there are at most $6-2t_0$ appropriate tight contact structures on $\Sigma\times S^1$ without $0$-twisting vertical Legendrian circle and with boundary slopes $s_0=t_0$, $s_i=-\frac{1}{t_i}$ for $i=1,2$.

If $t_1=2$, then  we denote the $12$ contact structures on $\Sigma\times S^1$ with a $0$-twisting vertical Legendrian circle using the decorations $(\pm)(\pm\pm)(\pm)$. For $t_1\geq3$, we use the decorations $(\pm)((\pm)(\pm))(\pm)$ to denote the $16$ contact structures on $\Sigma\times S^1$ with a $0$-twisting vertical Legendrian circle. In the latter case, $((\pm)(\pm))$ refers to the two signed basic slices in $L'_1$ that do not form a continued fraction block.

If $t_0=t_1=3$ and $t_2=1$, we claim the two decorations $(+)((-)(+))(+)$ and $(-)((+)(-))(-)$ denote the same contact structure on $\Sigma\times S^1$. As before, there is a convex vertical annulus $A$ such that $\partial A$ consists of a Legendrian ruling on $T_0$ and a Legendrian ruling on $T_2$, and the dividing set on $A$ run from one boundary component to the other. If we cut $L'_0\cup L'_1\cup L'_2 \cup \Sigma'\times S^1$ along $A$ we will obtain a thickened torus admitting a factorization into two basic slices with slopes $-\frac{1}{3}$, $0$ and $0$, $-1$, and opposite signs. Here the slope $-1$ is obtained by $-s_0-s_2+1=-3-(-1)+1$. The three slopes can be transformed into $\frac{1}{3}$, $\frac{1}{2}$ and  $1$ as follows,
$$\begin{bmatrix} 
2 & 3 \\
1 & 2
\end{bmatrix}
\begin{bmatrix} 
3  \\
-1 
\end{bmatrix}=\begin{bmatrix} 
3  \\
1
\end{bmatrix},
\begin{bmatrix} 
2 & 3 \\
1 & 2
\end{bmatrix}
\begin{bmatrix} 
1  \\
0 
\end{bmatrix}=\begin{bmatrix} 
2  \\
1
\end{bmatrix},
\begin{bmatrix} 
2 & 3 \\
1 & 2
\end{bmatrix}
\begin{bmatrix} 
-1  \\
1 
\end{bmatrix}=\begin{bmatrix} 
1  \\
1
\end{bmatrix}.
$$
So these two basic slices  form a continued fraction block and can be interchanged. Similar to the argument in \cite[Page 135]{h2}, this leads to an exchange between $(+)((-)(+))(+)$ and $(-)((+)(-))(-)$ while preserving the isotopy classes of contact structures.
\end{proof}

\begin{lemma}\label{Lemma:t_1>1t_2>1}
Suppose $t_1>1$ and $t_2>1$, then there are at most
\begin{align*}
\begin{split}
\left\{  
\begin{array}{ll}
18, & ~\text{if}~ t_0\geq4~\text{and}~ t_1=t_2=2,\\
14, & ~\text{if}~ t_0=3~\text{and}~ t_1=t_2=2,\\
10, & ~\text{if}~ t_0=2~\text{and}~ t_1=t_2=2,\\
24, & ~\text{if}~ t_0\geq4~\text{and}~ t_1\geq3, t_{2}=2,\\
20, & ~\text{if}~ t_0=3~\text{and}~ t_1\geq3, t_{2}=2,\\
16, & ~\text{if}~ t_0=2~\text{and}~ t_1\geq3, t_{2}=2,\\
32, & ~\text{if}~ t_0\geq4~\text{and}~ t_1\geq3, t_{2}\geq3,\\
28, & ~\text{if}~ t_0=3~\text{and}~ t_1\geq3, t_{2}\geq3,\\
24, & ~\text{if}~ t_0=2~\text{and}~ t_1\geq3, t_{2}\geq3,\\
8-4t_0, & ~\text{if}~ t_0\leq1,
\end{array}
\right.
\end{split}
\end{align*}
appropriate tight contact structures on $\Sigma\times S^1$ with the given boundary slopes. 
\end{lemma}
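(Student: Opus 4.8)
The plan is to run the same two-regime argument used for Lemmas~\ref{Lemma:t1<0t2<0} and \ref{Lemma:t_1>1t_2=1}. Since $t_1,t_2>1$ we have $\lceil-\frac{1}{t_1}\rceil=\lceil-\frac{1}{t_2}\rceil=0$, so the integer controlling Lemmas~\ref{vertical}, \ref{Lemma:tor>0}, \ref{Lemma:slopes2} and \ref{Lemma:slopes} is simply $t_0$. By Lemma~\ref{vertical}, if $t_0\ge 2$ then every tight contact structure on $\Sigma\times S^1$ with the given boundary slopes carries a $0$-twisting vertical Legendrian circle; by Lemma~\ref{Lemma:tor>0}, if $t_0\le 1$ then no appropriate tight one does. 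I would treat these two ranges separately.

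For $t_0\ge 2$ I would invoke Lemma~\ref{Lemma:slopes1} to factor $\Sigma\times S^1 = L'_0\cup L'_1\cup L'_2\cup \Sigma'\times S^1$, where all three boundary slopes of $\Sigma'\times S^1$ are $\infty$, the slopes of $L'_0$ are $\infty$ and $t_0$, and the slopes of $L'_i$ are $\infty$ and $-\frac{1}{t_i}$. The torus $L'_0$ is a single basic slice and carries $2$ minimally twisting tight structures. For $L'_i$ I would reuse the continued-fraction computation from the proof of Lemma~\ref{Lemma:t_1>1t_2=1}: $L'_i$ has two basic slices with successive slopes $\infty,0,-\frac{1}{t_i}$, forming a single continued fraction block (shuffleable signs, $3$ tight structures) when $t_i=2$, and two continued fraction blocks each of one basic slice ($4$ tight structures) when $t_i\ge 3$. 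Since any tight structure in this range restricts to such pieces, restricts to the unique appropriate tight structure on $\Sigma'\times S^1$ by Lemma~\ref{Lemma:slopes3}, and is determined by these restrictions, the a priori count of appropriate tight structures is $2\cdot 3\cdot 3 = 18$, $2\cdot 4\cdot 3 = 24$, or $2\cdot 4\cdot 4 = 32$, according as $t_1=t_2=2$, exactly one of $t_1,t_2$ is $\ge 3$, or both are. I would record these by the decorations $(\pm)(\pm\pm)(\pm\pm)$, $(\pm)((\pm)(\pm))(\pm\pm)$ and $(\pm)((\pm)(\pm))((\pm)(\pm))$, where signs in a $(\pm\pm)$ block are shuffleable and $B'_i$ denotes the basic slice of $L'_i$ from $T'_i$ (slope $\infty$) to slope $0$.

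The identifications would then be read off exactly as in the earlier lemmas. If $t_0\ge 4$, then $t_0+\lceil-\frac{1}{t_1}\rceil+\lceil-\frac{1}{t_2}\rceil\ge 4$, there is no identification, and the bounds $18,24,32$ stand. If $t_0=3$, the sum is $3$ and Lemma~\ref{Lemma:slopes2}(1) applies: after shuffling $C'_i$ so that $B'_i$ has the sign of $L'_0$, the restriction to $L'_0\cup B'_1\cup B'_2\cup\Sigma'\times S^1$ is unchanged when all three signs are reversed at once; a decoration admits this move exactly when $L'_0$, $B'_1$, $B'_2$ can be made equal, which in each case is $1\cdot 2\cdot 2 = 4$ decorations with $L'_0$ positive paired with $4$ with $L'_0$ negative, so the bounds drop to $14,20,28$. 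If $t_0=2$, the sum is $2$ and Lemma~\ref{Lemma:slopes2}(2) declares those same $8$ decorations overtwisted, giving $10,16,24$. Finally, for $t_0\le 1$, Lemma~\ref{Lemma:tor>0} forces every appropriate tight structure to lack a $0$-twisting vertical Legendrian circle, so Lemma~\ref{Lemma:slopes} factors $\Sigma\times S^1 = \Sigma''\times S^1\cup L''_1\cup L''_2$ with boundary slopes $t_0,0,0$ on $\Sigma''\times S^1$ and $0,-\frac{1}{t_i}$ on $L''_i$; each $L''_i$ is a single basic slice ($2$ tight structures), and \cite[Lemma 5.1]{h2} gives exactly $2-t_0$ tight structures on $\Sigma''\times S^1$ without a $0$-twisting vertical Legendrian circle, yielding at most $4(2-t_0) = 8-4t_0$.

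The main obstacle is the bookkeeping in the cases $t_0=3$ and $t_0=2$: one must track, through the shuffling inside the continued fraction blocks $C'_i$ (which behave differently when $t_i=2$, where $C'_i$ is all of $L'_i$, versus $t_i\ge 3$, where $C'_i=B'_i$), precisely which decorations get identified by Lemma~\ref{Lemma:slopes2}(1) or become overtwisted by Lemma~\ref{Lemma:slopes2}(2), in order to obtain exactly the reductions $4$ and $8$. For the stated upper bounds it is enough that these $8$ decorations are involved; any further coincidences only decrease the count, so no extra identifications (such as the special one appearing in the $t_2=1$ case) need to be established here.
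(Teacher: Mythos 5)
Your proposal is correct and follows essentially the same route as the paper: a $0$-twisting circle for $t_0\ge 2$ via Lemma~\ref{vertical}, the factorization of Lemma~\ref{Lemma:slopes1} with the counts $2\times3\times3$, $2\times4\times3$, $2\times4\times4$ from the continued-fraction structure of $L'_i$, reductions by $4$ (duplications, Lemma~\ref{Lemma:slopes2}(1)) at $t_0=3$ and by $8$ (overtwisted, Lemma~\ref{Lemma:slopes2}(2)) at $t_0=2$, and the $\Sigma''\times S^1\cup L''_1\cup L''_2$ factorization giving $8-4t_0$ for $t_0\le 1$. Your explicit bookkeeping of which $8$ decorations are affected at $t_0=3$ and $t_0=2$ just spells out what the paper compresses into ``deleting the duplications or the overtwisted contact structures.''
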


\begin{proof} If $t_{0}\geq2$, then the tight contact structures on $\Sigma\times S^1$  always exist a $0$-twisting vertical Legendrian circles.

If $t_0\geq4$ and $t_{1}=t_{2}=2$, then there are at most $2\times 3\times 3=18$ tight contact structures. If $t_0\geq4$, $t_{1}\geq3$ and $t_{2}=2$, then there are at most $2\times 4\times 3=24$ tight contact structures.  If $t_0\geq4$, $t_{1}\geq3$ and $t_{2}\geq3$, then there are at most $2\times 4\times 4=32$ tight contact structures. The number of such contact structures depends on the signs of the basic slices in $L'_{i}$ for $i=0,1,2$.
For the other cases, the upper bound can be obtained by deleting the duplications or the overtwisted contact structures.

Suppose $t_0\leq1$. By Lemma~\ref{Lemma:tor>0}, there are no  appropriate tight contact structures with a $0$-twisting vertical Legendrian circle. We consider the appropriate tight contact structures without a $0$-twisting vertical Legendrian circle. By Lemma~\ref{Lemma:slopes}, we can factorize  $\Sigma\times S^1=\Sigma''\times S^1\cup L''_{1}\cup L''_{2}$, where the boundary slopes of $\Sigma''\times S^1$ are $t_0$, $0$ and $0$, and the boundary slopes of $L''_{i}$ are $0$ and $-\frac{1}{t_i}$. Since $t_0\leq1$,  according to \cite[Lemma 5.1]{h2}, there are exactly $2-t_0$ tight contact structures on $\Sigma''\times S^1$ without a $0$-twisting  vertical Legendrian circle. There are $2$ minimally twisting tight contact structures on $L''_{i}$. Therefore, there are at most $8-4t_0$ appropriate tight contact structures on $\Sigma\times S^1$ without a $0$-twisting vertical Legendrian circle and with boundary slopes $s_0=t_0$, $s_i=-\frac{1}{t_i}$ for $i=1,2$.
\end{proof}

If $t_1=t_2=2$, then  the $18$ contact structures on $\Sigma\times S^1$ with a $0$-twisting vertical Legendrian circle are denoted using the decorations $(\pm)(\pm\pm)(\pm\pm)$. For $t_1\geq3$ and $t_2=2$, we use the decorations $(\pm)((\pm)(\pm))(\pm\pm)$ to represent the $24$ contact structures on $\Sigma\times S^1$ with a $0$-twisting vertical Legendrian circle. When $t_1\geq3$ and $t_2\geq3$,  we use the decorations $(\pm)((\pm)(\pm))((\pm)(\pm))$ to signify the $32$ contact structures on $\Sigma\times S^1$ with a $0$-twisting vertical Legendrian circle. See Figure~\ref{Figure:Sigma2} for an example.

\begin{figure}[htb]
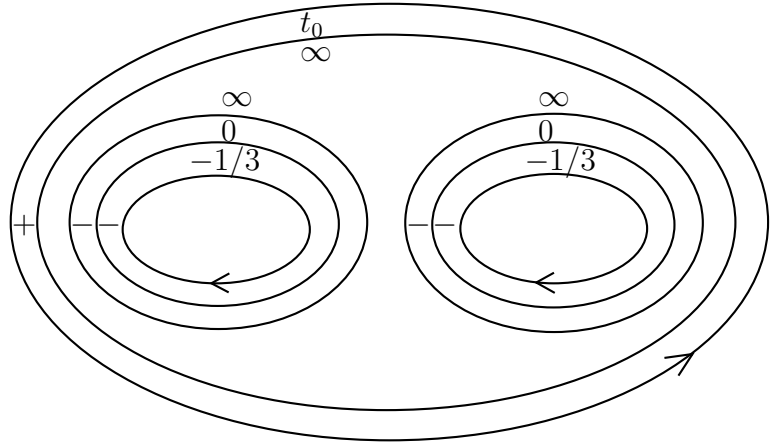

\begin{overpic}
{Sigma1.eps}
\put(1, 80){$+$}
\put(23, 80){$-$}
\put(33, 80){$-$}
\put(150, 80){$-$}
\put(160, 80){$-$}
\put(110, 145){$\infty$}
\put(110, 156){$t_0$}
\put(80, 128){$\infty$}
\put(80, 115){$0$}
\put(68, 104){$-1/3$}
\put(200, 128){$\infty$}
\put(200, 115){$0$}
\put(195, 104){$-1/3$}
\end{overpic}

\caption{A pair of pants $\Sigma$, where $t_{0}=0$, $t_{1}=t_{2}=3$.}
\label{Figure:Sigma2}
\end{figure}

\subsection{$t_{1}<0$ and $t_{2}>0$.}

\begin{lemma}\label{Lemma:t_1<0t_2=1}
Suppose $t_1<0$ and $t_2=1$, then there are at most
\begin{align*}
\begin{split}
\left\{  
\begin{array}{ll}
4-4t_1, & ~\text{if}~ t_0\geq4,\\
4-3t_1, & ~\text{if}~ t_0=3,\\
4-2t_1, & ~\text{if}~ t_0=2,\\
t_0t_1-2t_1, & ~\text{if}~ t_0\leq1,
\end{array}
\right.
\end{split}
\end{align*}
appropriate tight contact structures on $\Sigma\times S^1$ with the given boundary slopes. 
\end{lemma}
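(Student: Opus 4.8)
The plan is to run the argument of Lemma~\ref{Lemma:t1<0t2<0} in this boundary configuration. Here $s_0=t_0$, $s_1=-\frac{1}{t_1}$, $s_2=-1$, and since $t_1\le-1$ we have $\lceil-\frac{1}{t_1}\rceil=1$ while $\lceil-\frac{1}{t_2}\rceil=-1$, so the relevant quantity is $t_0+\lceil-\frac{1}{t_1}\rceil+\lceil-\frac{1}{t_2}\rceil=t_0$. I would split into $t_0\ge2$ and $t_0\le1$. If $t_0\ge2$, then by Lemma~\ref{vertical} every tight contact structure with these slopes has a $0$-twisting vertical Legendrian circle, so by Lemma~\ref{Lemma:slopes1} an appropriate $\xi$ factors as $L'_0\cup L'_1\cup L'_2\cup\Sigma'\times S^1$, with $\Sigma'\times S^1$ having all slopes $\infty$, $L'_0$ slopes $\infty,t_0$, and $L'_i$ slopes $\infty,-\frac{1}{t_i}$. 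The central piece is unique by Lemma~\ref{Lemma:slopes3}; $L'_0$ and $L'_2$ are each basic slices (the pairs $\{\infty,t_0\}$ and $\{\infty,-1\}$ are Farey edges), carrying $2$ tight structures apiece; and the same matrix computation as in the proof of Lemma~\ref{Lemma:t1<0t2<0} (which only involves $t_1$) shows $L'_1$ is a continued fraction block with $-t_1$ basic slices, carrying $1-t_1$ minimally twisting tight structures. This gives the bound $2\cdot2\cdot(1-t_1)=4-4t_1$, the stated answer for $t_0\ge4$, and I would record the candidates by decorations $(\pm)(\underbrace{\pm\cdots\pm}_{-t_1})(\pm)$ with the middle block shufflable.

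For $t_0=3$ and $t_0=2$ I would trim this bound with Lemma~\ref{Lemma:slopes2}, exactly as in Lemma~\ref{Lemma:t1<0t2<0}. When $t_0=3$ the controlling sum is $3$, so part (1) identifies the decoration $(+,a,+)$ (for $1\le a\le-t_1$, after shuffling a $+$ slice to be $B'_1$) with $(-,a-1,-)$; the $-2t_1$ decorations involved thus represent only $-t_1$ distinct contact structures, reducing the bound by $-t_1$ to $4-3t_1$. When $t_0=2$ the sum is $2$, so part (2) says every decoration in which $L'_0$, some slice of $L'_1$, and $L'_2$ share a sign is overtwisted; these are the $-t_1$ decorations $(+,a,+)$ with $a\ge1$ and the $-t_1$ decorations $(-,a,-)$ with $a\le-t_1-1$, and discarding them leaves $4-4t_1-(-2t_1)=4-2t_1$.

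Finally, if $t_0\le1$ the sum equals $t_0\le1$, so by Lemma~\ref{Lemma:tor>0} no appropriate tight structure has a $0$-twisting vertical Legendrian circle, and Lemma~\ref{Lemma:slopes} gives a factorization $\Sigma''\times S^1\cup L''_1\cup L''_2$ with $\Sigma''\times S^1$ of boundary slopes $t_0,1,-1$ and $L''_i$ of slopes $\lceil-\frac{1}{t_i}\rceil,-\frac{1}{t_i}$; here one should notice that $L''_2$ is trivial because $\lceil-\frac{1}{t_2}\rceil=-1=s_2$, so in fact $\Sigma\times S^1=\Sigma''\times S^1\cup L''_1$. Then \cite[Lemma 5.1]{h2} should give exactly $2-t_0$ tight structures on $\Sigma''\times S^1$ with no $0$-twisting vertical circle when $t_0\le1$, and \cite[Theorem 2.2]{h1} gives $-t_1$ minimally twisting tight structures on the block $L''_1$ of slopes $1,-\frac{1}{t_1}$ (as in Lemma~\ref{Lemma:t1<0t2<0}), so the product is $(2-t_0)(-t_1)=t_0t_1-2t_1$. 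The part I expect to require the most care is the decoration bookkeeping in the $t_0=2,3$ cases --- keeping the identified pairs and the overtwisted families from Lemma~\ref{Lemma:slopes2} disjoint and exactly enumerated --- together with reading off the count $2-t_0$ for $\Sigma''\times S^1$ from \cite[Lemma 5.1]{h2} after the appropriate $SL(2,\mathbb{Z})$ normalization of the mixed slopes $1$ and $-1$, and observing the degeneration of $L''_2$ so that only two factors appear.
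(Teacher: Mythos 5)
Your proposal is correct and follows essentially the same route as the paper: the factorization $L'_0\cup L'_1\cup L'_2\cup\Sigma'\times S^1$ with the continued-fraction-block count for $L'_1$ gives $2\cdot(1-t_1)\cdot2=4-4t_1$ when $t_0\geq2$, Lemma~\ref{Lemma:slopes2} removes exactly the $-t_1$ duplicated pairs when $t_0=3$ and the $-2t_1$ overtwisted decorations when $t_0=2$ (the paper states this only as ``deleting the duplication or the overtwisted contact structures,'' and your enumeration matches), and for $t_0\leq1$ the factorization of Lemma~\ref{Lemma:slopes} yields $(2-t_0)(-t_1)=t_0t_1-2t_1$. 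In the $t_0\leq1$ case your slopes ($t_0$, $1$, $-1$ for $\Sigma''\times S^1$, and $1$, $-\frac{1}{t_1}$ for $L''_1$, with $L''_2$ trivial since $\lceil-\frac{1}{t_2}\rceil=-1=s_2$) are the ones consistent with Lemma~\ref{Lemma:slopes} and with the count of $-t_1$ structures on $L''_1$; the paper's text writes $0$ in place of these (apparently carried over from the $t_1>0$ cases), but the resulting bounds are identical.
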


\begin{proof}
The boundary slopes of $\Sigma\times S^1$ are $s_0=t_0$, $s_{1}=-\frac{1}{t_1}>0$ and $s_{2}=-1$. 

If $t_{0}\geq2$, then the tight contact structures on $\Sigma\times S^1$ always contain a $0$-twisting vertical Legendrian circles.

If $t_0\geq4$, $t_{1}<0$ and $t_{2}=1$, then there are at most $2\times (1-t_1)\times 2=4(1-t_1)$ tight contact structures. They depend on the signs of the basic slices in $L'_{i}$ for $i=0,1,2$. For the other cases, the upper bound can be obtained by deleting the duplication or the overtwisted contact structures. 

Suppose $t_0\leq1$. By Lemma~\ref{Lemma:tor>0}, there are no  appropriate tight contact structures with a $0$-twisting vertical Legendrian circle.  We consider the appropriate tight contact structures without a $0$-twisting vertical Legendrian circle. By Lemma~\ref{Lemma:slopes}, we can factorize $\Sigma\times S^1=\Sigma''\times S^1\cup L''_{1}$, where the boundary slopes of $\Sigma''\times S^1$ are $t_0$, $0$ and $1$, and the boundary slopes of $L''_{1}$ are $0$ and $-\frac{1}{t_1}$. Since $t_0\leq1$, according to \cite[Lemma 5.1]{h2},  there are exactly $2-t_0$ tight contact structures on $\Sigma''\times S^1$ without a $0$-twisting vertical Legendrian circle. There are $-t_1$ minimally twisting tight contact structures on $L''_{1}$. Therefore, there are at most $-2t_1+t_0t_1$ tight contact structures on $\Sigma\times S^1$ without a $0$-twisting vertical Legendrian circle and with boundary slopes $s_0=t_0$, $s_i=-\frac{1}{t_i}$ for $i=1,2$.
\end{proof}

We use the decorations $(\pm)(\underbrace{\pm\cdots\pm}_{-t_{1}})(\pm)$ to denote the $4-4t_1$ contact structures on $\Sigma\times S^1$ with a $0$-twisting vertical Legendrian circle. 

\begin{lemma}\label{Lemma:t_1<0t_2>1}
Suppose $t_1<0$ and $t_2>1$, then there are at most
\begin{align*}
\begin{split}
\left\{  
\begin{array}{ll}
6-6t_1, & ~\text{if}~ t_0\geq3, t_2=2, \\
6-4t_1, & ~\text{if}~ t_0=2, t_2=2, \\
6-2t_1, & ~\text{if}~ t_0=1, t_2=2, \\
8-8t_1, & ~\text{if}~ t_0\geq3, t_2\geq3, \\
8-6t_1, & ~\text{if}~ t_0=2, t_2\geq3, \\
8-4t_1, & ~\text{if}~ t_0=1, t_2\geq4, \\
8-3t_1, & ~\text{if}~ t_0=1, t_2=3, \\
2t_0t_1-2t_1, & ~\text{if}~ t_0\leq0, t_2\geq3, 
\end{array}
\right.
\end{split}
\end{align*}
appropriate tight contact structures on $\Sigma\times S^1$ with the given boundary slopes. 

\end{lemma}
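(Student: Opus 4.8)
The plan is to argue as in Lemmas~\ref{Lemma:t1<0t2<0}, \ref{Lemma:t_1>1t_2=1} and \ref{Lemma:t_1<0t_2=1}, splitting according to the size of $t_0$. The first observation is that, since $t_1<0$ and $t_2>1$, one has $\lceil-\tfrac1{t_1}\rceil=1$ and $\lceil-\tfrac1{t_2}\rceil=0$, so the controlling sum $t_0+\lceil-\tfrac1{t_1}\rceil+\lceil-\tfrac1{t_2}\rceil$ equals $t_0+1$. Hence for $t_0\geq1$ Lemma~\ref{vertical} shows that every appropriate tight $\xi$ with these boundary slopes has a $0$-twisting vertical Legendrian circle, and Lemma~\ref{Lemma:slopes1} factorizes it as $\Sigma\times S^1=L'_0\cup L'_1\cup L'_2\cup\Sigma'\times S^1$ with all three boundary slopes of $\Sigma'\times S^1$ equal to $\infty$; by Lemma~\ref{Lemma:slopes3}, $\Sigma'\times S^1$ carries a unique appropriate tight structure. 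I would then count minimally twisting tight structures on the three collars: $L'_0$ has slopes $\infty$ and $t_0$, joined by a Farey edge, hence is a basic slice with $2$ of them; after the usual $\mathrm{SL}_2(\Z)$ normalization $L'_1$ (slopes $\infty$ and $-\tfrac1{t_1}$) is a single continued fraction block of $-t_1$ basic slices, so it has $1-t_1$ of them, recorded up to shuffling by the number of positive slices; and $L'_2$ (slopes $\infty$ and $-\tfrac1{t_2}$, with intermediate slope $0$), exactly as in the proof of Lemma~\ref{Lemma:t_1>1t_2=1}, has $3$ of them when $t_2=2$ (the basic slices with slopes $-\tfrac12,0$ and $0,\infty$ form one block) and $4$ of them when $t_2\geq3$ (two separate blocks). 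Multiplying gives at most $2(1-t_1)\cdot3=6-6t_1$ structures when $t_2=2$ and at most $2(1-t_1)\cdot4=8-8t_1$ when $t_2\geq3$.

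I would next treat the three remaining regimes of $t_0$ in which a $0$-twisting vertical Legendrian circle persists. For $t_0\geq3$ the controlling sum is $\geq4$, so neither clause of Lemma~\ref{Lemma:slopes2} applies and these products are the stated bounds. For $t_0=2$ the controlling sum is $3$, so Lemma~\ref{Lemma:slopes2}(1) identifies every decoration in which $L'_0$, a shuffled basic slice of $L'_1$, and the distinguished basic slice $B'_2\subset L'_2$ can be made simultaneously positive with the decoration obtained by flipping those three signs; counting these matched pairs --- there are $-2t_1$ of them in each of the cases $t_2=2$ and $t_2\geq3$, coming from the $-t_1$ choices for the number of positive slices in $L'_1$ times the $2$ configurations of $L'_2$ compatible with $B'_2$ positive --- reduces the bounds to $6-4t_1$ and $8-6t_1$. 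For $t_0=1$ the controlling sum is $2$, so Lemma~\ref{Lemma:slopes2}(2) makes overtwisted every decoration in which $L'_0$, some basic slice of $L'_1$, and $B'_2$ can be made to agree in sign; deleting those --- there are $-4t_1$ of them, split between the all positive and all negative triples --- leaves $6-2t_1$ when $t_2=2$ and $8-4t_1$ when $t_2\geq3$.

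This produces every bound in the statement except at $t_0=1,\ t_2=3$, where the claim is the smaller value $8-3t_1$; here I would add an argument parallel to the $t_0=t_1=3$ subcase of Lemma~\ref{Lemma:t_1>1t_2=1}. The idea is to take a convex vertical annulus $A$ between $T_0$ and $T_2$ whose dividing curves all connect its two boundary circles, cut $L'_0\cup L'_1\cup L'_2\cup\Sigma'\times S^1$ along $A$, compute the slope produced by edge rounding (namely $-s_0-s_2+1=\tfrac13$), and then exhibit an explicit $\mathrm{SL}_2(\Z)$ change of coordinates after which the relevant basic slice of $L'_2$ and a basic slice coming from $L'_0$ have slopes in a $\tfrac1n$-sequence, hence lie in one continued fraction block and can be interchanged; propagating this exchange through the decorations gives the extra coincidences that bring $8-4t_1$ down to $8-3t_1$. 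Finally, for $t_0\leq0$ the controlling sum is $\leq1$, so Lemma~\ref{Lemma:tor>0} forbids $0$-twisting vertical Legendrian circles, and Lemma~\ref{Lemma:slopes} factorizes every appropriate tight structure as $\Sigma\times S^1=\Sigma''\times S^1\cup L''_1\cup L''_2$ with boundary slopes $t_0,1,0$ on $\Sigma''\times S^1$, $1$ and $-\tfrac1{t_1}$ on $L''_1$, and $0$ and $-\tfrac1{t_2}$ on $L''_2$. Here $L''_1$ carries $-t_1$ minimally twisting tight structures by \cite[Theorem 2.2]{h1}, $L''_2$ is a basic slice carrying $2$, and $\Sigma''\times S^1$ carries $1-t_0$ appropriate tight structures with no $0$-twisting vertical Legendrian circle by \cite[Lemma 5.1]{h2}, so the product is at most $2(-t_1)(1-t_0)=2t_0t_1-2t_1$, as claimed.

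The step I expect to be the main obstacle is the $t_0=1,\ t_2=3$ exchange: one must identify precisely which decorations the continued-fraction-block interchange merges, confirm these identifications are mutually compatible, and interleave them correctly with the overtwisted-deletion count --- the same delicate bookkeeping modulo shuffling that appears on \cite[Page 135]{h2} and in the proof of Lemma~\ref{Lemma:t_1>1t_2=1}, but now performed on the tight structures that survive Lemma~\ref{Lemma:slopes2}(2). A secondary, routine but error-prone, task is checking that all of the counts glue consistently at the thresholds $t_0=0,1,2,3$ and $t_2=2,3$.
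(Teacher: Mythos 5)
Most of your proposal follows the paper's proof and is correct: for $t_0\geq 1$ the controlling sum is $t_0+1$, Lemma~\ref{vertical} and Lemma~\ref{Lemma:slopes1} give the factorization, the collar counts $2$, $1-t_1$, and $3$ (resp.\ $4$) are right, and your explicit bookkeeping of the $-2t_1$ duplications forced by Lemma~\ref{Lemma:slopes2}(1) at $t_0=2$ and the $-4t_1$ overtwisted decorations excluded by Lemma~\ref{Lemma:slopes2}(2) at $t_0=1$ reproduces the bounds $6-4t_1$, $8-6t_1$, $6-2t_1$, $8-4t_1$ (the paper leaves this counting implicit). The $t_0\leq 0$ case also matches the paper exactly.

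The genuine gap is the case $t_0=1$, $t_2=3$, precisely the step you flag as the main obstacle. Your proposed construction does not get off the ground: a convex vertical annulus $A$ from $T_0$ to $T_2$ meets $\Gamma_{T_0}$ in $2$ points (slope $t_0=1$) but $\Gamma_{T_2}$ in $2t_2=6$ points (slope $-\tfrac13$), so its dividing curves cannot all connect the two boundary circles, and the edge-rounding computation $-s_0-s_2+1=\tfrac13$ has no thickened torus to apply to. Moreover, the identification needed is not an interchange of a basic slice of $L'_2$ with one coming from $L'_0$: the coincidences that reduce $8-4t_1$ to $8-3t_1$ are $(+)(\underbrace{+\cdots+}_{l}\underbrace{-\cdots-}_{k})((-)(+))\sim(-)(\underbrace{-\cdots-}_{k+1}\underbrace{+\cdots+}_{l-1})((+)(-))$, which in particular moves one sign in $L'_1$. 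The paper's argument instead restricts to the submanifold $L'_0\cup B'_1\cup L'_2\cup \Sigma'\times S^1$, where $B'_1$ is the innermost basic slice of $L'_1$ with slopes $\infty$ and $1$, shuffled to be positive (possible since $l\geq1$); the annulus then runs from $T_0$ to the slope-$1$ boundary of $B'_1$ (both tori meet a vertical ruling in two points), edge-rounding gives slope $-1-1+1=-1$, and the resulting thickened torus splits into basic slices of slopes $-\tfrac13,0$ and $0,-1$ with opposite signs, which after the coordinate change of Lemma~\ref{Lemma:t_1>1t_2=1} form a continued fraction block and can be interchanged, producing exactly the $-t_1$ identifications above. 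Without this (or an equivalent) construction, your count at $t_0=1$, $t_2=3$ stops at $8-4t_1$ and does not reach the stated bound $8-3t_1$.
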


\begin{proof}
The boundary slopes of $\Sigma\times S^1$ are $s_0=t_0$, $s_{1}=-\frac{1}{t_1}>0$ and $s_{2}=-\frac{1}{t_2}\in(-1,0)$. 

If $t_{0}\geq1$, then the tight contact structures on $\Sigma\times S^1$  always contain a $0$-twisting vertical Legendrian circles.

If $t_0\geq3$, $t_{1}<0$ and $t_{2}=2$, then there are at most $2\times (1-t_1)\times 3=6(1-t_1)$ appropriate tight contact structures. If $t_0\geq3$, $t_{1}<0$ and $t_{2}\geq3$, then there are at most $2\times (1-t_1)\times 4=8(1-t_1)$  appropriate  tight contact structures. The number of such contact structures depends on the signs of the basic slices in $L'_{i}$ for $i=0,1,2$. For the other cases, the upper bound can be obtained by deleting the duplication or the overtwisted contact structures. 

Suppose $t_0\leq0$. By Lemma~\ref{Lemma:tor>0}, there are no  appropriate tight contact structures with a $0$-twisting vertical Legendrian circle. We consider the appropriate tight contact structures without a $0$-twisting vertical Legendrian circle. By Lemma~\ref{Lemma:slopes}, we can factorize  $\Sigma\times S^1=\Sigma''\times S^1\cup L''_{1}\cup L''_{2}$, where the boundary slopes of $\Sigma''\times S^1$ are $t_0$, $1$ and $0$, the boundary slopes of $L''_{1}$ are $1$ and $-\frac{1}{t_1}$, and the boundary slopes of $L''_{2}$ are $0$ and $-\frac{1}{t_2}$. Since $t_0\leq0$, according to \cite[Lemma 5.1]{h2}, there are exactly $1-t_0$ tight contact structures on $\Sigma''\times S^1$ without a $0$-twisting vertical Legendrian circle. There are $-t_1$ minimally twisting tight contact structures on $L''_{1}$. There are $2$ minimally twisting tight contact structures on $L''_{2}$. Therefore, there are at most $-2t_1+2t_0t_1$ appropriate tight contact structures on $\Sigma\times S^1$ without a $0$-twisting vertical Legendrian circle and with boundary slopes $s_0=t_0$, $s_i=-\frac{1}{t_i}$ for $i=1,2$.

When $t_{2}=2$, the $6-6t_1$ contact structures on $\Sigma\times S^1$ with a $0$-twisting vertical Legendrian circle are denoted using the decorations $(\pm)(\underbrace{\pm\cdots\pm}_{-t_{1}})(\pm\pm)$. For $t_{2}\geq3$,  we use the decorations $(\pm)(\underbrace{\pm\cdots\pm}_{-t_{1}})((\pm)(\pm))$ to represent the $8-8t_1$ contact structures on $\Sigma\times S^1$  with a $0$-twisting vertical Legendrian circle. 

If $t_0=1,$ $t_1<0$ and $t_2=3$, we claim the two decorations $$(+)(\underbrace{+ \cdots +}_{l}\underbrace{- \cdots -}_{k})((-)(+)) ~\text{and}~ (-)(\underbrace{- \cdots -}_{k+1}\underbrace{+ \cdots +}_{l-1})((+)(-)),$$ where $l\geq1, k\geq0,  k+l=-t_1$, denote the same contact structure on $\Sigma\times S^1$. We consider $L'_0\cup B'_1\cup L'_2 \cup \Sigma'\times S^1$ in $\Sigma\times S^1$ with the first decoration, where $B'_1$ is the inner most basic slice in $L'_1$ with two boundary slopes $\infty$ and $1$. We can assume the sign of $B'_1$ is positive since $L'_1$ is a continued fraction block containing at least one positive basic slice. As before, there is a convex vertical annulus $A$ such that $\partial A$ consists of a Legendrian ruling on $T_0$ and a Legendrian ruling on the boundary component of $B'_1$ with slope $1$, and the dividing set on $A$ run from one boundary component to the other. If we cut $L'_0\cup B'_1\cup L'_2 \cup \Sigma'\times S^1$ along $A$, we will obtain a thickened torus admitting a factorization into two basic slices with slopes $-\frac{1}{3}$, $0$ and $0$, $-1$, and opposite signs. Here the slope $-1$ is obtained by $-1-1+1$. Using the same reasoning as in the proof of Lemma~\ref{Lemma:t_1>1t_2=1}, we have an exchange from the first decoration to the second without altering the isotopy classes of contact structures.
\end{proof}

\subsection{$t_{1}=0$.}
\begin{lemma}\label{Lemma:t1eq0}
Suppose $t_1=0$; then there are at most
\begin{align*}
\begin{split}
\left\{  
\begin{array}{ll}
8, & ~\text{if}~ t_2\geq3, \\
6, & ~\text{if}~ t_2=2, \\
4, & ~\text{if}~ t_2=1, \\
2-2t_2, & ~\text{if}~ t_2\leq0,  
\end{array}
\right.
\end{split}
\end{align*}
appropriate tight contact structures on $\Sigma\times S^1$ with the given boundary slopes. All of them have $0$-twisting vertical Legendrian circles.
\end{lemma}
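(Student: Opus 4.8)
The plan is to count the appropriate tight contact structures by reducing to Honda's classification of tight contact structures on a thickened torus, taking advantage of the fact that the hypothesis $t_1=0$ forces the dividing slope $s_1=-\frac{1}{t_1}$ of $T_1$ to be $\infty$. I would first dispose of the last sentence of the lemma: $T_1$ is minimal convex with dividing slope $\infty$, so after putting $T_1$ in standard form its Legendrian divides are vertical Legendrian circles; the contact framing of a Legendrian divide equals the framing induced by $T_1$, which along vertical curves coincides with the canonical product framing of $\Sigma\times S^1$, so these circles have twisting number $0$. Hence every appropriate tight contact structure with the given boundary slopes contains a $0$-twisting vertical Legendrian circle, and in particular the ``no $0$-twisting'' analysis of Lemmas~\ref{Lemma:tor>0} and~\ref{Lemma:slopes} never enters here.

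Given such a circle, I would then run the factorization argument of Lemma~\ref{Lemma:slopes1} (whose proof follows \cite[Lemma 5.1, Part 1]{h2}); the only modification is that, since $T_1$ already has slope $\infty$, the collar one would peel off $T_1$ is trivial. This yields a decomposition $\Sigma\times S^1=L'_0\cup L'_2\cup(\Sigma'\times S^1)$ in which all three boundary components of $\Sigma'\times S^1$ have slope $\infty$ (one of them being $T_1$ itself), $L'_0$ is a minimally twisting thickened torus with boundary slopes $t_0$ and $\infty$, and $L'_2$ is a minimally twisting thickened torus with boundary slopes $-\frac{1}{t_2}$ and $\infty$ (trivial when $t_2=0$). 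Consequently the number of appropriate tight contact structures on $\Sigma\times S^1$ is at most the product of the numbers of such structures on these three pieces.

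To finish the count I would evaluate the three factors. By Lemma~\ref{Lemma:slopes3} there is a unique appropriate tight contact structure on $\Sigma'\times S^1$. Since $t_0$ and $\infty$ are joined by an edge of the Farey graph, $L'_0$ is a single basic slice and carries exactly $2$ tight contact structures. The number of minimally twisting tight contact structures on $L'_2$ is computed from \cite[Theorem 2.2]{h1} via the continued fraction expansion of the normalized boundary slope, exactly as in the proofs of Lemmas~\ref{Lemma:t1<0t2<0} and~\ref{Lemma:t_1>1t_2=1}: it equals $1-t_2$ if $t_2\le 0$, since $L'_2$ is then, after a change of coordinates, a continued fraction block with $-t_2$ basic slices (in particular $1$ when $t_2=0$); it equals $2$ if $t_2=1$, when $L'_2$ is the single basic slice with boundary slopes $-1$ and $\infty$; it equals $3$ if $t_2=2$, the two basic slices with slopes $-\frac12,0$ and $0,\infty$ forming one continued fraction block; and it equals $2\cdot2=4$ if $t_2\ge 3$, those two basic slices then lying in distinct continued fraction blocks. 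Multiplying by the factor $2$ coming from $L'_0$ gives the asserted upper bounds $2-2t_2$, $4$, $6$, $8$, none of which involves $t_0$.

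The step I expect to require the most care is the reduction of the previous paragraph---checking that the factorization of Lemma~\ref{Lemma:slopes1} genuinely carries over to the degenerate case $t_1=0$---together with explaining the $t_0$-independence. In the cases where $t_0,t_1,t_2$ are all nonzero, same-sign decorations can coincide or become overtwisted by Lemma~\ref{Lemma:slopes2}, and it is precisely this that makes those counts depend on $t_0$; here, since one boundary torus of $\Sigma'\times S^1$ is $T_1$ itself, already of slope $\infty$ and carrying no basic slice, the hypotheses of Lemma~\ref{Lemma:slopes2} are never met, so there are no such corrections. For the ``at most'' statement this remark is not strictly needed, the factorization alone giving the upper bound; but it accounts for the $t_0$-independence and, in combination with the realizations of Section~\ref{Section:realizations}, for the sharpness of the bound.
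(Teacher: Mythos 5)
Your proposal is correct and follows essentially the same route as the paper: observe that $s_1=\infty$ forces a $0$-twisting vertical Legendrian circle, factor $\Sigma\times S^1=L'_0\cup L'_2\cup\Sigma'\times S^1$ via Lemma~\ref{Lemma:slopes1} with Lemma~\ref{Lemma:slopes3} handling $\Sigma'\times S^1$, and multiply the count $2$ for the basic slice $L'_0$ by the count $1-t_2$, $2$, $3$, or $4$ for $L'_2$. Your added justifications (why the Legendrian divides of $T_1$ have twisting $0$, and why no Lemma~\ref{Lemma:slopes2}-type corrections arise, explaining the $t_0$-independence) are consistent with, and slightly more detailed than, the paper's argument.
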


\begin{proof} Since $s_1=\infty$, the appropriate tight contact structures on $\Sigma\times S^1$ always contain $0$-twisting vertical Legendrian circles.

The boundary slopes of $\Sigma\times S^1$ are $t_0$, $\infty$ and $-\frac{1}{t_{2}}$. We can  factorize  $\Sigma\times S^1=L'_{0}\cup L'_{2}\cup \Sigma'\times S^1$, where the boundary slopes of $\Sigma'\times S^1$ are all $\infty$, the boundary slopes of $L'_{0}$ are $\infty$ and $t_0$, and the boundary slopes of $L'_{2}$ are $\infty$ and $-\frac{1}{t_2}$. There are exactly $2$ minimally twisting  tight contact structures on $L'_{0}$. If $t_{2}\leq0$, $=1$, $=2$ or $\geq3$,  then there are $1-t_2$, $2$, $3$ or $4$ minimally twisting tight contact structures on $L'_{2}$, respectively. Therefore, if $t_{2}\leq0$, $=1$, $=2$ or $\geq3$,  then there are $2-2t_2$, $4$, $6$ or $8$ appropriate tight contact structures on $\Sigma\times S^1$, respectively.
\end{proof}

If $t_{2}\geq3$, the $8$ contact structures on $\Sigma\times S^1$ are denoted using the decorations $(\pm)((\pm)(\pm))$. For $t_{2}=2$,  we use the decorations $(\pm)(\pm\pm)$ to represent the $6$ contact structures on $\Sigma\times S^1$. When $t_{2}=1$,  we use the decorations $(\pm)(\pm)$ to denote the $4$ contact structures on $\Sigma\times S^1$. If $t_{2}\leq0$,  we use the decorations $(\pm)(\underbrace{\pm\cdots\pm}_{-t_{2}})$ to denote the $2-2t_2$ contact structures on $\Sigma\times S^1$.

\subsection{Some tight contact structures}
We use the notation $(T^2\times [0,1], s_0, s_1)$ to represent a basic slice with boundary slopes $s_0$ and $s_1$ on $T^2\times\{i\}$, where $i=0,1$. There is a geodesic in the Farey graph connecting $s_0$ and $s_1$. Moreover, any boundary parallel convex torus of this slice has a dividing slope within the range of $[s_0, s_1]$ corresponding to the clockwise arc on the boundary of the Poincare disk shown in Figure~\ref{Figure:farey}.

\begin{lemma}\label{Lemma:tight}
There are $6$ tight contact structures on $\Sigma\times S^1$ with boundary slopes $t_0$, $-\frac{1}{t_1}$ and $-\frac{1}{t_2}$, where $t_1, t_2\neq0$, and satisfying that 
\begin{itemize}
\item $\Sigma\times S^1$ can be decomposed as $L'_{0}\cup L'_{1}\cup L'_{2}\cup \Sigma'\times S^1$, where $\Sigma'\times S^1$ have boundary slopes $\infty$,
\item $L'_{0}$ is a basic slice, 
\item $L'_{i}$, $i=1,2$, is a thickened torus, all of whose basic slices have the same signs,
\item the signs of $L'_0$, $L'_1$ and $L'_2$ are $\pm\mp\mp$, $\pm\mp\pm$ or $\pm\pm\mp$.
\end{itemize}
\end{lemma}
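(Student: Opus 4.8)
The plan is to realize the six contact structures by an explicit basic--slice construction, to prove tightness by peeling off the non--integral parts of the boundary collars and reducing to a pair of pants whose boundary slopes lie in $\{-1,0,1\}$ (the range where \cite[Lemma 5.1]{h2} applies), and to deduce pairwise non--isotopy from the same input.

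First I would fix the count. Starting from the unique appropriate tight contact structure on $\Sigma'\times S^1$ with all three boundary slopes $\infty$ (Lemma~\ref{Lemma:slopes3}), glue onto the boundary $T'_0$ the basic slice $L'_0$ with boundary slopes $\infty$ and $t_0$: since the integer $t_0$ is Farey--adjacent to $\infty$ this is a single basic slice, with exactly two choices recorded by a sign $\epsilon_0\in\{+,-\}$. Onto $T'_i$, $i=1,2$, glue a minimal twisting thickened torus $L'_i$ with boundary slopes $\infty$ and $-\frac{1}{t_i}$, all of whose basic slices carry one sign $\epsilon_i$; as $t_i\neq0$ and the basic slices of a continued fraction block can be shuffled (Lemma~\ref{Lemma:bypass}), there is exactly one such $L'_i$ for each $\epsilon_i$. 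Hence the contact structures satisfying the four listed bullets correspond bijectively to the non--constant triples $(\epsilon_0,\epsilon_1,\epsilon_2)\in\{+,-\}^3$, of which there are six; it remains to show these are tight and pairwise non--isotopic.

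Next I would reduce to \cite[Lemma 5.1]{h2}. For $i=1,2$ let $B'_i\subset L'_i$ be the innermost basic slice, with boundary slopes $\infty$ and $\lceil-\frac{1}{t_i}\rceil$; since $\lceil-\frac{1}{t_i}\rceil\in\{-1,0,1\}$ for every nonzero integer $t_i$ and this slope lies on the geodesic from $\infty$ to $-\frac{1}{t_i}$, we may write $L'_i=B'_i\cup L''_i$ with $L''_i$ a minimal twisting thickened torus from $\lceil-\frac{1}{t_i}\rceil$ to $-\frac{1}{t_i}$ whose basic slices all have sign $\epsilon_i$. Set $Y:=L'_0\cup B'_1\cup B'_2\cup\Sigma'\times S^1$, a copy of $\Sigma\times S^1$ with boundary slopes $t_0$, $\lceil-\frac{1}{t_1}\rceil$, $\lceil-\frac{1}{t_2}\rceil$ (two of them in $\{-1,0,1\}$), so that $\Sigma\times S^1=Y\cup L''_1\cup L''_2$ glued along the convex tori of slopes $\lceil-\frac{1}{t_i}\rceil$. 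By \cite[Lemma 5.1]{h2} the tight contact structures on $Y$ are classified, and a basic--slice decomposition of $Y$ with signs $(\epsilon_0,\epsilon_1,\epsilon_2)$ on $(L'_0,B'_1,B'_2)$ is tight and appropriate, with distinct non--constant triples giving non--isotopic structures --- the simultaneous sign flip of Lemma~\ref{Lemma:slopes2}(1) only identifies the two \emph{constant} triples, and by Lemma~\ref{Lemma:slopes2} the constant triples are the only ones that can fail to be tight. Finally I would glue $L''_1,L''_2$ back: each $L''_i$ is a minimal twisting tight thickened torus whose boundary slope $\lceil-\frac{1}{t_i}\rceil$ matches that of the corresponding boundary of $Y$, and $B'_i\cup L''_i=L'_i$ remains minimal twisting, so the attachment introduces no Giroux torsion and no overtwisted disk (it is the inverse of the back--of--$T_i$ bypass attachments of Lemma~\ref{Lemma:bypass}); the six resulting structures on $\Sigma\times S^1$ still restrict to the six distinct tight structures on $Y$, hence are tight, appropriate, and pairwise non--isotopic, and there are exactly six.

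I expect the main obstacle to be the use of \cite[Lemma 5.1]{h2} on $Y$: one must verify that \emph{every} non--constant sign triple on $(L'_0,B'_1,B'_2)$ yields a tight, appropriate contact structure on the pair--of--pants piece $Y$ for all $t_0\in\Z$ and all $\lceil-\frac{1}{t_i}\rceil\in\{-1,0,1\}$, and that distinct non--constant triples stay non--isotopic after the tails $L''_1,L''_2$ are attached; the latter reduces to the standard fact that stacking a minimal twisting tight $T^2\times[0,1]$ onto a tight manifold along a convex boundary torus of matching slope preserves tightness and does not collapse the relative Euler class.
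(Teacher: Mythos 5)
Your reduction to the core piece $Y=L'_0\cup B'_1\cup B'_2\cup\Sigma'\times S^1$ is exactly the paper's decomposition, but your final gluing step contains a genuine gap. The fact you invoke --- that stacking a minimally twisting tight $T^2\times[0,1]$ onto a tight manifold along a convex boundary torus of matching slope preserves tightness --- is not a standard fact; it is false in general, since tightness is not preserved by gluing along convex tori. The paper's own Lemma~\ref{Lemma:slopes2}(2) is a counterexample in this very setting: the tight pieces $L'_0$, $B'_1$, $B'_2$ (each a tight basic slice) glued onto the tight $\Sigma'\times S^1$ of Lemma~\ref{Lemma:slopes3} yield an \emph{overtwisted} structure whenever the three signs agree and $t_0+\lceil-\frac{1}{t_1}\rceil+\lceil-\frac{1}{t_2}\rceil\leq 2$. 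So tightness of $Y\cup L''_1\cup L''_2$ is exactly the point that needs an argument, and it is the point you wave away; identifying it as ``the main obstacle'' does not discharge it.

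What the paper does instead: by Part~2 of \cite[Lemma 5.1]{h2} the six mixed-sign structures on $L'_0\cup B'_1\cup B'_2\cup\Sigma'\times S^1$ are \emph{universally} tight; each such structure extends to a universally tight $\tilde{\Sigma}\times S^1$ with all boundary slopes $\infty$, and the full structure $L'_0\cup L'_1\cup L'_2\cup\Sigma'\times S^1$ (with the same-sign tails $L'_i$) embeds into this universally tight manifold, hence is tight. Universal tightness, not mere tightness of $Y$, is what makes this extension-and-embedding argument work. If you prefer your gluing formulation, you must invoke and verify the hypotheses of an actual gluing theorem --- e.g.\ \cite[Theorem 1.3]{h2}, which the paper uses elsewhere precisely when attaching basic slices $(T^2\times[0,1],-\frac{1}{t_i},-\frac{1}{t_i-1})$ --- rather than a general ``stacking tight pieces preserves tightness'' principle. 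A smaller secondary issue: pairwise non-isotopy of the glued-up structures does not follow from their ``restricting to distinct structures on $Y$''; the paper does not claim distinctness within this lemma, and it distinguishes the resulting Legendrian links later by relative Euler class (rotation number) computations.
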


\begin{proof}
Suppose they have $0$-twisting vertical Legendrian circles.  By Lemma~\ref{Lemma:slopes1}, each of them can be decomposed as $L'_{0}\cup L'_{1}\cup L'_{2}\cup \Sigma'\times S^1$, where the boundary slopes of $\Sigma'\times S^1$ are all $\infty$, $L'_{0}$ is a basic slice $(T^{2}\times [0,1]; \infty, t_0)$, and the innermost basic slice $B'_i$ of $L'_{i}$ is $(T^{2}\times [0,1]; \infty, \lceil-\frac{1}{t_i}\rceil)$ for $i=1,2$.
Using Part 2 of \cite[Lemma 5.1]{h2}, we know that there are $6$  universally tight contact structures on $L'_{0}\cup B'_{1}\cup B'_{2}\cup \Sigma'\times S^1$ which are determined by the signs of $L'_{0}$, $B'_{1}$ and $B'_{2}$. Note that the three signs are not the same.  Each of them can be extended to a universally tight $\tilde{\Sigma}\times S^1$ whose boundary slopes are all $\infty$. The contact structure on $L'_{0}\cup L'_{1}\cup L'_{2}\cup \Sigma'\times S^1$ can be embedded into $\tilde{\Sigma}\times S^1$. Hence the given contact $\Sigma\times S^1$ is tight.
\end{proof}

\begin{lemma}\label{Lemma:tight1}
There are $4$ tight contact structures on $\Sigma\times S^1$ with boundary slopes $t_0$, $\infty$ and $-\frac{1}{t_2}$, where $t_2\neq0$, and satisfying that 
\begin{itemize}
\item $\Sigma\times S^1$ can be decomposed as $L'_{0}\cup L'_{2}\cup \Sigma'\times S^1$, where $\Sigma'\times S^1$ have boundary slopes $\infty$,
\item $L'_{0}$ is a basic slice, 
\item $L'_{2}$ is a thickened torus, all of whose basic slices have the same signs, 
\item the signs of $L'_0$ and $L'_2$ are $\pm\pm$ or $\pm\mp$.
\end{itemize}
\end{lemma}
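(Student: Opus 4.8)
The plan is to run the same argument as in the proof of Lemma~\ref{Lemma:tight}, but with one fewer boundary component to twist at. First I would observe that since one of the boundary slopes is $\infty$, every appropriate tight contact structure on $\Sigma \times S^1$ of this type automatically has a $0$-twisting vertical Legendrian circle (as in the proof of Lemma~\ref{Lemma:t1eq0}, or simply because the ruling curves on the slope-$\infty$ boundary torus are themselves such circles). Hence by Lemma~\ref{Lemma:slopes1} (applied with the degenerate factor $L'_1$ being trivial, since $s_1 = \infty$ already), $\Sigma \times S^1$ decomposes as $L'_0 \cup L'_2 \cup \Sigma' \times S^1$ with all three boundary slopes of $\Sigma' \times S^1$ equal to $\infty$, with $L'_0 = (T^2 \times [0,1]; \infty, t_0)$ a basic slice, and with the innermost basic slice $B'_2$ of $L'_2$ equal to $(T^2 \times [0,1]; \infty, \lceil -\frac{1}{t_2}\rceil)$.

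The key step is then to cite Part~2 of \cite[Lemma 5.1]{h2} applied to $L'_0 \cup B'_2 \cup \Sigma' \times S^1$, whose three boundary slopes are $t_0$, $\lceil -\frac{1}{t_2}\rceil$ and $\infty$. That result classifies the universally tight contact structures on such a pair-of-pants-times-$S^1$ with two slopes "sufficiently negative" relative to the third: they are determined by the signs of the two basic slices $L'_0$ and $B'_2$, and the combinations with opposite signs (i.e.\ $\pm\mp$) are always tight, while the combination $\pm\pm$ with matching signs gives either a tight or an overtwisted structure depending on the numerics — but the four I care about here are the two genuinely distinct sign patterns $\pm\pm$ and $\pm\mp$, each in two "directions," giving $4$. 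Each of these universally tight pieces extends to a universally tight $\tilde\Sigma \times S^1$ with all boundary slopes $\infty$ (using Lemma~\ref{Lemma:slopes3} together with the extension statement in \cite[Lemma 5.1]{h2}), into which the given contact $\Sigma \times S^1$ embeds; tightness of the ambient structure then passes to tightness of $\Sigma \times S^1$.

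The main obstacle I expect is bookkeeping around exactly which of the "matching sign" cases $\pm\pm$ is tight versus overtwisted, i.e.\ making sure the count is genuinely $4$ and not $3$ or $6$ — this hinges on reading off the hypothesis of Lemma~\ref{Lemma:tight1} correctly (the bullet points restrict to $L'_2$ having \emph{all} basic slices of one sign, and the sign patterns are specified to be $\pm\pm$ or $\pm\mp$, so the count is built into the statement) and on confirming that the continued-fraction-block structure of $L'_2$ lets us shuffle so that $B'_2$ carries the sign of the whole block. Since the statement already pins down the $4$ sign patterns, the real content is just exhibiting tightness, and that is handled uniformly by the embedding-into-universally-tight argument above; no case analysis on $t_0, t_2$ should be needed beyond noting $\lceil -\frac{1}{t_2}\rceil \in \{0,1\}$.
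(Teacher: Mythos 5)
Your overall strategy is the paper's: Lemma~\ref{Lemma:tight1} is proved by running the argument of Lemma~\ref{Lemma:tight} with one factor removed, reducing to the core piece $L'_0\cup B'_2\cup\Sigma'\times S^1$ and then extending/embedding into a universally tight $\tilde\Sigma\times S^1$ with all boundary slopes $\infty$. The gap is in the key citation and, as a consequence, in the matching-sign case. The paper's proof rests on \cite[Lemma 5.2]{h2}, not on Part~2 of \cite[Lemma 5.1]{h2}. As that latter result is used throughout this paper (proofs of Lemma~\ref{Lemma:tight} and Lemma~\ref{Lemma:slopes2}), it concerns the pair of pants whose three boundary slopes are finite and whose decoration consists of three signed basic slices, and it is precisely the statement under which the all-same-sign decorations are excluded (identified with mixed ones or overtwisted, depending on the slopes). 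Applying it to $L'_0\cup B'_2\cup\Sigma'\times S^1$, one of whose boundary tori has slope $\infty$, is outside its boundary condition; and even taken at face value it would not give tightness of the $\pm\pm$ decorations, which are two of the four structures the lemma asserts. The statement adapted to a vertical ($\infty$) boundary torus is Honda's Lemma 5.2: it produces exactly $4$ universally tight structures determined by the two signs, with no exclusion of the matching-sign case — which is why the count here is $4$ for every $t_0$, in contrast with the three-finite-slope situation.

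Your fallback — that the sign patterns are pinned down by the statement, so "the real content is just exhibiting tightness, handled uniformly by the embedding argument" — is circular at the crucial point. The extension of the core piece to a universally tight $\tilde\Sigma\times S^1$ with boundary slopes $\infty$, and hence the embedding of $L'_0\cup L'_2\cup\Sigma'\times S^1$ into it, is available only after one knows that the signed structure on the core piece is universally tight; for $\pm\mp$ this follows from mixed-sign statements, but for $\pm\pm$ it is exactly the content of \cite[Lemma 5.2]{h2} (consistently with the realizations later in the paper, e.g.\ Lemma~\ref{Lemma:3.4.1}, where the decorations $\pm(+)(+\cdots+)$ do occur as exceptional links). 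So, as written, the two matching-sign structures are left unproven; replacing your citation by \cite[Lemma 5.2]{h2} repairs the argument and recovers the paper's proof. (A minor point: since the hypothesis already requires all basic slices of $L'_2$ to carry the same sign, no shuffling is needed to arrange the sign of $B'_2$.)
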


\begin{proof}
Using \cite[Lemma 5.2]{h2}, the proof is similar to that of Lemma~\ref{Lemma:tight}.
\end{proof}





\section{Methods of construction of strongly exceptional Legendrian $A_3$ links}

In practice, contact surgery diagrams are a common tool for representing strongly exceptional Legendrian links. Several works, such as \cite{go1}, \cite{go}, \cite{go2}, \cite{m} and \cite{emm}  employ this technique. In this paper, we utilize contact surgery diagrams to construct strongly exceptional Legendrian $A_3$ links. It is worth noting that if an exceptional Legendrian $A_3$ link can be constructed by this technique, then it must be strongly exceptional. This is because conducting contact surgery along such a Legendrian $A_3$ link results in a tight contact 3-manifold, whereas a Giroux torsion domain in $\Sigma\times S^1$ gives rise to an overtwisted disk after the surgery.  Given a contact surgery diagram for an exceptional Legendrian $A_3$ link, the Thurston-Bennequin invariants and rotation numbers can be calculated using  \cite[Lemma 6.6]{loss}. Furthermore, the $d_3$-invariant of the ambient contact 3-sphere can be obtained according to \cite{DGS}.

Additionally, we introduce three other methods. The first method involves performing Legendrian  connected sums of two Legendrian knots. The concept of Legendrian connected sum of Legendrian knots was defined in \cite[Section 3]{eh}.

\begin{figure}[htb]
\begin{overpic}
{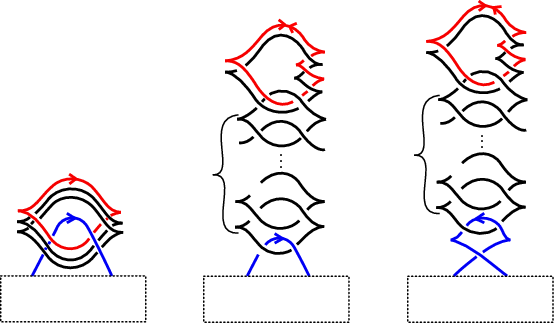}
\put(1, 7){$K'_{0}\# K''_{0}\cup K_2$}
\put(99, 7){$K'_{0}\# K''_{0}\cup K_2$}
\put(197, 7){$K'_{0}\# K''_{0}\cup K_2$}
\put(0, 62){$K_1$}
\put(100, 134){$K_1$}
\put(200, 144){$K_1$}
\put(59, 33){$+1$}
\put(59, 43){$+1$}
\put(154, 118){$+1$}
\put(154, 90){$-1$}
\put(154, 76){$-1$}
\put(154, 51){$-1$}
\put(154, 38){$-1$}
\put(251, 127){$+1$}
\put(251, 98){$-1$}
\put(251, 83){$-1$}
\put(251, 60){$-1$}
\put(251, 46){$-1$}
\end{overpic}
\caption{In the middle and right picture, for $t_1$ even, $K'_0$ and $K_1$  bear the same orientation, for $t_{1}$ odd, the opposite one.  }
\label{Figure:link40inot}
\end{figure}

\begin{lemma}\label{Lemma:connected}
Let $K'_{0}\cup K_1$ be a strongly exceptional Legendrian Hopf link in a contact $(S^3, \xi_{\frac{1}{2}})$ with $(t'_{0}, r'_0)=(t_{1}, r_1)=(1, 0)$ or $(t'_{0}, r'_0)=(0, \pm1), t_{1}\geq2, r_{1}=\pm(t_{1}-1)$. Let $K''_{0}\cup K_2$ be a strongly exceptional Legendrian Hopf link in a contact $S^3$. Then the Legendrian connected sum $(K'_{0}\# K''_{0})\cup K_1\cup K_2$ is a strongly exceptional Legendrian $A_3$ link in a contact $S^3$.
\end{lemma}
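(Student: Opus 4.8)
The plan is to realize $(K'_{0}\# K''_{0})\cup K_1\cup K_2$ as the distinguished three–component sublink of a contact surgery diagram, and then to establish two things: that its exterior is tight (so the link is exceptional), and that this exterior has no Giroux torsion. The second point will follow formally from the principle recorded just above Figure~\ref{Figure:link40inot}: once the $A_3$ link is exceptional and is presented by contact surgery, completing that surgery (including along the $A_3$ link itself) yields a tight contact $3$–manifold, so a Giroux torsion domain in the exterior would persist under the surgery and create an overtwisted disk, a contradiction. Hence the real content is the tightness of the exterior.

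First I would assemble the surgery diagram. Under either listed alternative, $K'_{0}\cup K_1$ is one of the strongly exceptional Hopf links in $(S^3,\xi_{1/2})$ that is presented by contact $(\pm1)$–surgery on a Legendrian link in $(S^3,\xi_{st})$, as in Figure~\ref{Figure:link40inot}, with $K'_0$ a Legendrian unknot and $K_1$ a Legendrian meridian of it; and by \cite{go} the strongly exceptional Hopf link $K''_{0}\cup K_2$ has a similar contact $(\pm1)$–surgery presentation, with $K''_0$ an unknot and $K_2$ a meridian. I would then perform the Legendrian connected sum of $K'_0$ and $K''_0$ in the sense of \cite[Section 3]{eh}, inside a Darboux ball meeting each of $K'_0,K''_0$ in an unknotted Legendrian arc and disjoint from $K_1$, $K_2$ and from every surgery curve. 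The result is a single contact surgery diagram whose ambient manifold is $S^3\#S^3=S^3$, whose ambient contact structure is the connected sum of the two contact structures on $S^3$ (hence a contact $3$–sphere, with $d_3$ fixed by additivity), and whose distinguished sublink is $(K'_0\#K''_0)\cup K_1\cup K_2$; since $K'_0,K''_0$ are unknots and $K_1,K_2$ are meridians of them, this link is topologically the $A_3$ link. The two families in the hypothesis are precisely the Hopf links admitting the presentations in Figure~\ref{Figure:link40inot} for which this connected sum can be carried out.

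For tightness of the exterior I would use the cut–and–paste description
\[
S^3\setminus N(A_3)=\bigl(S^3\setminus N(K'_0\cup K_1)\bigr)\cup_{P}\bigl(S^3\setminus N(K''_0\cup K_2)\bigr),
\]
where $P$ is the convex ``connected sum annulus'': under the identification $S^3\setminus N(A_3)\cong\Sigma\times S^1$ it is a vertical annulus $\delta\times S^1$ for an arc $\delta\subset\Sigma$ with both endpoints on $c_0$, so that $\partial P$ is a pair of meridians of $K_0$ on $T_0$. Each of the two pieces is an appropriate tight $T^2\times[0,1]$, being the exterior of a strongly exceptional Hopf link. Using the hypotheses to pin down the contact structures on the two pieces near $T_0$, I would take $P$ with no boundary–parallel dividing curves, read off its dividing slopes from the two basic–slice decompositions, and verify that after edge–rounding the union is one of the appropriate tight contact structures on $\Sigma\times S^1$ from Section~\ref{Section:realizations}; equivalently, that it embeds into a universally tight $\widetilde\Sigma\times S^1$ with all boundary slopes $\infty$, which is tight by Lemma~\ref{Lemma:slopes3} and is detected through Lemma~\ref{Lemma:tight} or Lemma~\ref{Lemma:tight1}.

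The step I expect to be the main obstacle is exactly this last one: controlling the tight contact structure obtained by gluing two thickened tori along the convex annulus $P$, and in particular ruling out the extra Giroux torsion that an uncontrolled gluing along a convex annulus could introduce. This is what the two special families of Hopf links are there to prevent, since their $T^2\times[0,1]$ exteriors are ``minimal'' models near $T_0$, so that no bypass forced along $P$ produces either an overtwisted disk or a boundary–parallel torus of a torsion–producing slope; and where convenient I would finesse the point by the surgery route of the second paragraph, using tightness of the completed contact surgery to force the outcome.
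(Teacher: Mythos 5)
There is a genuine gap: you correctly identify tightness of the exterior as the real content of the lemma, but you never actually prove it. Your primary route (splitting $S^3\setminus N(A_3)$ into the two Hopf-link exteriors glued along the connected-sum annulus $P$ and controlling the gluing by convex-surface theory) is exactly the step you concede is ``the main obstacle,'' and gluing two appropriate tight thickened tori along a convex annulus does not in general yield a tight, torsion-free manifold; no verification of the dividing-set configurations or of the embedding into a universally tight $\widetilde\Sigma\times S^1$ is carried out, so nothing is established. Your fallback, ``using tightness of the completed contact surgery to force the outcome,'' is circular as stated: to invoke the principle that a surgery presentation implies strong exceptionality you must first exhibit a contact surgery along $(K'_0\# K''_0)\cup K_1\cup K_2$ whose result is tight, and that is precisely what is missing. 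The paper supplies this by a concrete cancellation mechanism tied to the specific hypotheses on $K'_0\cup K_1$: in the diagrams of Figure~\ref{Figure:link40inot}, performing contact $(-1)$-surgery along $K_1$ (or contact $(-\frac{1}{2})$-surgery when $t'_0=t_1=1$) cancels the contact $(+1)$-surgeries coming from the $K'_0\cup K_1$ part, leaving exactly a surgery diagram for the known strongly exceptional Hopf link $K''_0\cup K_2$; since suitable surgeries on that link give closed tight manifolds and Legendrian $(-1)$-surgery preserves tightness \cite{w}, some surgery on the $A_3$ link is tight, which is what forces the exterior to be appropriate tight. This is the idea your proposal lacks, and it is also why the hypotheses single out those two families of $K'_0\cup K_1$.

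A second concrete error: you assert that by \cite{go} every strongly exceptional Hopf link $K''_0\cup K_2$ admits a contact $(\pm1)$-surgery presentation. That fails precisely when $(t''_0,t_2)=(2,1)$ or $(1,2)$, which is why the paper treats these cases separately: there the exterior of $K''_0\cup K_2$ is a universally tight thickened torus that embeds into a tight $T^3$, and tightness after the contact $(-1)$-surgery along $K_1$ is obtained from tightness of $(-1)$-surgery on links in tight $T^3$ rather than from a surgery diagram. Without the cancellation argument and without this case distinction, your proposal does not close.
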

\begin{proof}
Suppose $t'_{0}=0, t_{1}\geq1$. Let $t''_0$ be the Thurston-Bennequin invariant of $K''_0$. If the pair $(t''_0, t_2)$ is not $(2,1)$ or $(1,2)$, then any strongly exceptional Legendrian Hopf link $K''_0\cup K_2$ has a contact surgery diagram \cite{go}. As a result, $(K'_{0}\# K''_{0})\cup K_1\cup K_2$ has a contact surgery diagram as shown in the middle and right of Figure~\ref{Figure:link40inot}. We then perform contact $(-1)$-surgery along $K_1$ and cancel the contact $(+1)$-surgery along the Legendrian unknots. By ignoring the Legendrian unknots with contact $(-1)$-surgeries, we obtain a contact surgery diagram for the Legendrian link $K''_{0}\cup K_2$. As per \cite{go}, some contact surgeries along $K''_{0}\cup K_2$ will result in  closed tight contact 3-manifolds. Since contact $(-1)$-surgery on closed contact 3-manifold preserves tightness \cite{w}, some contact surgery along $(K'_{0}\# K''_{0})\cup K_1\cup K_2$ will yield a tight contact 3-manifold. Therefore, $(K'_{0}\# K''_{0})\cup K_1\cup K_2$ is strongly exceptional. 

In the case where $(t''_0, t_2)$ is either $(2,1)$ or $(1,2)$, \cite{go} tells us that its exterior is a universally tight thickened torus and can therefore be contact embedded into a tight contact $T^3$. The contact $(-1)$-surgery along links in a tight contact $T^3$ results in a tight 3-manifold. As such, the contact $(-1)$-surgery along links in the exterior of $K''_0\cup K_2$ will also yield a tight 3-manifold. Therefore, the contact $(-1)$-surgery along $K_1$ will result in a tight contact 3-manifold. This means that  $(K'_{0}\# K''_{0})\cup K_1\cup K_2$ is strongly exceptional. 

Assuming $t'_{0}=t_{1}=1$. If the pair $(t''_0, t_2)$ is not $(2,1)$ or $(1,2)$, then $(K'_{0}\# K''_{0})\cup K_1\cup K_2$ will have a contact surgery diagram as shown in the left of Figure~\ref{Figure:link40inot}.  We then perform contact $(-\frac{1}{2})$-surgery along $K_1$ and cancel the contact $(+1)$-surgery along the two Legendrian unknots. By doing so,  we obtain a contact surgery diagram for the strongly exceptional Legendrian link $K''_0\cup K_2$. This means that the exterior of  $(K'_{0}\# K''_{0})\cup K_1\cup K_2$ is appropriate tight.

If the pair $(t''_0, t_2)$ is either $(2,1)$ or $(1,2)$,  we can apply the same argument as in the previous case.
\end{proof}

We recall that the $d_3$-invariant of the contact connected sum of two contact 3-spheres $(S^3,\xi)$ and $(S^3, \xi')$ is given by $d_{3}(\xi)+d_{3}(\xi')+\frac{1}{2}$. Suppose $K''_0$ has Thurston-Bennequin invariant $t''_0$ and rotation number $r''_0$, then $K'_0\#K''_0$ has Thurston-Bennequin invariant $t'_0+t''_0+1$ and rotation number $r'_0+r''_0$.

The second method involves adding local Legendrian meridians. In a contact 3-sphere, consider a Legendrian knot intersecting a Darboux ball in a simple arc. A Legendrian unknot within the Darboux ball, which serves as a meridian of the given Legendrian knot, is called a local Legendrian meridian. The following lemma is straightforward.

\begin{lemma}\label{Lemma:meridian}
Suppose $K_{0}\cup K_{2}$ is a strongly exceptional Legendrian Hopf link. Let $K_1$ be a local Legendrian meridian of $K_0$. Then $K_{0}\cup K_{1}\cup K_{2}$ is an  strongly exceptional Legendrian $A_3$ link with $t_{1}<0$ and $r_{1}\in\{t_{1}+1, t_{1}+3, \cdots, -t_{1}-1\}$.
\end{lemma}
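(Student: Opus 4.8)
The statement of Lemma~\ref{Lemma:meridian} is that adding a local Legendrian meridian $K_1$ to one component $K_0$ of a strongly exceptional Legendrian Hopf link $K_0\cup K_2$ produces a strongly exceptional Legendrian $A_3$ link, with the stated constraints $t_1<0$ and $r_1\in\{t_1+1,t_1+3,\dots,-t_1-1\}$. I would organize the proof around three points: (i) the resulting link really is an $A_3$ link (topological identification); (ii) the exterior of $K_0\cup K_1\cup K_2$ is tight with zero Giroux torsion, i.e.\ appropriate tight, hence the link is strongly exceptional; (iii) the classical invariants of the new component $K_1$ lie in the stated ranges.

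For (i): since $K_1$ is an unknotted meridian of $K_0$ lying in a Darboux ball meeting $K_0$ in a trivial arc, $K_1$ is a meridian of $K_0$ unlinked from $K_2$, and $K_2$ is a meridian of $K_0$ disjoint from $K_1$; this is precisely the oriented link $A_3$ of Figure~\ref{figure:Link0} (after fixing orientations). For (ii): the exterior of $K_0\cup K_1\cup K_2$ is obtained from the exterior of $K_0\cup K_2$ by removing a standard neighborhood $N(K_1)$ of the small Legendrian meridian $K_1$, which sits inside a Darboux ball disjoint from everything else. Removing such a neighborhood inside a Darboux ball cannot create overtwistedness or Giroux torsion: a standard neighborhood of a Legendrian unknot in a Darboux ball has a tight complement-in-the-ball with convex boundary, and gluing it to the (appropriate) tight exterior of $K_0\cup K_2$ along a disk region keeps the whole thing tight. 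Concretely, one can argue that the exterior of the Hopf link $K_0\cup K_2$ is appropriate tight by hypothesis, and that drilling $K_1$ out of a Darboux ball in it is the same as a connected-sum-type operation with a tight model piece; since neither tightness nor the absence of Giroux torsion can be destroyed by this (any overtwisted disk or torsion domain would have to be pushed off the solid-torus piece into the Hopf-link exterior, contradicting the hypothesis), the exterior of $K_0\cup K_1\cup K_2$ is appropriate tight, and by the criterion recalled just after Lemma~\ref{Lemma:slopes3} in Section~2 this means $K_0\cup K_1\cup K_2$ is strongly exceptional.

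For (iii): because $K_1$ is a Legendrian unknot contained in a Darboux ball, its Thurston--Bennequin and rotation numbers as a knot in $S^3$ are exactly those of a Legendrian unknot, so $t_1\le -1$ and, by the classical Bennequin-type inequality for the unknot, $r_1\in\{t_1+1,t_1+3,\dots,-t_1-1\}$. One should double-check the sign convention: since $K_1$ is a meridian of $K_0$, its Seifert framing in $S^3$ agrees with the product framing coming from the bounding disk in the Darboux ball, so there is no framing correction and the invariants are literally the unknot invariants. That gives exactly the asserted ranges.

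**Main obstacle.** The genuinely delicate step is (ii) — making rigorous that drilling the local meridian preserves both tightness \emph{and} vanishing of Giroux torsion. The honest statement is that the exterior decomposes as a gluing of the appropriate tight exterior of $K_0\cup K_2$ with a tight ``pair of pants times $S^1$'' model (or, more elementarily, with the tight complement of a neighborhood of a Legendrian unknot in a ball), and one must check the gluing is along convex tori/annuli in a way that the relevant tightness/torsion-freeness results apply — e.g.\ that no new vertical Giroux torsion domain is introduced along any of the three boundary tori. This is where I would lean on the structural lemmas of Section~2 (the characterization of appropriate tight contact structures on $\Sigma\times S^1$ and Lemma~\ref{Lemma:tor>0}), or simply invoke that contact $(-1)$/$(+1)$-surgery descriptions of $K_0\cup K_2$ extend to $K_0\cup K_1\cup K_2$ by adding a small unknot, exactly as in the surgery-diagram argument used in Lemma~\ref{Lemma:connected}; the authors indeed call this ``straightforward,'' so the intended proof is presumably the short surgery-diagram / tight-model argument rather than a hands-on convex-surface computation.
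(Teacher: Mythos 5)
The paper offers no argument at all for this lemma (it is simply declared ``straightforward''), so there is no proof of record to compare against; judging your proposal on its own terms, parts (i) and (iii) are fine, and in fact (ii) is easier than you make it in one respect: tightness of the $A_3$ exterior needs no gluing discussion, since it is a codimension-zero subset of the Hopf-link exterior, which is tight by hypothesis, and an overtwisted disk in a subset is an overtwisted disk in the ambient piece. Your language about ``gluing a tight model piece'' and ``pushing off'' an overtwisted disk is the wrong mechanism and is not needed.

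The genuine gap is the torsion statement, which you correctly flag as the main obstacle but do not close. Vanishing of Giroux torsion along tori parallel to $T_0$ or $T_2$ does follow at once, because such a torsion domain sits inside the Hopf-link exterior along a boundary-parallel class there, contradicting strong exceptionality of $K_0\cup K_2$. But for tori parallel to the new boundary component $T_1=\partial N(K_1)$ this inheritance argument breaks down: inside the Hopf-link exterior that torus is \emph{compressible} (it bounds the solid torus $N(K_1)$), so the zero-torsion hypothesis for $K_0\cup K_2$ says nothing directly. What is missing is the standard observation that a $\pi$-torsion domain along a torus bounding a solid torus forces overtwistedness: inside the torsion domain the dividing slopes of convex tori sweep through all slopes, so one finds a convex torus, isotopic to $\partial N(K_1)$, whose dividing slope is the meridian of $N(K_1)$; its Legendrian divide then bounds a disk in the enclosed solid torus with contact framing equal to the disk framing, i.e.\ an overtwisted disk inside the (tight) Hopf-link exterior, a contradiction. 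Your fallback suggestion of running the surgery-diagram criterion as in Lemma~\ref{Lemma:connected} is also not a complete substitute, since not every strongly exceptional Legendrian Hopf link in the classification is presented by a contact surgery diagram (cf.\ the cases $(t''_0,t_2)=(2,1),(1,2)$ treated separately in that proof), so that route would still need the embedding-into-a-tight-model argument for those cases.
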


The third method involves extending an (appropriate) tight contact $\Sigma\times S^1$ admitting $0$-twisting vertical Legendrian circle to an overtwisted contact 
$S^3$.

Suppose an (appropriate) tight contact structure $\xi$ on $\Sigma\times S^{1}$ has a $0$-twisting vertical Legendrian circle $\gamma$. We attach three contact solid tori $D^{2}_{i}\times S^{1}$, $i=0,1,2$, to $(\Sigma\times S^{1}, \xi)$ such that $\partial D^{2}_{0}$ is identified to $h$, $\partial D^{2}_{1}$ is identified to $c_1$, and $\partial D^{2}_{2}$ is identified to $c_2$. Then the resulting manifold $\Sigma\times S^{1}\cup D^{2}_{0}\times S^{1}\cup D^{2}_{1}\times S^{1}\cup D^{2}_{2}\times S^{1}$ is diffeomorphic to $S^3$.


If the contact structure on $D^{2}_{i}\times S^{1}$ has a minimal convex boundary with slope given by a longitude (i.e., the dividing set of the convex boundary intersects the meridional circle in exactly two points), then it admits a unique tight contact structure. Additionally, the core of such contact solid torus is Legendrian.

Since the dividing set of $T_i$ intersects the meridional disk of $D^{2}_{i}\times S^{1}$ in exactly two points, the contact structure $\xi$ on $\Sigma\times S^1$ uniquely extends to a contact structure on $S^3$. However, Since $\partial D^{2}_{0}$ is identified to $h$, the Legendrian vertical circle $\gamma$ bounds an overtwisted disk in $S^3$. Therefore, the resulting contact structure on $S^3$ is overtwisted. 

\begin{lemma}\label{Lemma:construction}
Let $\xi$ be an (appropriate) tight contact structure on $\Sigma\times S^{1}$ admits a $0$-twisting vertical Legendrian circle. Extending $\xi$ to a contact 3-sphere as above by adding three tight contact solid tori. Let $K_i$, $i=0,1,2$, be the core of three attached contact solid tori. Then $K_{0}\cup K_{1}\cup K_2$ is a (strongly) exceptional Legendrian  $A_3$ link in an overtwisted contact 3-sphere.
\end{lemma}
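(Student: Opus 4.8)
The plan is to verify that the construction in the paragraph preceding the statement actually produces what is claimed: first, that the three attached solid tori carry tight contact structures making the ambient $S^3$ carry a well-defined contact structure; second, that this contact structure is overtwisted; and third, that the three cores form a Legendrian $A_3$ link whose complement is the original (appropriate) tight $\Sigma\times S^1$, hence exceptional (strongly exceptional in the appropriate case).

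First I would recall the gluing setup. We are given $(\Sigma\times S^1,\xi)$ with convex boundary $T_0\cup T_1\cup T_2$, where $T_i$ has dividing set of slope $t_0$, $-\frac1{t_1}$, $-\frac1{t_2}$ respectively, each with exactly two dividing curves. We attach $D^2_i\times S^1$ so that $\partial D^2_0 \mapsto h$, $\partial D^2_1 \mapsto c_1$, $\partial D^2_2 \mapsto c_2$. The key homological observation, already recorded in Section 2, is that $h = \mu_0$, $-c_1 = \mu_1$, $-c_2 = \mu_2$ are the meridians of $K_0, K_1, K_2$, so the attaching is exactly Dehn filling back to $S^3$; the resulting closed manifold is diffeomorphic to $S^3$. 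Because the dividing set of each $T_i$ meets the meridian disk of $D^2_i\times S^1$ in exactly two points, each solid torus admits a unique tight contact structure with the prescribed convex boundary (Honda's classification of tight contact structures on solid tori), and it is the unique such filling; moreover its core is Legendrian. Gluing these uniquely determined pieces to $\xi$ along $T_0\cup T_1\cup T_2$ produces a contact structure on $S^3$, well-defined up to isotopy. Thus $K_0\cup K_1\cup K_2$, the union of the three cores, is a Legendrian link, and by construction its complement (a standard neighborhood of each core removed) is contactomorphic to $(\Sigma\times S^1,\xi)$; in particular the Thurston--Bennequin invariants are $t_0$, $t_1$, $t_2$ with the stated meridians, so this is a Legendrian realization of $A_3$.

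Next I would establish overtwistedness. By hypothesis $\xi$ has a $0$-twisting vertical Legendrian circle $\gamma$, i.e. a Legendrian curve isotopic to $h=\{pt\}\times S^1$ whose contact framing equals the product framing. After the filling, $h$ bounds the meridian disk of $D^2_0\times S^1$ (since $\partial D^2_0$ is identified with $h$), so $\gamma$ is a Legendrian unknot in $S^3$ bounding a disk whose surface framing is the product framing. The $0$-twisting condition says the contact framing of $\gamma$ agrees with the product framing, hence with this disk framing, so $\mathrm{tb}(\gamma)=0$; a Legendrian unknot with $\mathrm{tb}=0$ bounds an overtwisted disk. Therefore the ambient contact $S^3$ is overtwisted.

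Finally, the complement of $K_0\cup K_1\cup K_2$ is $(\Sigma\times S^1,\xi)$ by construction, which is tight by hypothesis, so the link is exceptional; and when $\xi$ is appropriate — no contact embedding of a half Giroux torsion domain with torus parallel to a boundary component — the complement has zero Giroux torsion, so the link is strongly exceptional. The component $K_0$ is the central one with meridian $h$ and the meridians of $K_1,K_2$ are $-c_1,-c_2$, matching the description of $A_3$, so we really do obtain an $A_3$ link. I expect the only genuinely delicate point to be checking that the prescribed convex-boundary data on each $D^2_i\times S^1$ is realized by a (unique) tight contact structure and that gluing it to $\xi$ does not destroy tightness of the interior decomposition or change the contactomorphism type of the complement — but this is exactly the content of Honda's gluing/uniqueness results for solid tori invoked in the paragraph above, so the argument reduces to citing those. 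The overtwisted-disk argument via the $0$-twisting vertical circle is the conceptual heart, and it is short once the framing bookkeeping ($h$ becoming a meridian of $D^2_0\times S^1$) is made explicit.
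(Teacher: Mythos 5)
Your proposal is correct and follows essentially the same route as the paper: the paper's "proof" is the paragraph preceding the lemma, which invokes the uniqueness of the tight contact structure on each solid torus with the given minimal convex boundary (so the filling and the Legendrian cores are well defined, and the link exterior is the original tight $\Sigma\times S^1$), and observes that since $\partial D^2_0$ is identified with $h$, the $0$-twisting vertical Legendrian circle bounds a disk with zero relative twisting, yielding an overtwisted disk. Your framing bookkeeping and the appropriate-tight $\Rightarrow$ strongly exceptional step match the paper's stated equivalence, so there is nothing to add.
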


Moreover, we have the following observations.

\begin{lemma}\label{Lemma:appro}
Let $\xi_1$ and $\xi_2$ be two tight contact structures on $\Sigma\times S^1$ with $0$-twisting vertical Legendrian circles. Suppose they both have minimal convex boundaries with slopes $t_0$, $-\frac{1}{t_1}$ and $-\frac{1}{t_2}$. Suppose their factorizations $L'_{0}\cup L'_{1}\cup L'_{2}\cup \Sigma'\times S^1$ (or $L'_{0}\cup L'_{2}\cup \Sigma'\times S^1$ when $t_1=0$) differ only in the signs of basic slices in $L'_{0}\cup L'_{1}\cup L'_{2}$ (or $L'_{0}\cup L'_{2}$ when $t_1=0$). If $\xi_1$ is appropriate tight, then so is $\xi_2$. 
\end{lemma}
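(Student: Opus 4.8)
The plan is to reduce the whole statement to a single observation about the piece $\Sigma'\times S^1$: for any tight contact structure on $\Sigma\times S^1$ carrying a factorization $L'_0\cup L'_1\cup L'_2\cup\Sigma'\times S^1$ as in Lemma~\ref{Lemma:slopes1} (with the $L'_i$ minimally twisting and $\Sigma'\times S^1$ having all boundary slopes $\infty$), I claim that the structure is appropriate tight if and only if its restriction to $\Sigma'\times S^1$ is appropriate tight. Granting this, the lemma follows at once: $\xi_1$ and $\xi_2$ differ only in the signs of the basic slices in $L'_0\cup L'_1\cup L'_2$, so their restrictions to $\Sigma'\times S^1$ are the same contact structure; since $\xi_1$ is appropriate, this common piece is appropriate, and by Lemma~\ref{Lemma:slopes3} it is therefore \emph{the} unique appropriate tight structure on a $\Sigma\times S^1$ with all boundary slopes $\infty$; hence $\xi_2$ is appropriate, and being tight by hypothesis it is appropriate tight. (The case $t_1=0$, where the factorization is $L'_0\cup L'_2\cup\Sigma'\times S^1$, is handled identically.)

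To prove the claimed equivalence, the easy direction is that if the ambient structure $\xi$ is appropriate then so is the $\Sigma'\times S^1$-piece: a contact embedding of the $\pi$-twisted $T^2\times[0,1]$ into $\Sigma'\times S^1$ with a boundary torus parallel to some $T'_j$ is also such an embedding into $\Sigma\times S^1$, and there the torus $T'_j$ is isotopic to the boundary component $T_j$ through the collar $L'_j$, contradicting appropriateness of $\xi$. For the converse, suppose the $\Sigma'\times S^1$-piece is appropriate but $\xi$ is not, witnessed by a $\pi$-twisted domain $V$ one of whose boundary tori is isotopic to $T_j$. Then both components of $\partial V$ are isotopic to $T_j$, so $V$ together with the collar it cuts off from $T_j$ is a thickened-torus neighborhood of $T_j$; being disjoint from the other two boundary components $T_k,T_l$, it may be isotoped off the collars $L'_k,L'_l$ and hence into $L'_j\cup\Sigma'\times S^1$. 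But $L'_j\cup\Sigma'\times S^1$ admits no $\pi$-twisted domain parallel to $T_j$: crossing the minimally twisting collar $L'_j$ accumulates strictly less than $\pi$ of twisting and arrives at $T'_j$ with slope $\infty$, after which appropriateness of the $\Sigma'\times S^1$-piece forbids accumulating a further $\pi$ of twisting along $T'_j$-parallel tori. This contradiction establishes the converse.

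The main obstacle is the converse direction, and specifically the two intertwined points there: that a boundary-parallel $\pi$-twisted domain can be pushed into the single block $L'_j\cup\Sigma'\times S^1$, and that the twisting available along $T_j$-parallel tori in that block is bounded by the twisting of $L'_j$ plus the (zero) torsion of $\Sigma'\times S^1$. I expect to argue both with the convex-surface, bypass, and edge-rounding machinery already deployed in Lemmas~\ref{vertical} and \ref{Lemma:tor>0}, the essential feature being that the slopes appearing (and hence the twisting bounds) are dictated by the boundary data and by the fixed structure on $\Sigma'\times S^1$, and are completely insensitive to the signs of the basic slices in $L'_0\cup L'_1\cup L'_2$. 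Once this is in hand, no case analysis on $(t_0,t_1,t_2)$ is needed, and the hypothesis ``$\xi_1$ appropriate'' enters only to identify the shared $\Sigma'\times S^1$-piece with the unique appropriate model of Lemma~\ref{Lemma:slopes3}.
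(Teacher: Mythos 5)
Your reduction has the right flavour, but the load-bearing step is exactly the one you postpone. The forward direction of your claimed equivalence is fine, and the identification of the common $\Sigma'\times S^1$ piece (via Lemma~\ref{Lemma:slopes3}) is harmless, but the converse -- that appropriateness of the $\Sigma'\times S^1$ piece forces appropriateness of the whole structure -- is the entire content of the lemma, and you only sketch it (``I expect to argue\dots''). Two concrete problems in that sketch: first, a Giroux torsion domain is a \emph{contact} embedding of $(T^2\times[0,1],\ker(\sin(\pi z)dx+\cos(\pi z)dy))$; a smooth isotopy pushing it off the collars $L'_k,L'_l$ and into $L'_j\cup\Sigma'\times S^1$ is not a contact isotopy, so the domain does not survive the move and you cannot conclude that the torsion is already visible inside $L'_j\cup\Sigma'\times S^1$. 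Second, even restricted to $L'_j\cup\Sigma'\times S^1$, the inequality ``less than $\pi$ of twisting across the minimally twisting collar plus zero torsion in $\Sigma'\times S^1$'' does not by itself bound the twisting of the union: you would need to control the dividing slopes of \emph{all} convex tori parallel to $T_j$ in that union (gluing two pieces each with small twisting can in principle create more), and no such control is established. So as written the proposal has a genuine gap rather than a complete alternative proof.

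For comparison, the paper's proof is a one-line observation in a different spirit: whether a torsion domain in the isotopy class of $T_j$ exists is detected by the slopes of the convex tori parallel to $T_j$ produced by the factorization, and these slopes are identical for $\xi_1$ and $\xi_2$ because changing the signs of basic slices changes the relative Euler class data but not any dividing slopes. In other words, the paper does not route through the $\Sigma'\times S^1$ piece at all; it notes that the Giroux-torsion computation is slope-theoretic and hence sign-blind. If you want to salvage your approach, the honest way is to prove your converse by exactly this kind of slope argument (as in Lemmas~\ref{vertical} and~\ref{Lemma:tor>0}), at which point the detour through the uniqueness statement of Lemma~\ref{Lemma:slopes3} becomes unnecessary.
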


\begin{proof}
This is because the computation of Giroux torsion of an embedded torus $T$ in a contact 3-manifold only depends on the slopes of the convex tori parallel to $T$. 
\end{proof}

\begin{lemma}\label{Lemma:stabilization}
Suppose $\mathcal{L}$ is an exceptional Legendrian $A_3$ link whose exterior contains a $0$-twisting Legendrian vertical circle. Then the components $K_0$ and $K_{i}$ with $t_i\neq0$, where $i=1,2$, of $\mathcal{L}$ can always be destabilized.
\end{lemma}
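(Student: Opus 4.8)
The plan is to recover, from the $0$-twisting vertical Legendrian circle $\gamma$ in the exterior of $\mathcal{L}$, a product collar on which the destabilizations become visible. First I would invoke Lemma~\ref{Lemma:slopes1} to factorize the exterior $\Sigma\times S^1$ as $L'_0\cup L'_1\cup L'_2\cup\Sigma'\times S^1$ (or $L'_0\cup L'_2\cup\Sigma'\times S^1$ when $t_1=0$), where each $\Sigma'\times S^1$ boundary component has slope $\infty$, $L'_0$ is a thickened torus with boundary slopes $\infty$ and $t_0$, and $L'_i$ is a thickened torus with boundary slopes $\infty$ and $-\frac{1}{t_i}$ for the indices $i$ with $t_i\neq 0$. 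The core $K_0$ of the attached solid torus $D^2_0\times S^1$ sits inside the solid torus $N(K_0)\cup L'_0$, and likewise $K_i$ sits inside $N(K_i)\cup L'_i$; each of these is a solid torus whose convex boundary has dividing slope (measured in the $(\mu,\lambda)$-framing of the ambient $S^3$) equal to $t_0$, respectively $-\frac{1}{t_i}$, i.e. the same Thurston--Bennequin data as the Legendrian knot itself.

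The key step is then the standard fact that a Legendrian knot $K$ in a contact manifold admits a destabilization precisely when its standard neighborhood $N(K)$ can be thickened, i.e. sits inside a larger solid torus with convex boundary of strictly larger dividing slope (in the sense of moving toward $\infty$ along the appropriate arc of the Farey graph); see Honda~\cite{h1} and the translation between bypasses and stabilizations. Here the ambient solid torus $N(K_i)\cup L'_i$ has boundary slope $\infty$, and $\infty$ is strictly counterclockwise of $-\frac{1}{t_i}$ (and strictly counterclockwise of $t_0$ for $K_0$) on the relevant Farey arc, so that $L'_i$ (respectively $L'_0$) genuinely contains a nontrivial thickening of the standard neighborhood. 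Concretely, I would locate inside $L'_i$ a convex torus parallel to $T_i$ with dividing slope one step closer to $-\frac{1}{t_i}$ along the Farey graph — equivalently a bypass attached to $T_i$ from outside — and use Lemma~\ref{Lemma:bypass} to identify the resulting slope; peeling off this bypass layer exhibits $K_i$ as a stabilization of a Legendrian knot with larger Thurston--Bennequin invariant, and since the new exterior still contains $\gamma$ and is a submanifold of the old tight exterior, the destabilized link is again exceptional. The same argument applied to $L'_0$ handles $K_0$.

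The main obstacle is bookkeeping rather than conceptual: one must check that the bypass layer guaranteed inside $L'_i$ can be taken disjoint from the bypass layers inside $L'_0$ and $L'_{3-i}$, and that after peeling it off the circle $\gamma$ still lies in the remaining exterior and still has twisting number $0$ — this is immediate because $\gamma$ lives in $\Sigma'\times S^1$, which is untouched, and because the factorization keeps the $L'_j$ pairwise disjoint. One must also confirm that ``destabilize the component $K_i$'' in the statement is interpreted as an ambient operation on the link (the other components and the contact structure on the complement of a neighborhood of $K_i$ being unaffected), which is exactly what peeling a collar off $L'_i$ accomplishes. I would also remark that the hypothesis $t_i\neq 0$ is essential: when $t_i=0$ the boundary slope of $T_i$ is already $\infty$, there is no room in $L'_i$, and indeed $K_i$ need not be destabilizable.
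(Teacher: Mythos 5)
Your proposal is correct and follows essentially the same route as the paper: use the factorization from Lemma~\ref{Lemma:slopes1} and locate inside $L'_0$ a basic slice with slopes $(t_0+1,t_0)$ and inside $L'_i$ a basic slice with slopes $(-\tfrac{1}{t_i+1},-\tfrac{1}{t_i})$, whose outer layers, when peeled off, exhibit the destabilizations. The paper's proof is just a terser version of this, so no further comparison is needed.
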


\begin{proof}
There is a basic slice $L'_0$ in the exterior of $\mathcal{L}$ which is $(T^{2}\times [0,1], \infty, t_0)$. We can find a basic slice $(T^{2}\times [0,1], t_0+1, t_0)$ in $L'_0$. So the component $K_0$ can be destabilized.  For $i=1,2$, since there is a basic slice $(T^{2}\times [0,1], -\frac{1}{t_i +1}, -\frac{1}{t_i})$  in the thickened torus $L'_i$, the component $K_i$ can be destabilized. 
\end{proof}

\section{Realizations of strongly exceptional Legendrian $A_3$ links}\label{Section:realizations}

In this section, we construct strongly exceptional Legendrian $A_3$ links. 

Throughout this paper, in the contact surgery diagrams representing a Legendrian $A_3$ link, if a component is a Legendrian push-off of some $K_i$, $i=0, 1, 2$, then its contact surgery coefficient is $+1$, otherwise its contact surgery coefficient is $-1$.

\subsection{$t_{1}<0$ and $t_{2}<0$.}

The boundary slopes of $\Sigma\times S^1$ are $s_0=t_0$, $s_1=-\frac{1}{t_1}\in(0, 1]$ and $s_2=-\frac{1}{t_2}\in(0, 1]$. 

\begin{lemma}\label{Lemma:relEuler1}
For any $t_0\in\mathbb{Z}$, there are $6$ exceptional Legendrian $A_3$ links whose exteriors have $0$-twisting vertical Legendrian circles, and have decorations $\pm(+)(\underbrace{- \cdots -}_{-t_{1}})(\underbrace{- \cdots -}_{-t_{2}})$, $\pm(+)(\underbrace{- \cdots -}_{-t_{1}})(\underbrace{+ \cdots +}_{-t_{2}})$ and $\pm(+)(\underbrace{+ \cdots +}_{-t_{1}})(\underbrace{- \cdots -}_{-t_{2}})$. Their rotation numbers are $$r_{0}=\pm(t_0-1), r_1=\pm(1-t_{1}), r_{2}=\pm(1-t_{2}); r_{0}=\pm(t_0-1), r_1=\pm(1-t_{1}), r_{2}=\pm(t_{2}+1); $$
$$r_{0}=\pm(t_0-1), r_1=\pm(t_{1}+1), r_{2}=\pm(1-t_{2}).$$
The corresponding $d_3$-invariants are independent of $t_0$ if $t_1$ and $t_2$ are fixed.
\end{lemma}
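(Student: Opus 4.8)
The plan is to produce the six exceptional Legendrian $A_3$ links by the third construction method of Section 3, namely by extending the appropriate tight contact structures on $\Sigma\times S^1$ whose decorations are the six listed ones. That these decorations do give appropriate tight contact structures follows from Lemma~\ref{Lemma:tight} (their signs are of the form $\pm\mp\mp$, $\pm\mp\pm$ or $\pm\pm\mp$, so $\xi$ is tight) together with Lemma~\ref{Lemma:appro} and Lemma~\ref{Lemma:construction}: since each admits a $0$-twisting vertical Legendrian circle (by Lemma~\ref{vertical}, valid for all $t_0$ because $s_1,s_2\in(0,1]$ forces $\lceil-1/t_i\rceil=0$, hence $t_0+0+0\geq0$ is not quite enough — I will instead observe the vertical circle comes directly from the factorization of Lemma~\ref{Lemma:slopes1}), attaching the three tight solid tori $D^2_i\times S^1$ with $\partial D^2_0=h$, $\partial D^2_1=c_1$, $\partial D^2_2=c_2$ yields an overtwisted $S^3$ in which the three cores form a strongly exceptional Legendrian $A_3$ link. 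The six decorations are genuinely distinct because $L'_0$, $L'_1$, $L'_2$ are each made of basic slices all of one sign and the sign-vectors $(+,-,-),(-,+,+),(+,-,+),(-,+,-),(+,+,-),(-,-,+)$ are pairwise inequivalent under the shuffling moves allowed inside the continued-fraction blocks.

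Next I would compute the rotation numbers. For this I would use the standard fact (as in \cite{h2}, \cite{emm}) that in a signed basic slice $(T^2\times[0,1]; s_0,s_1)$ of sign $\epsilon$ the contribution to the relative Euler class — hence to the rotation number of the Legendrian core picked out after filling — is $\epsilon$ times the difference of the primitive integral vectors representing $s_0$ and $s_1$. Concretely, $L'_0=(T^2\times[0,1];\infty,t_0)$ of sign $\pm$ contributes $\pm(t_0-1)$ to $r_0$ after filling the $h$-curve; each of the $-t_1$ basic slices of $L'_1$ (all of sign $\pm$, with consecutive slopes running $\infty=-\tfrac10,0,-\tfrac11,\dots,-\tfrac1{t_1}$ after the change of coordinates $c_1\mapsto$ meridian) contributes a fixed amount, and summing the $-t_1$ of them gives $\pm(1-t_1)$ or $\pm(t_1+1)$ according to the common sign, and symmetrically for $r_2$ with $t_2$. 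Assembling the three blocks with the six sign patterns produces exactly the three displayed families of rotation numbers (each with its $\pm$), which is what the lemma asserts. I would do one representative computation in detail and indicate that the others are identical after relabelling, deferring the full bookkeeping to the sample computation promised in the last section.

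Finally, for the $d_3$-invariant I would use the description of $\xi$ on $S^3$ as surgery (via \cite{DGS}), or more efficiently observe that varying $t_0$ while keeping $t_1,t_2$ fixed amounts to changing only the basic slice $L'_0=(T^2\times[0,1];\infty,t_0)$, which — being a single basic slice of fixed sign whose only invariant that changes is the boundary slope $t_0$ — alters the relevant filling of $D^2_0\times S^1$ in a way that does not change the homotopy class of the plane field on $S^3$; equivalently, the $d_3$ formula from \cite{DGS} depends on $t_0$ only through the contributions that cancel against the corresponding change in the surgery coefficient along $K_0$. So $d_3$ is independent of $t_0$ once $t_1,t_2$ are fixed. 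The main obstacle I anticipate is the rotation-number bookkeeping across the change of coordinates $\begin{bmatrix}0&-1\\1&1\end{bmatrix}$ used in Section 2 to turn the slopes $-\tfrac1{t_i}$ into a continued-fraction block based at $\infty$: one must track carefully how the relative Euler class contribution of each basic slice transforms, and how it reassembles after edge-rounding and filling, so that the signs in $r_0=\pm(t_0-1)$, $r_1=\pm(1-t_1)$ vs.\ $\pm(t_1+1)$, etc., come out consistently; everything else is routine once that is pinned down.
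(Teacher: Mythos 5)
Your construction step is the paper's: the six decorations give tight structures by Lemma~\ref{Lemma:tight} (the signs are never all equal), and Lemma~\ref{Lemma:construction} extends them to an overtwisted $S^3$ in which the three cores form an exceptional $A_3$ link. Two small corrections there: for $t_1,t_2<0$ one has $\lceil-\tfrac{1}{t_i}\rceil=1$, not $0$ (your self-correction via the factorization of Lemma~\ref{Lemma:slopes1} is the right fix, since the structures of Lemma~\ref{Lemma:tight} contain $\Sigma'\times S^1$ with all slopes $\infty$); and you should not claim the resulting links are \emph{strongly} exceptional for all $t_0$ --- for $t_0\leq-1$ Lemma~\ref{Lemma:tor>0} shows these structures are not appropriate tight, which is exactly why the lemma only asserts ``exceptional''.

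The genuine gaps are in the two remaining claims. First, the rotation numbers: you state the per-basic-slice relative Euler class rule and assert the totals, but you never carry out a computation, and the part you defer (``the main obstacle I anticipate'') is precisely the content of the paper's proof. The paper fixes explicit Seifert surfaces ($\Sigma$ capped with two disks for $K_0$; a meridian disk plus an annulus for $K_i$), evaluates the relative Euler class slice by slice with the $\ominus,\bullet$ calculus of \cite{emm}, and --- crucially --- uses a definite sign convention (reversing the dividing-slope signs in $L'_1,L'_2$ when computing $r_0$, and in $L'_0$ when computing $r_1,r_2$) to get, e.g., $r_0=1-t_0$, $r_i=t_i-1$ for the decoration $(+)(-\cdots-)(-\cdots-)$. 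Without doing at least one such computation and pinning down that convention, the displayed values $\pm(t_0-1)$, $\pm(1\mp t_i)$ are not established. Second, your $d_3$ argument is circular: saying that changing $L'_0$ ``does not change the homotopy class of the plane field'' is a restatement of the claim, not a proof, and the surgery-cancellation remark is not substantiated. The paper's argument is different and short: by Lemma~\ref{Lemma:stabilization} the component $K_0$ can be destabilized inside the exterior (which contains a $0$-twisting vertical Legendrian circle), so the links with the same decoration and varying $t_0$ are related by (de)stabilizations at $K_0$; these take place in a standard neighborhood and do not change the ambient contact $3$-sphere, hence $d_3$ is independent of $t_0$. Replacing your homotopy-class assertion with this destabilization argument, and supplying the Euler-class computation, would close the gaps.
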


\begin{proof}
The first statement follows from Lemma~\ref{Lemma:tight} and Lemma~\ref{Lemma:construction}. The rotation number of a Legendrian knot in a contact 3-sphere is the evaluation of the relative Euler class on a Seifert surface of the knot. We compute the rotation numbers in a similar way as that in \cite[Section 2.5]{emm}. The Seifert surface of $K_0$ can be obtained by capping the pair of pants $\Sigma$ by two disks along the boundary components $c_1$ and $c_2$. The Seifert surface of $K_i$, $i=1,2$, is a union of a meridian disk of $K_0$ and an annulus.  For instance, if the signs of $L'_0$, $L'_1$ and $L'_2$ are $+--$, see Figure~\ref{Figure:Sigma1} for an example, then the rotation numbers can be computed using relative Euler class as follows. We denote $\frac{a}{b}\ominus\frac{c}{d}$ to be $\frac{a-c}{b-d}$, and $\frac{a}{b}\bullet\frac{c}{d}$ to be $ad-bc$ \cite[Section 2.5]{emm}. The denominators are assumed non-negative. The rotation number of $K_0$ is 
\begin{align*}
r_{0}&=-(\frac{-1}{-t_1}\ominus\frac{-1}{-t_{1}-1})\bullet\frac{0}{1}-(\frac{-1}{-t_{1}-1}\ominus\frac{-1}{-t_{1}-2})\bullet\frac{0}{1}-\cdots-(\frac{-1}{1}\ominus\frac{-1}{0})\bullet\frac{0}{1}\\
&-(\frac{-1}{-t_2}\ominus\frac{-1}{-t_{2}-1})\bullet\frac{0}{1}-(\frac{-1}{-t_{2}-1}\ominus\frac{-1}{-t_{2}-2})\bullet\frac{0}{1}-\cdots-(\frac{-1}{1}\ominus\frac{-1}{0})\bullet\frac{0}{1}\\
&+(\frac{1}{0}\ominus\frac{t_0}{1})\bullet\frac{0}{1}=1-t_{0}.
\end{align*}
The rotation number of $K_1$ is \begin{align*}
r_1=(\frac{-t_0}{1}\ominus\frac{-1}{0})\bullet\frac{1}{0}-(\frac{1}{0}\ominus\frac{1}{1})\bullet\frac{1}{0}-(\frac{1}{1}\ominus\frac{1}{2})\bullet\frac{1}{0}-\cdots-(\frac{1}{-t_1-1}\ominus\frac{1}{-t_{1}})\bullet\frac{1}{0}=t_{1}-1.
\end{align*}
The rotation number of $K_2$ is \begin{align*}
r_2=(\frac{-t_0}{1}\ominus\frac{-1}{0})\bullet\frac{1}{0}-(\frac{1}{0}\ominus\frac{1}{1})\bullet\frac{1}{0}-(\frac{1}{1}\ominus\frac{1}{2})\bullet\frac{1}{0}-\cdots-(\frac{1}{-t_{2}-1}\ominus\frac{1}{-t_{2}})\bullet\frac{1}{0}=t_{2}-1.
\end{align*}
In the computation above, when calculating $r_0$, it is necessary to reverse the signs of the dividing slopes in the thickened tori $L'_1$ and $L'_2$. Similarly, when calculating $r_1$ and $r_2$, the signs of the dividing slopes in the thickened torus $L'_0$ should be reversed.

The last statement follows directly from Lemma~\ref{Lemma:stabilization}.
\end{proof}

In a similar way, we can use relative Euler classes and the given decorations to compute the rotation numbers of any other Legendrian $A_3$ links whose exteriors contain a $0$-twisting vertical Legendrian circle.



\begin{proof}[Proof of Theorem~\ref{Theorem:t_1<0t_2<0}] Recall that the numbers of strongly exceptional Legendrian $A_3$ links have upper bounds listed in Lemma~\ref{Lemma:t1<0t2<0}. We will show that these upper bounds can be attained.

\begin{lemma}\label{Lemma:topological}
The oriented link $K_0\cup K_1\cup K_2$ in the surgery diagram in Figure~\ref{Figure:link51} is a topological  $A_3$ link in $S^3$.
\end{lemma}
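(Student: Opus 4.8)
The plan is to treat the statement as pure Kirby calculus: forget the contact geometry, replace every contact surgery coefficient in Figure~\ref{Figure:link51} by its smooth framing (so a Legendrian push-off carrying contact coefficient $+1$ becomes a $(tb+1)$-framed unknot, and each remaining auxiliary unknot carrying contact coefficient $-1$ becomes a $(tb-1)$-framed unknot), and then simplify the resulting framed link while carrying the three distinguished, un-surgered components $K_0\cup K_1\cup K_2$ along. The goal is to reduce the surgery sublink to the empty diagram — which shows the ambient manifold is $S^3$ — and simultaneously to see that $K_0\cup K_1\cup K_2$ is carried to the standard oriented link $A_3$ of Figure~\ref{figure:Link0}.

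First I would organize the auxiliary unknots. In the regime $t_1<0$, $t_2<0$ the diagram has the same flavor as the chains in Figure~\ref{Figure:link40inot}: the component $K_0$ together with two ``arms'' of $\pm1$-framed unknots, one linked with $K_1$ and one linked with $K_2$, plus the push-off components. I would then perform a sequence of blow-downs (and, where needed, handle slides) on these arms, starting from the innermost unknot of each arm; each blow-down of a $\pm1$-framed unknot inserts a $\mp1$ full twist on the strands it encircles and shifts the framings of its chain-neighbours by $\pm1$ in the usual way, so that after finitely many moves each arm collapses and the entire surgery sublink is erased. I would present this reduction as an explicit sequence of figures.

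The second step runs in parallel with the first: under each blow-down I would record the effect on the knot types of $K_0,K_1,K_2$ and on the pairwise linking numbers $\mathrm{lk}(K_0,K_1)$, $\mathrm{lk}(K_0,K_2)$, $\mathrm{lk}(K_1,K_2)$. The point is that, once orientations are fixed, the twists introduced along each arm cancel in pairs, so at the end $K_1$ and $K_2$ are geometric meridians of $K_0$, are unlinked from each other, and all three components are unknotted — i.e.\ exactly the ordered, oriented link $A_3$. Together with the fact that the surgery sublink has disappeared, this gives $(S^3,K_0\cup K_1\cup K_2)\cong(S^3,A_3)$.

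The main obstacle is bookkeeping: one must choose the order of blow-downs so that every auxiliary framing is successively brought to $\pm1$ and removed (equivalently, check that the linking matrix of the surgery sublink is unimodular and that the Kirby sequence terminates at $S^3$), and one must be sure the accumulated twists leave $K_0,K_1,K_2$ unknotted with the correct linking numbers rather than creating spurious clasps. As a safeguard I would, independently of the figure manipulation, compute $\pi_1$ of the surgered manifold from the surgery presentation (it must be trivial) and read off the three linking numbers directly from the original diagram, using these as consistency checks on the Kirby moves.
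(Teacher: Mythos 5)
Your plan is essentially the paper's proof: the authors simply invoke the analogous Kirby-calculus argument of \cite[Lemma 5.1, part (i)]{go}, i.e.\ blow down the chain of unknots, check that the surgery sublink disappears leaving $S^3$, and track the three companion components, their orientations and their linking numbers along the way. The only adjustment is that Figure~\ref{Figure:link51} is already a smooth surgery diagram (a single chain with framings $-1,-2,\dots,-2$, with $K_1$ and $K_2$ near its far end), so no contact-to-smooth conversion of coefficients and no separate ``arms'' or push-off components are involved; the blow-downs cascade from the $-1$-framed curve exactly as you describe, with the parity condition in the caption accounting for the orientation bookkeeping.
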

\begin{figure}[htb]
\begin{overpic}
{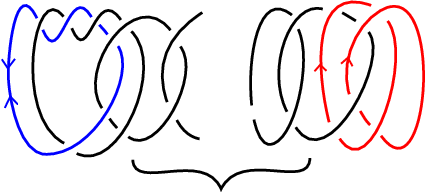}
\put(185, 10){$K_2$}
\put(162, 10){$K_1$}
\put(-13, 48){$K_0$}
\put(100, 50){$\cdots$}
\put(105, -8){$n$}
\put(40, 10){$-1$}
\put(63, 90){$-2$}
\put(90, 90){$-2$}
\put(122, 92){$-2$}
\put(142, 92){$-2$}
\end{overpic}
\caption{ For $n$ even, $K_0$ and $K_{i}$, $i=1,2$, bear the same orientation, for $n$ odd, the opposite one.  }
\label{Figure:link51}
\end{figure}
\begin{proof}
The proof is similar to that of \cite[Lemma 5.1, part (i)]{go}.
\end{proof}

(1) Suppose $t_0\geq2$. 

\begin{lemma}\label{t0>1t1<0t2<0}
If $t_0\geq2, t_1<0, t_2<0$, there exist $2t_{1}t_{2}-2t_{1}-2t_{2}+2$ strongly exceptional Legendrian $A_3$ links in $(S^3, \xi_{\frac{1}{2}})$ whose rotation numbers are $$r_{0}=\pm(t_{0}-1), r_{i}\in\pm\{t_{i}+1, t_{i}+3,\cdots, -t_{i}+1\}, i=1,2.$$ 
\end{lemma}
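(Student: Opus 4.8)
The plan is to show that the upper bound $N:=2t_1t_2-2t_1-2t_2+2$ of Lemma~\ref{Lemma:t1<0t2<0} is sharp, by exhibiting $N$ pairwise non-coarsely-equivalent strongly exceptional Legendrian $A_3$ links in $(S^3,\xi_{1/2})$ realizing the listed rotation numbers. Since $t_1,t_2<0$ give $\lceil-\frac{1}{t_1}\rceil=\lceil-\frac{1}{t_2}\rceil=1$, we have $t_0+\lceil-\frac{1}{t_1}\rceil+\lceil-\frac{1}{t_2}\rceil=t_0+2\ge 4$, so by Lemma~\ref{vertical} every appropriate tight contact structure on $\Sigma\times S^1$ with the given boundary slopes carries a $0$-twisting vertical Legendrian circle, hence factors as $L'_0\cup L'_1\cup L'_2\cup(\Sigma'\times S^1)$ in the sense of Lemma~\ref{Lemma:slopes1} and is decorated by $(\pm)(\underbrace{\pm\cdots\pm}_{-t_1})(\underbrace{\pm\cdots\pm}_{-t_2})$. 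Up to shuffling inside the continued fraction blocks $L'_1$ and $L'_2$ there are exactly $2(1-t_1)(1-t_2)=N$ such decorations, and I would show each of them is realized.

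First I would settle tightness and appropriateness of all $N$ decorations. Lemma~\ref{Lemma:tight} supplies six of them (the \emph{monochromatic} sign patterns $\pm\mp\mp$, $\pm\mp\pm$, $\pm\pm\mp$), which are moreover appropriate, being pieces of the universally tight $\Sigma\times S^1$ with $\infty$-boundary of Lemma~\ref{Lemma:slopes3}; since every one of the $N$ decorations differs from $(+)(\underbrace{-\cdots-}_{-t_1})(\underbrace{-\cdots-}_{-t_2})$ only in the signs of basic slices, Lemma~\ref{Lemma:appro} shows all $N$ are appropriate tight. Each having a $0$-twisting vertical Legendrian circle, Lemma~\ref{Lemma:construction} turns each into a strongly exceptional Legendrian $A_3$ link in an overtwisted contact $S^3$. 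For the explicit invariant computations I would realize the same family concretely from the topological diagram of Figure~\ref{Figure:link51} (an $A_3$ link by Lemma~\ref{Lemma:topological}) upgraded to a contact surgery diagram, in which the auxiliary Legendrian unknots are stabilized in all admissible ways—changing neither their Thurston--Bennequin invariants, nor the smooth coefficients, nor the topological type—so that the resulting patterns of zig-zags correspond bijectively to the $N$ decorations.

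Next I would compute invariants. Thurston--Bennequin invariants are read off the diagram, and rotation numbers are obtained by evaluating relative Euler classes on the Seifert surfaces of $K_0,K_1,K_2$ (the pair of pants $\Sigma$ capped along $c_1,c_2$, respectively a meridian disk of $K_0$ together with an annulus), exactly as in the proof of Lemma~\ref{Lemma:relEuler1} and \cite[Section 2.5]{emm}. For a decoration with sign $\varepsilon$ on $L'_0$ and $a_i$ positive basic slices in $L'_i$, this yields $r_0=\varepsilon(t_0-1)$ and $r_i=\varepsilon\,\rho_i(a_i)$, where each $\rho_i$ is an affine bijection $\{0,1,\dots,-t_i\}\to\{t_i+1,t_i+3,\dots,-t_i+1\}$, matching the list in the statement. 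The crucial point is that the decoration-to-$(r_0,r_1,r_2)$ map is injective: for fixed $\varepsilon$ the pair $(r_1,r_2)$ recovers $(a_1,a_2)$, while the sign of $r_0=\pm(t_0-1)$, with $t_0-1\ge 1$, detects $\varepsilon$. Hence the $N$ links are pairwise non-coarsely-equivalent, and combined with Lemma~\ref{Lemma:t1<0t2<0} there are exactly $N$ of them. Finally, the $d_3$-invariant of the ambient sphere is computed from the contact surgery diagram using \cite{DGS}; since the linking matrix, the signature, and the number of $\pm1$-surgeries are the same for every distribution of zig-zags, the zig-zag contribution to the $d_3$-formula cancels and $d_3=\tfrac12$ uniformly, so all $N$ links lie in $(S^3,\xi_{1/2})$.

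The main obstacle I expect is precisely this last, uniform $d_3$ computation—verifying that every one of the $N$ decorations lands in $(S^3,\xi_{1/2})$ rather than in a different overtwisted sphere—together with pinning down the affine maps $\rho_i$ so that the rotation-number injectivity is unambiguous. The conceptual inputs (existence of the vertical circle, tightness and appropriateness of all decorations, realizability as honest Legendrian links) are delivered cleanly by Lemmas~\ref{vertical}, \ref{Lemma:tight}, \ref{Lemma:appro} and \ref{Lemma:construction}; what remains is the relative Euler class and $d_3$ bookkeeping in the style of \cite{emm} and \cite{DGS}, for which a sample computation, as in the last section, is the safest check. As a further cross-check, the proof of Lemma~\ref{Lemma:relEuler1} already records that the $d_3$-invariants of the six monochromatic decorations are independent of $t_0$, and the sign-change invariance built into Lemma~\ref{Lemma:slopes2} propagates the value $\tfrac12$ to the remaining $N-6$.
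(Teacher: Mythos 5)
Your overall target (realize $N=2t_1t_2-2t_1-2t_2+2$ links with pairwise distinct classical invariants and thereby meet the upper bound of Lemma~\ref{Lemma:t1<0t2<0}) is the right one, but the step that is supposed to produce the links does not work as stated. Lemma~\ref{Lemma:appro} assumes that \emph{both} contact structures are already tight and only transfers the property of being appropriate from one to the other; it cannot be used to conclude that all $N$ decorations are appropriate tight. Lemma~\ref{Lemma:tight} furnishes only the six mixed-sign patterns, so for the remaining $N-6$ decorations (for instance $(+)(\underbrace{+\cdots+}_{-t_1})(\underbrace{+\cdots+}_{-t_2})$, or those with both signs occurring inside $L'_1$) you have no tightness statement at all --- and establishing exactly this is the content of the realization step, so your abstract thread is circular. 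The paper argues in the opposite direction: it exhibits the $N$ contact surgery diagrams of Figure~\ref{Figure:link10inot} (identified as topological $A_3$ links via Lemma~\ref{Lemma:topological}), gets strong exceptionality because contact $(-1)$-surgery along $K_0$ cancels the $(+1)$-surgery and leaves a tight manifold, computes the invariants as in the proof of \cite[Theorem~1.2(b1)]{go} (equivalently via \cite[Lemma 6.6]{loss}), and distinguishes the $N$ links by their rotation-number triples; no decoration-by-decoration tightness discussion is needed once the count meets the upper bound.

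The invariant bookkeeping is also flawed. The claim that $d_3=\frac12$ holds uniformly ``because the linking matrix, the signature and the number of $(\pm1)$-surgeries are the same'' is not a valid principle: redistributing zig-zags among \emph{surgery} curves changes the rotation vector $\underline{\mathrm{rot}}$, hence $c^2=\langle\mathbf{x},\underline{\mathrm{rot}}\rangle$ and in general $d_3$ --- this is precisely how other families in this paper (e.g.\ Lemma~\ref{t0>5t1=1t2=1}) land in spheres with $d_3\in\{-\frac32,\frac12,\frac52\}$ from one smooth picture. In Figure~\ref{Figure:link10inot} the stabilizations realizing the range of $r_1,r_2$ sit on the link components $K_1,K_2$ themselves ($k_i+l_i=-t_i$), so they never enter the $d_3$-formula, and only the stabilizations tied to $r_0=\pm(t_0-1)$ lie on a surgery curve, giving the single value $\frac12$; your version, which places the zig-zags on auxiliary surgery unknots ``without changing their Thurston--Bennequin invariants or smooth coefficients,'' is self-contradictory (stabilization changes $tb$), and the asserted bijection between zig-zag patterns and decorations is neither proved nor needed. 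Finally, the proposed cross-checks do not apply: Lemma~\ref{Lemma:slopes2} concerns $t_0+\lceil-\frac1{t_1}\rceil+\lceil-\frac1{t_2}\rceil\le3$, which fails here since the sum is $t_0+2\ge4$, and the $t_0$-independence recorded in Lemma~\ref{Lemma:relEuler1} does not pin down the value $\frac12$.
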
 

\begin{proof}
There are $2t_{1}t_{2}-2t_{1}-2t_{2}+2$ strongly exceptional Legendrian $A_3$ links as illustrated in Figure~\ref{Figure:link10inot}. According to Lemma~\ref{Lemma:topological},  $K_0\cup K_1\cup K_2$ forms a topological $A_3$ link. By performing the same calculations as in the proof of Theorem 1.2 (b1) in \cite{go}, we can determine that their rotation numbers are as listed.  The corresponding $d_3$-invariant is $\frac{1}{2}$. The strong exceptionality property arises from carrying out contact $(-1)$-surgery along $K_0$ which cancels the contact $(+1)$-surgery.
\end{proof}  


\begin{figure}[htb]
\begin{overpic}
{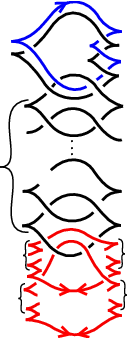}
\put(30, 6){$K_2$}
\put(30, 26){$K_1$}
\put(0, 148){$K_0$}
\put(-32, 78){$t_{0}-2$}
\put(60, 137){$+1$}
\put(60, 107){$-1$}
\put(60, 91){$-1$}
\put(60, 65){$-1$}
\put(60, 50){$-1$}
\put(0, 35){$k_1$}
\put(62, 35){$l_1$}
\put(-3, 15){$k_2$}
\put(62, 15){$l_2$}
\end{overpic}
\caption{ $t_0\geq2$, $t_1\leq0$, $t_2\leq0$. For $i=1,2$, $k_{i}+l_{i}=-t_{i}$.  For $t_0$ even, $K_0$ and $K_{i}$, $i=1,2$, bear the same orientation, for $t_{0}$ odd, the opposite one.  }
\label{Figure:link10inot}
\end{figure}

(2) Suppose $t_0=1$. 

\begin{lemma}\label{t0=1t1<0t2<0}
If $t_0=1, t_1<0, t_2<0$, then there exist $t_{1}t_{2}-2t_{1}-2t_{2}+2$ strongly exceptional Legendrian $A_3$ links  in $(S^3, \xi_{\frac{1}{2}})$ whose rotation numbers are $$r_{0}=0, r_{i}\in\{t_{i}+1, t_{i}+3,\cdots, -t_{i}+1\}, i=1,2;$$
$$r_{0}=0, r_{1}=t_{1}-1, r_{2}\in\{t_{2}-1, t_{2}+1, \cdots,  -t_{2}-1\};$$ 
$$r_{0}=0,  r_{1}\in\{t_{1}+1, t_{1}+3, \cdots,  -t_{1}-1\}, r_{2}=t_{2}-1. $$
\end{lemma}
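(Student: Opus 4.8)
The plan is to run the same construction-and-count strategy used for Lemma~\ref{t0>1t1<0t2<0}, adjusted for the fact that $t_0=1$. By Lemma~\ref{Lemma:t1<0t2<0} there are at most $t_1t_2-2t_1-2t_2+2$ appropriate tight contact structures on $\Sigma\times S^1$ with the given boundary slopes, all of which have a $0$-twisting vertical Legendrian circle (by Lemma~\ref{vertical}, since $t_0=1\geq0$), so it suffices to exhibit $t_1t_2-2t_1-2t_2+2$ pairwise non-coarsely-equivalent strongly exceptional Legendrian $A_3$ links with $t_0=1$, and to check their rotation numbers are as listed. Concretely, I would draw a contact surgery diagram analogous to Figure~\ref{Figure:link10inot} but with the central component $K_0$ having $t_0=1$ (so the $(t_0-2)$-box becomes a single negative stabilization box, i.e. $-1$ twisting, or equivalently one fewer negative cusp), together with the same two groups of $k_i$ up / $l_i$ down stabilizations on the meridians $K_1,K_2$. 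Lemma~\ref{Lemma:topological} (or rather its analogue, proved as in \cite[Lemma 5.1]{go}) guarantees the underlying link is topologically $A_3$, and performing contact $(-1)$-surgery along $K_0$ to cancel the $(+1)$-surgery shows strong exceptionality, exactly as in Lemma~\ref{t0>1t1<0t2<0}.

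Next I would compute the rotation numbers. As in the proof of Lemma~\ref{Lemma:relEuler1}, the rotation numbers are read off as evaluations of the relative Euler class; with $t_0=1$ the central contribution $\pm(t_0-1)$ collapses to $0$, which forces $r_0=0$ and is the structural reason the count drops from $2t_1t_2$ to $t_1t_2$ in the leading term: the two sign choices on $L_0'$ that were distinct when $t_0\geq2$ now coincide after shuffling, by Lemma~\ref{Lemma:slopes2}(1) applied at $t_0+\lceil-\tfrac1{t_1}\rceil+\lceil-\tfrac1{t_2}\rceil=3$, i.e. exactly when all three of $L_0',B_1',B_2'$ can be made to share a sign. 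So the "generic" family contributes $t_1t_2$ links with $r_0=0$, $r_i\in\{t_i+1,t_i+3,\dots,-t_i+1\}$ (this is $-t_i$ values, and one checks $(-t_1)(-t_2)=t_1t_2$ of them — wait, that is $t_1t_2$ after the sign collapse: before collapse it was $2t_1t_2$, the two $r_0=\pm(t_0-1)$ families, which now merge). The remaining $-2t_1-2t_2+2$ links come from the decorations in which the signs of $L_1'$ (resp. $L_2'$) are not all equal, or equal but opposite to $L_0'$; these are the configurations of Lemma~\ref{Lemma:tight} (and Lemma~\ref{Lemma:relEuler1}) and they yield the "extremal" rotation numbers $r_1=t_1-1$ with $r_2$ ranging, or $r_2=t_2-1$ with $r_1$ ranging, as in the second and third displayed lines. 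I would compute one representative of each type explicitly with the $\ominus$/$\bullet$ calculus of \cite[Section 2.5]{emm}, reversing signs of the $L_j'$-slopes appropriately when evaluating on the Seifert surface of a different component, just as in Lemma~\ref{Lemma:relEuler1}, and argue the rest by the shuffling symmetry of continued fraction blocks.

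The $d_3$-invariant computation is routine once the diagram is fixed: the surgery diagram exhibits the ambient manifold as surgery on a link in $(S^3,\xi_{st})$, and \cite{DGS} gives $d_3=\frac12$; independence from $t_0$ (for fixed $t_1,t_2$) is Lemma~\ref{Lemma:stabilization}, since increasing $t_0$ corresponds to stabilizing $K_0$ inside the basic slice $L_0'=(T^2\times[0,1],\infty,t_0)$.

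The main obstacle I anticipate is the bookkeeping of \emph{distinctness}: I must verify that the $t_1t_2-2t_1-2t_2+2$ links constructed really are pairwise non-coarsely-equivalent, i.e. that the listed triples $(r_0,r_1,r_2)$ are all distinct and that no two of the three displayed families overlap (in particular the boundary cases $r_i=t_i-1$ versus $r_i=-t_i+1$ and the endpoints of the ranges $\{t_i+1,\dots,-t_i+1\}$ vs $\{t_i-1,t_i+1,\dots,-t_i-1\}$ need to be matched carefully against the shuffling identifications of Lemma~\ref{Lemma:slopes2}). Since by Theorem~\ref{classification} coarse equivalence is detected by $(t_i,r_i)$, distinctness of the triples is exactly what is needed; but one has to be sure the upper bound of Lemma~\ref{Lemma:t1<0t2<0} is met with equality, so the count of distinct triples must come out to precisely $t_1t_2-2t_1-2t_2+2$, which I would confirm by a direct tally: $(-t_1)(-t_2)=t_1t_2$ from the first family, $(-t_2)$ from the second (values of $r_2$, with $r_2$ running over $\{t_2-1,t_2+1,\dots,-t_2-1\}$, and similarly $-t_1$ from the third), minus $2$ for the double-counted extremal configurations where both $r_1=t_1-1$ and $r_2=t_2-1$ are forced — giving $t_1t_2+(-t_1)+(-t_2)\cdot 1$... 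I would be careful here to reconcile with the stated total $t_1t_2-2t_1-2t_2+2$, tracking the factor-of-two that survives from the $\pm$ ambient symmetry on the extremal families while the $r_0=0$ family has no such doubling.
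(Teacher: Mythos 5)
Your overall strategy (hit the upper bound of Lemma~\ref{Lemma:t1<0t2<0} by explicit realizations and read off rotation numbers) is the same as the paper's, but the execution has two genuine gaps. First, the construction: a diagram ``analogous to Figure~\ref{Figure:link10inot} with the $(t_0-2)$-box replaced by one fewer negative cusp'' does not exist for $t_0=1$ (a box of $-1$ stabilizations is not a diagram), and a single diagram is in any case not enough. The paper uses a genuinely different picture for $t_0=1$ (Figure~\ref{Figure:link11inot}, with \emph{two} contact $(+1)$-surgeries), and moreover uses \emph{two} versions of it: the left one (with $l_1,l_2\geq1$) realizes $r_0=0$, $r_i\in\{t_i+1,\dots,-t_i+1\}$, the right one (with $k_1,k_2\geq1$) realizes $r_0=0$, $r_i\in\{t_i-1,\dots,-t_i-1\}$, and these two families overlap in exactly $t_1t_2$ links, so their union gives all three displayed families and the total $t_1t_2-2t_1-2t_2+2$ by inclusion--exclusion. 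Nothing in your proposal produces the links with one extremal and one interior rotation number (e.g.\ $r_1=t_1-1$ with $t_2+1\leq r_2\leq -t_2-1$): the decorations of Lemma~\ref{Lemma:tight}/Lemma~\ref{Lemma:relEuler1} only cover the case where \emph{all} basic slices of each $L'_i$ have a constant sign (at most $6$ links), and even for those, Lemma~\ref{Lemma:construction} only yields \emph{exceptional} links -- upgrading to \emph{strongly} exceptional requires a Lemma~\ref{Lemma:appro} comparison with an already-realized strongly exceptional link of the same decoration, which you do not supply (and which the paper avoids here precisely because the surgery diagrams cover everything).

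Second, the bookkeeping is incorrect and, as you yourself note, unresolved. The first displayed family has $(1-t_1)(1-t_2)$ elements (each range $\{t_i+1,\dots,-t_i+1\}$ has $1-t_i$ values), not $t_1t_2$; the second and third have $1-t_2$ and $-t_1$ elements, summing to $t_1t_2-2t_1-2t_2+2$, whereas your split into a ``$t_1t_2$ generic family'' plus ``$-2t_1-2t_2+2$ extremal links'' is inconsistent with the statement, and your final tally trails off without reconciling. Your heuristic that the $\pm(t_0-1)$ families ``merge'' at $t_0=1$ is also too coarse: by Lemma~\ref{Lemma:slopes2}(1) the identification of $\pm$ decorations occurs only when $L'_0$, $B'_1$, $B'_2$ can be given a common sign, i.e.\ only on a sub-family of size $t_1t_2$, which is exactly the overlap of the paper's two diagrams; merging everything would give $(1-t_1)(1-t_2)$, which undercounts.
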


\begin{proof}
There are $t_{1}t_{2}-2t_{1}-2t_{2}+2$ strongly exceptional Legendrian $A_3$ links as shown in Figure~\ref{Figure:link11inot}. The linking number of the components $K_1$ and $K_2$ in Figure~\ref{Figure:link11inot} is $-2$. Using similar Kirby diagrams as in \cite[Lemma 5.1, part (iii), Figure 3]{go}, we can show that it is a topological $A_3$ link. By performing the same calculations as in the proof of Theorem 1.2 (b2) in \cite{go}, we can determine that their rotation numbers are as listed. In the left diagram of Figure~\ref{Figure:link11inot}, $$r_0=0, r_{i}\in\{ t_i+1, t_i+3, \cdots, -t_{i}+1\} ~\text{for}~ i=1,2,$$ while in the right diagram,  $$r_0=0, r_{i}\in\{t_i-1, t_i+1, \cdots, -t_{i}-1\}~\text {for}~ i=1,2.$$ There are exactly $t_{1}t_{2}$ Legendrian $A_3$ links represented by both the left and the right diagrams. Moreover, 
the corresponding $d_3$-invariant is $\frac{1}{2}$. 
\end{proof}

\begin{figure}[htb]
\begin{overpic}
{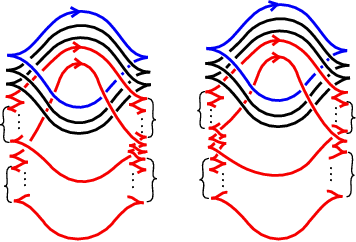}
\put(30, 6){$K_2$}
\put(30, 35){$K_1$}
\put(55, 110){$K_0$}
\put(127, 7){$K_2$}
\put(127, 36){$K_1$}
\put(154, 110){$K_0$}
\put(-12, 82){$+1$}
\put(-12, 72){$+1$}
\put(170, 82){$+1$}
\put(170, 72){$+1$}
\put(-10, 55){$k_1$}
\put(75, 55){$l_1$}
\put(-10, 25){$k_{2}$}
\put(76, 28){$l_{2}$}
\put(172, 55){$l_1$}
\put(86, 60){$k_1$}
\put(86, 28){$k_{2}$}
\put(172, 30){$l_{2}$}
\end{overpic}
\caption{$t_0=1$, $t_1<0$, $t_2<0$. For $i=1,2$, $k_{i}+l_{i}=-t_{i}+1$. In the left diagram, $l_1, l_2\geq1$, while in the right diagram, $k_1, k_2\geq1$. }
\label{Figure:link11inot}
\end{figure}

(3) Suppose $t_0=0$. 

\begin{lemma}\label{t0=0t1<0t2<0}
If $t_0=0, t_1<0, t_2<0$,  then there exist $-2t_1-2t_2+2$ strongly exceptional Legendrian $A_3$ links in $(S^3, \xi_{\frac{1}{2}})$ whose rotation numbers are $$r_0=\pm 1, r_{1}=\pm(t_{1}-1), r_{2}\in\{t_{2}+1, t_{2}+3, \cdots, -t_{2}-1\};$$
$$r_0=\pm 1, r_{1}\in\{t_{1}+1, t_{1}+3, \cdots, -t_{1}-1\}, r_{2}=\pm(t_{2}-1);$$ 
$$r_0=\pm 1, r_{1}=\pm(t_{1}-1), r_{2}=\pm(t_{2}-1).$$
\end{lemma}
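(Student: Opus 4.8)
The plan is to realize the upper bound of Lemma~\ref{Lemma:t1<0t2<0} for $t_0=0$ by an explicit contact surgery construction, exactly in the spirit of Lemma~\ref{t0>1t1<0t2<0} and Lemma~\ref{t0=1t1<0t2<0}. First I would draw a contact surgery diagram analogous to Figures~\ref{Figure:link10inot} and~\ref{Figure:link11inot}, consisting of the central component $K_0$, a Legendrian pushoff of $K_0$ with surgery coefficient $+1$, and two chains of stabilized Legendrian unknots with coefficient $-1$ realizing the two meridians, the chain for $K_i$ split as $k_i+l_i=-t_i$; as in the case $t_0=1$ there will be two mirror sub-diagrams (one with $l_1,l_2\ge 1$ and one with $k_1,k_2\ge 1$) so that the two resulting families overlap precisely in the configurations where both meridional chains are ``extreme.'' The initial step is to check, via the Kirby-calculus moves of \cite[Lemma 5.1]{go}, that the underlying smooth link is the $A_3$ link of Figure~\ref{figure:Link0}.

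Next I would compute the classical data. The Thurston--Bennequin invariants are read off the diagram, and the rotation numbers are computed either as in \cite[proof of Theorem 1.2]{go} or via the relative-Euler-class bookkeeping set up in the proof of Lemma~\ref{Lemma:relEuler1} (recalling that, when evaluating on the Seifert surface of one component, one reverses the dividing slopes of the thickened tori of the other components). This should yield $r_0=\pm1$ together with the three displayed families: $r_1=\pm(t_1-1)$ with $r_2\in\{t_2+1,\dots,-t_2-1\}$; $r_1\in\{t_1+1,\dots,-t_1-1\}$ with $r_2=\pm(t_2-1)$; and $r_1=\pm(t_1-1)$, $r_2=\pm(t_2-1)$. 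The $d_3$-invariant of each ambient contact sphere is $\tfrac{1}{2}$ by the formula of \cite{DGS}, in agreement with the cases $t_0\ge 1$. Strong exceptionality is automatic: a contact $(-1)$-surgery along $K_0$ cancels the $(+1)$-surgery on its pushoff, so the surgered manifold is tight, while a Giroux-torsion domain in the exterior would produce an overtwisted disk after that surgery --- this is precisely the mechanism of Lemma~\ref{Lemma:construction} and the preamble of Section~\ref{Section:realizations}.

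Finally I would check that the construction is exhaustive. Since $t_0=0\ge 0$ and $t_1,t_2\neq 0$, Lemma~\ref{vertical} guarantees that the exterior of every such link carries a $0$-twisting vertical Legendrian circle, so it is one of the $-2t_1-2t_2+2$ appropriate tight contact structures counted in Lemma~\ref{Lemma:t1<0t2<0}; comparing the two counts shows the construction hits all of them, and since the listed triples $(r_0,r_1,r_2)$ are pairwise distinct the links are pairwise non-coarsely-equivalent. I expect the main difficulty to be the bookkeeping that reconciles the two mirror sub-diagrams with the three overlapping lines of rotation numbers: one must verify that the sign patterns forbidden by the constraints $l_i\ge 1$ (respectively $k_i\ge 1$) are exactly the decorations with all basic-slice signs equal --- which are overtwisted by Lemma~\ref{Lemma:slopes2}(2) --- and carefully track the orientation reversal interchanging $(+)$- and $(-)$-type decorations, so that the final tally is $-2t_1-2t_2+2$ with neither double counting nor omissions.
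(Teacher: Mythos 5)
Your plan has a genuine gap: it rests on surgery diagrams for $t_0=0$ that you never exhibit, and the natural analogues of Figures~\ref{Figure:link10inot} and~\ref{Figure:link11inot} do not extend to this case (the chain in Figure~\ref{Figure:link10inot} has length $t_0-2<0$, while the two $(+1)$-pushoffs in Figure~\ref{Figure:link11inot} produce $t_0=1$, not $0$). More seriously, the shape of diagram you describe --- $K_1$ and $K_2$ appearing as independently stabilized meridians with free splittings $k_i+l_i=-t_i$, and strong exceptionality certified by cancelling the $(+1)$-pushoff with a $(-1)$-surgery on $K_0$ --- cannot work for $t_0=0$: if such diagrams existed for all sign patterns allowed by constraints of the form $l_i\ge1$ or $k_i\ge1$, the cancellation argument would certify on the order of $t_1t_2-2t_1-2t_2+2$ strongly exceptional links, exceeding the bound $-2t_1-2t_2+2$ of Lemma~\ref{Lemma:t1<0t2<0}. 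For $t_0=0$ at most one of $r_1,r_2$ can lie in the interior range $\{t_i+1,\dots,-t_i-1\}$, and your proposed fix --- discarding the surplus decorations as overtwisted via Lemma~\ref{Lemma:slopes2}(2) --- is inconsistent with your own construction: anything realized by a diagram with the cancellation property has appropriate tight exterior, so it cannot be thrown away afterwards. The exclusion must be built into the construction itself, and your proposal does not explain how.

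The paper proceeds differently: it takes the strongly exceptional Legendrian Hopf links $K_0\cup K_1$ with $(t_0,r_0)=(0,\pm1)$, $r_1=\pm(t_1-1)$ from \cite{go} (note these already force $K_1$ to be extremally stabilized, with sign tied to $r_0$) and adds $K_2$ as a local Legendrian meridian via Lemma~\ref{Lemma:meridian}, giving the $-2t_2$ links of the first family; exchanging the roles of $K_1$ and $K_2$ gives the $-2t_1$ links of the second family; and the remaining two links with both $r_1=\pm(t_1-1)$ and $r_2=\pm(t_2-1)$ are produced not by a surgery diagram at all but by the convex-surface-theoretic construction (Lemmas~\ref{Lemma:tight}, \ref{Lemma:construction}, \ref{Lemma:relEuler1}) together with Lemma~\ref{Lemma:appro}. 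Your proposal has no mechanism for realizing these last two links, which is precisely where a new idea is needed; the relative Euler class computation you cite from Lemma~\ref{Lemma:relEuler1} is part of that alternative construction, not of a surgery-diagram argument.
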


\begin{proof}
By \cite[Theorem 1.2]{go}, there are two strongly exceptional Legendrian Hopf link $K_0\cup K_1$ in $(S^3, \xi_{\frac{1}{2}})$ with $(t_0, r_0)=(0, \pm1)$, $t_1<0$ and $r_{1}=\pm(t_{1}-1)$. Let $K_2$ be a local Legendrian meridian of $K_0$. Then by Lemma~\ref{Lemma:meridian} there are $-2t_2$ strongly exceptional Legendrian $A_3$ links in $(S^3, \xi_{\frac{1}{2}})$ whose rotation numbers are $$r_0=\pm 1, r_{1}=\pm(t_{1}-1), r_{2}\in\{t_{2}+1, t_{2}+3, \cdots, -t_{2}-1\}.$$ 

Similarly, there are $-2t_1$ strongly exceptional Legendrian $A_3$ links in $(S^3, \xi_{\frac{1}{2}})$ whose rotation numbers are $$r_0=\pm 1, r_{1}\in\{t_{1}+1, t_{1}+3, \cdots, -t_{1}-1\}, r_{2}=\pm(t_{2}-1).$$

Moreover, by Lemma~\ref{Lemma:relEuler1} and Lemma~\ref{Lemma:appro}, there are $2$ strongly exceptional Legendrian $A_3$ links in $(S^3, \xi_{\frac{1}{2}})$ whose rotation numbers $(r_0, r_1, r_2)$ are
$(\pm 1, \pm(t_{1}-1), \pm(t_{2}-1)).$
\end{proof}

\begin{figure}[htb]
\begin{overpic}
{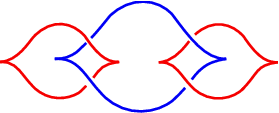}
\put(111, 45){$K_2$}
\put(10, 45){$K_1$}
\put(60, 58){$K_0$}
\end{overpic}
\caption{A Legendrian $A_3$ link  in $(S^3, \xi_{st})$. }
\label{Figure:linkinstd}
\end{figure}

(4) Suppose $t_{0}<0$. 

\begin{lemma}\label{t0<0t1<0t2<0}
If $t_{i}<0$ for $i=0,1,2$, then there exist $-t_{0}t_{1}t_{2}$ strongly exceptional Legendrian $A_3$ links in $(S^3, \xi_{st})$ whose rotation numbers are $$r_{i}\in\{t_{i}+1, t_{i}+3, \cdots, -t_{i}-1\}, ~\text{for}~ i=0,1,2.$$ 
\end{lemma}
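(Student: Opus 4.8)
The plan is to realize all $-t_0t_1t_2$ links directly inside the tight sphere $(S^3,\xi_{st})$ and then invoke the upper bound of Lemma~\ref{Lemma:t1<0t2<0} to see that nothing else occurs. First I would take the Legendrian $A_3$ link of Figure~\ref{Figure:linkinstd}: it lies in $(S^3,\xi_{st})$, and computing Thurston--Bennequin invariants and rotation numbers from the front in the usual way gives $(t_0,t_1,t_2)=(-1,-1,-1)$ with $r_0=r_1=r_2=0$, which is the ``top'' of the ranges in the statement since each $t_i\le-1$. Next I would stabilize the three components independently: stabilizing $K_i$ a total of $-t_i-1$ times, in all $-t_i$ ways according to how many stabilizations are positive, produces a Legendrian $A_3$ link with Thurston--Bennequin invariant $t_i$ on the $i$-th component and with $r_i$ ranging over $\{t_i+1,t_i+3,\dots,-t_i-1\}$. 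Performing this on all three components simultaneously yields $(-t_0)(-t_1)(-t_2)=-t_0t_1t_2$ Legendrian $A_3$ links in $(S^3,\xi_{st})$ whose rotation number triples exhaust $\{t_0+1,\dots,-t_0-1\}\times\{t_1+1,\dots,-t_1-1\}\times\{t_2+1,\dots,-t_2-1\}$.

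Then I would verify the two remaining points. For distinctness: any contactomorphism of $(S^3,\xi_{st})$ realizing a coarse equivalence preserves the Thurston--Bennequin invariant and rotation number of each ordered, oriented component, so links with different rotation triples are not coarsely equivalent, and the $-t_0t_1t_2$ links just built are pairwise inequivalent. For strong exceptionality: the exterior of each such link is diffeomorphic to $\Sigma\times S^1$ with boundary slopes $s_0=t_0$ and $s_i=-\frac1{t_i}$, and it is tight because $(S^3,\xi_{st})$ is. To see it is appropriate, note that $t_0+\lceil-\frac1{t_1}\rceil+\lceil-\frac1{t_2}\rceil=t_0+2\le1$, so by Lemma~\ref{Lemma:tor>0} the exterior has no $0$-twisting vertical Legendrian circle, whence by Lemma~\ref{Lemma:slopes} it factors as $\Sigma''\times S^1\cup L''_1\cup L''_2$ with $\Sigma''\times S^1$ of boundary slopes $t_0,1,1$ and $L''_1,L''_2$ minimally twisting; such a contact structure carries no Giroux torsion on any boundary-parallel torus. (Equivalently, every torus in $S^3$ bounds a solid torus, on which a tight contact structure has no Giroux torsion by Honda's classification, so $(S^3,\xi_{st})$ has no Giroux torsion at all and the exterior is automatically appropriate.)

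Finally, Lemma~\ref{Lemma:t1<0t2<0} gives at most $-t_0t_1t_2$ appropriate tight contact structures on $\Sigma\times S^1$ with these boundary slopes, so the $-t_0t_1t_2$ links constructed above account for all of them and carry exactly the rotation numbers claimed, in the ambient structure $\xi_{st}$.

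I expect the main difficulty to be bookkeeping rather than anything conceptual: one must check that the front in Figure~\ref{Figure:linkinstd} indeed has the claimed invariants and that the stabilizations can be carried out locally, near the respective components and away from the pair-of-pants structure, so that the exterior stays $\Sigma\times S^1$ with the asserted boundary slopes throughout. The rotation-number computation for the stabilized links is done by evaluating the relative Euler class on the Seifert surfaces of $K_0,K_1,K_2$, exactly as in the proof of Lemma~\ref{Lemma:relEuler1}.
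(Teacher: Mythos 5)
Your proposal is correct and follows essentially the same route as the paper: the paper also obtains these $-t_0t_1t_2$ links by stabilizing the Legendrian $A_3$ link of Figure~\ref{Figure:linkinstd} in $(S^3,\xi_{st})$, with the upper bound supplied by Lemma~\ref{Lemma:t1<0t2<0}. The extra checks you spell out (distinctness via the classical invariants and appropriateness of the exterior from the vanishing of Giroux torsion in the tight $S^3$) are exactly the details the paper leaves implicit.
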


\begin{proof}
By stabilizations of the Legendrian $A_3$ link shown in Figure~\ref{Figure:linkinstd}, we obtain $-t_{0}t_{1}t_{2}$ strongly exceptional Legendrian $A_3$ links in $(S^3, \xi_{st})$. Their rotation numbers are as listed.
\end{proof}

So there are exactly $-t_{0}t_{1}t_{2}$ Legendrian $A_3$ links in  contact 3-spheres whose complements are appropriate tight if $t_{i}<0$ for $i=0,1,2$.  

The proof of Theorem~\ref{Theorem:t_1<0t_2<0} is completed.
\end{proof}

\subsection{$t_{1}>0$ and $t_{2}>0$.}

The boundary slopes of $\Sigma \times S^1$ are $s_0=t_0$, $s_1=-\frac{1}{t_1}\in[-1,0)$ and $s_2=-\frac{1}{t_2}\in[-1,0)$.

\begin{lemma}\label{Lemma:3.2.1} 
For any $t_0\in\mathbb{Z}$, there are $6$ exceptional Legendrian $A_3$ links whose exteriors have $0$-twisting vertical Legendrian circles, and the signs of basic slices in $L'_0, L'_1, L'_2$ are $\pm(+--), \pm(++-)$ and $\pm(+-+)$, respectively.
Their rotation numbers are 
$$r_{0}=\pm(t_{0}+3), r_{1}=\pm(t_{1}+1), r_{2}=\pm(t_{2}+1); r_{0}=\pm(t_{0}-1), r_{1}=\pm(1-t_{1}), r_{2}=\pm(t_{2}+1);$$
$$r_{0}=\pm(t_{0}-1),  r_{1}=\pm(t_{1}+1), r_{2}=\pm(1-t_{2}).$$
The corresponding $d_3$-invariants are independent of $t_0$ if $t_1$ and $t_2$ are fixed.
\end{lemma}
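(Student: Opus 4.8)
The plan is to mirror the structure of the proof of Lemma~\ref{Lemma:relEuler1}, since the two lemmas are formally parallel: in both cases we must (i) exhibit the six exceptional Legendrian $A_3$ links, (ii) compute their rotation numbers by evaluating relative Euler classes, and (iii) observe that the $d_3$-invariants do not depend on $t_0$. First I would establish existence. By Lemma~\ref{Lemma:tight} there are exactly $6$ tight contact structures on $\Sigma\times S^1$ whose factorization $L'_0\cup L'_1\cup L'_2\cup\Sigma'\times S^1$ has all of $L'_1,L'_2$ with constant signs and with $(\mathrm{sgn}\,L'_0,\mathrm{sgn}\,L'_1,\mathrm{sgn}\,L'_2)$ of mixed type, i.e.\ one of $\pm(+--),\pm(++-),\pm(+-+)$. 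Each such $\xi$ has a $0$-twisting vertical Legendrian circle by construction (its factorization is of the type in Lemma~\ref{Lemma:slopes1}), so Lemma~\ref{Lemma:construction} applies and produces an exceptional Legendrian $A_3$ link in an overtwisted $S^3$ as the cores of the three attached tight solid tori. This gives the six links.

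Next I would compute the rotation numbers. Here the boundary slopes are $s_0=t_0$, $s_i=-\tfrac1{t_i}\in[-1,0)$, so in the Farey-graph factorization $L'_i$ ($i=1,2$) is a continued fraction block running from $\infty$ through $\lceil-\tfrac1{t_i}\rceil=0$ down to $-\tfrac1{t_i}$, and $L'_0$ is the single basic slice $(T^2\times[0,1];\infty,t_0)$. Using the relative Euler class formalism of \cite[Section 2.5]{emm}—the same $\ominus$ and $\bullet$ bookkeeping used explicitly in the proof of Lemma~\ref{Lemma:relEuler1}, with the sign of each basic slice's contribution read off from the decoration, and with the convention that the dividing slopes in $L'_1,L'_2$ are sign-reversed when computing $r_0$ and those in $L'_0$ sign-reversed when computing $r_1,r_2$—I would carry out the telescoping sums for each of the three sign patterns $\pm(+--),\pm(++-),\pm(+-+)$. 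The all-minus block $L'_i$ contributes $\pm(t_i+1)$ to $r_i$ (as in Lemma~\ref{Lemma:relEuler1}, the block of $-t_i$ negative basic slices from $0$ to $-\tfrac1{t_i}$ telescopes, but now starting the count at $\infty$ rather than at $1$, which shifts the Seifert-surface contribution), while the all-plus block contributes $\pm(1-t_i)$; for $K_0$ the basic slice $L'_0$ together with the two capping disks gives $\pm(t_0-1)$ when $L'_1,L'_2$ have the "expected" signs and $\pm(t_0+3)$ in the $(+--)$ case, the extra $+4$ coming from the two sign-reversed full blocks each shifting the $\frac{0}{1}$-pairings. This reproduces the three displayed triples.

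Finally, the independence of the $d_3$-invariant from $t_0$ follows from Lemma~\ref{Lemma:stabilization}: in each of the six exteriors there is a basic slice $(T^2\times[0,1];t_0+1,t_0)$ inside $L'_0$, so $K_0$ destabilizes, and destabilization does not change the ambient contact structure; iterating relates the $t_0$ and $t_0-1$ cases, so $d_3$ depends only on $t_1,t_2$ (and the chosen sign pattern). The main obstacle I anticipate is purely computational: getting the telescoping sums and, in particular, the sign-reversal conventions exactly right so that the $(+--)$ case correctly yields $r_0=\pm(t_0+3)$ rather than $\pm(t_0-1)$—this is the one place where the $t_1,t_2>0$ geometry genuinely differs from the $t_1,t_2<0$ computation of Lemma~\ref{Lemma:relEuler1}, because now the blocks $L'_1,L'_2$ pass through slope $0$ and their orientation relative to $K_0$'s Seifert surface flips, changing which pairings survive.
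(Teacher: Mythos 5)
Your proposal is correct and follows essentially the same route as the paper: existence of the six links from Lemma~\ref{Lemma:tight} together with Lemma~\ref{Lemma:construction}, rotation numbers via the relative Euler class bookkeeping of \cite[Section 2.5]{emm} exactly as in Lemma~\ref{Lemma:relEuler1} (the paper only writes out the $+--$ case, with the same sign-reversal conventions you describe), and independence of $d_3$ from $t_0$ via the destabilization argument of Lemma~\ref{Lemma:stabilization}. The contribution values you predict for the all-plus and all-minus blocks and for $L'_0$ agree with the paper's explicit telescoping computation.
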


\begin{proof}
The first statement can be inferred from Lemma~\ref{Lemma:tight} and Lemma~\ref{Lemma:construction}. For example, when the signs of $L'_0$, $L'_1$ and $L'_2$ are $+--$,  the rotation numbers can be computed using the relative Euler class as follows. See Figure~\ref{Figure:Sigma2} for the decoration. The rotation number of $K_0$ is 
$$r_{0}=-(\frac{1}{t_1}\ominus\frac{0}{1})\bullet\frac{0}{1}-(\frac{0}{1}\ominus\frac{-1}{0})\bullet\frac{0}{1}
-(\frac{1}{t_2}\ominus\frac{0}{1})\bullet\frac{0}{1}-(\frac{0}{1}\ominus\frac{-1}{0})\bullet\frac{0}{1}
+(\frac{1}{0}\ominus\frac{t_0}{1})\bullet\frac{0}{1}=-t_{0}-3.$$
The rotation number of $K_1$ is 
$$r_1=(\frac{-t_0}{1}\ominus\frac{-1}{0})\bullet\frac{1}{0}-(\frac{1}{0}\ominus\frac{0}{1})\bullet\frac{1}{0}-(\frac{0}{1}\ominus\frac{-1}{t_1})\bullet\frac{1}{0}=-t_{1}-1.$$
The rotation number of $K_2$ is $$
r_2=(\frac{-t_0}{1}\ominus\frac{-1}{0})\bullet\frac{1}{0}-(\frac{1}{0}\ominus\frac{0}{1})\bullet\frac{1}{0}-(\frac{0}{1}\ominus\frac{-1}{t_2})\bullet\frac{1}{0}=-t_{2}-1.
$$   
\end{proof}

\subsubsection{$t_{1}=t_{2}=1$.}
\begin{proof}[Proof of Theorem~\ref{Theorem:t_1=t_2=1}] The upper bound of strongly exceptional Legendrian $A_3$ links is given by Lemma~\ref{Lemma:t1=t2=1}.  We will show that these upper bounds can be attained.

\begin{figure}[htb]
\begin{overpic}
[width=0.97\textwidth]
{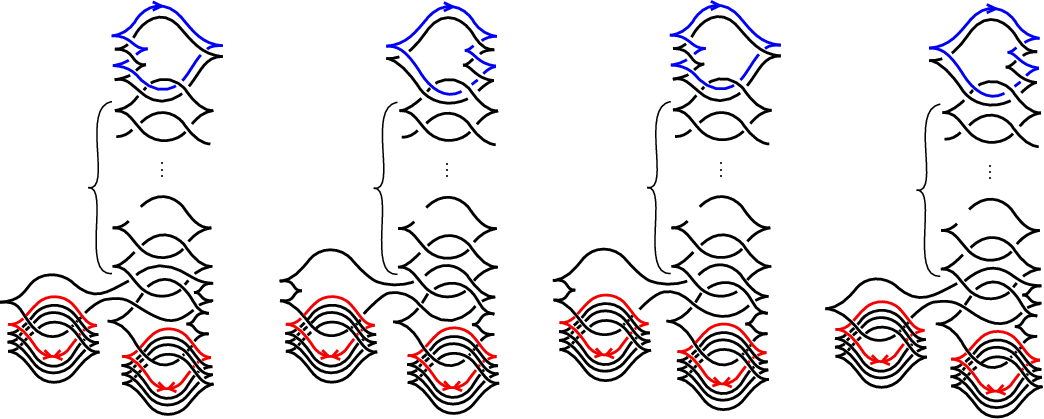}
\put(5, 95){$t_{0}-6$}
\put(124, 95){$t_{0}-6$}
\put(238, 95){$t_{0}-6$}
\put(351, 95){$t_{0}-6$}
\put(36, 164){$K_{0}$}
\put(-12, 39){$K_{1}$}
\put(88, 29){$K_{2}$}
\put(88, 0){$+1$}
\put(88, 9){$+1$}
\put(88, 18){$+1$}
\put(88, 39){$-1$}
\put(88, 60){$-1$}
\put(88, 74){$-1$}
\put(88, 109){$-1$}
\put(88, 123){$-1$}
\put(90, 143){$+1$}
\put(-13, 29){$+1$}
\put(-13, 20){$+1$}
\put(-13, 11){$+1$}
\put(-13, 50){$-1$}
\put(155, 164){$K_{0}$}
\put(104, 38){$K_{1}$}
\put(207, 24){$K_{2}$}
\put(270, 164){$K_{0}$}
\put(218, 38){$K_{1}$}
\put(320, 24){$K_{2}$}
\put(385, 164){$K_{0}$}
\put(333, 37){$K_{1}$}
\put(435, 24){$K_{2}$}

\end{overpic}
\caption{$t_0\geq6$, $t_1=t_2=1$.  For $t_0$ even, $K_0$ and $K_{i}$, $i=1,2$, bear the same orientation, for $t_{0}$ odd, the opposite one.} 
\label{Figure:link19inot}
\end{figure}

(1) Suppose $t_0\geq6$.

\begin{lemma}\label{t0>5t1=1t2=1}
If $t_0\geq6, t_1=t_2=1$, then there exist $8$ strongly exceptional Legendrian $A_3$ links whose rotation numbers and corresponding $d_3$ invariants $(r_0, r_1, r_2; d_3)$ are 
$$(\pm(t_{0}+3), \pm2, \pm2; -\frac{3}{2}),
(\pm(t_{0}-1), \pm2, 0; \frac{1}{2}),
(\pm(t_{0}-1), 0, \pm2; \frac{1}{2}), (\pm(t_{0}-5), 0, 0; \frac{5}{2}).$$
\end{lemma}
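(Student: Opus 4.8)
The plan is to realize the eight links by means of the four contact surgery diagrams in Figure~\ref{Figure:link19inot}, each of which depends on $t_0$ only through a box of $t_0-6$ parallel $(-1)$-framed push-offs of $K_0$, and each of which produces exactly one of the four $\pm$-pairs in the list under its two admissible orientations. First I would check, by a Kirby calculus argument of the same kind as \cite[Lemma 5.1]{go} (cf.\ Lemma~\ref{Lemma:topological}), that the underlying smooth link $K_0\cup K_1\cup K_2$ in each diagram is a topological $A_3$ link: slide the chains of $(-2)$- and $(\pm1)$-framed unknots off $K_0$ to leave an unknot $K_0$ with two meridians $K_1$, $K_2$.

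For the rotation numbers I would split into the six links already produced by Lemma~\ref{Lemma:3.2.1} with $t_1=t_2=1$ — the decorations $\pm(+--)$, $\pm(++-)$, $\pm(+-+)$ giving the triples $(\pm(t_0+3),\pm2,\pm2)$, $(\pm(t_0-1),0,\pm2)$, $(\pm(t_0-1),\pm2,0)$ — and the remaining $\pm$-pair with decoration $\pm(+)(+)(+)$, for which I would run the relative Euler class computation exactly as in the proof of Lemma~\ref{Lemma:3.2.1} (using the Seifert surface of $K_0$ obtained by capping $\Sigma$ along $c_1$, $c_2$ and those of $K_1$, $K_2$ obtained from a meridian disk of $K_0$ glued to an annulus), which yields $(\pm(t_0-5),0,0)$; alternatively all eight triples can be read directly off Figure~\ref{Figure:link19inot} via \cite[Lemma 6.6]{loss}. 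The $d_3$-invariants I would compute from these surgery diagrams via \cite{DGS}, using $d_3=\frac{1}{4}\bigl(c(X)^2-3\sigma(X)-2\chi(X)\bigr)+q$ for the associated $4$-manifold $X$, with $q$ the number of contact $(+1)$-surgeries; the $t_0-6$ extra push-offs of $K_0$ contribute nothing (which is why the answer is $t_0$-independent, as Lemma~\ref{Lemma:3.2.1} predicts), and the four diagrams give $-\frac{3}{2}$, $\frac{1}{2}$, $\frac{1}{2}$, $\frac{5}{2}$ respectively.

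Strong exceptionality comes from the construction itself: each link is presented by a contact surgery diagram, so some contact surgery along $K_0\cup K_1\cup K_2$ gives a closed contact $3$-manifold, while a Giroux torsion domain in the exterior $\Sigma\times S^1$ would create an overtwisted disk there; performing contact $(-1)$-surgery along $K_0$ (and along $K_1$, $K_2$ where present) cancels the neighbouring contact $(+1)$-surgeries and reduces to a diagram representing a tight contact $3$-manifold by \cite{go}, using that contact $(-1)$-surgery preserves tightness of closed contact manifolds \cite{w}. This gives strong exceptionality for at least one link; since the other seven have exteriors differing from it only in the signs of the basic slices of $L'_0\cup L'_1\cup L'_2$ — all eight exteriors carrying a $0$-twisting vertical Legendrian circle because $t_0\geq6$ — Lemma~\ref{Lemma:appro} upgrades all eight to appropriate tight, hence strongly exceptional.

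Finally, the eight triples $(r_0,r_1,r_2;d_3)$ are pairwise distinct when $t_0\geq6$ (for instance $t_0+3>t_0-1>0$ and $t_0-5\neq5-t_0$), so they represent eight distinct coarse equivalence classes; combined with the upper bound of $8$ from Lemma~\ref{Lemma:t1=t2=1}, these are exactly the strongly exceptional Legendrian $A_3$ links with $t_0\geq6$, $t_1=t_2=1$. I expect the main obstacle to be the $d_3$ bookkeeping — fixing compatible orientations and framings on the diagrams of Figure~\ref{Figure:link19inot}, writing down the linking matrix, and evaluating $c(X)^2$ against it — together with keeping the rotation-number signs consistent across the four diagrams and with the conventions of Lemma~\ref{Lemma:3.2.1}.
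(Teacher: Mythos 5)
Your proposal follows essentially the same route as the paper: the eight links are produced by the four contact surgery diagrams of Figure~\ref{Figure:link19inot}, their topological type is checked by the Kirby-move trick of Lemma~\ref{Lemma:topological} together with the argument of \cite[Lemma 5.1]{go}, the rotation numbers and $d_3$-invariants are computed via \cite[Lemma 6.6]{loss} and \cite{DGS}, and the count is matched against the upper bound of Lemma~\ref{Lemma:t1=t2=1}. Your supplementary cross-check of the rotation numbers through Lemma~\ref{Lemma:3.2.1} and the relative Euler class (including the all-same-sign decoration giving $(\pm(t_0-5),0,0)$), and the appeal to Lemma~\ref{Lemma:appro}, are consistent with but not needed beyond the paper's argument.
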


\begin{proof}
There exist $8$ strongly exceptional Legendrian $A_3$ links shown in Figure~\ref{Figure:link19inot}. Using the trick of Lemma~\ref{Lemma:topological}, the upper branch in each of the surgery diagrams can be topologically reduced to a single unknot, and the lower two branches in each of the surgery diagrams can be split. Furthermore, using the trick in the proof of \cite[Lemma 5.1, part (ii), Figure 5]{go}, we can show that $K_0\cup K_1\cup K_2$ is a topological $A_3$ link. Their rotation numbers are 
$$r_0=\pm(t_{0}+3), r_{1}=\pm2, r_{2}=\pm2;
r_0=\pm(t_{0}-1), r_{1}=\pm2, r_{2}=0;$$
$$r_0=\pm(t_{0}-1), r_{1}=0, r_{2}=\pm2; r_0=\pm(t_{0}-5), r_{1}=r_{2}=0.$$
The corresponding $d_3$-invariants are $-\frac{3}{2}, \frac{1}{2}, \frac{1}{2}, \frac{5}{2}$. These $d_3$-invariants are calculated using the algorithm described in \cite{DGS}.
\end{proof}

(2) Suppose $t_0=5$. 

\begin{lemma}\label{t0=5t1=1t2=1}
If $t_0=5, t_1=t_2=1$, then there exist $7$ strongly exceptional Legendrian $A_3$ links whose rotation numbers and corresponding $d_3$ invariants $(r_0, r_1, r_2; d_3)$ are
$$(\pm4, 0, \pm2; \frac{1}{2}), (0, 0, 0; \frac{5}{2}), (\pm4, \pm2, 0; \frac{1}{2}), (\pm8, \pm2, \pm2; -\frac{3}{2}).$$   
\end{lemma}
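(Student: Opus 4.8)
The plan is to play off the exact count in Lemma~\ref{Lemma:t1=t2=1}: when $t_0=5$, $t_1=t_2=1$ the boundary slopes of $\Sigma\times S^1$ are $5,-1,-1$ and there are \emph{exactly} $7$ appropriate tight contact structures. So it suffices to produce $7$ pairwise distinct strongly exceptional Legendrian $A_3$ links carrying the seven listed triples $(r_0,r_1,r_2;d_3)$; the upper bound then forces that list to be complete.

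First I would pin down the $7$ appropriate tight structures on $\Sigma\times S^1$. Six of them are those with decorations $\pm(+--)$, $\pm(++-)$, $\pm(+-+)$ furnished by Lemma~\ref{Lemma:3.2.1} (here $L'_1,L'_2$ are single basic slices since $t_1=t_2=1$). The seventh is the one with the constant decoration: for $t_0=5$ we have $t_0+\lceil-\tfrac1{t_1}\rceil+\lceil-\tfrac1{t_2}\rceil=5-1-1=3$, so Lemma~\ref{Lemma:slopes2}(1) applies and tells us that the decorations $(+)(+)(+)$ and $(-)(-)(-)$ determine the \emph{same} contact structure on $\Sigma\times S^1$. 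This coincidence -- which disappears for $t_0\ge6$, where $t_0+\lceil-\tfrac1{t_1}\rceil+\lceil-\tfrac1{t_2}\rceil=t_0-2\ge4$ -- is exactly why the count drops from eight to seven. Since $t_0=5\ge4$, every tight contact structure on $\Sigma\times S^1$ with these boundary slopes contains a $0$-twisting vertical Legendrian circle, so each of the seven extends via Lemma~\ref{Lemma:construction} to an overtwisted contact $S^3$ containing a strongly exceptional Legendrian $A_3$ link with $(t_0,t_1,t_2)=(5,1,1)$; isotopic inputs give coarsely equivalent outputs, so the constant decoration contributes just one link.

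Next, the invariants. The rotation numbers of the six links with decorations $\pm(+--),\pm(++-),\pm(+-+)$ are read off from the relative Euler class computation of Lemma~\ref{Lemma:3.2.1} with $t_0=5$, giving $(\pm8,\pm2,\pm2)$, $(\pm4,\pm2,0)$ and $(\pm4,0,\pm2)$; the seventh link has $r_0=r_1=r_2=0$, either by running the same computation for the constant decoration or simply because the symmetry $(+)(+)(+)\cong(-)(-)(-)$ forces $(r_0,r_1,r_2)=(-r_0,-r_1,-r_2)$. For the $d_3$-invariants I would avoid recomputing from scratch: each of the seven links has a $0$-twisting vertical circle, so by Lemma~\ref{Lemma:stabilization} its component $K_0$ destabilizes, and since the exterior of a destabilization embeds in the exterior of the original, the destabilized link is again strongly exceptional, now with $t_0=6$, hence one of the eight links of Lemma~\ref{t0>5t1=1t2=1}; as (de)stabilization takes place inside a fixed contact $S^3$, the $d_3$-invariant is unchanged. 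Matching $t_0$, the unchanged pair $(r_1,r_2)$, and $r_0\mapsto r_0\pm1$ identifies $(\pm8,\pm2,\pm2)$ with the $d_3=-\tfrac32$ links $(\pm9,\pm2,\pm2)$, identifies $(\pm4,\pm2,0)$ and $(\pm4,0,\pm2)$ with the $d_3=\tfrac12$ links $(\pm5,\pm2,0)$ and $(\pm5,0,\pm2)$, and identifies $(0,0,0)$ with the $d_3=\tfrac52$ links $(\pm1,0,0)$. (Alternatively one writes down an explicit contact surgery diagram for each of the seven -- a variant of Figure~\ref{Figure:link19inot} in which the $(t_0-6)$-stabilization box is replaced by a short reduced tangle -- and computes $d_3$ by the algorithm of \cite{DGS}.) Since these seven triples are visibly pairwise distinct and Lemma~\ref{Lemma:t1=t2=1} caps the number at $7$, the lemma follows.

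The only genuinely new computation is the relative Euler class evaluation for the constant decoration, which is routine. The conceptually delicate point is the drop from eight to seven: one must be sure that it is \emph{precisely} the coincidence $(+)(+)(+)\cong(-)(-)(-)$ coming from $t_0+\lceil-\tfrac1{t_1}\rceil+\lceil-\tfrac1{t_2}\rceil=3$, with no appropriate tight structure becoming overtwisted and no new one appearing. I expect this to be the main obstacle conceptually, but it requires no separate argument -- it is exactly what the sharp equality ``$=7$'' in Lemma~\ref{Lemma:t1=t2=1} certifies.
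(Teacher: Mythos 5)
Your proposal is correct in substance, but it takes a genuinely different route from the paper. The paper realizes five of the seven links concretely by Legendrian connected sum of Hopf links: using the Geiges--Onaran classification and Lemma~\ref{Lemma:connected}, it sums $K'_0\cup K_1$ with $(t'_0,r'_0)=(t_1,r_1)=(1,0)$ in $(S^3,\xi_{\frac12})$ with $K''_0\cup K_2$ having $(t''_0,r''_0)=(3,\pm4)$ or $(3,0)$, and then swaps the roles of $K_1$ and $K_2$; this yields $(\pm4,0,\pm2;\frac12)$, $(0,0,0;\frac52)$, $(\pm4,\pm2,0;\frac12)$ with $d_3$ coming from the connected-sum formula, and only the two links $(\pm8,\pm2,\pm2;-\frac32)$ are produced abstractly via Lemma~\ref{Lemma:3.2.1} together with Lemma~\ref{Lemma:appro}. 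You instead obtain all seven by extending the seven appropriate tight structures on $\Sigma\times S^1$ certified by the sharp count in Lemma~\ref{Lemma:t1=t2=1}, using Lemma~\ref{Lemma:slopes2}(1) to cap the decoration classes at seven (the identification $(+)(+)(+)\simeq(-)(-)(-)$, which exists exactly because $t_0-2=3$), Lemma~\ref{Lemma:construction} to convert each structure into a strongly exceptional link, the relative Euler class computation of Lemma~\ref{Lemma:3.2.1} (plus the sign-flip symmetry for the constant decoration) for the rotation numbers, and destabilization of $K_0$ into the completed $t_0=6$ classification of Lemma~\ref{t0>5t1=1t2=1} for the $d_3$-invariants. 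Both arguments are sound; yours leans on the existence half of Lemma~\ref{Lemma:t1=t2=1} (special to $t_1=t_2=1$, where Honda's classification gives an exact count rather than only the upper bounds that Section~2 provides in the other cases) and on the fact that a destabilization of a strongly exceptional link is again strongly exceptional (consistent with Lemma~\ref{Lemma:stabilization} and Remark~\ref{Remark:Stabilization}, and correctly justified by the embedding of exteriors), while the paper's route buys explicit connected-sum/surgery presentations with $d_3$ computed directly via \cite{DGS}.

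Two minor caveats, neither fatal: the closing claim that the drop from eight to seven ``requires no separate argument'' is slightly glib --- the equality $=7$ alone does not identify the mechanism; it is the combination of Lemma~\ref{Lemma:slopes2}(1) (at most seven decoration classes) with the exact count that forces every class, including the constant one, to be realized by exactly one appropriate tight structure, which is in fact the argument you use. And the parenthetical alternative of adapting Figure~\ref{Figure:link19inot} to $t_0=5$ is too vague to carry weight (that diagram needs $t_0\geq6$), but it is not needed since your destabilization matching already determines the $d_3$-invariants unambiguously.
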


\begin{proof}
By \cite[Theorem 1.2, (c1), (c2)]{go}, there is a Legendrian Hopf link $K'_0\cup K_1$ in $(S^3, \xi_{\frac{1}{2}})$ with $(t'_0, r'_0)=(t_1, r_1)=(1,0)$,  two Legendrian Hopf links $K''_0\cup K_2$ in  $(S^3, \xi_{-\frac{1}{2}})$ with $(t''_0, r''_0)=(3, \pm4), (t_2, r_2)=(1, \pm2)$, and a Legendrian Hopf links $K''_0\cup K_2$ in  $(S^3, \xi_{\frac{3}{2}})$ with $(t''_0, r''_0)=(3, 0 ), (t_2, r_2)=(1, 0)$. Connected summing $K'_0$ and $K''_0$, by Lemma~\ref{Lemma:connected}, we obtain $3$ strongly exceptional Legendrian $A_3$ links  with $t_0=5, t_1=t_2=1$. Their rotation numbers and corresponding $d_3$-invariants $(r_0, r_1, r_2; d_3)$ are 
$(\pm4, 0, \pm2; \frac{1}{2})$ and $ (0, 0, 0; \frac{5}{2}).$

By exchanging the roles of $K_1$ and $K_2$, we obtain  $2$ strongly exceptional Legendrian $A_3$ links in $(S^3, \xi_{\frac{1}{2}})$ whose rotation numbers $(r_0, r_1, r_2)$ are $(\pm4, \pm2, 0).$

By Lemma~\ref{Lemma:3.2.1} and  Lemma~\ref{Lemma:appro}, there are $2$ strongly exceptional Legendrian $A_3$ links in $(S^3, \xi_{-\frac{3}{2}})$ whose rotation numbers $(r_0, r_1, r_2)$ are
$(\pm8, \pm2, \pm2).$
Their exteriors have decorations $\pm(+)(-)(-)$.
\end{proof}


(3) Suppose $t_0=4$. 

\begin{lemma}\label{t0=4t1=1t2=1}
If $t_0=4, t_1=t_2=1$, then there exist $6$ strongly exceptional Legendrian $A_3$ links whose rotation numbers and corresponding $d_3$ invariants $(r_0, r_1, r_2; d_3)$ are
$$(\pm3, 0, \pm2; \frac{1}{2}), (\pm3, \pm2, 0; \frac{1}{2}), (\pm7, \pm2, \pm2; -\frac{3}{2}).$$   
\end{lemma}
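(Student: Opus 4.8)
The plan is to realize the six links exactly as in the proof of Lemma~\ref{t0=5t1=1t2=1}: two of them come from Legendrian connected sums, two more by exchanging the roles of $K_1$ and $K_2$, and the last two from the decorated tight structures of Lemma~\ref{Lemma:3.2.1}. The count $2+2+2=6$ then matches the upper bound of Lemma~\ref{Lemma:t1=t2=1}, so equality holds, all six are pairwise non-coarsely-equivalent (their $(r_0,r_1,r_2;d_3)$ tuples are distinct), and the list is complete.

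For the first pair I would take the strongly exceptional Legendrian Hopf link $K'_0\cup K_1$ in $(S^3,\xi_{\frac12})$ with $(t'_0,r'_0)=(t_1,r_1)=(1,0)$ and, from the classification of exceptional Hopf links in \cite{go}, strongly exceptional Legendrian Hopf links $K''_0\cup K_2$ with $(t''_0,r''_0)=(2,\pm3)$ and $(t_2,r_2)=(1,\pm2)$ (signs correlated) in the overtwisted $(S^3,\xi_{-\frac12})$. Applying Lemma~\ref{Lemma:connected} in the case $t'_0=t_1=1$ and forming $K'_0\#K''_0$ yields strongly exceptional Legendrian $A_3$ links with Thurston--Bennequin invariants $(t'_0+t''_0+1,\,t_1,\,t_2)=(4,1,1)$ and rotation numbers $(r'_0+r''_0,\,r_1,\,r_2)=(\pm3,0,\pm2)$, lying in a contact sphere with $d_3=\tfrac12+(-\tfrac12)+\tfrac12=\tfrac12$. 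Exchanging $K_1$ and $K_2$ throughout (connect-summing a Hopf link $K'_0\cup K_2$ with $(1,0)$ onto a Hopf link $K''_0\cup K_1$ with $(2,\pm3)$, $(1,\pm2)$) then produces the two links with $(\pm3,\pm2,0;\tfrac12)$.

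For the last pair I would use Lemma~\ref{Lemma:3.2.1}: the exceptional Legendrian $A_3$ links whose exteriors carry the decoration $\pm(+)(-)(-)$ have rotation numbers $(\pm(t_0+3),\pm(t_1+1),\pm(t_2+1))=(\pm7,\pm2,\pm2)$, and since the associated $d_3$-invariant is independent of $t_0$ when $t_1,t_2$ are fixed it equals the value $-\tfrac32$ already computed at $t_0=5$. To upgrade these to \emph{strongly} exceptional links (Lemma~\ref{Lemma:3.2.1} together with Lemma~\ref{Lemma:construction} only gives exceptional a priori) I would invoke Lemma~\ref{Lemma:appro}: because $t_1=t_2=1$, the exterior of every tight contact structure with these boundary slopes factors with $L'_1$ and $L'_2$ single basic slices, so these exteriors differ from the exteriors of the four links built above only in the signs of the basic slices of $L'_0\cup L'_1\cup L'_2$, and appropriateness is inherited.

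The step I expect to require the most care is the connected-sum construction at the degenerate pair $(t''_0,t_2)=(2,1)$: there $K''_0\cup K_2$ has no contact surgery diagram, so strong exceptionality of the output must be argued through the embedding of its universally tight exterior into a tight $T^3$, exactly as in the second paragraph of the proof of Lemma~\ref{Lemma:connected}. Beyond that, one must confirm that no seventh appropriate decoration survives; but this is automatic once the upper bound $6$ of Lemma~\ref{Lemma:t1=t2=1} is invoked, which forces the six constructed links to be all of them.
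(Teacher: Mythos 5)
Your proposal is correct and follows essentially the same route as the paper: the two links with $(\pm3,0,\pm2;\frac12)$ come from connect-summing the $(1,0)$ Hopf link component with the $(2,\pm3)$, $(1,\pm2)$ Hopf link via Lemma~\ref{Lemma:connected}, the pair $(\pm3,\pm2,0;\frac12)$ by exchanging $K_1$ and $K_2$, and the pair $(\pm7,\pm2,\pm2;-\frac32)$ from the decorations $\pm(+)(-)(-)$ via Lemma~\ref{Lemma:3.2.1} and Lemma~\ref{Lemma:appro}, with the count closed by the upper bound of Lemma~\ref{Lemma:t1=t2=1}. Your extra remarks (handling the $(t''_0,t_2)=(2,1)$ case through the tight $T^3$ embedding inside Lemma~\ref{Lemma:connected}, and the $t_0$-independence of $d_3$) are exactly the points the paper relies on implicitly.
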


\begin{proof}
Suppose $t_0=4$. By \cite[Theorem 1.2]{go}, there is a Legendrian Hopf link $K'_0\cup K_1$ in $(S^3, \xi_{\frac{1}{2}})$ with $(t'_0, r'_0)=(t_1, r_1)=(1,0)$, and two Legendrian Hopf links $K''_0\cup K_2$ in $(S^3, \xi_{-\frac{1}{2}})$ with $(t''_0, r''_0)=(2, \pm3), (t_2, r_2)=(1, \pm2)$. Connected summing $K'_0$ and $K''_0$, by Lemma~\ref{Lemma:connected}, we obtain $2$ strongly exceptional Legendrian $A_3$ link with $t_0=4, t_1=t_2=1$  in $(S^3, \xi_{\frac{1}{2}})$. Their rotation numbers $(r_0, r_1, r_2)$ are $(\pm3, 0, \pm2).$ 

By exchanging the roles of $K_1$ and $K_2$ we obtain $2$ strongly exceptional Legendrian $A_3$ links  in $(S^3, \xi_{\frac{1}{2}})$  whose rotation numbers $(r_0, r_1, r_2)$ are
$(\pm3, \pm2, 0).$

By Lemma~\ref{Lemma:3.2.1} and Lemma~\ref{Lemma:appro}, there are $2$  strongly exceptional Legendrian $A_3$ links in $(S^3, \xi_{-\frac{3}{2}})$  whose rotation numbers $(r_0, r_1, r_2)$ are
$(\pm7, \pm2, \pm2).$
Their exteriors have decorations $\pm(+)(-)(-)$. 
\end{proof}

(4) Suppose $t_{0}\leq3$.

\begin{lemma}\label{t0<4t1=1t2=1}
If $t_{0}\leq3, t_1=t_2=1$, then there exist $4-t_0$ strongly exceptional Legendrian $A_3$ links in $(S^3, \xi_{\frac{3}{2}})$ whose rotation numbers are $$r_{0}\in\{t_{0}-3, t_{0}-1,\cdots, 3-t_{0}\}, r_{1}=r_{2}=0.$$
\end{lemma}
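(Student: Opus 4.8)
The upper bound $4-t_0$ is already delivered by Lemma~\ref{Lemma:t1=t2=1}, so the plan is purely one of realization: I will exhibit $4-t_0$ pairwise coarsely inequivalent strongly exceptional Legendrian $A_3$ links with $t_1=t_2=1$, the prescribed value of $t_0$, and rotation numbers $r_0\in\{t_0-3,t_0-1,\dots,3-t_0\}$, $r_1=r_2=0$. The first thing to notice is that here, with $t_1=t_2=1$, we have $t_0+\lceil-\frac{1}{t_1}\rceil+\lceil-\frac{1}{t_2}\rceil=t_0-2\le 1$, so by Lemma~\ref{Lemma:tor>0} none of these appropriate tight contact structures on $\Sigma\times S^1$ has a $0$-twisting vertical Legendrian circle. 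Consequently the constructions via local Legendrian meridians (Lemma~\ref{Lemma:meridian}) and via extending a $0$-twisting $\Sigma\times S^1$ (Lemma~\ref{Lemma:construction}) are unavailable in this regime, and the right tool is the Legendrian connected sum of Lemma~\ref{Lemma:connected}.

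\textbf{Construction.} I would fix the strongly exceptional Legendrian Hopf link $K'_0\cup K_1$ in $(S^3,\xi_{1/2})$ with $(t'_0,r'_0)=(t_1,r_1)=(1,0)$, and then let $K''_0\cup K_2$ range over the strongly exceptional Legendrian Hopf links in $(S^3,\xi_{1/2})$ with $(t''_0,t_2)=(t_0-2,1)$ and $r_2=0$. By the classification of strongly exceptional Legendrian Hopf links \cite[Theorem 1.2]{go} there are exactly $4-t_0$ of these, with $r''_0$ running over $\{t_0-3,\,t_0-1,\,\dots,\,3-t_0\}$. Since $t_0\le 3$ forces $t''_0=t_0-2\le 1$, the pair $(t''_0,t_2)$ is never $(2,1)$ or $(1,2)$, so Lemma~\ref{Lemma:connected} applies to each such pair without the exceptional-case caveat and produces a strongly exceptional Legendrian $A_3$ link $(K'_0\#K''_0)\cup K_1\cup K_2$. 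Using $t_0=t'_0+t''_0+1$ and $r_0=r'_0+r''_0$, these links have $t_1=t_2=1$, the required $t_0$, $r_0=r''_0\in\{t_0-3,\dots,3-t_0\}$, and $r_1=r_2=0$ (the meridional components are untouched by the connected sum); and since the $d_3$-invariant is additive under contact connected sum, $d_3=d_3(\xi_{1/2})+d_3(\xi_{1/2})+\frac{1}{2}=\frac{3}{2}$, so all of them lie in $(S^3,\xi_{3/2})$.

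\textbf{Conclusion and main obstacle.} The $4-t_0$ links produced this way have pairwise distinct rotation numbers $r_0$, hence are pairwise coarsely inequivalent; this gives at least $4-t_0$ strongly exceptional Legendrian $A_3$ links, and combined with the upper bound of Lemma~\ref{Lemma:t1=t2=1} we conclude there are exactly $4-t_0$, all in $(S^3,\xi_{3/2})$, with the stated invariants. The only genuinely delicate point — and it is external to the present paper — is extracting from \cite{go} the precise count and the rotation numbers of the auxiliary Hopf links $K''_0\cup K_2$ with $(t''_0,t_2)=(t_0-2,1)$ and $r_2=0$, and confirming that they live in $(S^3,\xi_{1/2})$ (so that the connected sums land in $\xi_{3/2}$ rather than some other overtwisted sphere). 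Everything else — applicability of Lemma~\ref{Lemma:connected}, the arithmetic of Thurston–Bennequin and rotation numbers under $\#$, and the $d_3$ bookkeeping — is routine. (As an alternative realization one could instead start from the single $t_0=3$ link and write down an explicit contact surgery diagram, stabilizing the central component $3-t_0$ times with a chosen number of positive stabilizations; but the connected-sum route is cleaner and bypasses the Kirby-calculus identification of the underlying topological link.)
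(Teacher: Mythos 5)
Your proposal is correct and follows essentially the same route as the paper: both fix the Hopf link $K'_0\cup K_1$ with $(t'_0,r'_0)=(t_1,r_1)=(1,0)$ in $(S^3,\xi_{1/2})$, connect-sum with the $4-t_0$ Hopf links $K''_0\cup K_2$ in $(S^3,\xi_{1/2})$ having $t''_0=t_0-2\le1$, $r''_0\in\{t_0-3,\dots,3-t_0\}$, $(t_2,r_2)=(1,0)$ from \cite[Theorem 1.2]{go}, and invoke Lemma~\ref{Lemma:connected} together with the additivity formulas for $tb$, $r$ and $d_3$ to land in $(S^3,\xi_{3/2})$ with the stated invariants. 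Your extra checks (no $0$-twisting circle when $t_0\le3$, the $(2,1)/(1,2)$ caveat being vacuous here, distinctness via $r_0$) are consistent with the paper and harmless.
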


\begin{proof}
Suppose $t_{0}\leq3$. By \cite[Theorem 1.2, (c1), (b2)]{go}, there is a Legendrian Hopf link $K'_0\cup K_1$ in $(S^3, \xi_{\frac{1}{2}})$  with $(t'_0, r'_0)=(t_1, r_1)=(1, 0)$, and a Legendrian Hopf link $K''_0\cup K_2$  in $(S^3, \xi_{\frac{1}{2}})$  with $t''_0\leq1, r''_0\in\{t''_0-1, t''_0+1, \cdots, -t''_0+1\}, (t_2, r_2)=(1,0)$.  By Lemma~\ref{Lemma:connected}, we can construct  $4-t_{0}$ strongly exceptional Legendrian $A_3$ links in $(S^3, \xi_{\frac{3}{2}})$ with $t_0\leq3, t_1=t_2=1$. 
Their rotation numbers are as listed.
\end{proof} 
These $4-t_0$ strongly exceptional Legendrian $A_3$ links are obtained by stabilizations along $K_0$ of the Legendrian $A_3$ link with $t_0=3, t_1=t_2=1$.

The proof of Theorem~\ref{Theorem:t_1=t_2=1} is completed.
\end{proof}

\subsubsection{$t_1\geq2$ and $t_2=1$.}

The boundary slopes of $\Sigma\times S^1$ are $s_0=t_0$, $s_1=-\frac{1}{t_1}$ and $s_2=-1$.

\begin{proof}[Proof of Theorem~\ref{Theorem:t_1>1t_2=1}]
The upper bound of strongly exceptional Legendrian $A_3$ links is given by Lemma~\ref{Lemma:t_1>1t_2=1}. We will show that these upper bounds can be attained except in the cases $(t_0, t_1, t_2)=(3,3,1)$.

\begin{figure}[htb]
\begin{overpic}
{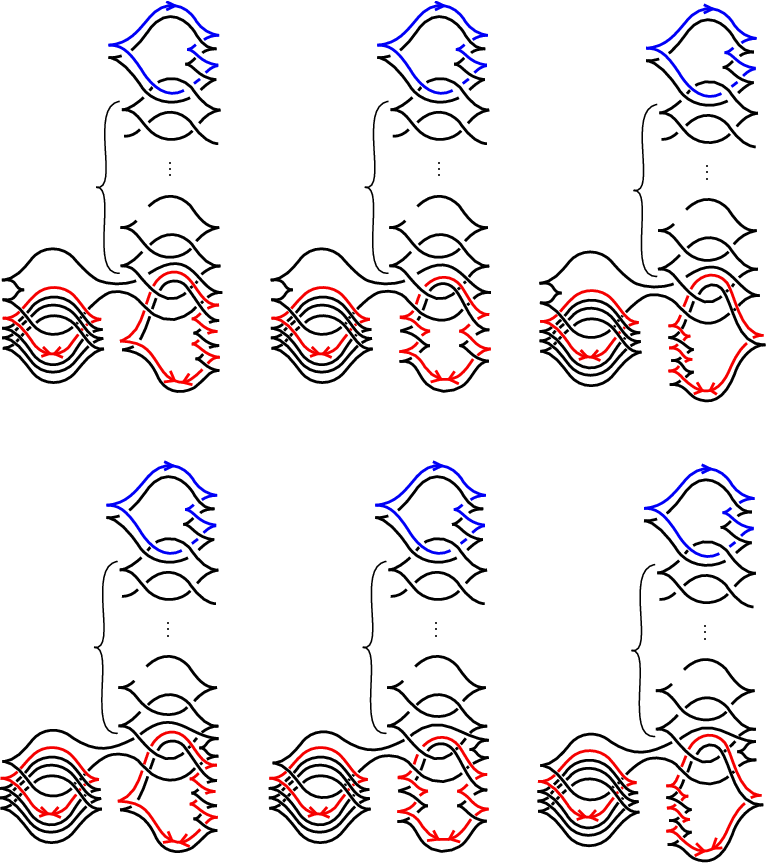}
\put(50, 402){$K_{0}$}
\put(180, 402){$K_{0}$}
\put(310, 402){$K_{0}$}
\put(50, 180){$K_{0}$}
\put(180, 180){$K_{0}$}
\put(310, 180){$K_{0}$}
\put(76, 240){$K_{1}$}
\put(208, 240){$K_{1}$}
\put(337, 240){$K_{1}$}
\put(76, 18){$K_{1}$}
\put(208, 18){$K_{1}$}
\put(337, 18){$K_{1}$}
\put(20, 280){$K_{2}$}
\put(150, 280){$K_{2}$}
\put(280, 280){$K_{2}$}
\put(-15, 40){$K_{2}$}
\put(115, 40){$K_{2}$}
\put(245, 40){$K_{2}$}
\put(14, 322){$t_{0}-5$}
\put(144, 322){$t_{0}-5$}
\put(274, 322){$t_{0}-5$}
\put(14, 102){$t_{0}-5$}
\put(144, 102){$t_{0}-5$}
\put(274, 102){$t_{0}-5$}
\put(110, 390){$+1$}
\put(110, 360){$-1$}
\put(110, 345){$-1$}
\put(110, 304){$-1$}
\put(110, 285){$-1$}
\put(110, 272){$-1$}
\put(110, 233){$+1$}
\put(-14, 258){$+1$}
\put(-14, 249){$+1$}
\put(-14, 240){$+1$}
\end{overpic}
\caption{$t_0\geq5$, $t_1=2$, $t_2=1$.  For $t_0$ odd, $K_0$ and $K_{2}$  bear the same orientation, while $K_0$ and $K_{1}$  bear the opposite orientation. For $t_{0}$ even, $K_0$ and $K_{1}$  bear the same orientation, while $K_0$ and $K_{2}$  bear the opposite orientation. }
\label{Figure:link21inot}
\end{figure}

(1) Suppose $t_0\geq5$ and $t_1=2$.

\begin{lemma}\label{t0>4t1=2t2=1}
If $t_0\geq5$, $t_1=2$ and $t_2=1$, then there exist $12$ strongly exceptional Legendrian $A_3$ links whose rotation numbers and corresponding $d_3$ invariants $(r_0, r_1, r_2; d_3)$ are $$(\pm(t_{0}-5), \mp1, 0; \frac{5}{2}), (\pm(t_{0}-3), \pm1,  0; \frac{5}{2}),
(\pm(t_{0}-1), \pm3,  0;  \frac{1}{2}),$$ $$(\pm(t_{0}-1), \mp1, \pm2; \frac{1}{2}),
(\pm(t_{0}+1), \pm1, \pm2; \frac{1}{2}), (\pm(t_{0}+3), \pm3, \pm2; -\frac{3}{2}).$$
\end{lemma}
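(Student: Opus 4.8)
The plan is to follow the template already used for Lemmas~\ref{t0>5t1=1t2=1}--\ref{t0<4t1=1t2=1}: exhibit an explicit family of contact surgery diagrams realizing twelve candidates, identify the underlying topological link, read off the classical invariants and the $d_3$-invariants, verify strong exceptionality, and then invoke the upper bound of Lemma~\ref{Lemma:t_1>1t_2=1} to conclude there are no others.

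\textbf{Construction.} First I would take the twelve contact surgery diagrams of Figure~\ref{Figure:link21inot}: it consists of six pictures, each carrying a global sign $\pm$ governed by the orientation conventions (which of $K_1,K_2$ is parallel to $K_0$ depends on the parity of $t_0$, as in the caption). In every diagram the only $+1$-framed components are Legendrian push-offs of $K_0$, $K_1$ or $K_2$, so strong exceptionality will follow from the surgery-realizability principle discussed in Section~3 once we know the diagrams represent exceptional Legendrian $A_3$ links: performing contact $(-1)$-surgery along $K_0$ cancels the $(+1)$-surgery on its push-off, and the remaining diagram presents a surgery along a strongly exceptional Legendrian Hopf link (or its universally tight thickened-torus exterior) as classified in \cite{go}; such a surgery produces a tight $3$-manifold, while a Giroux-torsion domain in the $\Sigma\times S^1$ exterior would yield an overtwisted disk after the surgery, so the exterior is appropriate tight.

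\textbf{Topological type and invariants.} As in Lemma~\ref{Lemma:topological} and \cite[Lemma 5.1]{go}, I would blow down and handle-slide the chains of $-1$- and $-2$-framed unknots so that the upper branch collapses to a single unknot $K_0$ while $K_1$ and $K_2$ become two meridians of it, exhibiting $K_0\cup K_1\cup K_2$ as the $A_3$ link. The Thurston--Bennequin and rotation numbers of the three components then come from \cite[Lemma 6.6]{loss}, producing the six listed triples $(r_0,r_1,r_2)$ up to the global sign, and the $d_3$-invariant of each ambient contact sphere is computed by the algorithm of \cite{DGS}, yielding the attached values $\frac{5}{2},\frac{5}{2},\frac{1}{2},\frac{1}{2},\frac{1}{2},-\frac{3}{2}$. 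For the family carrying $d_3=-\frac{3}{2}$, whose exterior should have decoration $(\pm)(\mp\mp)(\mp)$, one can alternatively build it from Lemma~\ref{Lemma:3.2.1} together with Lemma~\ref{Lemma:appro}, exactly as in the proofs of Lemmas~\ref{t0=5t1=1t2=1} and \ref{t0=4t1=1t2=1}; this doubles as a cross-check.

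\textbf{Conclusion and the main obstacle.} The twelve tuples $(r_0,r_1,r_2;d_3)$ are pairwise distinct when $t_0\geq5$ (the only near-coincidence, $\pm(t_0-5)=0$ at $t_0=5$, is separated by the value $\mp1$ of $r_1$), so together with the upper bound $12$ of Lemma~\ref{Lemma:t_1>1t_2=1} these are exactly the strongly exceptional Legendrian $A_3$ links with $t_0\geq5$, $t_1=2$, $t_2=1$. I expect the main difficulty to be twofold. First, the honest $d_3$-computation of \cite{DGS} requires careful bookkeeping of the linking matrix and the rotation-number vector of the full surgery diagram. Second, one must correctly match each of the six surgery pictures to its $0$-twisting decoration in $L'_0\cup L'_1\cup L'_2\cup\Sigma'\times S^1$: here it is essential that $L'_1$ is a continued-fraction block, whose two signs may be shuffled by Lemma~\ref{Lemma:slopes2}, so that all $2\times 3\times 2$ decorations are accounted for and no two pictures present coarsely equivalent links. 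Keeping the orientation conventions consistent across the parity of $t_0$ is the most error-prone point in the rotation-number bookkeeping.
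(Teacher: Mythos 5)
Your proposal follows essentially the same route as the paper: the paper also realizes the twelve links by the contact surgery diagrams of Figure~\ref{Figure:link21inot}, identifies the topological $A_3$ link type via Lemma~\ref{Lemma:topological} together with the arguments of \cite[Theorem 1.2 (c3)]{go}, reads off the rotation numbers and $d_3$-invariants by \cite[Lemma 6.6]{loss} and \cite{DGS}, and relies on the surgery-realizability principle of Section~3 for strong exceptionality and on Lemma~\ref{Lemma:t_1>1t_2=1} for the matching upper bound. Your extra cross-check of the $d_3=-\frac{3}{2}$ family via Lemma~\ref{Lemma:3.2.1} and Lemma~\ref{Lemma:appro} is consistent with, but not needed for, the paper's argument.
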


\begin{proof}
There exist $12$ strongly exceptional Legendrian $A_3$ links shown in Figure~\ref{Figure:link21inot}. Using the trick of Lemma~\ref{Lemma:topological} and the proof of \cite[Theorem 1.2, (c3)]{go}, we can show that $K_0\cup K_1\cup K_2$ is a topological $A_3$ link. Their rotation numbers and corresponding $d_3$-invariants are as listed.
\end{proof}

(2) Suppose $t_0=4$ and $t_1=2$. 

\begin{lemma}\label{t0=4t1=2t2=1}
If $t_0=4$, $t_1=2$ and $t_2=1$, then there exist $10$ strongly exceptional Legendrian $A_3$ links whose rotation numbers and corresponding $d_3$-invariants $(r_0, r_1, r_2; d_3)$ are $$(\pm5, \pm1, \pm2; \frac{1}{2}), (\pm3, \mp1, \pm2; \frac{1}{2}), (\pm1, \pm1,  0; \frac{5}{2}),
(\pm3, \pm3,  0;  \frac{1}{2}), (\pm7, \pm3, \pm2; -\frac{3}{2}).$$
\end{lemma}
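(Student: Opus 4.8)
The strategy is to exhibit ten strongly exceptional Legendrian $A_3$ links carrying the ten pairwise distinct triples $(r_0,r_1,r_2)$ in the statement; together with the upper bound $10$ from Lemma~\ref{Lemma:t_1>1t_2=1} (the case $t_0=4$, $t_1=2$) and the fact that coarse equivalence preserves Thurston--Bennequin invariants and rotation numbers, this forces the count to be exactly ten. I would realize six of the links by Legendrian connected sums and the remaining four by the construction of Lemma~\ref{Lemma:3.2.1}.

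For the first six: by \cite[Theorem 1.2]{go} there is a strongly exceptional Legendrian Hopf link $K'_0\cup K_1$ in $(S^3,\xi_{1/2})$ with $(t'_0,r'_0)=(0,\pm1)$, $t_1=2$, $r_1=\pm1$ (signs correlated), which meets the hypothesis of Lemma~\ref{Lemma:connected}; and there are strongly exceptional Legendrian Hopf links $K''_0\cup K_2$ with $t''_0=3$, $t_2=1$ and $(r''_0,r_2)\in\{(\pm4,\pm2),(0,0)\}$, lying in $(S^3,\xi_{-1/2})$ and $(S^3,\xi_{3/2})$ respectively. Forming $(K'_0\#K''_0)\cup K_1\cup K_2$, Lemma~\ref{Lemma:connected} produces strongly exceptional Legendrian $A_3$ links with $t_0=t'_0+t''_0+1=4$, $t_1=2$, $t_2=1$, rotation numbers $r_0=r'_0+r''_0$ with the unchanged $r_1,r_2$, and $d_3$-invariant $d_3(\xi')+d_3(\xi'')+\tfrac12$. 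Running through the admissible sign choices yields exactly $(\pm5,\pm1,\pm2;\tfrac12)$, $(\pm3,\mp1,\pm2;\tfrac12)$ and $(\pm1,\pm1,0;\tfrac52)$.

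For the remaining four: specializing Lemma~\ref{Lemma:3.2.1} to $t_0=4$, $t_1=2$, $t_2=1$, the decorations $\pm(+--)$ and $\pm(+-+)$ give exceptional Legendrian $A_3$ links with rotation numbers $(\pm7,\pm3,\pm2)$ and $(\pm3,\pm3,0)$; since by Lemma~\ref{Lemma:3.2.1} the ambient $d_3$ is independent of $t_0$, it equals the value $-\tfrac32$, resp.\ $\tfrac12$, already computed for these decorations in the case $t_0\ge5$ (Lemma~\ref{t0>4t1=2t2=1}). To see these are strongly exceptional I would argue as follows. Because $t_0+\lceil-\tfrac1{t_1}\rceil+\lceil-\tfrac1{t_2}\rceil=4+0-1\ge2$, Lemma~\ref{vertical} shows every appropriate tight contact structure on $\Sigma\times S^1$ with these boundary slopes has a $0$-twisting vertical Legendrian circle; in particular one of the six connected-sum links above has appropriate tight exterior with such a circle and boundary slopes $t_0=4$, $-\tfrac12$, $-1$, so by Lemma~\ref{Lemma:slopes1} and the uniqueness in Lemma~\ref{Lemma:slopes3} its factorization differs from those underlying the $\pm(+--)$ and $\pm(+-+)$ decorations only in the signs of the basic slices. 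Lemma~\ref{Lemma:appro} then upgrades the latter two families to appropriate tight, and Lemma~\ref{Lemma:construction} turns them into strongly exceptional Legendrian $A_3$ links with the stated data.

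The ten links so produced carry the ten distinct triples of the statement, so combined with the upper bound of Lemma~\ref{Lemma:t_1>1t_2=1} there are exactly ten. I expect the main difficulty to be bookkeeping rather than any conceptual point: keeping the correlated signs in the connected sum straight so that precisely these triples (and no spurious ones) appear, tracking the $d_3$-invariant through the connected-sum formula, and above all transferring appropriate tightness (hence strong exceptionality) to the links coming from Lemma~\ref{Lemma:3.2.1}, which is exactly the purpose of the chain Lemma~\ref{vertical}--Lemma~\ref{Lemma:slopes1}--Lemma~\ref{Lemma:appro}.
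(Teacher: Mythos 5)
Your proposal is essentially correct and follows the paper's architecture for eight of the ten links: the six links $(\pm5,\pm1,\pm2;\tfrac12)$, $(\pm3,\mp1,\pm2;\tfrac12)$, $(\pm1,\pm1,0;\tfrac52)$ come from exactly the same connected sums (Lemma~\ref{Lemma:connected} with $(t'_0,r'_0)=(0,\pm1)$, $r_1=\pm1$ and the Hopf links with $t''_0=3$), and the pair $(\pm7,\pm3,\pm2;-\tfrac32)$ is obtained, as in the paper, from the decoration $\pm(+)(--)(-)$ via Lemma~\ref{Lemma:3.2.1} and Lemma~\ref{Lemma:appro}. The one genuine deviation is the pair $(\pm3,\pm3,0;\tfrac12)$: the paper realizes it by a second connected sum, namely $K'_0\cup K_2$ with $(t'_0,r'_0)=(t_2,r_2)=(1,0)$ in $(S^3,\xi_{1/2})$ summed with the Hopf links $K''_0\cup K_1$ with $(t''_0,r''_0)=(t_1,r_1)=(2,\pm3)$ in $(S^3,\xi_{-1/2})$, which hands you both strong exceptionality and $d_3=\tfrac12$ directly from the connected-sum formula; you instead take the decoration $\pm(+)(--)(+)$ from Lemma~\ref{Lemma:3.2.1}, transfer appropriate tightness with Lemma~\ref{vertical}, Lemma~\ref{Lemma:slopes1}, Lemma~\ref{Lemma:slopes3} and Lemma~\ref{Lemma:appro}, and pin down $d_3=\tfrac12$ by the $t_0$-independence statement anchored at the $t_0\ge5$ computation. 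Your route works, but note the anchoring step is slightly more than a citation: Lemma~\ref{t0>4t1=2t2=1} is a surgery-diagram construction with no decorations attached, so to import its $d_3$ value you must identify the decorated link at $t_0\ge5$ with the surgery-presented link having rotation numbers $(\pm(t_0-1),\pm3,0)$, which is legitimate because at $t_0\ge5$ the upper bound $12$ of Lemma~\ref{Lemma:t_1>1t_2=1} is attained and the twelve links are distinguished by their rotation numbers; spelling this out would close the only small gap in your write-up. In exchange, your approach needs one fewer input from the Hopf-link classification, while the paper's second connected sum avoids the $d_3$-transfer bookkeeping altogether.
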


\begin{proof}
By \cite[Theorem 1.2, (c2), (d)]{go}, there are two Legendrian Hopf links $K'_0\cup K_1$ in  $(S^3, \xi_{\frac{1}{2}})$ with $(t'_0, r'_0)=(0,\pm1), (t_1, r_1)=(2, \pm1)$,  two Legendrian Hopf links $K''_0\cup K_2$ in  $(S^3, \xi_{-\frac{1}{2}})$ with $(t''_0, r''_0)=(3, \pm4), (t_2, r_2)=(1, \pm2)$, and a Legendrian Hopf link $K''_0\cup K_2$ in  $(S^3, \xi_{\frac{3}{2}})$ with $(t''_0, r''_0)=(3, 0 ), (t_2, r_2)=(1, 0)$.  By Lemma~\ref{Lemma:connected}, we can obtain $6$ strongly exceptional Legendrian $A_3$ links whose rotation numbers and $d_3$-invariants $(r_0, r_1, r_2; d_3)$ are 
$(\pm5, \pm1, \pm2; \frac{1}{2}), (\pm3,\mp1, \pm2; \frac{1}{2})$ and $ (\pm1, \pm1, 0; \frac{5}{2}).$

By \cite[Theorem 1.2, (c1), (c2)]{go}, there is a Legendrian Hopf link $K'_0\cup K_2$ in $(S^3, \xi_{\frac{1}{2}})$ with $(t'_0, r'_0)=(t_2, r_2)=(1,0)$, and four Legendrian Hopf links $K''_0\cup K_1$ with $(t''_0, r''_0)=(t_1, r_1)=(2, \pm3)$ in $(S^3, \xi_{-\frac{1}{2}})$ or $(2, \pm1)$ in $(S^3, \xi_{\frac{3}{2}})$.  By Lemma~\ref{Lemma:connected}, we can obtain $2$  more strongly exceptional Legendrian $A_3$ links whose rotation numbers and $d_3$-invariants $(r_0, r_1, r_2; d_3)$ are 
$(\pm3, \pm3, 0; \frac{1}{2}).$

By Lemma~\ref{Lemma:3.2.1} and Lemma~\ref{Lemma:appro}, there exist $2$ strongly exceptional Legendrian $A_3$ links in $(S^3, \xi_{-\frac{3}{2}})$ whose rotation numbers $(r_0, r_1, r_2)$ are
$(\pm7, \pm3, \pm2).$
Their exteriors have decorations $\pm(+)(--)(-)$.
\end{proof}

So there exist $10$ strongly exceptional Legendrian $A_3$ links with $t_{0}=4, t_{1}=2, t_{2}=1$. As a corollary, the $10$ contact structures on $\Sigma\times S^1$ with boundary slopes $s_{0}=4, s_{1}=-\frac{1}{2}, s_{2}=-1$ listed in Lemma~\ref{Lemma:t_1>1t_2=1}  are all appropriate tight.

(3) Suppose $t_0=3$ and $t_1=2$.  

\begin{lemma}\label{t0=3t1=2t2=1}
If $t_0=3$, $t_1=2$ and $t_2=1$, then there exist $8$ strongly exceptional Legendrian $A_3$ links whose rotation numbers and corresponding $d_3$ invariants $(r_0, r_1, r_2; d_3)$ are $$
(\pm2, \pm3,  0;  \frac{1}{2}), (\pm2, \mp1, \pm2; \frac{1}{2}),
(\pm4, \pm1, \pm2; \frac{1}{2}), (\pm6, \pm3, \pm2; -\frac{3}{2}).$$
\end{lemma}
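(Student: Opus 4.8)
The plan is to exhibit eight pairwise non-coarsely-equivalent strongly exceptional Legendrian $A_3$ links with $(t_0,t_1,t_2)=(3,2,1)$ realizing the stated invariants. Since Lemma~\ref{Lemma:t_1>1t_2=1} already bounds the number of such links above by $8$, producing eight distinct ones completes the proof and shows in addition that all $8$ appropriate tight contact structures on $\Sigma\times S^1$ with boundary slopes $s_0=3$, $s_1=-\frac{1}{2}$, $s_2=-1$ are realized.

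Six of the eight links I would obtain by Legendrian connected sum, following the template of the proof of Lemma~\ref{t0=4t1=2t2=1}. By \cite[Theorem 1.2]{go} there is a strongly exceptional Legendrian Hopf link $K'_0\cup K_1$ in $(S^3,\xi_{\frac{1}{2}})$ with $(t'_0,r'_0)=(0,\pm1)$, $(t_1,r_1)=(2,\pm1)$, and a strongly exceptional Legendrian Hopf link $K''_0\cup K_2$ in $(S^3,\xi_{-\frac{1}{2}})$ with $(t''_0,r''_0)=(2,\pm3)$, $(t_2,r_2)=(1,\pm2)$; since $t'_0+t''_0+1=3$, Lemma~\ref{Lemma:connected} produces strongly exceptional Legendrian $A_3$ links $(K'_0\#K''_0)\cup K_1\cup K_2$. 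Recalling that $K'_0\#K''_0$ has Thurston-Bennequin invariant $t'_0+t''_0+1=3$ and rotation number $r'_0+r''_0$, and that the ambient $d_3$-invariant of the contact connected sum is $\frac{1}{2}+(-\frac{1}{2})+\frac{1}{2}=\frac{1}{2}$, running over the four independent sign choices gives links with $(r_0,r_1,r_2;d_3)$ equal to $(\pm4,\pm1,\pm2;\frac{1}{2})$ and $(\pm2,\mp1,\pm2;\frac{1}{2})$. Exchanging the roles of $K_1$ and $K_2$, \cite[Theorem 1.2]{go} also supplies a strongly exceptional Legendrian Hopf link $K'_0\cup K_2$ in $(S^3,\xi_{\frac{1}{2}})$ with $(t'_0,r'_0)=(t_2,r_2)=(1,0)$ and a strongly exceptional Legendrian Hopf link $K''_0\cup K_1$ in $(S^3,\xi_{-\frac{1}{2}})$ with $(t''_0,r''_0)=(1,\pm2)$, $(t_1,r_1)=(2,\pm3)$; since again $t'_0+t''_0+1=3$, Lemma~\ref{Lemma:connected} and the same bookkeeping yield two further links with $(r_0,r_1,r_2;d_3)=(\pm2,\pm3,0;\frac{1}{2})$.

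The remaining two links are those whose exteriors carry a $0$-twisting vertical Legendrian circle with decoration $\pm(+)(--)(-)$. By Lemma~\ref{Lemma:3.2.1}, applied with $(t_0,t_1,t_2)=(3,2,1)$, the sign pattern $\pm(+--)$ gives an exceptional Legendrian $A_3$ link with $(r_0,r_1,r_2)=(\pm6,\pm3,\pm2)$, and its ambient $d_3$-invariant, being independent of $t_0$ by the last sentence of that lemma and equal to $-\frac{3}{2}$ in the $t_0=4$ computation of Lemma~\ref{t0=4t1=2t2=1}, is $-\frac{3}{2}$. To see that these two are strongly exceptional, i.e.\ that the decorated exterior is \emph{appropriate} tight, I would invoke Lemma~\ref{Lemma:appro}: the decoration $\pm(+)(--)(-)$ differs from $\pm(+)(++)(-)$ only in the signs of the two basic slices forming the continued fraction block $L'_1$, and $\pm(+)(++)(-)$ is the exterior decoration of one of the connected-sum links already built (the one with $(r_0,r_1,r_2)=(\pm2,\mp1,\pm2)$), which is appropriate tight by construction.

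Finally I would verify that the eight links just constructed are pairwise non-coarsely-equivalent: their tuples $(t_0,t_1,t_2,r_0,r_1,r_2)$ are pairwise distinct, with six carrying $d_3=\frac{1}{2}$ and two carrying $d_3=-\frac{3}{2}$. Matching the upper bound of Lemma~\ref{Lemma:t_1>1t_2=1}, the list is then complete. The step I expect to require the most care is the sign bookkeeping in the two connected-sum families together with the final distinctness count: one must check that the four sign choices in each family realize exactly the claimed rotation-number triples, with no coincidences among them or with the two decorated exteriors, so that precisely eight distinct links appear and the upper bound is attained.
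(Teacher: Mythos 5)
Your proposal is correct and follows essentially the same route as the paper: the six links with $d_3=\frac{1}{2}$ are built by Legendrian connected sums of exactly the same Hopf link pieces from \cite{go} via Lemma~\ref{Lemma:connected} (your ``exchange of $K_1$ and $K_2$'' description is just a relabeling of the paper's first family), and the two links with $(\pm6,\pm3,\pm2;-\frac{3}{2})$ come from Lemma~\ref{Lemma:3.2.1} together with Lemma~\ref{Lemma:appro}, matched against the upper bound of Lemma~\ref{Lemma:t_1>1t_2=1}. Your extra identification of the comparison decoration as $\pm(+)(++)(-)$ is not needed for Lemma~\ref{Lemma:appro} (any appropriate tight exterior with the same boundary slopes and a $0$-twisting vertical circle suffices), but it is harmless.
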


\begin{proof}
By \cite[Theorem 1.2, (c2), (c1)]{go}, there are two Legendrian Hopf links $K'_0\cup K_1$ in $(S^3, \xi_{-\frac{1}{2}})$ with $(t'_0, r'_0)=(1, \pm2), (t_1, r_1)=(2, \pm3)$, and one Legendrian Hopf link $K''_0\cup K_2$ in  $(S^3, \xi_{\frac{1}{2}})$  with $(t''_0, r''_0)=(t_2, r_2)=(1,0)$. By Lemma~\ref{Lemma:connected}, we can obtain $2$ strongly exceptional Legendrian $A_3$ links whose rotation numbers and $d_3$-invariants $(r_0, r_1, r_2; d_3)$ are
$(\pm2, \pm3, 0; \frac{1}{2}).$

By \cite[Theorem 1.2, (d), (c2)]{go}, there are two Legendrian Hopf links $K'_0\cup K_1$ in $(S^3, \xi_{\frac{1}{2}})$ with $(t'_0, r'_0)=(0, \pm1), (t_1, r_1)=(2, \pm1)$, and two Legendrian Hopf links $K''_0\cup K_2$  in $(S^3, \xi_{-\frac{1}{2}})$  with $(t''_0, r''_0)=(2, \pm3),$  $ (t_2, r_2)=(1, \pm2)$. By Lemma~\ref{Lemma:connected}, we can obtain  $4$ strongly exceptional Legendrian $A_3$ links whose rotation numbers and $d_3$-invariants $(r_0, r_1, r_2; d_3)$ are
$(\pm2, \mp1,  \pm2; \frac{1}{2}) $ and $ (\pm4, \pm1, \pm2; \frac{1}{2}).$

By Lemma~\ref{Lemma:3.2.1} and Lemma~\ref{Lemma:appro}, there are $2$  strongly exceptional Legendrian $A_3$ links in $(S^3, \xi_{-\frac{3}{2}})$  whose rotation numbers $(r_0, r_1, r_2)$ are
$(\pm6, \pm3, \pm2).$
Their exteriors have decorations $\pm(+)(--)(-)$.
\end{proof}

So there are $8$ strongly exceptional Legendrian $A_3$ links with $t_{0}=3, t_{1}=2, t_{2}=1$. As a corollary, the $8$ contact structures on $\Sigma\times S^1$ with boundary slopes $s_{0}=3, s_{1}=-\frac{1}{2}, s_{2}=-1$  listed in Lemma~\ref{Lemma:t_1>1t_2=1} are all appropriate tight.

\begin{figure}[htb]
\begin{overpic}
[width=0.93\textwidth]
{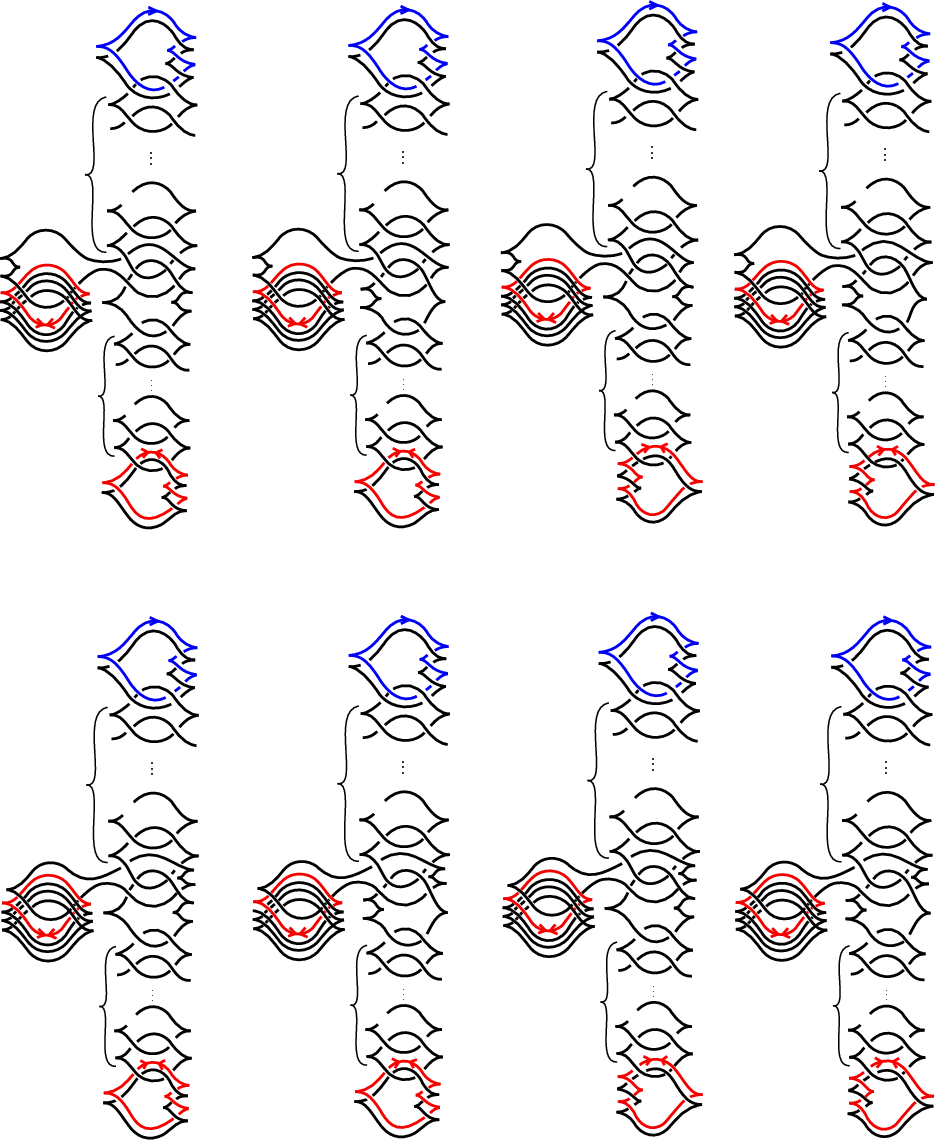}
\put(7, 430){$t_{0}-5$}
\put(13, 331){$t_{1}-3$}
\put(44, 500){$K_{0}$}
\put(57, 283){$K_{1}$}
\put(16, 392){$K_{2}$}
\end{overpic}
\caption{$t_0\geq5$,  $t_1\geq3$, $t_2=1$.  For $t_0+t_1$ even, $K_0$ and $K_1$ bear the same orientation, and for $t_0+t_1$ odd, the opposite one. For $t_0$ odd, $K_0$ and $K_2$ bear the same orientation, and for $t_0$ even, the opposite one. }
\label{Figure:link22inot}
\end{figure}

(4) Suppose $t_0\geq5$ and $t_1\geq3$.

\begin{lemma}\label{t0>4t1>2t2=1}
If $t_0\geq5$, $t_1\geq3$ and $t_2=1$, then there exist $16$ strongly exceptional Legendrian $A_3$ links whose rotation numbers and corresponding $d_3$ invariants $(r_0, r_1, r_2; d_3)$ are 
$$(\pm(t_{0}+1), \pm(t_{1}-1), \pm2; \frac{1}{2}), 
(\pm(t_{0}+3), \pm(t_{1}+1), \pm2; -\frac{3}{2}), $$
$$(\pm(t_{0}-1), \pm(1-t_{1}), \pm2; \frac{1}{2}), 
(\pm(t_{0}+1), \pm(3-t_{1}), \pm2;  \frac{1}{2}), (\pm(t_{0}-3), \pm(t_{1}-1), 0; \frac{5}{2}), $$
$$ 
(\pm(t_{0}-1), \pm(t_{1}+1), 0; \frac{1}{2}), (\pm(t_{0}-5), \pm(1-t_1), 0; \frac{5}{2}), (\pm(t_{0}-3), \pm(3-t_1), 0; \frac{5}{2}). $$
\end{lemma}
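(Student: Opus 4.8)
By Lemma~\ref{Lemma:t_1>1t_2=1} the number of appropriate tight contact structures on $\Sigma\times S^1$ with boundary slopes $s_0=t_0\geq5$, $s_1=-\frac1{t_1}$ with $t_1\geq3$, and $s_2=-1$ is at most $16$, so it suffices to exhibit $16$ pairwise non-coarsely-equivalent strongly exceptional Legendrian $A_3$ links with the stated invariants; together with the upper bound this will also show all $16$ of those contact structures are appropriate tight. The plan is to realize all $16$ of them from the single family of contact surgery diagrams in Figure~\ref{Figure:link22inot}, where the $t_0-5$ and $t_1-3$ stabilizing cusps and the orientation choices (signs of the $\pm1$-framed push-offs) vary, exactly as was done for $t_1=2$ in Lemma~\ref{t0>4t1=2t2=1}.

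First I would pin down the topological type: applying the blow-down trick of Lemma~\ref{Lemma:topological} to the $-1$- and $-2$-framed chains, together with the Kirby-calculus computation in the proof of \cite[Theorem~1.2, (c3)]{go}, one checks that $K_0\cup K_1\cup K_2$ in Figure~\ref{Figure:link22inot} is the $A_3$ link in $S^3$ with the orientations indicated in the caption. Strong exceptionality then follows as usual: contact $(-1)$-surgery along $K_0$ (resp.\ along $K_1$) cancels the paired contact $(+1)$-surgery on its Legendrian push-off, and the remaining contact surgeries yield a tight contact $3$-manifold, using \cite{go} and the fact \cite{w} that contact $(-1)$-surgery preserves tightness; hence each complement is appropriate tight and each link is strongly exceptional in an overtwisted $(S^3,\xi_d)$.

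Next I would compute the invariants. The Thurston--Bennequin invariants are $t_0$, $t_1$, $1$ by construction; the rotation numbers come from the surgery diagrams via \cite[Lemma~6.6]{loss}, producing the eight $\pm$-families listed (four with $r_2=\pm2$, four with $r_2=0$). As a cross-check, every one of these exteriors contains a $0$-twisting vertical Legendrian circle (since $t_0\geq5$), so each carries a decoration $(\pm)((\pm)(\pm))(\pm)$, and the rotation numbers can be recomputed by the relative Euler class method of Lemma~\ref{Lemma:3.2.1}; in particular the extremal family $(\pm(t_0+3),\pm(t_1+1),\pm2)$ is precisely the decoration $\pm(+)((-)(-))(-)$ of Lemma~\ref{Lemma:3.2.1}, whose appropriateness follows from Lemma~\ref{Lemma:appro}. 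The $d_3$-invariants $\tfrac12,-\tfrac32,\tfrac12,\tfrac12,\tfrac52,\tfrac12,\tfrac52,\tfrac52$ are obtained from the surgery diagrams by the algorithm of \cite{DGS}; one can double-check several of them by realizing the corresponding links as Legendrian connected sums $K'_0\#K''_0$ of strongly exceptional Legendrian Hopf links from \cite[Theorem~1.2]{go} via Lemma~\ref{Lemma:connected}, using that $d_3$ is additive up to $+\tfrac12$, that $tb$ adds as $t'_0+t''_0+1$, and that rotation numbers add.

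\textbf{Expected main obstacle.} The genuinely hard step is bookkeeping, not any isolated geometric fact. One must verify that the $16$ tuples $(r_0,r_1,r_2;d_3)$ are pairwise distinct --- so that the $16$ constructed links are pairwise non-coarsely-equivalent --- and that they saturate the upper bound of Lemma~\ref{Lemma:t_1>1t_2=1}, i.e.\ match the $16$ decorations $(\pm)((\pm)(\pm))(\pm)$ on $\Sigma\times S^1$ under the sign/shuffling identifications already recorded in that lemma; the subtle point is keeping track of the orientation conventions of Figure~\ref{Figure:link22inot} when running the \cite{DGS} algorithm, since an error there would collapse or misassign the $d_3$-values. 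A minor additional check is that the constructed links are exactly the stabilizations along $K_0$ and $K_1$ of the ones with smallest $t_0,t_1$, so that the $t_0\geq5$, $t_1\geq3$ range is covered uniformly.
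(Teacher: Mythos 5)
Your proposal is correct and follows essentially the same route as the paper: the paper also realizes all $16$ links from the surgery diagrams of Figure~\ref{Figure:link22inot}, identifies the topological type via the trick of Lemma~\ref{Lemma:topological} together with the Kirby-calculus arguments of \cite[Theorem 1.2, (c3), (c4)]{go}, and reads off the rotation numbers and $d_3$-invariants by the \cite{loss}/\cite{DGS} computations (a sample of which is carried out in Section 6), with strong exceptionality coming from the surgery-diagram construction as in Section 3. Your additional cross-checks via the decorations of Lemma~\ref{Lemma:3.2.1} and via connected sums are consistent with the paper but not needed for its argument.
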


\begin{proof}
There exist $16$ strongly exceptional Legendrian $A_3$ links shown in Figure~\ref{Figure:link22inot}. Using the trick of Lemma~\ref{Lemma:topological} and the proof of \cite[Theorem 1.2, (c3), (c4)]{go}, we can show that $K_0\cup K_1\cup K_2$ is a topological $A_3$ link. Their rotation numbers and corresponding $d_3$-invariants are as listed. 
\end{proof}

(5) Suppose $t_0=4$ and $t_1\geq3$. 

\begin{lemma}\label{t0=4t1>2t2=1}
If $t_0=4$, $t_1\geq3$ and $t_2=1$, then there exist $14$ strongly exceptional Legendrian $A_3$ links whose rotation numbers and corresponding $d_3$ invariants $(r_0, r_1, r_2; d_3)$ are 
$$(\pm3, \pm(t_{1}+1), 0; \frac{1}{2}), (\mp1, \pm(1-t_1), 0; \frac{5}{2}), (\pm1, \pm(3-t_1), 0; \frac{5}{2}),  $$
$$(\pm5, \pm(t_{1}-1), \pm2; \frac{1}{2}), (\pm3, \pm(1-t_{1}), \pm2; \frac{1}{2}), (\pm7, \pm(t_{1}+1), \pm2; -\frac{3}{2}),(\pm5, \pm(3-t_{1}), \pm2; \frac{1}{2}). $$
\end{lemma}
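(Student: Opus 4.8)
The plan is to follow the template already used in the proof of Lemma~\ref{t0=4t1=2t2=1}. Since Lemma~\ref{Lemma:t_1>1t_2=1} bounds the number of strongly exceptional Legendrian $A_3$ links with $(t_0,t_1,t_2)=(4,t_1,1)$ above by $14$, and since Theorem~\ref{classification} guarantees that links with distinct triples $(r_0,r_1,r_2)$ are coarsely inequivalent, it suffices to construct $14$ strongly exceptional Legendrian $A_3$ links realizing exactly the $14$ listed triples; exactness, and the statement that all $14$ appropriate tight structures on $\Sigma\times S^1$ with slopes $4,-\frac{1}{t_1},-1$ are then realized, follow immediately. First I would sort the targets by $d_3$-invariant: two of them, $(\pm7,\pm(t_1+1),\pm2)$, have $d_3=-\frac{3}{2}$; four have $d_3=\frac{5}{2}$; and the other eight have $d_3=\frac{1}{2}$.

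Most of the realization I would carry out with the Legendrian connected sum of Lemma~\ref{Lemma:connected}, fed by the Hopf links classified in \cite{go}, in its two orderings. In the first ordering, take $K'_0\cup K_1$ to be the Hopf link in $(S^3,\xi_{\frac{1}{2}})$ with $(t'_0,r'_0)=(0,\pm1)$, $t_1\geq2$, $r_1=\pm(t_1-1)$, and let $K''_0\cup K_2$ range over the strongly exceptional Hopf links with $(t''_0,t_2)=(3,1)$, namely $(t''_0,r''_0)=(3,\pm4)$ in $(S^3,\xi_{-\frac{1}{2}})$ and $(3,0)$ in $(S^3,\xi_{\frac{3}{2}})$; applying $t_0=t'_0+t''_0+1$, $r_0=r'_0+r''_0$ and $d_3=\frac{1}{2}+d_3(\xi'')+\frac{1}{2}$ produces the six links with triples $(\pm5,\pm(t_1-1),\pm2)$ and $(\pm3,\pm(1-t_1),\pm2)$, both with $d_3=\frac{1}{2}$, together with $(\mp1,\pm(1-t_1),0)$ with $d_3=\frac{5}{2}$. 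In the second ordering, take $K'_0\cup K_2$ to be the Hopf link with $(t'_0,r'_0)=(t_2,r_2)=(1,0)$ in $(S^3,\xi_{\frac{1}{2}})$ and let $K''_0\cup K_1$ range over the strongly exceptional Hopf links with $t''_0=2$, $t_1\geq3$ from \cite{go}; the same formulas give the four links with triples $(\pm3,\pm(t_1+1),0)$, $d_3=\frac{1}{2}$, and $(\pm1,\pm(3-t_1),0)$, $d_3=\frac{5}{2}$. Strong exceptionality for all of these is immediate from Lemma~\ref{Lemma:connected}, the values of $r_1,r_2$ are inherited from the Hopf-link summands, and the $d_3$'s come from the connected-sum formula, cross-checked with \cite{DGS}.

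There remain four links, with triples $(\pm7,\pm(t_1+1),\pm2;-\frac{3}{2})$ and $(\pm5,\pm(3-t_1),\pm2;\frac{1}{2})$, which I would realize by extending a tight $\Sigma\times S^1$ with a $0$-twisting vertical Legendrian circle to an overtwisted contact $S^3$ as in Lemma~\ref{Lemma:construction}. For the first pair the relevant $\Sigma\times S^1$ is the appropriate tight one whose factorization carries decoration $\pm(+)((-)(-))(-)$, supplied by Lemma~\ref{Lemma:3.2.1} and Lemma~\ref{Lemma:appro}, with rotation numbers given by Lemma~\ref{Lemma:3.2.1} specialized to $t_0=4$, $t_2=1$. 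For the pair $(\pm5,\pm(3-t_1),\pm2;\frac{1}{2})$ --- whose $K_1$-rotation is the interior value $\pm(3-t_1)=\mp(t_1-3)$ and which does not arise from either connected-sum ordering --- I would instead apply Lemma~\ref{Lemma:construction} to the tight $\Sigma\times S^1$ whose decoration has the two basic slices of $L'_1$ of opposite sign, compute its rotation numbers from the relative Euler class exactly as in Lemma~\ref{Lemma:3.2.1}, and deduce strong exceptionality from Lemma~\ref{Lemma:appro} (alternatively, exhibit this pair by a direct contact surgery diagram). Collecting the six, four, and four links above gives $14$; comparing the resulting list of $(r_0,r_1,r_2;d_3)$ with the statement and invoking the upper bound of Lemma~\ref{Lemma:t_1>1t_2=1} completes the proof.

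The hard part will be the bookkeeping around the off-diagonal Hopf links and the interior rotation number: the argument needs the precise list --- rotation numbers and ambient $d_3$-invariants included --- of the strongly exceptional Legendrian Hopf links with $t''_0=2$ and $t_1\geq3$ in \cite{go}, and in particular an honest identification of the strongly exceptional $\Sigma\times S^1$ realizing $(\pm5,\pm(3-t_1),\pm2;\frac{1}{2})$, which for $t_1=2$ coincided with $(\pm5,\pm(t_1-1),\pm2;\frac{1}{2})$ and so never needed separate treatment. A secondary technical point is checking that the connected-sum and Lemma~\ref{Lemma:3.2.1}-type constructions together hit each of the $14$ triples at least once, with no omission and with the $d_3$-values consistent on the overlaps.
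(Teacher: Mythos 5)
Your overall architecture matches the paper's: take the upper bound $14$ from Lemma~\ref{Lemma:t_1>1t_2=1}, realize ten of the triples by Legendrian connected sums via Lemma~\ref{Lemma:connected} (your reshuffling of which Hopf-link pairings produce $(\mp1,\pm(1-t_1),0;\frac{5}{2})$ versus the paper's choice is immaterial, since only realization of the invariants is needed), and realize $(\pm7,\pm(t_1+1),\pm2;-\frac{3}{2})$ from the decoration $\pm(+)((-)(-))(-)$ via Lemma~\ref{Lemma:3.2.1} and Lemma~\ref{Lemma:appro}, exactly as the paper does. (Minor quibble: you do not need Theorem~\ref{classification} to separate the $14$ links --- distinct rotation-number triples already force coarse inequivalence; citing it here is also circular, since it is deduced from these lemmas.)

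The genuine gap is the last pair $(\pm5,\pm(3-t_1),\pm2;\frac{1}{2})$, whose exterior carries the decoration $\pm(+)((-)(+))(-)$. Your plan is to ``apply Lemma~\ref{Lemma:construction} to the tight $\Sigma\times S^1$'' with mixed signs in $L'_1$ and then ``deduce strong exceptionality from Lemma~\ref{Lemma:appro}'', but neither step is available: Lemma~\ref{Lemma:tight} (and hence Lemma~\ref{Lemma:3.2.1}) only produces tight structures in which all basic slices of $L'_1$ have the same sign, and for $t_1\geq3$ the two basic slices of $L'_1$ do not form a continued fraction block, so the mixed-sign decoration is not among the structures whose tightness you have established; and Lemma~\ref{Lemma:appro} assumes both structures are already tight and merely transfers appropriate tightness, so it cannot supply the missing tightness. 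This is exactly why the paper treats this pair separately: it shows the exterior with decoration $\pm(+)((-)(+))(-)$ embeds into the appropriate tight $\Sigma\times S^1$ with boundary slopes $4,-\frac{1}{2},-1$ and decoration $\pm(+)(-+)(-)$ (already realized, and hence appropriate tight, in the $t_1=2$ case, Lemma~\ref{t0=4t1=2t2=1}) by adding the basic slices $(T^2\times[0,1],-\frac{1}{t_1},-\frac{1}{t_1-1}),\dots,(T^2\times[0,1],-\frac{1}{3},-\frac{1}{2})$ along $T_1$ via the Gluing Theorem \cite[Theorem 1.3]{h2}; appropriate tightness of the ambient structure then passes to the sub-piece, and only afterwards do Lemma~\ref{Lemma:construction} and the relative Euler class computation give the Legendrian link and its rotation numbers. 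You flag this point as ``the hard part'' and mention a surgery diagram as an alternative, but no argument or diagram is actually supplied, so as written the construction of these two links is missing.
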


\begin{proof}
By \cite[Theorem 1.2, (c3), (c1)]{go}, there are two Legendrian Hopf links $K'_0\cup K_1$ in $(S^3, \xi_{-\frac{1}{2}})$ with $(t'_0, r'_0)=(2, \pm3), t_1\geq3, r_1=\pm(t_1+1)$, two Legendrian Hopf links $K'_0\cup K_1$ in $(S^3, \xi_{\frac{3}{2}})$ with $(t'_0, r'_0)=(2, \pm1), t_1\geq3, r_1=\pm(t_1-1)$, two Legendrian Hopf links $K'_0\cup K_1$ in $(S^3, \xi_{\frac{3}{2}})$ with $(t'_0, r'_0)=(2, \mp1), t_1\geq3, r_1=\pm(t_1-3)$, and one Legendrian Hopf link $K''_0\cup K_2$ in $(S^3, \xi_{\frac{1}{2}})$ with $(t''_0, r''_0)=(t_2, r_2)=(1,0)$.  By Lemma~\ref{Lemma:connected}, we can obtain $6$ strongly exceptional Legendrian $A_3$ links whose rotation numbers and corresponding $d_3$-invariants $(r_0, r_1, r_2; d_3)$ are $(\pm3, \pm(t_{1}+1), 0; \frac{1}{2}), (\mp1, \pm(1-t_1), 0; \frac{5}{2})$ and $ (\pm1, \pm(3-t_1), 0; \frac{5}{2}). $

By \cite[Theorem 1.2, (d), (c2)]{go}, there are two Legendrian Hopf links $K'_0\cup K_1$  in $(S^3, \xi_{\frac{1}{2}})$ with $(t'_0, r'_0)=(0, \pm1), t_1\geq3, r_1=\pm(t_{1}-1)$,  two Legendrian Hopf links $K''_0\cup K_2$ in  $(S^3, \xi_{-\frac{1}{2}})$ with $(t''_0, r''_0)=(3, \pm4), (t_2, r_2)=(1, \pm2)$. By Lemma~\ref{Lemma:connected}, we can obtain $4$ more strongly exceptional Legendrian $A_3$ links whose rotation numbers and corresponding $d_3$-invariants $(r_0, r_1, r_2; d_3)$ are $(\pm5, \pm(t_{1}-1), \pm2; \frac{1}{2})$ and $ (\pm3, \pm(1-t_{1}), \pm2; \frac{1}{2}).$

By Lemma~\ref{Lemma:3.2.1} and Lemma~\ref{Lemma:appro}, there are  $2$ strongly exceptional Legendrian $A_3$ links in $(S^3, \xi_{-\frac{3}{2}})$  whose rotation numbers are
$(\pm7, \pm(t_{1}+1), \pm2).$
The decorations of their exteriors are $\pm(+)((-)(-))(-)$.

There are $2$ strongly exceptional Legendrian $A_3$ links in $(S^3, \xi_{\frac{1}{2}})$  whose rotation numbers $(r_0, r_1, r_2)$ are
$(\pm5, \pm(3-t_{1}), \pm2).$
The decorations of their exteriors are $\pm(+)((-)(+))(-)$. 
These exteriors can be embedded into an appropriate tight contact $\Sigma\times S^1$ with boundary slopes $4, -\frac{1}{2}, -1$ and decorations $\pm(+)(-+)(-)$. This can be achieved by adding basic slices $(T^2\times[0,1], -\frac{1}{t_{1}}, -\frac{1}{t_{1}-1})$, $\cdots$, $(T^2\times[0,1], -\frac{1}{3}, -\frac{1}{2})$ to the boundary $T_1$, as per the  Gluing Theorem \cite[Theorem 1.3]{h2}. So these exteriors are appropriate tight.
\end{proof}

(6) Suppose $t_0=3$ and $t_1\geq3$. 

\begin{lemma}\label{t0=3t1>2t2=1}
If $t_0=3$, $t_1\geq3$ and $t_2=1$, then there exist $12$ ($11$ if $t_1=3$) strongly exceptional Legendrian $A_3$ links whose rotation numbers and corresponding $d_3$-invariants $(r_0, r_1, r_2; d_3)$ are 
$$(\pm2, \pm(t_{1}+1), 0; \frac{1}{2}),  (0, \pm(3-t_1), 0; \frac{5}{2}), (\pm4, \pm(t_{1}-1), \pm2; \frac{1}{2}), 
(\pm2, \pm(1-t_{1}), \pm2; \frac{1}{2}), $$
$$(\pm6, \pm(t_{1}+1), \pm2; -\frac{3}{2}), (\pm4, \pm(3-t_{1}), \pm2; \frac{1}{2}).$$
\end{lemma}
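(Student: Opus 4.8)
The plan is to show that the upper bound of Lemma~\ref{Lemma:t_1>1t_2=1} is attained, following the same scheme as the proof of Lemma~\ref{t0=4t1>2t2=1}. Since coarse equivalence preserves the Thurston--Bennequin invariants, the rotation numbers and the $d_3$-invariant of the ambient contact sphere, it suffices to exhibit strongly exceptional Legendrian $A_3$ links realizing each of the listed tuples $(r_0,r_1,r_2;d_3)$ and to check that these tuples are pairwise distinct. The latter holds for $t_1\geq4$; for $t_1=3$ the two tuples $(0,\pm(3-t_1),0;\frac{5}{2})$ both collapse to $(0,0,0;\frac{5}{2})$, which accounts for the drop from $12$ to $11$ and is consistent with the upper bound of Lemma~\ref{Lemma:t_1>1t_2=1}.

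The eight links with $d_3\in\{\frac{1}{2},\frac{5}{2}\}$ will be built by Legendrian connected sum (Lemma~\ref{Lemma:connected}) from the Legendrian Hopf links classified in \cite[Theorem 1.2]{go}, using the connected-sum formulas for the Thurston--Bennequin and rotation numbers together with the formula $d_3(\xi)+d_3(\xi')+\frac{1}{2}$ for the $d_3$-invariant of a contact connected sum, both recorded after Lemma~\ref{Lemma:connected}. Because $t_0=3$, the two relevant decompositions of $K_0$ as $K'_0\#K''_0$ are forced. In the first, $K'_0\cup K_1$ is the special Hopf link with $(t'_0,r'_0)=(0,\pm1)$, $t_1\geq3$, $r_1=\pm(t_1-1)$ in $(S^3,\xi_{\frac{1}{2}})$, connect-summed to a Hopf link $K''_0\cup K_2$ with $(t''_0,t_2)=(2,1)$ (which sits in $(S^3,\xi_{-\frac{1}{2}})$ with $r''_0=\pm3$, $r_2=\pm2$); this yields the four $d_3=\frac{1}{2}$ links $(\pm4,\pm(t_1-1),\pm2)$ and $(\pm2,\pm(1-t_1),\pm2)$. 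In the second, $K''_0\cup K_2$ is the special Hopf link with $(t''_0,r''_0)=(t_2,r_2)=(1,0)$ in $(S^3,\xi_{\frac{1}{2}})$, connect-summed to a Hopf link $K'_0\cup K_1$ with $t'_0=1$ and $t_1\geq3$: taken in $(S^3,\xi_{-\frac{1}{2}})$ this gives $(\pm2,\pm(t_1+1),0;\frac{1}{2})$, and in $(S^3,\xi_{\frac{3}{2}})$ it gives $(0,\pm(3-t_1),0;\frac{5}{2})$. Strong exceptionality in every case is immediate from Lemma~\ref{Lemma:connected}.

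The remaining four links come from the extension construction used in Lemma~\ref{Lemma:3.2.1}. For the decoration $\pm(+)((-)(-))(-)$, Lemma~\ref{Lemma:3.2.1} gives rotation numbers $(\pm6,\pm(t_1+1),\pm2)$ and ambient sphere $(S^3,\xi_{-\frac{3}{2}})$, Lemma~\ref{Lemma:tight} gives tightness of the underlying $\Sigma\times S^1$, and Lemma~\ref{Lemma:appro} upgrades this to appropriate tightness using a member of the same sign-family already shown strongly exceptional above. The decoration $\pm(+)((-)(+))(-)$ is not covered by Lemma~\ref{Lemma:tight}, so, as in the proof of Lemma~\ref{t0=4t1>2t2=1}, I would embed its exterior into a $\Sigma\times S^1$ with boundary slopes $3,-\frac{1}{2},-1$ and decoration $\pm(+)(-+)(-)$ by gluing the basic slices $(T^2\times[0,1],-\frac{1}{t_1},-\frac{1}{t_1-1}),\dots,(T^2\times[0,1],-\frac{1}{3},-\frac{1}{2})$ onto $T_1$ and invoking the Gluing Theorem \cite[Theorem 1.3]{h2}; appropriate tightness of this target is exactly the $t_1=2$ statement proved in Lemma~\ref{t0=3t1=2t2=1}, and the resulting links have rotation numbers $(\pm4,\pm(3-t_1),\pm2)$ with $d_3=\frac{1}{2}$. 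The step I expect to be most delicate is the bookkeeping of the connected sums --- assigning to each Hopf-link summand from \cite[Theorem 1.2]{go} the correct overtwisted $(S^3,\xi_d)$ so that the resulting $d_3$ and rotation numbers match the list --- together with the appropriate-tightness check for the $\pm(+)((-)(+))(-)$-decorated exteriors, which are the only ones outside the scope of Lemma~\ref{Lemma:tight}.
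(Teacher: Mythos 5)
Your proposal follows essentially the same route as the paper: the same two Legendrian connected-sum families built from the Hopf links of \cite[Theorem 1.2]{go} (the $(t'_0,r'_0)=(0,\pm1)$ link summed with the $(2,\pm3)$/$(1,\pm2)$ Hopf link, and the $(1,0)$ link summed with the $t'_0=1$ Hopf links in $\xi_{-1/2}$ and $\xi_{3/2}$), the same use of Lemma~\ref{Lemma:3.2.1} together with Lemma~\ref{Lemma:appro} for the $(\pm6,\pm(t_1+1),\pm2)$ links with decoration $\pm(+)((-)(-))(-)$, and the same embedding of the $\pm(+)((-)(+))(-)$ exteriors into the appropriate tight $t_1=2$ structure via gluing basic slices. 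The bookkeeping of rotation numbers, $d_3$-invariants, and the $t_1=3$ collapse to $11$ links all agree with the paper's proof, so the proposal is correct.
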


\begin{proof}
By \cite[Theorem 1.2, (c3), (c2), (c1)]{go}, there are two  Legendrian Hopf links $K'_0\cup K_1$ in  $(S^3, \xi_{-\frac{1}{2}})$  with $(t'_0, r'_0)=(1, \pm2), t_1\geq3, r_{1}=\pm(t_{1}+1)$, two (one if $t_1=3$) Legendrian Hopf links $K'_0\cup K_1$ in  $(S^3, \xi_{\frac{3}{2}})$  with $(t'_0, r'_0)=(1, 0), t_1\geq3, r_{1}=\pm(t_{1}-3)$, and one Legendrian Hopf link $K''_0\cup K_2$ in $(S^3, \xi_{\frac{1}{2}})$ with $(t''_0, r''_0)=(t_2, r_2)=(1,0)$.   By Lemma~\ref{Lemma:connected}, we can obtain $4$ ($3$ if $t_1=3$) strongly exceptional Legendrian $A_3$ links whose rotation numbers and corresponding $d_3$-invariants $(r_0, r_1, r_2; d_3)$ are $(\pm2, \pm(t_{1}+1), 0; \frac{1}{2})$ and $  (0, \pm(3-t_1), 0; \frac{5}{2}). $

By \cite[Theorem 1.2, (d), (c2)]{go}, there are two Legendrian Hopf links $K'_0\cup K_1$  in  $(S^3, \xi_{\frac{1}{2}})$  with $(t'_0, r'_0)=(0,\pm1), t_1\geq3, r_{1}=\pm(t_{1}-1)$, and two Legendrian Hopf link $K''_0\cup K_2$  in  $(S^3, \xi_{-\frac{1}{2}})$ with $(t''_0, r''_0)=(2, \pm3),(t_2, r_2)=(1, \pm2)$. By Lemma~\ref{Lemma:connected}, we can obtain strongly exceptional Legendrian $A_3$ links with $t_0=3, t_1\geq3, t_2=1$. So there are $4$ strongly exceptional Legendrian $A_3$ links  whose rotation numbers and corresponding $d_3$-invariants $(r_0, r_1, r_2; d_3)$ are 
$(\pm4, \pm(t_{1}-1), \pm2; \frac{1}{2})$ and $ 
(\pm2, \pm(1-t_{1}), \pm2; \frac{1}{2}).$

By Lemma~\ref{Lemma:3.2.1} and Lemma~\ref{Lemma:appro}, there are $2$ strongly exceptional Legendrian $A_3$ links in $(S^3, \xi_{-\frac{3}{2}})$  whose rotation numbers $(r_0, r_1, r_2)$ are $(\pm6, \pm(t_{1}+1), \pm2).$
The decorations of their exteriors are $\pm(+)((-)(-))(-)$.

There are $2$ strongly exceptional Legendrian $A_3$ links in $(S^3, \xi_{\frac{1}{2}})$  whose rotation numbers $(r_0, r_1, r_2)$ are
$(\pm4, \pm(3-t_{1}), \pm2).$
The decorations of their exteriors are $\pm(+)((-)(+))(-)$. These exteriors are appropriate tight since they can be embedded into an appropriate tight contact $\Sigma\times S^1$ with boundary slopes $3, -\frac{1}{2}, -1$ and decorations $\pm(+)(-+)(-)$ by adding basic slices $(T^2\times[0,1], -\frac{1}{t_{1}}, -\frac{1}{t_{1}-1})$, $\cdots$, $(T^2\times[0,1], -\frac{1}{3}, -\frac{1}{2})$ to the boundary $T_1$. 
\end{proof}

So there are exactly $12$ (resp. $11$) strongly exceptional Legendrian $A_3$ links with $t_{0}=3$, $t_{1}\geq4$ (resp. $t_{1}=3$), $t_{2}=1$. If $t_0=t_1=3$ and $t_2=1$, then the decorations $(+)((-)(+))(+)$ and $(-)((+)(-))(-)$ correspond to the same Legendrian $A_3$ links with rotation numbers $r_0=r_1=r_2=0$.


(7) Suppose $t_{0}\leq2$. 

\begin{lemma}\label{t0<3t1>1t2=1}
If $t_{0}\leq2$, $t_1>1$ and $t_2=1$, then there exist $6-2t_0$ strongly exceptional Legendrian $A_3$ links in $(S^3, \xi_{\frac{3}{2}})$ whose rotation numbers are $$r_{0}\in\pm\{t_{0}-1, t_{0}+1, \cdots, -t_{0}+1, -t_{0}+3\}, r_{1}=\pm(t_{1}-1), r_{2}=0.$$  
\end{lemma}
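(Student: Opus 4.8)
The plan is to match the upper bound of $6-2t_0$ given by Lemma~\ref{Lemma:t_1>1t_2=1}, constructing all of these links as Legendrian connected sums in the same spirit as the proof of Lemma~\ref{t0<4t1=1t2=1}. First I would take, from \cite[Theorem 1.2]{go}, a strongly exceptional Legendrian Hopf link $K'_0\cup K_1$ in $(S^3,\xi_{\frac{1}{2}})$ with $(t'_0,r'_0)=(0,\pm1)$, $t_1\geq2$ and $r_1=\pm(t_1-1)$ (the two $\pm$ signs matching), together with the strongly exceptional Legendrian Hopf links $K''_0\cup K_2$ in $(S^3,\xi_{\frac{1}{2}})$ with $t''_0\leq1$, $(t_2,r_2)=(1,0)$ and $r''_0$ ranging over $\{t''_0-1,t''_0+1,\dots,-t''_0+1\}$. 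These are precisely the building blocks permitted by Lemma~\ref{Lemma:connected}.

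Next I would form the Legendrian connected sum $(K'_0\#K''_0)\cup K_1\cup K_2$. By Lemma~\ref{Lemma:connected} it is a strongly exceptional Legendrian $A_3$ link in a contact $3$-sphere, and by the connected-sum formulas recalled after that lemma its invariants are $t_0=t'_0+t''_0+1=t''_0+1$, $r_0=r'_0+r''_0$, $r_1=\pm(t_1-1)$, $r_2=0$, while the ambient $d_3$-invariant is $\frac{1}{2}+\frac{1}{2}+\frac{1}{2}=\frac{3}{2}$, so the link lies in $(S^3,\xi_{\frac{3}{2}})$. As $t''_0$ runs over the integers $\leq1$, $t_0=t''_0+1$ runs over the integers $\leq2$; for a fixed such $t_0$ (hence $t''_0=t_0-1$) the choice $r'_0=1$ produces links with $r_1=t_1-1$ and $r_0=1+r''_0\in\{t_0-1,t_0+1,\dots,-t_0+3\}$, whereas $r'_0=-1$ produces links with $r_1=-(t_1-1)$ and $r_0=-1+r''_0$ running over the negatives of that set. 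Since $r''_0$ takes $3-t_0$ values, this yields $2(3-t_0)=6-2t_0$ links with exactly the rotation numbers listed in the statement.

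Finally I would check that these $6-2t_0$ links are pairwise non-coarsely-equivalent: within a single choice of $r'_0$ the values of $r_0$ are distinct, and the two choices of $r'_0$ give $r_1$ of opposite sign (with $t_1-1\neq0$ as $t_1>1$), so all triples $(r_0,r_1,r_2)$ are distinct, and a contactomorphism preserves each $(t_i,r_i)$. Together with the upper bound of Lemma~\ref{Lemma:t_1>1t_2=1} this yields exactly $6-2t_0$ strongly exceptional Legendrian $A_3$ links when $t_0\leq2$, all in $(S^3,\xi_{\frac{3}{2}})$. I do not expect a genuine geometric obstacle: strong exceptionality and tightness are already handed to us by Lemma~\ref{Lemma:connected}, so the only points needing care are confirming that Hopf-link building blocks of the required form exist for every $t_1\geq2$ (which is exactly the input of Lemma~\ref{Lemma:connected}, available from \cite{go}) and the bookkeeping that turns the product of the two families of rotation numbers into the stated list without overcounting. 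As in the $t_1=t_2=1$ case, one can then observe that these representatives are obtained from those with $t_0=2$ by stabilizations along $K_0$, which explains why the count grows by $2$ each time $t_0$ decreases by $1$.
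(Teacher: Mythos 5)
Your proposal is correct and follows essentially the same route as the paper: both realize the $6-2t_0$ links as Legendrian connected sums of two strongly exceptional Hopf links via Lemma~\ref{Lemma:connected}, with $d_3=\frac{1}{2}+\frac{1}{2}+\frac{1}{2}=\frac{3}{2}$, the stated rotation numbers coming from the additivity formulas, and the count matching the upper bound of Lemma~\ref{Lemma:t_1>1t_2=1}. The only (immaterial) difference is which factor carries the varying Thurston--Bennequin invariant: the paper takes $K'_0\cup K_1$ from \cite[Theorem 1.2 (b1)]{go} with $t'_0\leq 1$ and fixes $K''_0\cup K_2$ as the $(1,0)$--$(1,0)$ Hopf link, whereas you fix the $K_1$-factor as $(t'_0,r'_0)=(0,\pm1)$, $r_1=\pm(t_1-1)$ and let the $K_2$-factor from \cite[Theorem 1.2 (b2)]{go} range over $t''_0\leq 1$, producing the same links and invariants.
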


\begin{proof}

By \cite[Theorem 1.2, (b1), (c1)]{go}, there is a Legendrian Hopf link $K'_0\cup K_1$ in $(S^3, \xi_{\frac{1}{2}})$ with $t'_0\leq1, r'_0\in\pm\{t'_0+1, t'_0+3, \cdots, -t'_0-1, -t'_0+1\}$, $t_1\geq2$, $r_1=\pm(t_{1}-1)$, and a Legendrian Hopf link $K''_0\cup K_2$ in $(S^3, \xi_{\frac{1}{2}})$  with $(t''_0, r''_0)=(t_2, r_2)=(1, 0)$.  By Lemma~\ref{Lemma:connected}, we can construct $6-2t_{0}$ strongly exceptional Legendrian $A_3$ links in $(S^3, \xi_{\frac{3}{2}})$ with $t_{0}\leq2, t_1>1, t_2=1$.  Their rotation numbers are as listed.
\end{proof}
These $6-2t_0$ strongly exceptional Legendrian $A_3$ links are stabilizations of the Legendrian $A_3$ links with $t_{0}=2,  t_1>1, t_2=1$.

The proof of Theorem~\ref{Theorem:t_1>1t_2=1} is completed.
\end{proof}

\subsubsection{$t_1\geq2$ and $t_2\geq2$.}
\begin{proof}[Proof of Theorem~\ref{Theorem:t_1>1t_2>1}]
The upper bound of strongly exceptional Legendrian $A_3$ links is given by Lemma~\ref{Lemma:t_1>1t_2>1}. We will show that these upper bounds can be attained.
\begin{figure}[htb]
\begin{overpic}
[width=0.95\textwidth]
{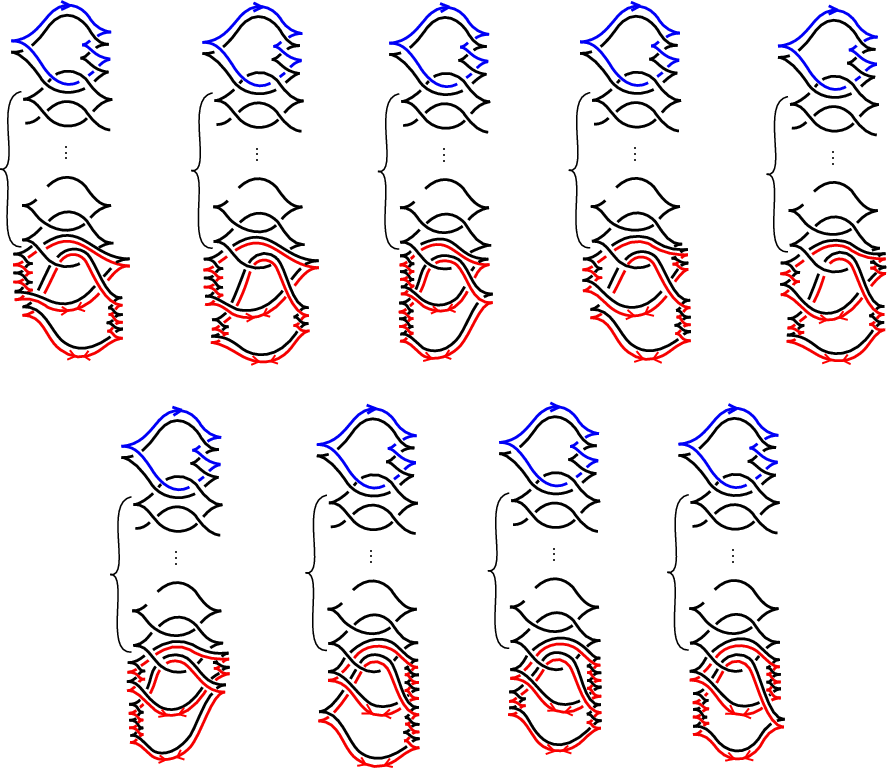}
\put(-30, 283){$t_{0}-4$}
\put(0, 358){$K_{0}$}
\put(32, 210){$K_{1}$}
\put(32, 187){$K_{2}$}
\end{overpic}
\caption{$t_0\geq4$, $t_{1}=t_{2}=2$.   For $t_0$ odd, $K_{0}$ and $K_{i}$ are given the same orientation, for $t_0$ even, the opposite one, where $i=1,2$. }
\label{Figure:link23inot}
\end{figure}

(1) Suppose $t_0\geq4$ and $t_1=t_2=2$.

\begin{lemma}\label{t0>3t1=2t2=2}
If $t_0\geq4$ and $t_1=t_2=2$, then there exist $18$ strongly exceptional Legendrian $A_3$ links whose rotation numbers and corresponding $d_3$-invariants $(r_0, r_1, r_2; d_3)$ are
$$(\pm(t_{0}-1), \pm3, \mp1; \frac{1}{2}), (\pm(t_{0}+1), \pm3, \pm1;  \frac{1}{2}),(\pm(t_{0}+3), \pm3, \pm3;   -\frac{3}{2}), $$ 
$$(\pm(t_{0}-3), \pm1, \mp1; \frac{5}{2}), (\pm(t_{0}-1), \pm1, \pm1;  \frac{5}{2}), (\pm(t_{0}+1), \pm1, \pm3; \frac{1}{2}),$$ 
$$(\pm(t_{0}-5), \mp1, \mp1;  \frac{5}{2}), (\pm(t_{0}-3), \mp1, \pm1;  \frac{5}{2}), (\pm(t_{0}-1), \mp1, \pm3; \frac{1}{2}).$$ 
\end{lemma}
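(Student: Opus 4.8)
The plan is to realize all $18$ links explicitly and thereby show that the upper bound of Lemma~\ref{Lemma:t_1>1t_2>1} is attained. First I would exhibit the $18$ strongly exceptional Legendrian $A_3$ links as the contact surgery diagrams in Figure~\ref{Figure:link23inot}: the central component $K_0$ is stabilized $t_0-4$ times with the stabilizations distributed between the two signs, and $K_1$, $K_2$ are built, in the style of Figure~\ref{Figure:link19inot} and Figure~\ref{Figure:link21inot}, from short chains of $(-1)$--framed Legendrian unknots together with $(+1)$--framed Legendrian push-offs that are arranged to cancel. Strong exceptionality is then immediate by the same reasoning as in the proof of Lemma~\ref{t0>1t1<0t2<0} and the general principle of Section~3: performing the contact $(-1)$--surgery along the push-offs cancels the $(+1)$--surgeries and leaves a surgery presentation of one of the tight closed contact manifolds of \cite{go}; since contact $(-1)$--surgery preserves tightness \cite{w} while a Giroux torsion domain in the complement would produce an overtwisted disk after surgery, each exterior $\overline{S^3\setminus N(K_0\cup K_1\cup K_2)}$ is appropriate tight.

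Next I would check the underlying smooth topology. Applying the trick of Lemma~\ref{Lemma:topological} (slide the $(-2)$--framed chains and cancel them against the central unknot) together with the Kirby calculus already carried out in \cite[Theorem 1.2, (c3), (c4)]{go} for the Hopf sublinks $K_0\cup K_1$ and $K_0\cup K_2$, one identifies the underlying link as $A_3$ with $K_1,K_2$ meridians of $K_0$ as in Figure~\ref{Figure:Link0}.

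The computational core is then routine: the Thurston--Bennequin invariants and rotation numbers are read off from \cite[Lemma 6.6]{loss}, and the $d_3$-invariants of the ambient $S^3$ from the algorithm of \cite{DGS}, precisely as in the proofs of Lemma~\ref{t0>5t1=1t2=1} and Lemma~\ref{t0>4t1=2t2=1}; this yields the nine sign-pairs $(r_0,r_1,r_2;d_3)$ listed in the statement. As a cross-check, the pair with $d_3=-\frac{3}{2}$ corresponds to the exterior decoration $\pm(+)(--)(--)$ and can alternatively be obtained from Lemma~\ref{Lemma:3.2.1} together with Lemma~\ref{Lemma:appro}, which re-confirms that its $d_3$ does not depend on $t_0$.

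I expect the main obstacle to be bookkeeping rather than anything conceptual. One must track the parity of $t_0$, which reverses the relative orientation of $K_0$ and each $K_i$ and hence the signs entering the rotation-number computation, and one must confirm that the $18$ triples $(r_0,r_1,r_2)$ are pairwise distinct. Once that is done the $18$ links are pairwise non-coarsely-equivalent, so together with the upper bound of Lemma~\ref{Lemma:t_1>1t_2>1} we obtain exactly $18$; as a corollary, the $18$ contact structures on $\Sigma\times S^1$ with boundary slopes $t_0,-\frac{1}{2},-\frac{1}{2}$ counted in Lemma~\ref{Lemma:t_1>1t_2>1} are all appropriate tight.
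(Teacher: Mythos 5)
Your proposal matches the paper's proof: the paper also realizes all $18$ links by the contact surgery diagrams of Figure~\ref{Figure:link23inot}, identifies the underlying link as $A_3$ via the trick of Lemma~\ref{Lemma:topological} together with the Kirby calculus of \cite[Theorem 1.2, (c3)]{go}, and reads off the rotation numbers and $d_3$-invariants exactly as in your computational step (the paper's Section 6 carries out this computation for the top-left diagram of Figure~\ref{Figure:link23inot}), with strong exceptionality following from the surgery-cancellation principle of Section 3. Your cross-check of the $d_3=-\tfrac{3}{2}$ pair via Lemma~\ref{Lemma:3.2.1} and Lemma~\ref{Lemma:appro} is a harmless addition consistent with how the paper handles neighboring cases.
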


\begin{proof}

If $t_0\geq4$ and $t_1=t_2=2$, then there exist $18$ strongly exceptional Legendrian $A_3$ links shown in Figure~\ref{Figure:link23inot}.  Using the trick of Lemma~\ref{Lemma:topological} and the proof of \cite[Theorem 1.2, (c3)]{go}, we can show that $K_0\cup K_1\cup K_2$ is a topological $A_3$ link. Their rotation numbers and corresponding $d_3$-invariants are as listed. 
\end{proof}

(2) Suppose $t_0=3$ and $t_1=t_2=2$. 
\begin{lemma}\label{t0=3t1=2t2=2}
If $t_0=3$ and $t_1=t_2=2$, then there exist $14$ strongly exceptional Legendrian $A_3$ links whose rotation numbers and corresponding $d_3$-invariants $(r_0, r_1, r_2; d_3)$ are
$$(\pm4, \pm3, \pm1; \frac{1}{2}), (\pm4, \pm1, \pm3; \frac{1}{2}), (\pm2, \pm3, \mp1; \frac{1}{2}), (\pm2, \mp1, \pm3; \frac{1}{2}),$$ 
$$(\mp2, \mp1, \mp1; \frac{5}{2}), (0, \mp1, \pm1; \frac{5}{2}), (\pm6, \pm3, \pm3; -\frac{3}{2}).$$ 
\end{lemma}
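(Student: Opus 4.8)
The plan is to follow the template of Lemmas~\ref{t0=3t1=2t2=1} and \ref{t0=3t1>2t2=1}. The upper bound of $14$ is furnished by Lemma~\ref{Lemma:t_1>1t_2>1}, so it suffices to produce $14$ pairwise non-coarsely-equivalent strongly exceptional Legendrian $A_3$ links with the listed invariants $(r_0,r_1,r_2;d_3)$. Since $t_0=3$ and $\lceil-\tfrac{1}{t_1}\rceil=\lceil-\tfrac{1}{t_2}\rceil=0$, every appropriate tight contact structure on the exterior has a $0$-twisting vertical Legendrian circle (cf.\ the opening of the proof of Lemma~\ref{Lemma:t_1>1t_2>1}), hence a factorization $L'_0\cup L'_1\cup L'_2\cup\Sigma'\times S^1$ with decoration $(\pm)(\pm\pm)(\pm\pm)$. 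I will organize the $14$ links by their $d_3$-invariant: the twelve links with $d_3\in\{\tfrac12,\tfrac52\}$ will be built as Legendrian connected sums via Lemma~\ref{Lemma:connected}, and the two links with $d_3=-\tfrac32$ by the extension method of Lemmas~\ref{Lemma:3.2.1}, \ref{Lemma:tight}, \ref{Lemma:construction} together with Lemma~\ref{Lemma:appro}.

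For the connected sums, the only admissible ``special'' Hopf link input to Lemma~\ref{Lemma:connected} having a meridian of Thurston--Bennequin invariant $2$ is $K'_0\cup K_a$ in $(S^3,\xi_{\frac12})$ with $(t'_0,r'_0)=(0,\pm1)$ and $(t_a,r_a)=(2,\pm1)$, taken from \cite[Theorem 1.2]{go}, where $a\in\{1,2\}$. Since $t_0=t'_0+t''_0+1=3$ forces $t''_0=2$, the complementary ``arbitrary'' input is a strongly exceptional Legendrian Hopf link $K''_0\cup K_b$, $b\neq a$, with $(t''_0,t_b)=(2,2)$, and $(t''_0,t_b)$ is not one of the exceptional pairs $(2,1),(1,2)$, so Lemma~\ref{Lemma:connected} applies directly. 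Feeding in the Legendrian Hopf links with $(t''_0,t_b)=(2,2)$ classified in \cite[Theorem 1.2]{go} --- those with $(r''_0,r_b)=(\pm3,\pm3)$ in $(S^3,\xi_{-\frac12})$ and those with $(r''_0,r_b)=(\pm1,\pm1)$ in $(S^3,\xi_{\frac32})$ --- and using rotation-number additivity $r_0=r'_0+r''_0$ together with the connected-sum formula $d_3=d_3(\xi')+d_3(\xi'')+\tfrac12$, I obtain, running over $a\in\{1,2\}$ and both sign choices in each factor, the eight links with $d_3=\tfrac12$ and the four links with $d_3=\tfrac52$ appearing in the statement. All of these are manifestly strongly exceptional.

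For the remaining two links I invoke Lemma~\ref{Lemma:3.2.1}: with $t_0=3$, $t_1=t_2=2$ the decoration $\pm(+)(--)(--)$ yields an exceptional Legendrian $A_3$ link with rotation numbers $(\pm6,\pm3,\pm3)$, and by Lemma~\ref{Lemma:3.2.1} its $d_3$-invariant is independent of $t_0$, so it equals the value $-\tfrac32$ already computed via \cite{DGS} in the proof of Lemma~\ref{t0>3t1=2t2=2}. The exterior is tight by Lemma~\ref{Lemma:tight}; to conclude it is appropriate tight I apply Lemma~\ref{Lemma:appro}, using that the exteriors of the connected-sum links already provide appropriate tight contact structures on $\Sigma\times S^1$ with boundary slopes $3,-\tfrac12,-\tfrac12$, and that any tight contact structure on $\Sigma\times S^1$ with these slopes differs from them only in the signs of the basic slices of its $L'_0\cup L'_1\cup L'_2$ factorization. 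Lemma~\ref{Lemma:construction} then promotes these to strongly exceptional Legendrian $A_3$ links in $(S^3,\xi_{-\frac32})$. Finally, checking that the $14$ triples $(r_0,r_1,r_2;d_3)$ are pairwise distinct and comparing with the upper bound $14$ of Lemma~\ref{Lemma:t_1>1t_2>1} completes the proof, and shows a posteriori that all $14$ contact structures of that lemma are appropriate tight.

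I expect the principal obstacle to be combinatorial rather than conceptual: correctly matching each decoration $(\pm)(\pm\pm)(\pm\pm)$ to its triple of rotation numbers through the relative-Euler-class computation described after Lemma~\ref{Lemma:relEuler1}, and verifying that the connected-sum constructions --- over the two choices of which meridian is ``special'', the independent sign choices, and the several available $(2,2)$-Hopf links --- produce exactly the twelve distinct links with $d_3\in\{\tfrac12,\tfrac52\}$ and nothing more, with no overcounting under shuffling of continued-fraction blocks or under the sign-reversal identification of Lemma~\ref{Lemma:slopes2}(1). A secondary subtlety, handled exactly as in the proof of Lemma~\ref{t0=3t1>2t2=1}, is the justification of appropriate tightness for the mixed-sign exteriors $\pm(+)(++)(--)$ and $\pm(+)(--)(++)$ via Lemma~\ref{Lemma:appro} rather than by a direct Giroux-torsion estimate.
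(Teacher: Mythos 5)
Your proposal is correct and follows essentially the same route as the paper: the twelve links with $d_3\in\{\tfrac12,\tfrac52\}$ are obtained exactly as in the paper by Lemma~\ref{Lemma:connected}, pairing the special Hopf link with $(t'_0,r'_0)=(0,\pm1)$, $(2,\pm1)$ from \cite[Theorem 1.2]{go} with the $(2,2)$-Hopf links having $(r''_0,r_b)=(\pm3,\pm3)$ or $(\pm1,\pm1)$ and exchanging the roles of $K_1$ and $K_2$, while the two links with invariants $(\pm6,\pm3,\pm3;-\tfrac32)$ come from Lemma~\ref{Lemma:3.2.1} together with Lemma~\ref{Lemma:appro}, matching the upper bound of Lemma~\ref{Lemma:t_1>1t_2>1}. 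The only cosmetic difference is your extra caution about mixed-sign decorations, which is not needed here since all twelve surgery/connected-sum exteriors are automatically appropriate tight and Lemma~\ref{Lemma:appro} is only invoked for the decoration $\pm(+)(--)(--)$, just as in the paper.
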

\begin{proof}

By \cite[Theorem 1.2, (c2), (d)]{go}, there are two Legendrian Hopf links $K'_0\cup K_1$ in $(S^3, \xi_{-\frac{1}{2}})$ with $(t'_0, r'_0)=(t_1, r_1)=(2, \pm3)$, two Legendrian Hopf links $K'_0\cup K_1$ in $(S^3, \xi_{\frac{3}{2}})$ with $(t'_0, r'_0)=(t_1, r_1)=(2, \pm1)$, and two Legendrian Hopf links $K''_0\cup K_2$ in $(S^3, \xi_{\frac{1}{2}})$ with $(t''_0, r''_0)=(0, \pm1), (t_2, r_2)=(2, \pm1)$. By Lemma~\ref{Lemma:connected}, we can obtain strongly exceptional Legendrian $A_3$ links with $t_0=3, t_1=t_2=2$. So by exchanging the roles of $K_1$ and $K_2$ there are  $12$ strongly exceptional Legendrian $A_3$ links  whose rotation numbers and corresponding $d_3$-invariants $(r_0, r_1, r_2; d_3)$ are 
$(\pm4, \pm3, \pm1; \frac{1}{2}), $ $(\pm4, \pm1, \pm3; \frac{1}{2}), $ $(\pm2, \pm3, \mp1; \frac{1}{2}), $ $(\pm2, \mp1, \pm3; \frac{1}{2}),$
$(\mp2, \mp1, \mp1; \frac{5}{2})$ and  $(0, \mp1, \pm1; \frac{5}{2}).$ 

By Lemma~\ref{Lemma:3.2.1} and Lemma~\ref{Lemma:appro}, there are $2$ strongly exceptional Legendrian $A_3$ links in $(S^3, \xi_{-\frac{3}{2}})$  whose rotation numbers $(r_0, r_1, r_2)$ are
$(\pm6, \pm3, \pm3).$ 
The decorations of their exteriors are $\pm(+)(--)(--)$.
\end{proof}

So there are exactly $14$ strongly exceptional Legendrian $A_3$ links with $t_{0}=3, t_{1}=2, t_{2}=2$. As a corollary, the $14$ contact structures on $\Sigma\times S^1$ with boundary slopes $s_{0}=3, s_{1}=-\frac{1}{2}, s_{2}=-\frac{1}{2}$ listed in Lemma~\ref{Lemma:t_1>1t_2>1}  are all appropriate tight.

(3) Suppose $t_0=t_1=t_2=2$. 

\begin{lemma}\label{t0=2t1=2t2=2}
If $t_0=t_1=t_2=2$, then there exist $10$ strongly exceptional Legendrian $A_3$ links whose rotation numbers and corresponding $d_3$-invariants $(r_0, r_1, r_2; d_3)$ are
$$(\pm3, \pm3, \pm1; \frac{1}{2}), (\pm3, \pm1, \pm3; \frac{1}{2}), (\pm1, \pm3, \mp1; \frac{1}{2}), (\pm1, \mp1, \pm3; \frac{1}{2}), (\pm5, \pm3, \pm3; -\frac{3}{2}). $$
\end{lemma}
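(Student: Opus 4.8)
The plan is to show that the upper bound of $10$ from Lemma~\ref{Lemma:t_1>1t_2>1} (the case $t_0=2$, $t_1=t_2=2$) is attained, by exhibiting $10$ strongly exceptional Legendrian $A_3$ links with the invariants listed. Since a contactomorphism of $S^3$ preserves the ordered, oriented Thurston--Bennequin and rotation data together with the $d_3$-invariant of the ambient contact structure, the ten triples $(r_0,r_1,r_2;d_3)$ being pairwise distinct will force the ten links to be pairwise non-coarsely-equivalent, and the count will then be exactly $10$.

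First I would produce the eight links with $d_3=\frac{1}{2}$ by Legendrian connected sums, following the recipe of Lemma~\ref{t0=3t1=2t2=2}. One takes from \cite[Theorem 1.2]{go} a strongly exceptional Legendrian Hopf link $K''_0\cup K_2$ in $(S^3,\xi_{\frac{1}{2}})$ with $(t''_0,r''_0)=(0,\pm1)$, $(t_2,r_2)=(2,\pm1)$ to play the distinguished role of Lemma~\ref{Lemma:connected}, and a strongly exceptional Legendrian Hopf link $K'_0\cup K_1$ with $t'_0=1$, $t_1=2$ for the second factor; the Legendrian connected sum $(K'_0\#K''_0)\cup K_1\cup K_2$ then has $t_0=t'_0+t''_0+1=2$, $r_0=r'_0+r''_0$, and $d_3=d_3(\xi)+d_3(\xi')+\frac{1}{2}$. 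Running over the admissible signs and over the rotation numbers $r_1=\pm3$ and $r_1=\pm1$ available for $K_1$ should produce the triples $(\pm3,\pm3,\pm1;\frac{1}{2})$ and $(\pm1,\pm3,\mp1;\frac{1}{2})$; exchanging the roles of $K_1$ and $K_2$ then yields $(\pm3,\pm1,\pm3;\frac{1}{2})$ and $(\pm1,\mp1,\pm3;\frac{1}{2})$. Equivalently, in the language of Remark~\ref{Remark:Stabilization}, these eight links are exactly the stabilizations at $K_0$ of the links with $t_0=3$ of Lemma~\ref{t0=3t1=2t2=2} whose resulting rotation numbers are still admissible.

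Next I would construct the two links with $d_3=-\frac{3}{2}$. By Lemma~\ref{Lemma:tight} together with Lemma~\ref{Lemma:construction}, the decorations $\pm(+)(--)(--)$ are realized by exceptional Legendrian $A_3$ links whose exteriors contain a $0$-twisting vertical Legendrian circle; the rotation-number computation of Lemma~\ref{Lemma:3.2.1} specialized to $t_0=t_1=t_2=2$ gives $(r_0,r_1,r_2)=(\pm5,\pm3,\pm3)$, and since the associated $d_3$-invariant is independent of $t_0$ it equals $-\frac{3}{2}$, as already computed for $t_0\geq4$ in Lemma~\ref{t0>3t1=2t2=2}. To see that these two structures are \emph{appropriate} tight (not merely tight) I would invoke Lemma~\ref{Lemma:appro}: they have a $0$-twisting vertical Legendrian circle and differ from one of the eight appropriate tight structures already constructed only in the signs of the basic slices of $L'_0\cup L'_1\cup L'_2$.

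The main obstacle will be the bookkeeping in the first step: picking out the correct Legendrian Hopf links from \cite[Theorem 1.2]{go} with the right Thurston--Bennequin invariants, rotation numbers, relative orientations (the parity of $t_1$ flips the relative orientation of the two factors, as in Figure~\ref{Figure:link40inot}), and ambient $d_3$-invariants; in particular one must check that a Hopf link providing $(t_1,r_1)=(2,\pm3)$ together with the required central $tb$ actually exists, and that the connected-sum $d_3$-invariants all evaluate to $\frac{1}{2}$. The rest --- verifying that exactly the ten listed triples occur and that they are pairwise distinct --- is then a routine check.
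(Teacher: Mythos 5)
Your proposal follows essentially the same route as the paper: the eight links with $d_3=\frac{1}{2}$ are obtained via Lemma~\ref{Lemma:connected} by summing the Hopf link with $(t''_0,r''_0)=(0,\pm1)$, $(t_2,r_2)=(2,\pm1)$ in $(S^3,\xi_{\frac12})$ with the Hopf link with $(t'_0,r'_0)=(1,\pm2)$, $(t_1,r_1)=(2,\pm3)$ in $(S^3,\xi_{-\frac12})$ and then exchanging the roles of $K_1$ and $K_2$, while the two links $(\pm5,\pm3,\pm3;-\frac32)$ come from Lemma~\ref{Lemma:3.2.1} (decorations $\pm(+)(--)(--)$) with appropriateness via Lemma~\ref{Lemma:appro}, exactly as in the paper. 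The only slip is your ``running over $r_1=\pm3$ and $r_1=\pm1$'': by \cite[Theorem 1.2]{go} a Hopf link with Thurston--Bennequin invariants $(1,2)$ only exists with rotation numbers $(\pm2,\pm3)$, so all eight connected-sum links have $r_1=\pm3$ (or $r_2=\pm3$ after the exchange), and the triples $(\pm1,\mp1,\pm3)$ arise from the exchange rather than from an $r_1=\pm1$ factor; also note your ``equivalently, by stabilization'' aside cannot replace the construction, since Remark~\ref{Remark:Stabilization} presupposes the classification.
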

\begin{proof}
By \cite[Theorem 1.2, (c2), (d)]{go}, there are two Legendrian Hopf links $K'_0\cup K_1$ in $(S^3, \xi_{-\frac{1}{2}})$ with $(t'_0, r'_0)=(1, \pm2), (t_1, r_1)=(2, \pm3)$, and two Legendrian Hopf links $K''_0\cup K_2$ in $(S^3, \xi_{\frac{1}{2}})$  with $(t''_0, r''_0)=(0, \pm1), (t_2, r_2)=(2, \pm1)$. By Lemma~\ref{Lemma:connected}, we can obtain strongly exceptional Legendrian $A_3$ links with $t_0=t_1=t_2=2$. So by exchanging the roles of $K_1$ and $K_2$ there are  $8$ strongly exceptional Legendrian $A_3$ links whose rotation numbers and corresponding $d_3$-invariants $(r_0, r_1, r_2; d_3)$ are are 
$(\pm3, \pm3, \pm1; \frac{1}{2}), $ $(\pm3, \pm1, \pm3; \frac{1}{2}), $ $(\pm1, \pm3, \mp1; \frac{1}{2}) $ and  $(\pm1, \mp1, \pm3; \frac{1}{2}).$

By Lemma~\ref{Lemma:3.2.1} and Lemma~\ref{Lemma:appro}, there are $2$ strongly exceptional Legendrian $A_3$ links in $(S^3, \xi_{-\frac{3}{2}})$  whose rotation numbers $(r_0, r_1, r_2)$ are 
$(\pm5, \pm3, \pm3).$
The decorations of their exteriors are $\pm(+)(--)(--)$.
\end{proof}

So there are exactly $10$ strongly exceptional Legendrian $A_3$ links with $t_{0}=2, t_{1}=2, t_{2}=2$. As a corollary, the $10$ contact structures on $\Sigma\times S^1$ with boundary slopes $s_{0}=2, s_{1}=-\frac{1}{2}, s_{2}=-\frac{1}{2}$ listed in Lemma~\ref{Lemma:t_1>1t_2>1} are all appropriate tight.


\begin{figure}[htb]
\begin{overpic}
[width=0.95\textwidth]
{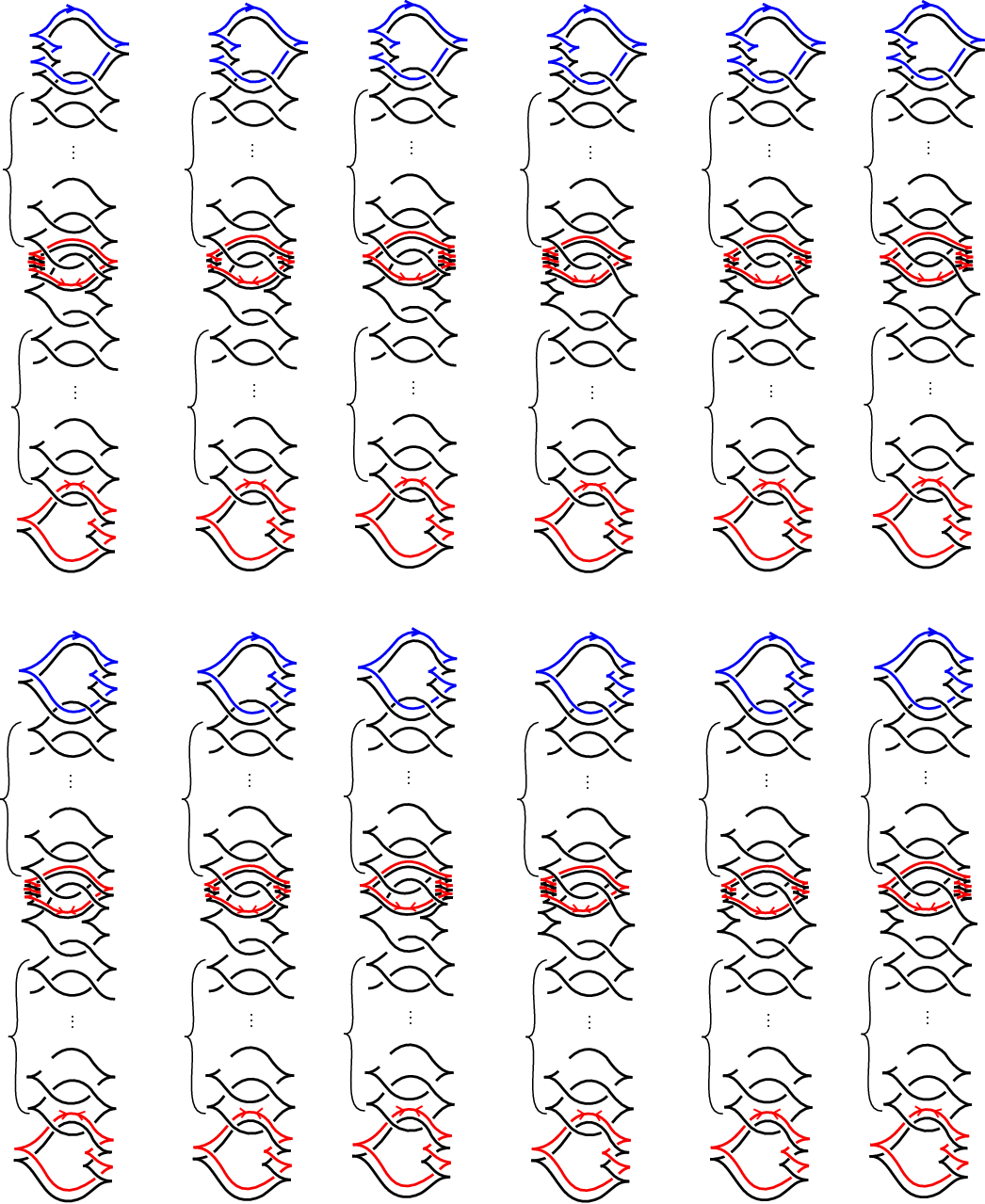}
\put(-30, 445){$t_{0}-4$}
\put(-28, 342){$t_{1}-3$}
\put(41, 512){$K_{0}$}
\put(23, 283){$K_{1}$}
\put(-4, 406){$K_{2}$}
\end{overpic}
\caption{$t_0\geq4$, $t_{1}\geq3$, $t_{2}=2$.  If $t_{0}+t_{1}$ is odd, then $K_{0}$ and $K_{1}$ bear the same orientation, if $t_{0}+t_{1}$ is even, then the opposite one. If $t_{0}$ is odd, then $K_{0}$ and $K_{2}$ bear the same orientation, if $t_{0}$ is even, then the opposite one. }
\label{Figure:link24inot}
\end{figure}

(4) Suppose $t_0\geq4$, $t_1\geq3$ and $t_2=2$.

\begin{lemma}\label{t0>3t1>2t2=2}
If $t_0\geq4$, $t_1\geq3$ and $t_2=2$, then there exist $24$ strongly exceptional Legendrian $A_3$ links  whose rotation numbers and corresponding $d_3$ invariants $(r_0, r_1, r_2; d_3)$ are
$$(\pm(t_{0}-3),  \pm(3-t_{1}),  \mp1;  \frac{5}{2}), (\pm(t_{0}-1),  \pm(3-t_{1}),  \pm1; \frac{5}{2}),$$
$$(\pm(t_{0}+1),  \pm(3-t_{1}),  \pm3; \frac{1}{2}), (\pm(t_{0}-5),  \pm(1-t_{1}),  \mp1; \frac{5}{2}),$$
$$(\pm(t_{0}-3),  \pm(1-t_{1}),  \pm1; \frac{5}{2}), (\pm(t_{0}-1),  \pm(1-t_{1}),  \pm3; \frac{1}{2}),$$
$$(\pm(t_{0}+1),  \pm(t_{1}-1),  \pm3;  \frac{1}{2}), (\pm(t_{0}-1),  \pm(t_{1}-1),  \pm1; \frac{5}{2}),$$
$$(\pm(t_{0}-3),  \pm(t_{1}-1),  \mp1;  \frac{5}{2}), (\pm(t_{0}+3),  \pm(t_{1}+1),  \pm3; -\frac{3}{2}),$$ 
$$(\pm(t_{0}+1),  \pm(t_{1}+1),  \pm1; \frac{1}{2}), (\pm(t_{0}-1),  \pm(t_{1}+1),  \mp1; \frac{1}{2}).$$
\end{lemma}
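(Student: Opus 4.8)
The plan is to follow the template of the proofs of Lemmas~\ref{t0>4t1>2t2=1} and \ref{t0>3t1=2t2=2}: exhibit the $24$ links explicitly through one family of contact surgery diagrams, verify that the underlying topological link is an $A_3$ link, and then read off the classical invariants and the $d_3$-invariant directly from the diagram. Concretely, I would take the $24$ Legendrian links presented by the contact surgery diagram in Figure~\ref{Figure:link24inot}, in which the box labelled $t_0-4$ (resp. $t_1-3$) records the zig-zags inserted along $K_0$ (resp. $K_1$), the Legendrian push-offs carry contact coefficient $+1$, and the auxiliary unknots carry contact coefficient $-1$. The count $24=2\times 4\times 3$ matches the number of appropriate tight contact structures on $\Sigma\times S^1$ with boundary slopes $t_0$, $-\frac{1}{t_1}$, $-\frac{1}{t_2}$ given in Lemma~\ref{Lemma:t_1>1t_2>1} for $t_0\geq4$, $t_1\geq3$, $t_2=2$, and to the decorations $(\pm)((\pm)(\pm))(\pm\pm)$.

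Next I would identify the topological type. Using the reduction trick of Lemma~\ref{Lemma:topological}, the $K_0$-branch of the diagram collapses after handle slides to a single unknot and the $K_1$- and $K_2$-branches split off; combining this with the Kirby calculus in the proof of \cite[Theorem 1.2, (c3), (c4)]{go} shows that $K_0\cup K_1\cup K_2$ is the connected sum of two Hopf links with $K_0$ central, i.e.\ an $A_3$ link, and that the three surgered components have Thurston-Bennequin invariants $t_0$, $t_1$, $t_2$. Then the rotation numbers are computed by applying \cite[Lemma 6.6]{loss} to the diagram, keeping track of the zig-zag contributions on each strand together with the orientation conventions from the caption (the parity of $t_0+t_1$ fixes the relative orientation of $K_0$ and $K_1$, the parity of $t_0$ that of $K_0$ and $K_2$); this yields exactly the twelve sign-families $(\pm,\pm,\pm)$ in the statement. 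The $d_3$-invariants are obtained from the surgery presentation by the algorithm of \cite{DGS}, producing the values $\frac{1}{2}$, $\frac{5}{2}$, $-\frac{3}{2}$ as listed.

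Finally, strong exceptionality is automatic for links built from such diagrams, as explained in Section~\ref{Section:realizations}: performing the remaining contact $(-1)$-surgeries cancels the contact $(+1)$-surgeries on the push-offs and produces a closed tight contact $3$-manifold, so a Giroux torsion domain in the exterior would give an overtwisted disk after surgery, a contradiction. Matching these $24$ realizations against the upper bound of Lemma~\ref{Lemma:t_1>1t_2>1} then shows that all $24$ candidate contact structures on $\Sigma\times S^1$ are appropriate tight.

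The main obstacle is the bookkeeping rather than any conceptual difficulty: one must check that the single family in Figure~\ref{Figure:link24inot} genuinely realizes all twelve sign-patterns with the asserted rotation numbers and $d_3$-invariants, consistently through the orientation reversals forced by the parities of $t_0$ and $t_1$, and that no two of the $24$ diagrams coincide. A representative computation — carried out in detail in the final section of the paper — suffices, the remaining cases being routine applications of \cite[Lemma 6.6]{loss} and \cite{DGS}.
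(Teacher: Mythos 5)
Your proposal is correct and follows essentially the same route as the paper: the authors also realize all $24$ links via the contact surgery diagrams of Figure~\ref{Figure:link24inot}, identify the topological type using the trick of Lemma~\ref{Lemma:topological} together with the Kirby calculus from \cite[Theorem 1.2, (c3), (c4)]{go}, and read off the rotation numbers and $d_3$-invariants from the diagrams (via \cite[Lemma 6.6]{loss} and \cite{DGS}), with strong exceptionality coming from the surgery-diagram construction as explained in Section~\ref{Section:realizations}. The extra details you supply (orientation parities, cancellation of the $(+1)$-surgeries, matching the upper bound of Lemma~\ref{Lemma:t_1>1t_2>1}) are exactly the background the paper leaves implicit.
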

\begin{proof}
If $t_0\geq4$, $t_1\geq3$ and $t_2=2$, then there are exactly  $24$ strongly exceptional Legendrian $A_3$ links shown in Figure~\ref{Figure:link24inot}. Using the trick of Lemma~\ref{Lemma:topological} and the proof of \cite[Theorem 1.2, (c3), (c4)]{go}, we can show that $K_0\cup K_1\cup K_2$ is a topological $A_3$ link. Their rotation numbers and corresponding $d_3$-invariants are as listed.
\end{proof}

(5) Suppose $t_0=3, t_1\geq3$ and $ t_2=2$. 
\begin{lemma}\label{t0=3t1>2t2=2}
If $t_0=3, t_1\geq3, t_2=2$, then there exist $20$ strongly exceptional Legendrian $A_3$ links whose rotation numbers and corresponding $d_3$-invariants $(r_0, r_1, r_2; d_3)$ are
$$(\pm4,  \pm(t_{1}-1),  \pm3;  \frac{1}{2}), (\pm2,  \pm(1-t_{1}),  \pm3; \frac{1}{2}), (\mp2,  \pm(1-t_{1}),  \mp1; \frac{5}{2}), (0,  \pm(1-t_{1}),  \pm1; \frac{5}{2}),$$
$$(\pm4,  \pm(t_{1}+1),  \pm1; \frac{1}{2}), (\pm2,  \pm(t_{1}+1),  \mp1; \frac{1}{2}), (0, \pm(3-t_{1}), \mp1;  \frac{5}{2}), (\pm2, \pm(3-t_{1}), \pm1; \frac{5}{2}),$$
$$(\pm6, \pm(t_{1}+1), \pm3; -\frac{3}{2}), (\pm4, \pm(3-t_{1}), \pm3; \frac{1}{2}). $$
\end{lemma}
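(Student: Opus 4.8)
The plan is to construct all $20$ links and then invoke the upper bound of Lemma~\ref{Lemma:t_1>1t_2>1}; the construction splits into three groups.

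\emph{Sixteen links by Legendrian connected sums.} I would apply Lemma~\ref{Lemma:connected} in two symmetric ways, using $t_0=t'_0+t''_0+1=3$, $r_0=r'_0+r''_0$, and $d_3=d_3(\xi')+d_3(\xi'')+\tfrac12$. First, anchor the sum on the Hopf link $K'_0\cup K_1$ in $(S^3,\xi_{1/2})$ with $(t'_0,r'_0)=(0,\pm1)$ and $r_1=\pm(t_1-1)$, and let $K''_0\cup K_2$ range over the strongly exceptional Hopf links with $t''_0=t_2=2$ from \cite[Theorem 1.2]{go}, namely those with $(r''_0,r_2)=(\pm3,\pm3)$ in $(S^3,\xi_{-1/2})$ and $(r''_0,r_2)=(\pm1,\pm1)$ in $(S^3,\xi_{3/2})$; this produces the eight links $(\pm4,\pm(t_1-1),\pm3;\tfrac12)$, $(\pm2,\pm(1-t_1),\pm3;\tfrac12)$, $(\mp2,\pm(1-t_1),\mp1;\tfrac52)$, $(0,\pm(1-t_1),\pm1;\tfrac52)$. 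Second, anchor on $K'_0\cup K_2$ with $(t'_0,r'_0)=(0,\pm1)$ and $r_2=\pm1$, and let $K''_0\cup K_1$ range over the Hopf links with $t''_0=2$, $t_1\geq3$ from \cite[Theorem 1.2]{go} with $(r''_0,r_1)=(\pm3,\pm(t_1+1))$ in $(S^3,\xi_{-1/2})$ and $(r''_0,r_1)=(\mp1,\pm(t_1-3))$ in $(S^3,\xi_{3/2})$; this produces $(\pm4,\pm(t_1+1),\pm1;\tfrac12)$, $(\pm2,\pm(t_1+1),\mp1;\tfrac12)$, $(0,\pm(3-t_1),\mp1;\tfrac52)$, $(\pm2,\pm(3-t_1),\pm1;\tfrac52)$. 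These sixteen are pairwise distinct by inspection of their $(r_0,r_1,r_2;d_3)$; the other available Hopf-link inputs (e.g.\ $(r''_0,r_1)=(\mp1,\pm(t_1-1))$) only reproduce links already listed, so nothing is double counted.

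\emph{Two links by Lemma~\ref{Lemma:3.2.1}.} Specializing that lemma to $t_0=3$, $t_2=2$, the sign pattern $+--$ gives exceptional links with exterior decoration $\pm(+)((-)(-))(--)$, rotation numbers $(\pm6,\pm(t_1+1),\pm3)$, and $d_3$-invariant equal to $-\tfrac32$; indeed, by Lemma~\ref{Lemma:3.2.1} it is independent of $t_0$, and destabilizing $K_1$ (which preserves the ambient contact $S^3$) identifies it with the value $-\tfrac32$ of the $t_1=2$ link $(\pm6,\pm3,\pm3)$ in Lemma~\ref{t0=3t1=2t2=2}. That these are strongly exceptional follows from Lemma~\ref{Lemma:appro}, since the corresponding $t_1=2$ decoration was shown appropriate tight in Lemma~\ref{t0=3t1=2t2=2}.

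\emph{Two links $(\pm4,\pm(3-t_1),\pm3;\tfrac12)$, and conclusion.} Their exteriors carry the decoration $\pm(+)((-)(+))(--)$ with $L'_1$ a mixed (one positive, one negative) pair, so neither Lemma~\ref{Lemma:3.2.1} nor a connected sum produces them directly. Following the argument used in Lemma~\ref{t0=3t1>2t2=1}, I would embed such an exterior into the appropriate tight contact $\Sigma\times S^1$ with boundary slopes $3,-\tfrac12,-\tfrac12$ and decoration $\pm(+)(-+)(--)$ — one of the fourteen appropriate tight structures identified in Lemma~\ref{t0=3t1=2t2=2} — by attaching to $T_1$ the basic slices $(T^2\times[0,1],-\tfrac1{t_1},-\tfrac1{t_1-1})$, $\ldots$, $(T^2\times[0,1],-\tfrac13,-\tfrac12)$ and applying the Gluing Theorem \cite[Theorem 1.3]{h2}, so appropriate tightness is inherited; the rotation numbers are then read off from the relative Euler class as in Lemma~\ref{Lemma:relEuler1} and Lemma~\ref{Lemma:3.2.1}, and $d_3=\tfrac12$ by destabilizing $K_1$ to the $t_1=2$ link $(\pm4,\pm1,\pm3;\tfrac12)$ of Lemma~\ref{t0=3t1=2t2=2}. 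Counting $16+2+2=20$ and comparing with Lemma~\ref{Lemma:t_1>1t_2>1} finishes the proof. The main obstacle is the bookkeeping: choosing the Hopf-link inputs from \cite{go} so that the sixteen connected-sum links are genuinely distinct and, with the four decorated ones, exhaust the list, while correctly matching each sign pattern to its rotation triple and $d_3$-invariant; the relative Euler class computation for the mixed decoration is the most delicate point.
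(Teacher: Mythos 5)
Your construction follows the paper's proof essentially step for step: the same sixteen links from Lemma~\ref{Lemma:connected} with the same Hopf-link inputs from \cite{go}, the two links $(\pm6,\pm(t_1+1),\pm3)$ from Lemma~\ref{Lemma:3.2.1} together with Lemma~\ref{Lemma:appro}, and the two links $(\pm4,\pm(3-t_1),\pm3)$ via embedding into the slope-$(3,-\tfrac12,-\tfrac12)$ structure with decoration $\pm(+)(-+)(--)$ by attaching basic slices and invoking the Gluing Theorem, capped off by the upper bound of Lemma~\ref{Lemma:t_1>1t_2>1}.

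Two details need repair. First, for the strong exceptionality of the $(\pm6,\pm(t_1+1),\pm3)$ links you invoke Lemma~\ref{Lemma:appro} against the $t_1=2$ structure of Lemma~\ref{t0=3t1=2t2=2}; that lemma requires the two contact structures to have the \emph{same} boundary slopes $t_0$, $-\frac{1}{t_1}$, $-\frac{1}{t_2}$, so comparing across different $t_1$ is not licensed. The correct comparison (as in the paper) is with a same-slope appropriate tight exterior whose decoration differs only in signs, e.g.\ the exterior of one of the sixteen connected-sum links with the same $t_0=3$, $t_1\geq3$, $t_2=2$ that you have just constructed. Second, your $d_3$ arguments via ``destabilizing $K_1$'' run in the wrong direction: passing from $t_1\geq3$ down to $t_1=2$ is a stabilization (destabilization raises the Thurston--Bennequin invariant), and identifying the stabilized link's ambient sphere with that of the specific $t_1=2$ link still needs the exterior/decoration bookkeeping you gloss over. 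The cleaner route, implicit in the paper, is to use the last statement of Lemma~\ref{Lemma:3.2.1} ($d_3$ independent of $t_0$ for fixed $t_1,t_2$, via destabilization at $K_0$) together with the explicit surgery-diagram computations for $t_0\geq4$ in Lemma~\ref{t0>3t1>2t2=2}, which give $-\tfrac32$ and $\tfrac12$ for the relevant decorations. With these adjustments your argument coincides with the paper's.
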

\begin{proof}

By \cite[Theorem 1.2, (d), (c2)]{go}, there are two Legendrian Hopf links $K'_0\cup K_1$  in $(S^3, \xi_{\frac{1}{2}})$ with $(t'_0, r'_0)=(0, \pm1), t_1\geq3, r_{1}=\pm(t_{1}-1)$, there are two Legendrian Hopf links $K''_0\cup K_2$ in $(S^3, \xi_{-\frac{1}{2}})$ with $(t''_0, r''_0)=(t_2, r_2)=(2, \pm3)$, and two Legendrian Hopf links $K''_0\cup K_2$ in $(S^3, \xi_{\frac{3}{2}})$ with $(t''_0, r''_0)=(t_1, r_2)=(2, \pm1)$. By Lemma~\ref{Lemma:connected}, we can obtain $8$ strongly exceptional Legendrian $A_3$ links whose rotation numbers and $d_3$-invariants $(r_0, r_1, r_2; d_3)$ are
$(\pm4,  \pm(t_{1}-1),  \pm3;  \frac{1}{2}), $ $ (\pm2,  \pm(1-t_{1}),  \pm3; \frac{1}{2}), $ $(\mp2,  \pm(1-t_{1}),  \mp1; \frac{5}{2}) $ and  $(0,  \pm(1-t_{1}),  \pm1; \frac{5}{2}).$

By \cite[Theorem 1.2, (c3), (d)]{go}, there are two Legendrian Hopf links $K'_0\cup K_1$ in  $(S^3, \xi_{-\frac{1}{2}})$ with $(t'_0, r'_0)=(2,\pm3), t_1\geq3, r_{1}=\pm(t_{1}+1)$, 
two Legendrian Hopf links $K'_0\cup K_1$ in  $(S^3, \xi_{\frac{3}{2}})$ with $(t'_0, r'_0)=(2,\mp1), t_1\geq3, r_{1}=\pm(t_{1}-3)$, and two Legendrian Hopf links $K''_0\cup K_2$ in $(S^3, \xi_{\frac{1}{2}})$ with $(t''_0, r''_0)=(0,\pm1), (t_2, r_2)=(2, \pm1)$. By Lemma~\ref{Lemma:connected}, we can obtain strongly exceptional Legendrian $A_3$ links with $t_0=3, t_1\geq3, t_2=2$. Then there are $8$ strongly exceptional Legendrian $A_3$ links whose rotation numbers and $d_3$-invariants are
$(\pm4,  \pm(t_{1}+1),  \pm1; \frac{1}{2}), $ $ (\pm2,  \pm(t_{1}+1),  \mp1; \frac{1}{2}), $ $(0, \pm(3-t_{1}), \mp1;  \frac{5}{2}) $ and  $(\pm2, \pm(3-t_{1}), \pm1; \frac{5}{2}).$

By Lemma~\ref{Lemma:3.2.1} and Lemma~\ref{Lemma:appro}, there are $2$ strongly exceptional Legendrian $A_3$ links in $(S^3, \xi_{-\frac{3}{2}})$  whose rotation numbers $(r_0, r_1, r_2)$ are $(\pm6, \pm(t_{1}+1), \pm3).$
The decorations of their exteriors are $\pm(+)((-)(-))(--)$.

There are $2$ strongly exceptional Legendrian $A_3$ links in $(S^3, \xi_{\frac{1}{2}})$  whose rotation numbers $(r_0, r_1, r_2)$ are
$(\pm4, \pm(3-t_{1}), \pm3).$
The decorations of their exteriors are $\pm(+)((-)(+))(--)$. These exteriors are appropriate tight since they can be embedded into an appropriate  tight contact $\Sigma\times S^1$ with boundary slopes $3, -\frac{1}{2}, -\frac{1}{2}$ and decorations $\pm(+)(-+)(--)$ by adding  basic slices $(T^2\times[0,1], -\frac{1}{t_{1}}, -\frac{1}{t_{1}-1})$, $\cdots$, $(T^2\times[0,1], -\frac{1}{3}, -\frac{1}{2})$ to the boundary $T_1$. 
\end{proof}

So there are exactly $20$ strongly exceptional Legendrian $A_3$ links with $t_{0}=3, t_{1}\geq3, t_{2}=2$. As a corollary, the $20$ contact structures on $\Sigma\times S^1$ with boundary slopes $s_{0}=3, s_{1}=-\frac{1}{t_1}, s_{2}=-\frac{1}{2}$ listed in Lemma~\ref{Lemma:t_1>1t_2>1}  are all appropriate tight.

(6) Suppose $t_0=2$, $t_1\geq3$ and $t_2=2$. 
\begin{lemma}\label{t0=2t1>2t2=2}
If $t_0=2$, $t_1\geq3$ and $t_2=2$, then there exist $16$ strongly exceptional Legendrian $A_3$ links whose rotation numbers and corresponding $d_3$-invariants $(r_0, r_1, r_2; d_3)$ are $$(\pm3,  \pm(t_{1}+1),  \pm1; \frac{1}{2}), (\pm1,  \pm(t_{1}+1),  \mp1; \frac{1}{2}),(\mp1,  \pm(3-t_{1}),  \mp1;  \frac{5}{2}), (\pm1,  \pm(3-t_{1}),  \pm1; \frac{5}{2}), $$  $$  (\pm1,  \pm(1-t_{1}),  \pm3; \frac{1}{2}), (\pm3,  \pm(t_{1}-1),  \pm3;  \frac{1}{2}), (\pm5, \pm(t_{1}+1), \pm3; -\frac{3}{2}), (\pm3, \pm(3-t_{1}), \pm3; \frac{1}{2}).$$

\end{lemma}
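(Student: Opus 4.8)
The plan follows the pattern of the preceding cases (compare the proofs of Lemmas~\ref{t0=3t1>2t2=2} and \ref{t0>3t1>2t2=2}). The upper bound $16$ is Lemma~\ref{Lemma:t_1>1t_2>1}, so it remains to realize $16$ strongly exceptional Legendrian $A_3$ links with the listed invariants; since they all have $(t_0,t_1,t_2)=(2,t_1,2)$, they are pairwise inequivalent as soon as their triples $(r_0,r_1,r_2)$ are distinct.

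Most of these links I would build by Legendrian connected sums via Lemma~\ref{Lemma:connected}, always writing $K_0=K'_0\#K''_0$ with $t'_0+t''_0+1=2$ (hence $t'_0=0$, $t''_0=1$) and feeding in the strongly exceptional Legendrian Hopf links supplied by \cite[Theorem~1.2]{go} --- the same ones already used in Lemmas~\ref{t0=3t1>2t2=1} and \ref{t0=3t1>2t2=2}. Pairing $K'_0\cup K_1$ in $(S^3,\xi_{\frac12})$ with $(t'_0,r'_0)=(0,\pm1)$, $t_1\ge3$, $r_1=\pm(t_1-1)$ against $K''_0\cup K_2$ in $(S^3,\xi_{-\frac12})$ with $(t''_0,r''_0)=(1,\pm2)$, $(t_2,r_2)=(2,\pm3)$ should give the four links $(\pm3,\pm(t_1-1),\pm3;\tfrac12)$ and $(\pm1,\pm(1-t_1),\pm3;\tfrac12)$, using that rotation numbers add, $r_0=r'_0+r''_0$, and $d_3=\tfrac12+(-\tfrac12)+\tfrac12=\tfrac12$. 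Interchanging the roles of $K_1$ and $K_2$ in Lemma~\ref{Lemma:connected} and pairing $K'_0\cup K_2$ in $(S^3,\xi_{\frac12})$ with $(t'_0,r'_0)=(0,\pm1)$, $(t_2,r_2)=(2,\pm1)$ against $K''_0\cup K_1$ with $(t''_0,r''_0)=(1,\pm2)$, $t_1\ge3$, $r_1=\pm(t_1+1)$ in $(S^3,\xi_{-\frac12})$ should give $(\pm3,\pm(t_1+1),\pm1;\tfrac12)$ and $(\pm1,\pm(t_1+1),\mp1;\tfrac12)$, and pairing the same $K'_0\cup K_2$ against $K''_0\cup K_1$ with $(t''_0,r''_0)=(1,0)$, $t_1\ge3$, $r_1=\pm(t_1-3)$ in $(S^3,\xi_{\frac32})$ should give $(\mp1,\pm(3-t_1),\mp1;\tfrac52)$ and $(\pm1,\pm(3-t_1),\pm1;\tfrac52)$ (now $d_3=\tfrac12+\tfrac32+\tfrac12=\tfrac52$). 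Strong exceptionality of all of these is immediate from Lemma~\ref{Lemma:connected}.

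The remaining links are those whose exteriors are specified by decorations. The pair with $(r_0,r_1,r_2)=(\pm5,\pm(t_1+1),\pm3)$ and decoration $\pm(+)((-)(-))(--)$ I would produce from Lemma~\ref{Lemma:3.2.1} --- whose decoration $\pm(+--)$ gives exactly $r_0=\pm(t_0+3)=\pm5$, $r_1=\pm(t_1+1)$, $r_2=\pm(t_2+1)=\pm3$ --- together with Lemma~\ref{Lemma:appro}, noting that the $d_3$-invariant there is independent of $t_0$ and so equals $-\tfrac32$ as computed in the cases $t_0=3,4$. The pair with $(r_0,r_1,r_2)=(\pm3,\pm(3-t_1),\pm3)$ and decoration $\pm(+)((-)(+))(--)$ I would produce from Lemma~\ref{Lemma:construction}; its exterior is appropriate tight because, after gluing the basic slices $(T^2\times[0,1],-\tfrac1{t_1},-\tfrac1{t_1-1}),\dots,(T^2\times[0,1],-\tfrac13,-\tfrac12)$ onto $T_1$ and applying the Gluing Theorem \cite[Theorem~1.3]{h2}, it embeds into the appropriate tight contact $\Sigma\times S^1$ with boundary slopes $2,-\tfrac12,-\tfrac12$ and decoration $\pm(+)(-+)(--)$ provided by Lemma~\ref{t0=2t1=2t2=2} (equivalently, by Lemma~\ref{Lemma:appro}). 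Throughout, the rotation numbers are computed from relative Euler classes as in Lemma~\ref{Lemma:relEuler1} and \cite[Section~2.5]{emm}, and the $d_3$-invariants of the decorated pieces via \cite{DGS}.

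The step I expect to be most delicate is the sign and count bookkeeping: one must check that the three connected-sum families land precisely on the twelve tuples claimed --- the simultaneous presence of $\pm(t_1-1)$, $\pm(1-t_1)$, $\pm(3-t_1)$ and $\pm(t_1+1)$, together with the sign correlations forced on the Hopf-link data by \cite[Theorem~1.2]{go}, makes this error-prone --- and that, after adjoining the four decorated links, exactly the $16$ of the upper bound are obtained with no further coincidences. Beyond this, no geometric ingredient is needed that is not already present in the $t_0=3$ and $t_0=4$ cases; in particular the appropriate-tightness argument for the decoration $\pm(+)((-)(+))(--)$ is the verbatim analogue of the one in the proof of Lemma~\ref{t0=3t1>2t2=2}.
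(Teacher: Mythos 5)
Your proposal is correct and follows essentially the same route as the paper's own proof: the twelve connected-sum links come from exactly the same Hopf-link inputs of \cite[Theorem 1.2 (c2), (c3), (d)]{go} fed into Lemma~\ref{Lemma:connected} (with the roles of $K_1$ and $K_2$ exchanged where needed), the pair $(\pm5,\pm(t_1+1),\pm3;-\tfrac32)$ comes from Lemma~\ref{Lemma:3.2.1} together with Lemma~\ref{Lemma:appro}, and the pair $(\pm3,\pm(3-t_1),\pm3;\tfrac12)$ is handled by the identical embedding of the decoration $\pm(+)((-)(+))(--)$ into the appropriate tight $\Sigma\times S^1$ with slopes $2,-\tfrac12,-\tfrac12$ via the Gluing Theorem. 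The only caveat, which your construction inherits verbatim from the paper (note the paper's own ``two (one if $t_1=3$)''), is the degenerate case $t_1=3$, where $\pm(3-t_1)=0$ makes two of the listed tuples coincide and the bookkeeping you flag as delicate would need separate attention.
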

\begin{proof}
By \cite[Theorem 1.2, (c2), (c3), (d)]{go}, there are two  Legendrian Hopf links $K'_0\cup K_1$ in  $(S^3, \xi_{-\frac{1}{2}})$  with $(t'_0, r'_0)=(1, \pm2), t_1\geq3, r_{1}=\pm(t_{1}+1)$, two (one if $t_1=3$) Legendrian Hopf links $K'_0\cup K_1$ in  $(S^3, \xi_{\frac{3}{2}})$  with $(t'_0, r'_0)=(1, 0), t_1\geq3, r_{1}=\pm(t_{1}-3)$,  and two Legendrian Hopf links $K''_0\cup K_2$ in $(S^3, \xi_{\frac{1}{2}})$ with $(t''_0, r''_0)=(0, \pm1), (t_2, r_{2})=(2, \pm1)$. By Lemma~\ref{Lemma:connected}, we can obtain strongly exceptional Legendrian $A_3$ links with $t_0=2, t_1\geq3, t_2=2$. Then there are $8$ strongly exceptional Legendrian $A_3$ links  whose rotation numbers and corresponding $d_3$-invariants $(r_0, r_1, r_2; d_3)$ are $(\pm3,  \pm(t_{1}+1),  \pm1; \frac{1}{2}), $ $(\pm1,  \pm(t_{1}+1),  \mp1; \frac{1}{2}), $ $(\mp1,  \pm(3-t_{1}),  \mp1;  \frac{5}{2})$ and $(\pm1,  \pm(3-t_{1}),  \pm1; \frac{5}{2}).$

By \cite[Theorem 1.2, (d), (c2)]{go}, there are two Legendrian Hopf links $K'_0\cup K_1$ in $(S^3, \xi_{\frac{1}{2}})$ with $(t'_0, r'_0)=(0, \pm1), t_1\geq3, r_{1}=\pm(t_{1}-1)$, and two Legendrian Hopf links $K''_0\cup K_2$ in  $(S^3, \xi_{-\frac{1}{2}})$ with $(t''_0, r''_0)=(1, \pm2), (t_2, r_{2})=(2, \pm3)$. By Lemma~\ref{Lemma:connected}, we can obtain strongly exceptional Legendrian $A_3$ links with $t_0=2, t_1\geq3, t_2=2$. Then there are $4$ strongly exceptional Legendrian $A_3$ links  whose rotation numbers and corresponding $d_3$-invariants $(r_0, r_1, r_2; d_3)$ are
$(\pm1,  \pm(1-t_{1}),  \pm3; \frac{1}{2}) $ and $(\pm3,  \pm(t_{1}-1), \pm3;  \frac{1}{2}).$

By Lemma~\ref{Lemma:3.2.1} and Lemma~\ref{Lemma:appro}, there are $2$ strongly exceptional Legendrian $A_3$ links in $(S^3, \xi_{-\frac{3}{2}})$  whose rotation numbers $(r_0, r_1, r_2)$ are  $(\pm5, \pm(t_{1}+1), \pm3).$
The decorations of their exteriors are $\pm(+)((-)(-))(--)$.

There are $2$ strongly exceptional Legendrian $A_3$ links in $(S^3, \xi_{\frac{1}{2}})$  whose rotation numbers $(r_0, r_1, r_2)$ are
$(\pm3, \pm(3-t_{1}), \pm3).$
The decorations of their exteriors are $\pm(+)((-)(+))(--)$. These exteriors are appropriate tight since they can be embedded into an appropriate tight contact $\Sigma\times S^1$ with boundary slopes $2, -\frac{1}{2}, -\frac{1}{2}$ and decorations $\pm(+)(-+)(--)$ by adding  basic slices $(T^2\times[0,1], -\frac{1}{t_{1}}, -\frac{1}{t_{1}-1})$, $\cdots$, $(T^2\times[0,1], -\frac{1}{3}, -\frac{1}{2})$ to the boundary $T_1$. 
\end{proof}
So there are exactly $16$ strongly exceptional Legendrian $A_3$ links with $t_{0}=2, t_{1}\geq3, t_{2}=2$. As a corollary, the $16$ contact structures on $\Sigma\times S^1$ with boundary slopes $s_{0}=3, s_{1}=-\frac{1}{t_1}, s_{2}=-\frac{1}{2}$ listed in Lemma~\ref{Lemma:t_1>1t_2>1}  are all appropriate tight.

\begin{figure}[htb]
\begin{overpic}
[width=1 \textwidth]
{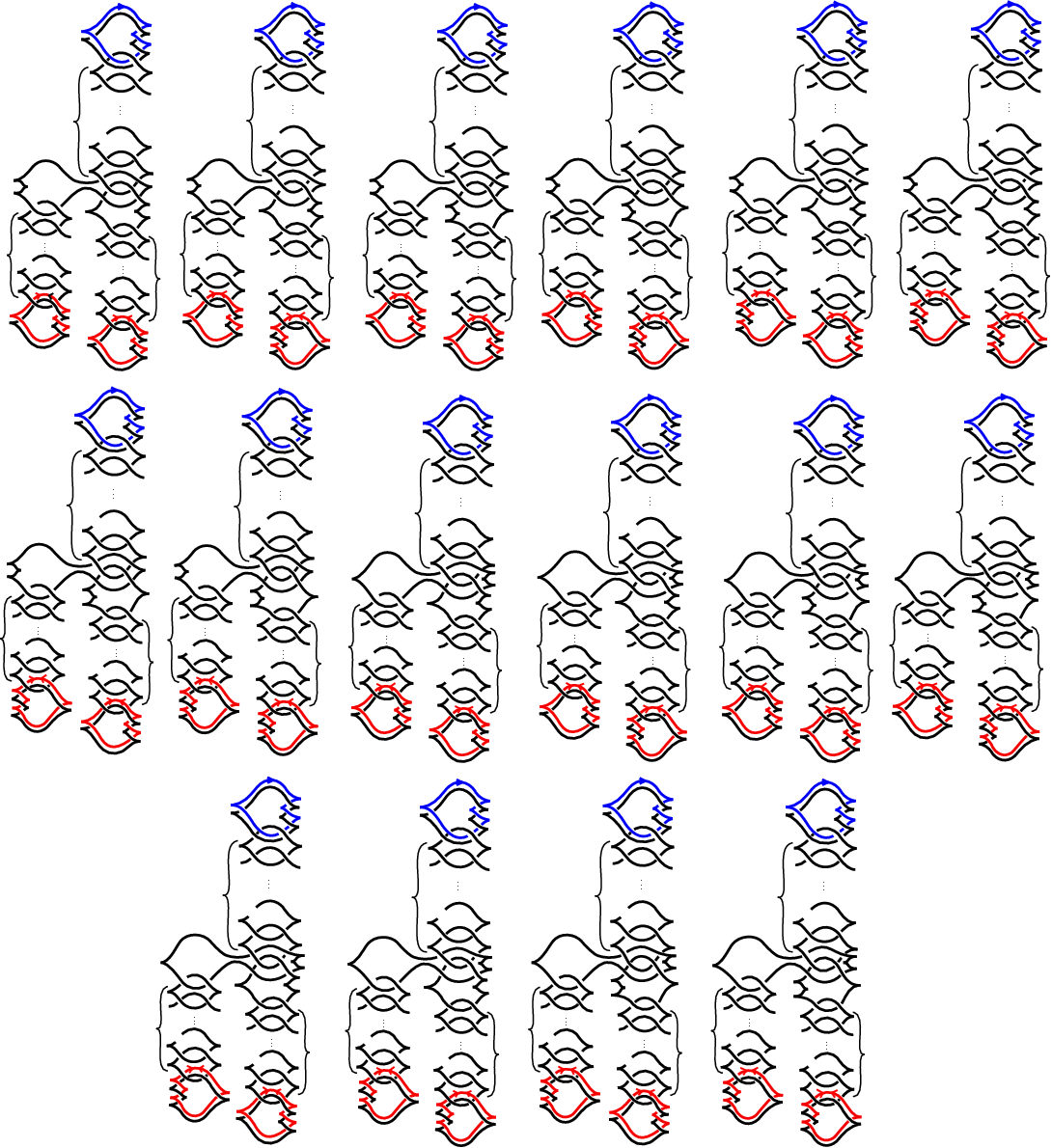}
\put(-1, 435){$t_{0}-4$}
\put(-30, 378){$t_{2}-3$}
\put(63, 435){$t_{1}-3$}
\put(70, 370){\vector(0, 1){62}} 
\put(-11, 353){$K_2$}
\put(64, 342){$K_1$}
\put(28, 483){$K_0$}
\end{overpic}
\caption{$t_0\geq4$, $t_{1}\geq 3$, $t_{2}\geq 3$. If $t_{0}+t_{i}$ is odd, then $K_{0}$ and $K_{i}$ bear the same orientation, $i=1,2$, if $t_{0}+t_{i}$ is even, then the opposite one.} 
\label{Figure:link30inot}
\end{figure}

(7) Suppose $t_0\geq4$, $t_1\geq3$ and $t_2\geq3$.

\begin{lemma}\label{t0>3t1>2t2>2}
If $t_0\geq4$, $t_1\geq3$ and $t_2\geq3$, then there exist $32$ strongly exceptional Legendrian $A_3$ links whose rotation numbers and corresponding $d_3$-invariants $(r_0, r_1, r_2; d_3)$ are 
$$(\pm(t_{0}+1), \pm(t_{1}-1), \pm(t_{2}+1);\frac{1}{2}), (\pm(t_{0}-1), \pm(1-t_{1}), \pm(t_{2}+1); \frac{1}{2}),$$
$$(\pm(t_{0}+3), \pm(t_{1}+1), \pm(t_{2}+1); -\frac{3}{2}), (\pm(t_{0}+1), \pm(3-t_{1}), \pm(t_{2}+1); \frac{1}{2}),$$
$$(\pm(t_{0}-1), \pm(t_{1}-1), \pm(3-t_{2}); \frac{5}{2}), (\pm(t_{0}-3), \pm(1-t_{1}), \pm(3-t_{2}); \frac{5}{2}),$$
$$(\pm(t_{0}+1), \pm(t_{1}+1), \pm(3-t_{2}); \frac{1}{2}), (\pm(t_{0}-1), \pm(3-t_{1}), \pm(3-t_{2}); \frac{5}{2}),$$
$$(\pm(t_{0}-1), \pm(t_{1}-1), \pm(t_{2}-1);  \frac{5}{2}), (\pm(t_{0}-3), \pm(1-t_{1}), \pm(t_{2}-1);  \frac{5}{2}),$$
$$(\pm(t_{0}+1), \pm(t_{1}+1), \pm(t_{2}-1);  \frac{1}{2}), (\pm(t_{0}-1), \pm(3-t_{1}), \pm(t_{2}-1);  \frac{5}{2}),$$
$$(\pm(t_{0}-3), \pm(t_{1}-1), \pm(1-t_{2});  \frac{5}{2}),(\pm(t_{0}-5), \pm(1-t_{1}), \pm(1-t_{2});  \frac{5}{2}),$$
$$(\pm(t_{0}-1), \pm(t_{1}+1), \pm(1-t_{2});  \frac{1}{2}), (\pm(t_{0}-3), \pm(3-t_{1}), \pm(1-t_{2});  \frac{5}{2}).$$
\end{lemma}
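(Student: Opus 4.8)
The plan is to exhibit all $32$ links simultaneously inside a single contact surgery diagram, following the template already used in this subsection for $t_1=t_2=2$ (Lemma~\ref{t0>3t1=2t2=2}) and for $t_1\ge 3,\ t_2=2$ (Lemma~\ref{t0>3t1>2t2=2}). Concretely, I would take the Legendrian $A_3$ link $K_0\cup K_1\cup K_2$ drawn in Figure~\ref{Figure:link30inot}: here $K_0$ carries $t_0-4$ stabilizations, $K_1$ carries $t_1-3$ stabilizations, and $K_2$ carries $t_2-3$ stabilizations, while the remaining surgery curves are Legendrian push-offs of the $K_i$ (with contact coefficient $+1$) together with $(-1)$-framed Legendrian unknots. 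Distributing the stabilization signs in the $16$ allowed ways and taking the overall mirror of each configuration produces the $32$ candidate links.

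First I would verify that the underlying smooth link of this diagram is the $A_3$ link. This is the step with real content. One runs the Kirby/Rolfsen moves of Lemma~\ref{Lemma:topological} to collapse each of the three arms of the diagram and to cancel the $(+1)$-framed push-offs against the adjacent $(-1)$-framed unknots, just as Geiges--Onaran do for the two-component Hopf link in the proof of \cite[Theorem~1.2, (c3), (c4)]{go}. The new feature here is that \emph{two} of the three branches are genuinely long (both the $t_1-3$ and the $t_2-3$ chains are present), so the calculus of \cite{go} must be carried out on these two arms in parallel, and the induced orientations --- which flip according to the parities of $t_0+t_1$ and $t_0+t_2$, as recorded in the caption of Figure~\ref{Figure:link30inot} --- have to be tracked throughout. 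I expect this topological identification, together with the accompanying orientation bookkeeping, to be the main obstacle.

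Granting this, strong exceptionality is automatic: as explained at the start of Section~3, any exceptional Legendrian $A_3$ link constructed by such a surgery diagram is strongly exceptional, since performing the contact $(-1)$-surgeries cancelling the $(+1)$-framed push-offs yields a tight closed contact $3$-manifold (using \cite{w}), whereas a Giroux torsion domain in the exterior would produce an overtwisted disk after surgery.

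Finally I would read off the invariants. The Thurston--Bennequin invariants and rotation numbers follow from \cite[Lemma~6.6]{loss} applied to the diagram, and the $d_3$-invariant of each ambient contact $S^3$ from the algorithm of \cite{DGS}; a representative instance of precisely this computation is carried out in detail in the last section of the paper. This produces the list of $32$ triples $(r_0,r_1,r_2;d_3)$ in the statement. Since those $32$ triples are pairwise distinct, the corresponding links are pairwise non-coarsely-equivalent, and because Lemma~\ref{Lemma:t_1>1t_2>1} bounds the number of strongly exceptional Legendrian $A_3$ links by $32$ in the range $t_0\ge 4,\ t_1\ge 3,\ t_2\ge 3$, the count $32$ is exact and the lemma follows.
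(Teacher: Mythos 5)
Your proposal follows essentially the same route as the paper: the paper likewise realizes the $32$ links by the contact surgery diagrams of Figure~\ref{Figure:link30inot}, identifies the smooth link type using the trick of Lemma~\ref{Lemma:topological} together with the proof of \cite[Theorem 1.2, (c4)]{go}, obtains strong exceptionality from the surgery presentation, computes $(r_0,r_1,r_2;d_3)$ via \cite{loss} and \cite{DGS} (a sample computation appears in the last section), and the exact count then follows from the upper bound of Lemma~\ref{Lemma:t_1>1t_2>1}. The only cosmetic discrepancy is your description of the boxes labelled $t_0-4$, $t_1-3$, $t_2-3$ as stabilizations of the $K_i$ themselves rather than chains of auxiliary $(-1)$-surgery curves, which does not affect the argument.
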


\begin{proof}
If $t_0\geq4$, $t_1\geq3$ and $t_2\geq3$, then there are exactly  $32$ strongly exceptional Legendrian $A_3$ links shown in Figure~\ref{Figure:link30inot}. Using the trick of Lemma~\ref{Lemma:topological} and the proof of \cite[Theorem 1.2, (c4)]{go}, we can show that $K_0\cup K_1\cup K_2$ is a topological $A_3$ link. Their rotation numbers and corresponding $d_3$-invariants are as listed.
\end{proof}

(8) Suppose $t_0=3$, $t_1\geq3$ and $t_2\geq3$. 

\begin{lemma}\label{t0=3t1>2t2>2}
If $t_0=3$, $t_1\geq3$ and $t_2\geq3$, then there exist  $28$ strongly exceptional Legendrian $A_3$ links whose rotation numbers and corresponding $d_3$-invariants $(r_0, r_1, r_2; d_3)$ are 
$$(\pm4, \pm(t_{1}+1), \pm(t_{2}-1); \frac{1}{2}), (\pm4, \pm(t_{1}-1), \pm(t_{2}+1); \frac{1}{2}),$$
$$(\pm2, \pm(t_{1}+1), \pm(1-t_{2}); \frac{1}{2}), (\pm2, \pm(1-t_{1}), \pm(t_{2}+1); \frac{1}{2}),$$
$$(\mp2, \pm(1-t_{1}), \pm(1-t_{2}); \frac{5}{2}), (0, \pm(t_{1}-1), \pm(1-t_{2}); \frac{5}{2}),$$
$$(0, \pm(3-t_{1}), \pm(1-t_{2}); \frac{5}{2}), (0, \pm(1-t_{1}), \pm(3-t_{2}); \frac{5}{2}),$$
$$(\pm2, \pm(3-t_{1}), \pm(t_{2}-1); \frac{5}{2}), (\pm2, \pm(t_{1}-1), \pm(3-t_{2}); \frac{5}{2}),$$
$$(\pm6, \pm(t_{1}+1), \pm(t_{2}+1); -\frac{3}{2}), (\pm2, \pm(3-t_{1}), \pm(3-t_{2}); \frac{5}{2}),$$
$$(\pm4, \pm(t_{1}+1), \pm(3-t_{2}); \frac{1}{2}), (\pm4, \pm(3-t_{1}), \pm(t_{2}+1); \frac{1}{2}).$$
\end{lemma}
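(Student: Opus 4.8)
The upper bound of $28$ is provided by Lemma~\ref{Lemma:t_1>1t_2>1}, so the task is to exhibit $28$ pairwise distinct strongly exceptional Legendrian $A_3$ links realizing the $14$ listed triples of invariants. I would follow the template used for the two closely related cases, namely $t_2=2$ (Lemma~\ref{t0=3t1>2t2=2}) and $t_0\geq 4$ (Lemma~\ref{t0>3t1>2t2>2}). Since $t_0=3$ is too small for all the links to come from a single surgery diagram (as happens when $t_0\geq4$), the plan is to build the bulk of them as Legendrian connected sums of strongly exceptional Hopf links from \cite[Theorem~1.2]{go} via Lemma~\ref{Lemma:connected}, and to obtain the rest from Lemma~\ref{Lemma:3.2.1}, Lemma~\ref{Lemma:appro}, and the Gluing Theorem \cite[Theorem~1.3]{h2}.

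For the connected sums: because $t_1,t_2\geq3$, the only Hopf link admissible on the restricted factor of Lemma~\ref{Lemma:connected} is the one with $(t'_0,r'_0)=(0,\pm1)$ and $r_1=\pm(t_1-1)$ (or, after exchanging $K_1$ and $K_2$, the one with $r_2=\pm(t_2-1)$); since $t_0=t'_0+t''_0+1=3$ this forces the companion knot $K''_0$ to have $t''_0=2$. I would then, for each strongly exceptional Hopf link $K''_0\cup K_j$ with $t''_0=2$ and $t_j\geq3$ classified in \cite[Theorem~1.2]{go} (those in $(S^3,\xi_{-1/2})$ with $r''_0=\pm3$, $r_j=\pm(t_j+1)$, and those in $(S^3,\xi_{3/2})$ with $r''_0=\pm1$, $r_j=\pm(t_j-1)$ or with $r''_0=\mp1$, $r_j=\pm(t_j-3)$), form the connected sum $(K'_0\#K''_0)\cup K_1\cup K_2$ and read off $t_0=3$, $r_0=r'_0+r''_0$, the remaining two rotation numbers, and $d_3=d_3(\xi')+d_3(\xi'')+\tfrac12$. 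Carrying this out with each of $K_1$ and $K_2$ playing the role of the restricted meridian and discarding the evident coincidences accounts for the links with $d_3\in\{\tfrac12,\tfrac52\}$ in the statement.

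The two links with decoration $\pm(+)((-)(-))((-)(-))$ and invariants $(\pm6,\pm(t_1+1),\pm(t_2+1);-\tfrac32)$ come directly from Lemma~\ref{Lemma:3.2.1} together with Lemma~\ref{Lemma:appro}: the former supplies the $\pm(+)(--)(--)$ exceptional $A_3$ links, and the latter allows one to switch basic-slice signs while preserving appropriate tightness. The links carrying a mixed decoration such as $\pm(+)((-)(+))((-)(-))$, whose rotation numbers involve the terms $\pm(3-t_1)$ or $\pm(3-t_2)$, are handled by the embedding argument of Lemma~\ref{t0=3t1>2t2=2}: gluing the basic slices $(T^2\times[0,1],-\tfrac1{t_1},-\tfrac1{t_1-1}),\dots,(T^2\times[0,1],-\tfrac13,-\tfrac12)$ onto $T_1$ (and, when needed, the analogous slices onto $T_2$) via \cite[Theorem~1.3]{h2} embeds such an exterior into an appropriate tight $\Sigma\times S^1$ with boundary slopes $3,-\tfrac12,-\tfrac12$ and a decoration of the simpler form $\pm(+)(-+)(--)$, whose appropriate tightness has already been established. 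Throughout, the rotation numbers are computed from the decorations by the relative-Euler-class recipe of \cite[Section~2.5]{emm} exactly as in Lemma~\ref{Lemma:3.2.1}, and the $d_3$-invariants follow either from the connected-sum additivity formula above or from the $t_0$-independence of $d_3$ recorded in Lemma~\ref{Lemma:3.2.1}.

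I expect the geometry of each individual construction to be routine; the real work, and the main obstacle, is the bookkeeping. One must check that the connected-sum family, the $(S^3,\xi_{-3/2})$ pair, and the links obtained via the Gluing Theorem together produce exactly $28$ mutually distinct links whose invariants match the $14$ listed triples bijectively: this means tracking the sign correlations (the $\pm$ versus $\mp$ in each triple) through $r_0=r'_0+r''_0$, removing the coincidences between the two connected-sum families, and confirming (as was needed for the $t_0=t_1=3$ coincidence in Lemma~\ref{Lemma:t_1>1t_2>1}) that no two mixed decorations silently define the same contact structure, so that the bound $28$ is attained exactly.
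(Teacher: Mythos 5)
Your proposal follows essentially the same route as the paper's proof: the bulk (20 links, triples with $d_3\in\{\frac12,\frac52\}$ listed first) comes from Lemma~\ref{Lemma:connected} connected sums of the $(t'_0,r'_0)=(0,\pm1)$ Hopf link with the $t''_0=2$ Hopf links of \cite[Theorem 1.2]{go}, with $K_1$ and $K_2$ exchanged and coincidences discarded; the pair $(\pm6,\pm(t_1+1),\pm(t_2+1);-\frac32)$ comes from Lemmas~\ref{Lemma:3.2.1} and~\ref{Lemma:appro}; and the six mixed-decoration links are shown appropriate tight by the Gluing Theorem embedding, with rotation numbers from the relative Euler class. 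The only cosmetic deviations are that your connected sums do not quite account for \emph{all} links with $d_3\in\{\frac12,\frac52\}$ (the six gluing-theorem links also have these $d_3$-values), and that you embed into slopes $3,-\frac12,-\frac12$ whereas the paper embeds into $3,-\frac{1}{t_1},-\frac12$ (appropriate tight by Lemma~\ref{t0=3t1>2t2=2}), both of which are legitimate.
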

\begin{proof}

By \cite[Theorem 1.2, (c3), (d)]{go}, there are two Legendrian Hopf links $K'_0\cup K_1$ in  $(S^3, \xi_{-\frac{1}{2}})$ with $(t'_0, r'_0)=(2,\pm3), t_1\geq3, r_{1}=\pm(t_{1}+1)$, two Legendrian Hopf links $K'_0\cup K_1$ in  $(S^3, \xi_{\frac{3}{2}})$ with $(t'_0, r'_0)=(2,\pm1), t_1\geq3, r_{1}=\pm(t_{1}-1)$, two Legendrian Hopf links $K'_0\cup K_1$ in  $(S^3, \xi_{\frac{3}{2}})$ with $(t'_0, r'_0)=(2,\mp1), t_1\geq3, r_{1}=\pm(t_{1}-3)$,  and two Legendrian Hopf links $K''_0\cup K_2$ in  $(S^3, \xi_{\frac{1}{2}})$ with $(t''_0, r''_0)=(0, \pm1), t_2\geq3, r_{2}=\pm(t_{2}-1)$. By Lemma~\ref{Lemma:connected}, we can obtain strongly exceptional Legendrian $A_3$ links with $t_0=3, t_1\geq3, t_2\geq3$. Then, after exchanging the roles of $K_1$ and $K_2$, there are $20$ strongly exceptional Legendrian $A_3$ links  whose rotation numbers and corresponding $d_3$-invariants $(r_0, r_1, r_2; d_3)$  are
$$(\pm4, \pm(t_{1}+1), \pm(t_{2}-1); \frac{1}{2}), (\pm4, \pm(t_{1}-1), \pm(t_{2}+1); \frac{1}{2}),$$
$$(\pm2, \pm(t_{1}+1), \pm(1-t_{2}); \frac{1}{2}), (\pm2, \pm(1-t_{1}), \pm(t_{2}+1); \frac{1}{2}),$$
$$(\mp2, \pm(1-t_{1}), \pm(1-t_{2}); \frac{5}{2}), (0, \pm(t_{1}-1), \pm(1-t_{2}); \frac{5}{2}),$$
$$(0, \pm(3-t_{1}), \pm(1-t_{2}); \frac{5}{2}), (0, \pm(1-t_{1}), \pm(3-t_{2}); \frac{5}{2}),$$
$$(\pm2, \pm(3-t_{1}), \pm(t_{2}-1); \frac{5}{2}), (\pm2, \pm(t_{1}-1), \pm(3-t_{2}); \frac{5}{2}).$$


By Lemma~\ref{Lemma:3.2.1} and Lemma~\ref{Lemma:appro}, there are $2$ strongly exceptional Legendrian $A_3$ links in $(S^3, \xi_{-\frac{3}{2}})$  whose rotation numbers $(r_0, r_1, r_2)$  are $(\pm6, \pm(t_{1}+1), \pm(t_{2}+1)).$
The decorations of their exteriors are $\pm(+)((-)(-))((-)(-))$.

There are other $6$ more strongly exceptional Legendrian $A_3$ links  whose rotation numbers and corresponding $d_3$-invariants $(r_0, r_1, r_2; d_3)$  are
$(\pm2, \pm(3-t_{1}), \pm(3-t_{2}); \frac{5}{2}),$  $(\pm4, \pm(t_{1}+1), \pm(3-t_{2}); \frac{1}{2})$
and $(\pm4, \pm(3-t_{1}), \pm(t_{2}+1); \frac{1}{2}).$
The decorations of their exteriors are  $$\pm(+)((-)(+))((-)(+)), \pm(+)((-)(-))((-)(+))~ \text{and}~ \pm(+)((-)(+))((-)(-)),$$ respectively. These exteriors are tight since they can be embedded into a tight contact $\Sigma\times S^1$ with boundary slopes $3, -\frac{1}{t_1}, -\frac{1}{2}$ and decorations $$\pm(+)((-)(+))(-+), \pm(+)((-)(-))(-+)~ \text{and} ~\pm(+)((-)(+))(--)$$ by adding basic slices $(T^2\times[0,1], -\frac{1}{t_{2}}, -\frac{1}{t_{2}-1})$, $\cdots$, $(T^2\times[0,1], -\frac{1}{3}, -\frac{1}{2})$ to the boundary $T_2$, respectively. 
\end{proof}

(9) Suppose $t_0=2$, $t_1\geq3$ and $t_2\geq3$.

\begin{lemma}\label{t0=2t1>2t2>2}
If $t_0=2$, $t_1\geq3$ and $t_2\geq3$, then there exist $24$ strongly exceptional Legendrian $A_3$ links whose rotation numbers and corresponding $d_3$-invariants $(r_0, r_1, r_2; d_3)$ are 
$$(\pm3, \pm(t_{1}+1), \pm(t_{2}-1); \frac{1}{2}), (\pm3, \pm(t_{1}-1), \pm(t_{2}+1); \frac{1}{2}),$$
$$(\pm1, \pm(t_{1}+1), \pm(1-t_{2}); \frac{1}{2}), (\pm1, \pm(1-t_{1}), \pm(t_{2}+1); \frac{1}{2}),$$
$$(\mp1, \pm(3-t_{1}), \pm(1-t_{2}); \frac{5}{2}), (\mp1, \pm(1-t_{1}), \pm(3-t_{2}); \frac{5}{2}),$$
$$(\pm1, \pm(3-t_{1}), \pm(t_{2}-1);  \frac{5}{2}), (\pm1, \pm(t_{1}-1), \pm(3-t_{2}); \frac{5}{2}),$$
$$(\pm5, \pm(t_{1}+1), \pm(t_{2}+1); -\frac{3}{2}), (\pm1, \pm(3-t_{1}), \pm(3-t_{2}); \frac{5}{2}),$$ $$ (\pm3, \pm(t_{1}+1), \pm(3-t_{2}); \frac{1}{2}), (\pm3, \pm(3-t_{1}), \pm(t_{2}+1); \frac{1}{2}).$$
\end{lemma}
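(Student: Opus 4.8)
The plan is to mirror the proof of Lemma~\ref{t0=3t1>2t2>2} one unit lower in $t_0$. By Lemma~\ref{Lemma:t_1>1t_2>1} there are at most $24$ appropriate tight contact structures on $\Sigma\times S^1$ with boundary slopes $2,-\frac{1}{t_1},-\frac{1}{t_2}$; since $t_0=2\ge 2$ each of them carries a $0$-twisting vertical Legendrian circle, hence (via Lemma~\ref{Lemma:construction}) a factorization $L'_0\cup L'_1\cup L'_2\cup\Sigma'\times S^1$ with a decoration of shape $(\pm)((\pm)(\pm))((\pm)(\pm))$. So it suffices to exhibit $24$ pairwise non-coarsely-equivalent strongly exceptional Legendrian $A_3$ links realizing all the listed invariants, and to check that no two of the $24$ coincide.

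First I would produce the bulk of the list --- presumably $16$ links, mirroring the split $20+2+6=28$ of the $t_0=3$ case as $16+2+6=24$ --- as Legendrian connected sums. Since $\mathrm{tb}(K'_0\#K''_0)=t'_0+t''_0+1$ must equal $t_0=2$, I take strongly exceptional Legendrian Hopf links $K'_0\cup K_a$ and $K''_0\cup K_b$ from \cite[Theorem 1.2]{go} with $t'_0+t''_0=1$ (so $(t'_0,t''_0)\in\{(0,1),(1,0)\}$), with the $t_1\ge 3$ component carried by one summand and the $t_2\ge 3$ component by the other, and apply Lemma~\ref{Lemma:connected}. Running over the allowed $(t,r)$-pairs of the two Hopf families, over the choice of which summand carries $K_1$ versus $K_2$, and over mirrors, yields the links with $d_3\in\{\tfrac12,\tfrac52\}$ on the list; their rotation numbers are $r_0=r'_0+r''_0$ together with the inherited $r_1,r_2$, and their $d_3$-invariants are $d_3'+d_3''+\tfrac12$.

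Next I would treat the $d_3=-\tfrac32$ pair $(\pm5,\pm(t_1+1),\pm(t_2+1))$, with decorations $\pm(+)((-)(-))((-)(-))$: these arise from the universally tight model of Lemma~\ref{Lemma:3.2.1} extended to $S^3$ by Lemma~\ref{Lemma:construction}, and appropriateness is supplied by Lemma~\ref{Lemma:appro}. The remaining $6$ links, with the mixed decorations $\pm(+)((-)(+))((-)(+))$, $\pm(+)((-)(-))((-)(+))$ and $\pm(+)((-)(+))((-)(-))$, I would show to be appropriate tight exactly as in Lemma~\ref{t0=3t1>2t2>2}: realize their exteriors inside an appropriate tight $\Sigma\times S^1$ with boundary slopes $2,-\frac12,-\frac12$ and decorations $\pm(+)(-+)(-+)$, $\pm(+)(--)(-+)$, $\pm(+)(-+)(--)$ by gluing the chains of basic slices $(T^2\times[0,1],-\frac{1}{t_j},-\frac{1}{t_j-1}),\dots,(T^2\times[0,1],-\frac13,-\frac12)$ onto $T_1$ and/or $T_2$ and invoking the Gluing Theorem \cite[Theorem 1.3]{h2}; the rotation numbers are then read off the decorations via relative Euler classes as in Lemma~\ref{Lemma:relEuler1}. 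Adding up, $16+2+6=24$.

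I expect the main obstacle to be this last group together with the global bookkeeping: one must confirm that gluing the basic-slice chains produces no Giroux torsion, so that the enlarged $\Sigma\times S^1$'s are genuinely appropriate tight, and one must track the connected-sum constructions carefully so that, after the $K_1\leftrightarrow K_2$ exchange and mirroring, they neither double-count nor omit any of the $24$ invariant tuples. That the $24$ constructed links are pairwise inequivalent is then immediate: they either have distinct $(t_0,t_1,t_2,r_0,r_1,r_2,d_3)$, or, where the invariants agree, they exhaust distinct contact structures on the complement, so the upper bound of Lemma~\ref{Lemma:t_1>1t_2>1} is attained; this attainment also establishes Theorem~\ref{classification} in this case.
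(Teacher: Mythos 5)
Your overall architecture (a $16+2+6$ split: sixteen links from Legendrian connected sums via Lemma~\ref{Lemma:connected}, two with decorations $\pm(+)((-)(-))((-)(-))$ from Lemmas~\ref{Lemma:3.2.1} and~\ref{Lemma:appro}, and six with mixed decorations handled by an embedding-plus-gluing argument) is exactly the paper's, and the upper bound from Lemma~\ref{Lemma:t_1>1t_2>1} is used the same way. However, there is a genuine gap in your treatment of the last group. You propose to certify appropriateness of the exteriors with decorations $\pm(+)((-)(+))((-)(+))$ (the links with invariants $(\pm1,\pm(3-t_1),\pm(3-t_2);\tfrac52)$) by gluing chains of basic slices onto \emph{both} $T_1$ and $T_2$ so as to embed them into a contact $\Sigma\times S^1$ with boundary slopes $2,-\tfrac12,-\tfrac12$ and decoration $\pm(+)(-+)(-+)$. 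No such appropriate tight structure exists: at these slopes $t_0+\lceil-\tfrac1{t_1}\rceil+\lceil-\tfrac1{t_2}\rceil=2$, each $L'_i$ ($i=1,2$) is a single continued fraction block, so its basic slices can be shuffled to make $B'_1$ and $B'_2$ both carry the sign of $L'_0$, and Lemma~\ref{Lemma:slopes2}(2) then says the structure is overtwisted. This is consistent with Lemma~\ref{t0=2t1=2t2=2}: only $10$ structures survive at $(t_0,t_1,t_2)=(2,2,2)$, and $(\pm)(-+)(-+)$ is not among them. Embedding into an overtwisted manifold proves nothing, so your argument leaves the two links with decorations $\pm(+)((-)(+))((-)(+))$ unjustified. (Your other two targets, $\pm(+)(--)(-+)$ and $\pm(+)(-+)(--)$, are fine.)

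The paper avoids this by never lowering $t_1$: it glues basic slices $(T^2\times[0,1],-\tfrac{1}{t_2},-\tfrac{1}{t_2-1}),\dots,(T^2\times[0,1],-\tfrac13,-\tfrac12)$ only onto $T_2$, embedding the three mixed-decoration exteriors into appropriate tight structures with boundary slopes $2,-\tfrac1{t_1},-\tfrac12$ and decorations $\pm(+)((-)(+))(-+)$, $\pm(+)((-)(-))(-+)$, $\pm(+)((-)(+))(--)$, whose appropriateness was already established in Lemma~\ref{t0=2t1>2t2=2} (there $B'_1$ keeps the sign $-$, opposite to $L'_0$, so the shuffling obstruction does not arise). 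Replacing your $(2,-\tfrac12,-\tfrac12)$ targets by these restores the proof; the rest of your outline (the connected-sum bookkeeping with $(t'_0,t''_0)=(0,1)$ forced by the hypotheses of Lemma~\ref{Lemma:connected}, the $K_1\leftrightarrow K_2$ exchange, and the distinctness of the $24$ invariant tuples) matches the paper.
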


\begin{proof}
By \cite[Theorem 1.2, (c2), (c3), (d)]{go}, there are two  Legendrian Hopf links $K'_0\cup K_1$ in $(S^3, \xi_{-\frac{1}{2}})$  with $(t'_0, r'_0)=(1, \pm2), t_1\geq3, r_{1}=\pm(t_{1}+1)$, two (one if $t_1=3$) Legendrian Hopf links $K'_0\cup K_1$ in  $(S^3, \xi_{\frac{3}{2}})$  with $(t'_0, r'_0)=(1, 0), t_1\geq3, r_{1}=\pm(t_{1}-3)$,  and two Legendrian Hopf links $K''_0\cup K_2$ in $(S^3, \xi_{\frac{1}{2}})$ with $(t''_0, r''_0)=(0, \pm1), t_2\geq3, r_{2}=\pm(t_{2}-1)$. By Lemma~\ref{Lemma:connected}, we can obtain strongly exceptional Legendrian $A_3$ links with $t_0=2, t_1\geq3, t_2\geq3$. So, after exchanging the roles of $K_1$ and $K_2$, there are $16$ strongly exceptional Legendrian $A_3$ links whose rotation numbers and corresponding $d_3$-invariants $(r_0, r_1, r_2; d_3)$ are
$$(\pm3, \pm(t_{1}+1), \pm(t_{2}-1); \frac{1}{2}), (\pm3, \pm(t_{1}-1), \pm(t_{2}+1); \frac{1}{2}),$$
$$(\pm1, \pm(t_{1}+1), \pm(1-t_{2}); \frac{1}{2}), (\pm1, \pm(1-t_{1}), \pm(t_{2}+1); \frac{1}{2}),$$
$$(\mp1, \pm(3-t_{1}), \pm(1-t_{2}); \frac{5}{2}), (\mp1, \pm(1-t_{1}), \pm(3-t_{2}); \frac{5}{2}),$$
$$(\pm1, \pm(3-t_{1}), \pm(t_{2}-1);  \frac{5}{2}), (\pm1, \pm(t_{1}-1), \pm(3-t_{2}); \frac{5}{2}).$$

By Lemma~\ref{Lemma:3.2.1} and Lemma~\ref{Lemma:appro}, there are $2$ strongly exceptional Legendrian $A_3$ links in $(S^3, \xi_{-\frac{3}{2}})$  whose rotation numbers $(r_0, r_1, r_2)$ are $(\pm5, \pm(t_{1}+1), \pm(t_{2}+1)).$
The decorations of their exteriors are $\pm(+)((-)(-))(--)$.

There are $6$ strongly exceptional Legendrian $A_3$ links whose rotation numbers and corresponding $d_3$-invariants  $(r_0, r_1, r_2; d_3)$  are
$(\pm1, \pm(3-t_{1}), \pm(3-t_{2}); \frac{5}{2}), $ $(\pm3, \pm(t_{1}+1), \pm(3-t_{2}); \frac{1}{2})$
and $(\pm3, \pm(3-t_{1}), \pm(t_{2}+1); \frac{1}{2}).$
The decorations of their exteriors are $$\pm(+)((-)(+))((-)(+)), \pm(+)((-)(-))((-)(+))~\text{ and} ~\pm(+)((-)(+))((-)(-)),$$ respectively. These exteriors are appropriate tight since they can be embedded into an appropriate  tight contact $\Sigma\times S^1$ with boundary slopes $2, -\frac{1}{t_1}, -\frac{1}{2}$ and decorations $$\pm(+)((-)(+))(-+), \pm(+)((-)(-))(-+)~ \text{and} ~\pm(+)((-)(+))(--)$$ by adding basic slices $(T^2\times[0,1], -\frac{1}{t_{2}}, -\frac{1}{t_{2}-1})$, $\cdots$, $(T^2\times[0,1], -\frac{1}{3}, -\frac{1}{2})$ to the boundary $T_2$, respectively. 
\end{proof}

(10) Suppose $t_{0}\leq1$. 

\begin{lemma}\label{t0<2t1>1t2>1}
If $t_{0}\leq1$, $t_1\geq2$ and $t_2\geq2$, then there exist $8-4t_0$ strongly exceptional Legendrian $A_3$ links in $(S^3, \xi_{\frac{3}{2}})$ whose rotation numbers are $$r_{0}\in\pm\{t_{0}+1, t_{0}+3, \cdots, -t_{0}+1, -t_{0}+3\}, r_{1}=\pm(t_{1}-1), r_{2}=\pm(t_{2}-1); $$
$$r_{0}\in\pm\{t_{0}-1, t_{0}+1,\cdots, -t_{0}-1, -t_{0}+1\}, r_{1}=\pm(1-t_{1}), r_{2}=\pm(t_{2}-1).$$
\end{lemma}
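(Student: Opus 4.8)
The plan is to realize the upper bound $8-4t_0$ of Lemma~\ref{Lemma:t_1>1t_2>1} by an explicit construction, using the Legendrian connected sum of Lemma~\ref{Lemma:connected} and stabilization along $K_0$. Since $t_1,t_2\geq 2$ we have $\lceil-\frac{1}{t_1}\rceil=\lceil-\frac{1}{t_2}\rceil=0$, so $t_0+\lceil-\frac{1}{t_1}\rceil+\lceil-\frac{1}{t_2}\rceil=t_0\leq 1$ and, by Lemma~\ref{Lemma:tor>0}, no such link has a $0$-twisting vertical Legendrian circle. I would first produce the links with $t_0=1$: by \cite[Theorem 1.2]{go} there is, for each sign, a strongly exceptional Legendrian Hopf link $K'_0\cup K_1\subset(S^3,\xi_{\frac{1}{2}})$ with $(t'_0,r'_0)=(0,\pm1)$, $t_1\geq 2$, $r_1=\pm(t_1-1)$ (signs correlated), and likewise a strongly exceptional Legendrian Hopf link $K''_0\cup K_2\subset(S^3,\xi_{\frac{1}{2}})$ with $(t''_0,r''_0)=(0,\pm1)$, $t_2\geq 2$, $r_2=\pm(t_2-1)$. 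Applying Lemma~\ref{Lemma:connected} to $K_0:=K'_0\#K''_0$, the four sign choices yield four strongly exceptional Legendrian $A_3$ links with $t_0=t'_0+t''_0+1=1$, $r_0=r'_0+r''_0\in\{2,0,0,-2\}$ and $(r_1,r_2)$ as above, all in the contact sphere with $d_3=d_3(\xi_{\frac{1}{2}})+d_3(\xi_{\frac{1}{2}})+\frac{1}{2}=\frac{3}{2}$; these are exactly the $t_0=1$ members of the two families (the $r_0=\pm 2$ pair, with $r_1,r_2$ of a common sign, is the first family; the two $r_0=0$ links, with $r_1$ and $r_2$ of opposite signs, the second).

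To reach every $t_0\leq 1$, I would run the same construction with $K''_0\cup K_2$ replaced by the strongly exceptional Hopf links of \cite[Theorem 1.2]{go} with $t''_0=t_0-1\leq 0$, $t_2\geq 2$, $r_2=\pm(t_2-1)$ and $r''_0$ ranging over the appropriate arithmetic progression; equivalently, stabilize $K_0$ in the four $t_0=1$ links. Stabilizing $K_0$ retains the contact surgery diagram (so the link stays strongly exceptional, since surgery on it is tight while a Giroux torsion domain in the exterior would give an overtwisted disc), leaves $r_1,r_2$ unchanged and shifts $r_0$ by $\pm1$; hence each of the four $(r_1,r_2)$-patterns acquires an arithmetic progression of $r_0$-values of length $2-t_0$, for a total of $4(2-t_0)=8-4t_0$ links in $(S^3,\xi_{\frac{3}{2}})$. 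A direct check that these $4(2-t_0)$ links are pairwise distinct (they are separated by their rotation-number triples) and that their rotation numbers are exactly the two displayed families, combined with the upper bound of Lemma~\ref{Lemma:t_1>1t_2>1}, makes the count exact.

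The geometric input being in hand, the main obstacle is the bookkeeping: checking that the Hopf-link families from \cite{go} carry precisely the claimed $(t,r)$-data, and that the sign correlations are as needed — the sign of $r''_0$ coupled to that of $r_2$, and of $r'_0$ to that of $r_1$ — so that one family has all three rotation numbers of a common sign while the other has $r_1$ and $r_2$ of opposite signs; one must also confirm that stabilizing $K_0$ never exits the appropriate-tight range, which is automatic here because the stabilized links still admit a contact surgery diagram. Finally, one records via \cite{DGS} that the ambient $d_3$-invariant equals $\frac{3}{2}$ throughout, which follows at once from the contact connected sum formula together with the fact that stabilization does not change the ambient contact structure.
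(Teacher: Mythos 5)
Your proposal is correct and follows essentially the same route as the paper: the paper also invokes Lemma~\ref{Lemma:connected} to connect-sum the Hopf link $K'_0\cup K_1$ with $(t'_0,r'_0)=(0,\pm1)$, $r_1=\pm(t_1-1)$ from \cite[Theorem 1.2 (d)]{go} with the $2(1-t''_0)$ Hopf links $K''_0\cup K_2$ with $t''_0=t_0-1\leq0$, $r_2=\pm(t_2-1)$ from \cite[Theorem 1.2 (b1)]{go}, obtaining all $8-4t_0$ links in $(S^3,\xi_{\frac{3}{2}})$ at once, and then remarks that these are stabilizations of the $t_0=1$ links. Your variant of first building the $t_0=1$ links and then re-running the construction with $t''_0=t_0-1$ is the same argument (and wisely you rely on that direct construction rather than on the looser "stabilization preserves strong exceptionality" claim, which by itself would need the justification the paper gives only via the realized rotation numbers).
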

\begin{proof}

By \cite[Theorem 1.2, (d)]{go}, there are two Legendrian Hopf links $K'_0\cup K_1$  in $(S^3, \xi_{\frac{1}{2}})$ with $(t'_0, r'_0)=(0, \pm1), t_1\geq3, r_{1}=\pm(t_{1}-1)$. By \cite[Theorem 1.2. (b1)]{go}, there are $2(1-t''_0)$ Legendrian Hopf links $K_0\cup K_2$ in $(S^3, \xi_{\frac{1}{2}})$ with
$t''_0\leq0$, $r''_{0}\in\pm\{t''_{0}+1, t''_{0}+3, \cdots, -t''_{0}-1, -t''_{0}+1\}$, $t_2\geq2$, $r_{2}=\pm(t_{2}-1).$ Using Lemma~\ref{Lemma:connected}, we construct $8-4t_0$ Legendrian $A_3$ links in $(S^3, \xi_{3/2})$ with $t_0\leq1, t_1\geq2, t_{2}\geq2$. Their rotation numbers are as listed.
\end{proof}
These $8-4t_0$ strongly exceptional Legendrian $A_3$ links are stabilizations of the Legendrian $A_3$ links with $t_{0}=1, t_1\geq2, t_2\geq2$.

The proof of Theorem~\ref{Theorem:t_1>1t_2>1} is completed.
\end{proof}

\subsection{$t_{1}<0$ and $t_{2}>0$.}

\begin{lemma}\label{Lemma:3.3.1}
For any $t_0\in\mathbb{Z}$, there are $6$ exceptional Legendrian $A_3$ links whose exteriors have $0$-twisting vertical Legendrian circles, and the signs of basic slices in $L'_0, L'_1, L'_2$ are $\pm(+--), \pm(++-)$ and $\pm(+-+)$, respectively. Their rotation numbers are 
$$r_{0}=\pm(t_{0}+1), r_{1}=\pm(1-t_{1}), r_{2}=\pm(t_{2}+1);  r_{0}=\pm(t_{0}+1), r_{1}=\pm(t_{1}+1), r_{2}=\pm(t_{2}+1); $$ $$r_{0}=\pm(t_{0}-3), r_{1}=\pm(1-t_{1}), r_{2}=\pm(1-t_{2}).$$ 
The corresponding $d_3$-invariants are independent of $t_0$ if $t_1$ and $t_2$ are fixed.
\end{lemma}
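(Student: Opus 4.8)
The plan is to imitate the proofs of Lemma~\ref{Lemma:relEuler1} and Lemma~\ref{Lemma:3.2.1} step for step; the only structural novelty is that now $s_1=-\frac{1}{t_1}\in(0,1]$ is positive while $s_2=-\frac{1}{t_2}\in[-1,0)$ is negative, so the two outer thickened tori contribute continued-fraction paths of opposite type. First I would invoke Lemma~\ref{Lemma:tight}: for the boundary slopes $t_0,-\frac{1}{t_1},-\frac{1}{t_2}$ there are exactly six tight contact structures on $\Sigma\times S^1$ whose factorization $L'_0\cup L'_1\cup L'_2\cup\Sigma'\times S^1$ has all basic slices inside $L'_1$ (and inside $L'_2$) of a single sign, with the triple of signs of $(L'_0,L'_1,L'_2)$ equal to one of $\pm(+--)$, $\pm(++-)$, $\pm(+-+)$. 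Each such $\xi$ contains a $0$-twisting vertical Legendrian circle, so Lemma~\ref{Lemma:construction} extends it to an overtwisted contact $S^3$ in which the cores of the three glued solid tori form an exceptional Legendrian $A_3$ link with the asserted Thurston--Bennequin invariants. This establishes the first assertion.

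Next I would compute the rotation numbers as evaluations of the relative Euler class, following \cite[Section 2.5]{emm} and the proof of Lemma~\ref{Lemma:relEuler1}. The Seifert surface of $K_0$ is the pair of pants $\Sigma$ capped off by meridian disks of $K_1$ and $K_2$, and that of $K_i$, $i=1,2$, is a meridian disk of $K_0$ glued to an annulus. Writing each $r_j$ as the alternating sum of $\bullet$-products of consecutive dividing slopes along the Farey paths realizing $L'_0$, $L'_1$, $L'_2$ -- and reversing the signs of the dividing slopes in the two thickened tori that the chosen Seifert surface misses -- the relevant paths are: the single edge $t_0\to\infty$ across the basic slice $L'_0$; the continued-fraction path of length $-t_1$ from $-\frac{1}{t_1}$ to $\infty$ across $L'_1$, exactly as in Lemma~\ref{Lemma:relEuler1} since $t_1<0$; and the continued-fraction path from $-\frac{1}{t_2}$ to $\infty$ across $L'_2$, computed exactly as in the proof of Lemma~\ref{Lemma:3.2.1} since $t_2>0$. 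Carrying out these three sums for each of the three sign patterns, and for each choice of overall sign, yields the three displayed triples $(r_0,r_1,r_2)$. Finally, the last sentence is immediate from Lemma~\ref{Lemma:stabilization}: within each of the three families the link with first parameter $t_0$ destabilizes at $K_0$ to the one with parameter $t_0-1$ inside a fixed contact $S^3$, so the ambient overtwisted structure, and hence its $d_3$-invariant, depends only on $t_1$ and $t_2$.

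The one genuinely delicate point is the sign and edge-rounding bookkeeping in the middle step: one must track which dividing slopes get negated when passing from the computation of $r_0$ to those of $r_1$ and $r_2$, and -- unlike in Lemma~\ref{Lemma:relEuler1} and Lemma~\ref{Lemma:3.2.1} -- handle the mixed configuration in which $L'_1$ has positive boundary slope while $L'_2$ has negative boundary slope, so that the two outer paths run in opposite directions around the Farey boundary. Once the three model computations (one per sign pattern) are carried out correctly, everything else is routine, and the same relative-Euler-class recipe then computes the rotation numbers of all the other Legendrian $A_3$ links in this range whose exteriors contain a $0$-twisting vertical Legendrian circle.
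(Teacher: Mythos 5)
Your proposal follows essentially the same route as the paper: the existence of the six links comes from Lemma~\ref{Lemma:tight} together with Lemma~\ref{Lemma:construction}, the rotation numbers are computed by the relative Euler class recipe exactly as in Lemmas~\ref{Lemma:relEuler1} and~\ref{Lemma:3.2.1} (handling the mixed slopes $-\frac{1}{t_1}>0$, $-\frac{1}{t_2}<0$), and the $t_0$-independence of $d_3$ follows from destabilization at $K_0$ via Lemma~\ref{Lemma:stabilization}. The only quibble is a direction slip in the last step: destabilizing $K_0$ increases the Thurston--Bennequin invariant, so the link with parameter $t_0$ destabilizes to the one with $t_0+1$ (not $t_0-1$), which does not affect the argument.
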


\begin{proof}
The first statement follows from Lemma~\ref{Lemma:tight} and Lemma~\ref{Lemma:construction}. The calculation of rotation numbers is analogous to that in the proof of Lemma~\ref{Lemma:3.2.1}.
\end{proof}

\subsubsection{$t_1<0$ and $t_2=1$.} 
The boundary slopes of $\Sigma\times S^1$ are $s_0=t_0$, $s_1=-\frac{1}{t_1}$ and $s_2=-1$.

\begin{proof}[Proof of Theorem~\ref{Theorem:t_1<0t_2=1}] The upper bound of strongly exceptional Legendrian $A_3$ links is given by Lemma~\ref{Lemma:t_1<0t_2=1}. We will show that these upper bounds can be attained.

(1) Suppose $t_0\geq4$. 

\begin{lemma}\label{t0>3t1<0t2=1}
If $t_0\geq4$, $t_1<0$ and $t_2=1$, then there exist  $2-2t_1$ strongly exceptional Legendrian $A_3$ links in $(S^3, \xi_{-\frac{1}{2}})$ with rotation numbers $$r_{0}=\pm(t_{0}+1), r_{1}\in\mp\{t_{1}-1, t_{1}+1,\cdots,-t_{1}-1\}, r_{2}=\pm2; $$ and $2-2t_1$ strongly exceptional Legendrian $A_3$ links in $(S^3, \xi_{\frac{3}{2}})$ with rotation numbers $$r_{0}=\pm(t_{0}-3), r_{1}\in\mp\{t_{1}-1, t_{1}+1,\cdots,-t_{1}-1\}, r_{2}=0. $$
\end{lemma}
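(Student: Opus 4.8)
The plan is to realize $2-2t_1$ strongly exceptional Legendrian $A_3$ links carrying the first list of invariants in $(S^3,\xi_{-\frac{1}{2}})$ and $2-2t_1$ carrying the second list in $(S^3,\xi_{\frac{3}{2}})$, and then to invoke Lemma~\ref{Lemma:t_1<0t_2=1}, which for $t_0\geq 4$, $t_1<0$, $t_2=1$ bounds the number of strongly exceptional Legendrian $A_3$ links by $4-4t_1=(2-2t_1)+(2-2t_1)$. Since the two families lie in distinct overtwisted contact spheres and, within each family, the triples $(r_0,r_1,r_2)$ are pairwise distinct (and coarse equivalence preserves all these invariants), the $4-4t_1$ links produced are pairwise inequivalent; hence the upper bound is attained, which also shows every appropriate tight contact structure on $\Sigma\times S^1$ with these boundary slopes is realized.

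For the family in $(S^3,\xi_{\frac{3}{2}})$ with $r_2=0$, I would use the Legendrian connected sum of Lemma~\ref{Lemma:connected}. Take the strongly exceptional Legendrian Hopf link $K'_0\cup K_2$ in $(S^3,\xi_{\frac{1}{2}})$ with $(t'_0,r'_0)=(t_2,r_2)=(1,0)$, and by \cite[Theorem 1.2]{go} take the strongly exceptional Legendrian Hopf links $K''_0\cup K_1$ with $t''_0=t_0-2\geq 2$, $t_1<0$, and $r''_0$, $r_1$ ranging over the appropriate values. Applying Lemma~\ref{Lemma:connected} with the roles of $K_1$ and $K_2$ exchanged, the Legendrian connected sum $(K'_0\#K''_0)\cup K_1\cup K_2$ is a strongly exceptional Legendrian $A_3$ link; by the additivity of Thurston--Bennequin invariants, rotation numbers and $d_3$-invariants recalled after Lemma~\ref{Lemma:connected}, it has $t_0=t'_0+t''_0+1$, $r_0=r'_0+r''_0=\pm(t_0-3)$, $r_1\in\mp\{t_1-1,t_1+1,\dots,-t_1-1\}$, $r_2=0$, and ambient $d_3$-invariant $\frac{1}{2}+\frac{1}{2}+\frac{1}{2}=\frac{3}{2}$. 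This accounts for the whole second family, and it explains the hypothesis $t_0\geq 4$: it is exactly what makes $t''_0\geq 2$, so that \cite[Theorem 1.2]{go} applies to $K''_0\cup K_1$.

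For the family in $(S^3,\xi_{-\frac{1}{2}})$ with $r_2=\pm2$, the Legendrian connected sum is unavailable (the special summand permitted by Lemma~\ref{Lemma:connected} would force $r_2=0$), so I would instead exhibit an explicit contact surgery diagram for $K_0\cup K_1\cup K_2$ in the spirit of Figures~\ref{Figure:link21inot} and \ref{Figure:link22inot}: $K_0$ equipped with a contact $(+1)$-surgery along a Legendrian push-off together with a string of stabilizing $(-1)$-curves governed by $t_0$, the component $K_1$ built from a chain of $(-2)$-framed unknots as in Figure~\ref{Figure:link51}, and $K_2$ from an additional $(-1)$-curve. One checks, via the topological reduction of Lemma~\ref{Lemma:topological}, that the underlying link is an $A_3$ link, reads off the Thurston--Bennequin and rotation numbers using \cite[Lemma 6.6]{loss} and the $d_3$-invariant (namely $-\frac{1}{2}$) using \cite{DGS}, and obtains strong exceptionality from the fact that a contact $(-1)$-surgery along $K_0$ cancels the contact $(+1)$-surgery. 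An alternative route is to add a local Legendrian meridian $K_1$ to a strongly exceptional Legendrian Hopf link $K_0\cup K_2$ in $(S^3,\xi_{-\frac{1}{2}})$ with $t_0\geq4$, $t_2=1$ (Lemma~\ref{Lemma:meridian}), which produces $-t_1$ of the values of $r_1$ for each sign of $r_0$, together with the two extremal representatives $r_1=\pm(1-t_1)$ coming from Lemma~\ref{Lemma:3.3.1} and Lemma~\ref{Lemma:appro}.

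The hard part is not any individual geometric step but the bookkeeping: one must verify that the surgery/connected-sum/meridian constructions really hit \emph{every} rotation-number triple in the two lists (equivalently, every decoration $\pm(\pm)(\pm\cdots\pm)(\pm)$ counted in Lemma~\ref{Lemma:t_1<0t_2=1}), and that the $d_3$-computations place each representative in the asserted sphere. Once the lists are seen to consist of exactly $2-2t_1$ mutually distinct triples in each family, comparison with the upper bound of Lemma~\ref{Lemma:t_1<0t_2=1} closes the argument.
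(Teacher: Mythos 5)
Your argument is essentially correct, but it proves the lemma by a different route than the paper. The paper realizes all $4-4t_1$ links at once by a single explicit family of contact surgery diagrams (Figure~\ref{Figure:link15inot}), checks the topological link type via Lemma~\ref{Lemma:topological}, and reads off the rotation numbers and $d_3$-invariants as in the proofs of \cite[Theorem 1.2, (b1), (c3)]{go}; strong exceptionality comes for free from the surgery presentation. You instead assemble the links from previously established pieces: the $\xi_{\frac{3}{2}}$ family via Lemma~\ref{Lemma:connected} applied (with $K_1$ and $K_2$ exchanged) to the $(1,0)$-summand and the Hopf links of \cite[Theorem 1.2 (b1)]{go} with $t''_0=t_0-2\geq 2$, $t_1<0$ — which indeed explains the hypothesis $t_0\geq4$ and gives the right additivity of $tb$, $rot$ and $d_3$ — and the $\xi_{-\frac{1}{2}}$ family via the local-meridian construction (Lemma~\ref{Lemma:meridian}) on the Hopf links with $(t_0,\pm(t_0+1))$, $(1,\pm2)$ in $\xi_{-\frac{1}{2}}$, supplemented by the two extremal values $r_1=\pm(1-t_1)$ from Lemma~\ref{Lemma:3.3.1} together with Lemma~\ref{Lemma:appro}. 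This is exactly the strategy the paper itself uses for the lower cases $t_0=3$ and $t_0=2$, so your outline is sound; note that your first, "explicit surgery diagram" suggestion for the $\xi_{-\frac{1}{2}}$ family is only gestured at, so the meridian route is what actually carries the proof, and it does: the symmetric set $\{t_1+1,\dots,-t_1-1\}$ from the meridian plus the extremal value per sign reproduces precisely $\mp\{t_1-1,t_1+1,\dots,-t_1-1\}$, giving $2-2t_1$ links in each sphere, and the upper bound $4-4t_1$ of Lemma~\ref{Lemma:t_1<0t_2=1} then closes the count. What the paper's diagrams buy is a uniform, self-contained computation (and explicit presentations later useful for the stabilization discussion); what your assembly buys is that no new diagram or $d_3$ computation is needed, at the mild cost of relying on the sign conventions of \cite[Theorem 1.2]{go} and of checking that the exceptional cases $(t''_0,t_1)\in\{(2,1),(1,2)\}$ in Lemma~\ref{Lemma:connected} do not occur (they cannot, since $t_1<0$).
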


\begin{proof}
There exist $4-4t_1$ strongly exceptional Legendrian representatives shown in Figure~\ref{Figure:link15inot}. Using the trick of Lemma~\ref{Lemma:topological} and the proof of \cite[Theorem 1.2, (b1), (c3)]{go}, we can show that $K_0\cup K_1\cup K_2$ is a topological $A_3$ link.  Their rotation numbers and corresponding $d_3$-invariants are
$$r_{0}=\pm(t_{0}+1), r_{1}\in\mp\{t_{1}-1, t_{1}+1,\cdots,-t_{1}-1\}, r_{2}=\pm2; d_3=-\frac{1}{2}, $$ 
$$r_{0}=\pm(t_{0}-3), r_{1}\in\mp\{t_{1}-1, t_{1}+1,\cdots,-t_{1}-1\}, r_{2}=0; d_3=\frac{3}{2}. $$
\end{proof}

\begin{figure}[htb]
\begin{overpic}
{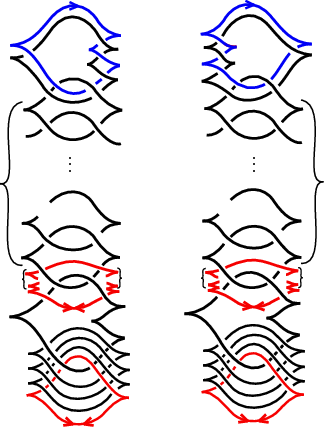}
\put(0, 190){$K_{0}$}
\put(145, 190){$K_{0}$}
\put(20, 50){$K_{1}$}
\put(106, 50){$K_{1}$}
\put(10, 0){$K_{2}$}
\put(96, 0){$K_{2}$}
\put(1, 68){$k_{1}$}
\put(60, 68){$l_{1}$}
\put(87, 68){$k_{1}$}
\put(147, 68){$l_{1}$}
\put(-32, 115){$t_{0}-4$}
\put(158, 115){$t_{0}-4$}
\put(60, 178){$+1$}
\put(60, 150){$-1$}
\put(60, 130){$-1$}
\put(60, 95){$-1$}
\put(60, 80){$-1$}
\put(60, 55){$-1$}
\put(63, 33){$+1$}
\put(63, 24){$+1$}
\put(63, 15){$+1$}

\end{overpic}
\caption{$t_0\geq4$, $t_1\leq 0$, $t_2=1$. $k_1+l_1=-t_1$. If $t_{0}$ is even, then $K_{0}$ and $K_{i}$, $i=1,2$, bear the same orientations, if $t_{0}$ is odd, then the opposite orientation.}
\label{Figure:link15inot}
\end{figure}

(2) Suppose $t_0=3$.

\begin{lemma}\label{t0=3t1<0t2=1}
If $t_0=3$, $t_1<0$ and $t_2=1$, then there exist $2-2t_1$ strongly exceptional Legendrian $A_3$ links in $(S^3, \xi_{-\frac{1}{2}})$ with rotation numbers $$r_{0}=\pm4, r_{1}\in\mp\{t_{1}-1, t_{1}+1,\cdots,-t_{1}-1\}, r_{2}=\pm2; $$ and $2-t_1$ strongly exceptional Legendrian $A_3$ links in $(S^3, \xi_{\frac{3}{2}})$ with rotation numbers $$r_{0}=0, r_{1}\in\mp\{t_{1}-1, t_{1}+1,\cdots,-t_{1}-1\}, r_{2}=0. $$
\end{lemma}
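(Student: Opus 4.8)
The plan is to build the $4-3t_1$ links by combining two constructions already set up in the paper — local Legendrian meridians (Lemma~\ref{Lemma:meridian}) attached to known strongly exceptional Legendrian Hopf links, and the ``extremal'' $A_3$ links of Lemma~\ref{Lemma:3.3.1} made strongly exceptional via Lemma~\ref{Lemma:appro} — and then to match the total against the upper bound of Lemma~\ref{Lemma:t_1<0t_2=1} at $t_0=3$. (The surgery diagram of Figure~\ref{Figure:link15inot} used for $t_0\geq4$ degenerates here, since the $t_0-4$ box becomes a $-1$--framed unknot, so a separate argument is needed.)

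First, by \cite[Theorem 1.2]{go} there are strongly exceptional Legendrian Hopf links $K_0\cup K_2$ with $(t_0,r_0)=(3,\pm4)$, $(t_2,r_2)=(1,\pm2)$ in $(S^3,\xi_{-1/2})$ and with $(t_0,r_0)=(3,0)$, $(t_2,r_2)=(1,0)$ in $(S^3,\xi_{3/2})$. Attaching to each a local Legendrian meridian $K_1$ of $K_0$ inside a Darboux ball leaves the ambient contact $S^3$ unchanged, and by Lemma~\ref{Lemma:meridian} produces $-t_1$ strongly exceptional Legendrian $A_3$ links realizing every $r_1\in\{t_1+1,t_1+3,\dots,-t_1-1\}$ with $r_0,r_2$ fixed. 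This gives $-t_1$ links with $(r_0,r_2)=(4,2)$ and $-t_1$ with $(r_0,r_2)=(-4,-2)$ in $\xi_{-1/2}$, and $-t_1$ links with $(r_0,r_2)=(0,0)$ in $\xi_{3/2}$. Next, specializing Lemma~\ref{Lemma:3.3.1} to $t_0=3$, $t_2=1$ and applying Lemma~\ref{Lemma:appro} to the relevant decorated $\Sigma\times S^1$ (which is appropriate tight), I get strongly exceptional Legendrian $A_3$ links with $(r_0,r_1,r_2)=(\pm4,\pm(1-t_1),\pm2)$ in $\xi_{-1/2}$ and $(0,\pm(1-t_1),0)$ in $\xi_{3/2}$; the decoration $\pm(++-)$ there gives $(\pm4,\pm(t_1+1),\pm2)$, which already occurs among the meridian links, so only the two endpoint values $r_1=\pm(1-t_1)$ are new in each family. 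Altogether this yields $2(-t_1)+2=2-2t_1$ links in $\xi_{-1/2}$ and $-t_1+2=2-t_1$ in $\xi_{3/2}$, with the stated rotation numbers.

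Finally, these $4-3t_1$ links are pairwise non-coarsely-equivalent because their ordered data $(t_0,t_1,t_2,r_0,r_1,r_2)$ together with the $d_3$-invariant of the ambient sphere are pairwise distinct, all of which coarse equivalence preserves; comparing with the upper bound $4-3t_1$ from Lemma~\ref{Lemma:t_1<0t_2=1} shows the bound is attained. The main obstacle is the bookkeeping: confirming that the $r_1=\pm(1-t_1)$ links coming from Lemma~\ref{Lemma:3.3.1} are not already among the meridian links, that the relevant exteriors are appropriate tight (so ``strongly exceptional'' is legitimate), and that the $d_3$-invariants of the two families are indeed $-\tfrac12$ and $\tfrac32$ as claimed.
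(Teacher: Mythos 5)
Your proposal is correct and follows essentially the same route as the paper: attach a local Legendrian meridian $K_1$ (Lemma~\ref{Lemma:meridian}) to the three strongly exceptional Hopf links $K_0\cup K_2$ with $(t_0,r_0)=(3,\pm4)$, $(t_2,r_2)=(1,\pm2)$ in $\xi_{-1/2}$ and $(3,0)$, $(1,0)$ in $\xi_{3/2}$ to get the $-3t_1$ interior values of $r_1$, then use Lemma~\ref{Lemma:3.3.1} together with Lemma~\ref{Lemma:appro} for the four extremal links $(\pm4,\pm(1-t_1),\pm2)$ and $(0,\pm(1-t_1),0)$. Your additional remarks (discarding the redundant $\pm(++-)$ decoration and comparing with the bound $4-3t_1$ of Lemma~\ref{Lemma:t_1<0t_2=1}) are consistent with the paper's counting and introduce no gap.
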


\begin{proof}
By \cite[Theorem 1.2]{go}, there are two Legendrian Hopf links $K_0\cup K_2$ with $(t_0,r_0)=(3,\pm4)$ and $(t_2, r_2)=(1, \pm2)$ in $(S^3, \xi_{-\frac{1}{2}})$, and a Legendrian Hopf links  with $(t_0,r_0)=(3, 0)$ and $(t_2, r_2)=(1, 0)$ in $(S^3, \xi_{\frac{3}{2}})$. Let $K_1$ be a local Legendrian meridian of $K_0$; then there are $-3t_1$ strongly exceptional Legendrian $A_3$ links. Their rotation numbers and corresponding $d_3$-invariants are
$$r_{0}=\pm4, r_{1}\in\{t_{1}+1, t_{1}+3, \cdots, -t_{1}-1\}, r_{2}=\pm2; d_3=-\frac{1}{2},$$ 
$$r_{0}=0, r_{1}\in\{t_{1}+1, t_{1}+3, \cdots, -t_{1}-1\}, r_{2}=0; d_3=\frac{3}{2}.$$
By Lemma~\ref{Lemma:3.3.1} and Lemma~\ref{Lemma:appro}, there are  $4$ strongly exceptional Legendrian $A_3$ links whose rotation numbers and corresponding $d_3$-invariants $(r_0, r_1, r_2; d_3)$ are $(\pm4, \pm(1-t_{1}), \pm2; -\frac{1}{2})$ and $(0, \pm(t_{1}-1), 0; \frac{3}{2}).$
\end{proof}


(3) Suppose $t_0=2$.

\begin{lemma}
If $t_0=2$, $t_1<0$ and $t_2=1$, then there exist $2-2t_1$ strongly exceptional Legendrian $A_3$ links in $(S^3, \xi_{-\frac{1}{2}})$ with rotation numbers $$r_{0}=\pm3, r_{1}\in\mp\{t_{1}-1, t_{1}+1,\cdots,-t_{1}-1\}, r_{2}=\pm2; $$ and $2$ strongly exceptional Legendrian $A_3$ links in $(S^3, \xi_{\frac{3}{2}})$ with rotation numbers $$r_{0}=\pm1, r_{1}=\pm(t_{1}-1), r_{2}=0. $$
\end{lemma}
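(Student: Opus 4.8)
The plan is to follow the same two‑pronged strategy used for $t_0=3$ in Lemma~\ref{t0=3t1<0t2=1}, adapted to the fact that when $(t_0,t_2)=(2,1)$ there is no strongly exceptional Legendrian Hopf link $K_0\cup K_2$ in $(S^3,\xi_{\frac{3}{2}})$; this is exactly why the $\xi_{\frac{3}{2}}$‑contribution drops to two links here. By Lemma~\ref{Lemma:t_1<0t_2=1} the number of strongly exceptional Legendrian $A_3$ links with $t_0=2$, $t_1<0$, $t_2=1$ is at most $4-2t_1$, so it suffices to realize this many pairwise‑distinct ones.

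The first construction uses local Legendrian meridians. By \cite[Theorem 1.2]{go} there are two strongly exceptional Legendrian Hopf links $K_0\cup K_2$ in $(S^3,\xi_{-\frac{1}{2}})$ with $(t_0,r_0)=(2,\pm3)$ and $(t_2,r_2)=(1,\pm2)$. Taking $K_1$ to be a local Legendrian meridian of $K_0$ in each of these and invoking Lemma~\ref{Lemma:meridian}, one obtains $2\cdot(-t_1)=-2t_1$ strongly exceptional Legendrian $A_3$ links with $r_0=\pm3$, $r_1\in\{t_1+1,t_1+3,\dots,-t_1-1\}$, $r_2=\pm2$; since $K_1$ lies in a Darboux ball these links remain in $(S^3,\xi_{-\frac{1}{2}})$, and the sign of $r_0$ stays correlated with that of $r_2$.

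The second construction uses Lemma~\ref{Lemma:3.3.1} with $t_0=2$ and $t_2=1$. Among the six exceptional Legendrian $A_3$ links it provides, we retain the two with decoration $\pm(+--)$, which have rotation data $(\pm3,\pm(1-t_1),\pm2)$ and $d_3=-\frac{1}{2}$, and the two with decoration $\pm(+-+)$, which have rotation data $(\pm1,\pm(t_1-1),0)$ and $d_3=\frac{3}{2}$; the two links with decoration $\pm(++-)$ have $r_1=\pm(t_1+1)$, a value already realized above, and are not needed. These four links are appropriate tight by Lemma~\ref{Lemma:appro}, since each shares a factorization (up to the signs of basic slices) with one of the meridian links already known to be appropriate tight, and their $d_3$‑invariants can be read off from the $t_0=3$ computation in Lemma~\ref{t0=3t1<0t2=1} because Lemma~\ref{Lemma:3.3.1} asserts independence of $t_0$, or computed directly from the relative Euler class together with \cite{DGS}. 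Because $1-t_1>-t_1-1$ and $t_1-1<t_1+1$, none of these four links has the rotation numbers of a meridian link, so all $-2t_1+4=4-2t_1$ links are pairwise distinct; this matches the upper bound of Lemma~\ref{Lemma:t_1<0t_2=1}, so the count is exact and the listed rotation numbers and $d_3$‑invariants are as stated.

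The step I expect to require the most care is the overlap accounting between the two constructions: one must identify precisely which of the six links of Lemma~\ref{Lemma:3.3.1} are already present among the meridian links (the $\pm(++-)$ pair), and check that the sign correlations among $r_0$, $r_1$, $r_2$ produced by the two constructions are compatible, so that the union is enumerated with neither double‑counting nor omission. Once a single appropriate‑tight representative in each relevant sign class is in hand, establishing appropriate tightness of the remaining ones via Lemma~\ref{Lemma:appro} is routine, and the $d_3$‑invariant computations are mechanical.
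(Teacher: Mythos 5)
Your proposal is correct and follows essentially the same route as the paper: realize $-2t_1$ links by taking a local Legendrian meridian $K_1$ of the two Hopf links $K_0\cup K_2$ in $(S^3,\xi_{-\frac{1}{2}})$ with $(t_0,r_0)=(2,\pm3)$, $(t_2,r_2)=(1,\pm2)$ via Lemma~\ref{Lemma:meridian}, then obtain the remaining four links from Lemma~\ref{Lemma:3.3.1} together with Lemma~\ref{Lemma:appro}, and match the upper bound $4-2t_1$ of Lemma~\ref{Lemma:t_1<0t_2=1}. Your $d_3$-values $-\frac{1}{2}$ and $\frac{3}{2}$ agree with the lemma statement and with the $t_0$-independence asserted in Lemma~\ref{Lemma:3.3.1} (the paper's own proof text lists $-\frac{3}{2}$ and $\frac{1}{2}$, which appears to be a typo), and your explicit accounting of the redundant $\pm(++-)$ decorations is a harmless elaboration of what the paper leaves implicit.
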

\begin{proof}
If $t_0=2$, then, by \cite[Theorem 1.2. (c2)]{go}, there exist two Legendrian Hopf links $K_0\cup K_2$ in $(S^3, \xi_{-\frac{1}{2}})$ with $(t_0,r_0)=(2,\pm3)$ and $(t_2, r_2)=(1, \pm2)$. Let $K_1$ be a local Legendrian meridian of $K_0$, then  by Lemma~\ref{Lemma:meridian}  we can realize $-2t_1$ strongly exceptional Legendrian $A_3$ links in $(S^3, \xi_{-\frac{1}{2}})$ whose rotation numbers are $$r_{0}=\pm3, r_{1}\in\{t_{1}+1, t_{1}+3, \cdots, -t_{1}-1\}, r_{2}=\pm2.$$
By Lemma~\ref{Lemma:3.3.1} and Lemma~\ref{Lemma:appro}, there are $4$ strongly exceptional Legendrian $A_3$ links whose rotation numbers and corresponding $d_3$-invariants $(r_0, r_1, r_2; d_3)$ are $(\pm3, \pm(1-t_{1}), \pm2; -\frac{3}{2})$ and $ (\pm1, \pm(t_{1}-1), 0; \frac{1}{2}).$
\end{proof}

(4) Suppose $t_0\leq1$. 

\begin{lemma}\label{t0<2t1<0t2=1}
If $t_0\leq1$, $t_1<0$ and $t_2=1$, then there exist $t_{0}t_{1}-2t_1$ strongly exceptional Legendrian $A_3$ links in $(S^3, \xi_{\frac{1}{2}})$ with rotation numbers $$r_{0}\in\{t_{0}-1, t_{0}+1, \cdots, 1-t_0\}, r_{1}\in\{t_{1}+1, t_{1}+3, \cdots,-t_{1}-1\}, r_{2}=0.$$ 
\end{lemma}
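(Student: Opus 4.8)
The plan is to produce all the required links by attaching a local Legendrian meridian to a strongly exceptional Legendrian Hopf link, and then to match the resulting count against the upper bound already established in Lemma~\ref{Lemma:t_1<0t_2=1}.

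First I would recall the relevant Hopf link input: by \cite[Theorem 1.2, (b2)]{go} (the family of Hopf links already used in the proof of Lemma~\ref{t0<4t1=1t2=1}), for every integer $t_0\le 1$ there are exactly $2-t_0$ strongly exceptional Legendrian Hopf links $K_0\cup K_2$ in $(S^3,\xi_{\frac{1}{2}})$ with $(t_2,r_2)=(1,0)$ and $r_0$ ranging over the $2-t_0$ values of $\{t_0-1,t_0+1,\dots,1-t_0\}$. Fix one such Hopf link together with a Darboux ball meeting $K_0$ in a single unknotted arc and disjoint from $K_2$, and place a local Legendrian meridian $K_1$ of $K_0$ inside this ball. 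By Lemma~\ref{Lemma:meridian} the resulting ordered, oriented link $K_0\cup K_1\cup K_2$ is a strongly exceptional Legendrian $A_3$ link with $t_1<0$ and $r_1$ an arbitrary prescribed element of $\{t_1+1,t_1+3,\dots,-t_1-1\}$; since the modification is supported in a Darboux ball, the ambient contact $3$-sphere is still $(S^3,\xi_{\frac{1}{2}})$, and the pairs $(t_0,r_0)$ and $(t_2,r_2)=(1,0)$ are unchanged.

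Running over the $2-t_0$ Hopf links and, for each, over the $-t_1$ admissible values of $r_1$, this construction yields $(2-t_0)(-t_1)=t_0t_1-2t_1$ strongly exceptional Legendrian $A_3$ links in $(S^3,\xi_{\frac{1}{2}})$ with the stated Thurston-Bennequin invariants and rotation numbers. These links are pairwise inequivalent under coarse equivalence, because a contactomorphism realizing a coarse equivalence preserves the ordered triple of Thurston-Bennequin invariants and rotation numbers, while the triples $(t_0,t_1,1;\,r_0,r_1,0)$ produced above are all distinct. Combining this with the upper bound $t_0t_1-2t_1$ from Lemma~\ref{Lemma:t_1<0t_2=1} (the case $t_0\le 1$), we conclude that there are exactly $t_0t_1-2t_1$ such links and that every strongly exceptional Legendrian $A_3$ link with $t_0\le 1$, $t_1<0$, $t_2=1$ lies in $(S^3,\xi_{\frac{1}{2}})$.

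I do not expect a serious obstacle here: the argument is essentially bookkeeping on top of Lemma~\ref{Lemma:meridian} and the Hopf link classification of \cite{go}. The one point to state carefully is that inserting a local Legendrian meridian neither changes the ambient contact structure nor introduces Giroux torsion into the complement, so that the adjective ``strongly'' is preserved and every rotation number in the listed range is realized; this is exactly the content of Lemma~\ref{Lemma:meridian}. (Alternatively, for $t_0\le 0$ one could first treat the case $t_0=1$ and then stabilize along $K_0$, in accordance with Remark~\ref{Remark:Stabilization}; the two routes give the same list.)
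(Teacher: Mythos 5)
Your proposal is correct and follows essentially the same route as the paper: it invokes the $2-t_0$ strongly exceptional Legendrian Hopf links $K_0\cup K_2$ with $(t_2,r_2)=(1,0)$ in $(S^3,\xi_{\frac{1}{2}})$ from \cite{go} and then attaches a local Legendrian meridian $K_1$ via Lemma~\ref{Lemma:meridian}, giving the count $(2-t_0)(-t_1)=t_0t_1-2t_1$. The extra remarks on distinctness of the realizations via their classical invariants and the comparison with the upper bound of Lemma~\ref{Lemma:t_1<0t_2=1} are consistent with how the paper assembles Theorem~\ref{Theorem:t_1<0t_2=1}.
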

\begin{proof}

By \cite[Theorem 1.2. (b2), (e)]{go}, there are $2-t_0$  strongly exceptional Legendrian Hopf links $K_0\cup K_2$ in $(S^3, \xi_{1/2})$ with $$r_{0}\in\{t_{0}-1, t_{0}+1,\cdots, 1-t_{0}\}, t_2=1, r_{2}=0.$$ Let $K_1$ be a local Legendrian meridian of $K_0$. Then  by Lemma~\ref{Lemma:meridian} there are
$(2-t_0)(-t_1)=t_{0}t_{1}-2t_{1}$ strongly exceptional Legendrian $A_3$ links  in $(S^3, \xi_{1/2})$
with rotation numbers are as listed.
\end{proof}
These $t_0t_1-2t_1$ strongly exceptional Legendrian $A_3$ links are stabilizations of the Legendrian $A_3$ links with $t_{0}=1, t_1=-1, t_2=1$.

The proof of Theorem~\ref{Theorem:t_1<0t_2=1} is completed.
\end{proof}

\subsubsection{$t_1<0$ and $t_2\geq2$.}

The boundary slopes of $\Sigma\times S^1$ are $s_0=t_0$, $s_1=-\frac{1}{t_1}$ and $s_2=-\frac{1}{t_2}$.

\begin{proof}[Proof of Theorem~\ref{Theorem:t_1<0t_2>1}]
The upper bound of strongly exceptional Legendrian $A_3$ links is given by Lemma~\ref{Lemma:t_1<0t_2>1}. We will show that the upper bounds can be attained except in the cases that $t_0=1$, $t_1<0$ and $t_2=3$.

(1) Suppose $t_0\geq3$ and $t_2=2$. 

\begin{lemma}
If $t_0\geq3$, $t_{1}<0$ and $t_2=2$, then there exist $6-6t_1$ strongly exceptional Legendrian $A_3$ links whose rotation numbers and corresponding $d_3$-invariants are $$r_{0}=\pm(t_{0}+1), r_{1}\in\pm\{t_{1}+1, \cdots,-t_{1}-1, -t_{1}+1\}, r_{2}=\pm3; d_3=-\frac{1}{2},$$
$$r_{0}=\pm(t_{0}-1), r_{1}\in\pm\{t_{1}+1, \cdots,-t_{1}-1, -t_{1}+1\}, r_{2}=\pm1; d_3=\frac{3}{2},$$ 
$$r_{0}=\pm(t_{0}-3), r_{1}\in\pm\{t_{1}+1, \cdots,-t_{1}-1, -t_{1}+1\}, r_{2}=\mp1; d_3=\frac{3}{2}.$$ 
\end{lemma}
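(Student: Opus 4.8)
The target of this lemma is to exhibit $6-6t_1$ pairwise distinct strongly exceptional Legendrian $A_3$ links realizing the three listed families of invariants; since $6-6t_1$ is precisely the upper bound furnished by Lemma~\ref{Lemma:t_1<0t_2>1}, this will complete the proof. The $6-6t_1$ links fall into three families of $2(1-t_1)$ each: one with $d_3=-1/2$ and $(r_0,r_2)$ of the form $(\pm(t_0+1),\pm3)$, and two with $d_3=3/2$, one with $(r_0,r_2)=(\pm(t_0-1),\pm1)$ and one with $(r_0,r_2)=(\pm(t_0-3),\mp1)$, in each of which $r_1$ runs over a symmetric $(1-t_1)$-element set whose two extreme values are $r_1=\pm(1-t_1)$. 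Because any two of these data $(t_0,t_1,t_2,r_0,r_1,r_2,d_3)$ differ, it suffices to construct $6-6t_1$ such links.

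For $t_0$ large (say $t_0\ge 4$) I would realize all $6-6t_1$ links from a family of contact surgery diagrams in the spirit of Figure~\ref{Figure:link15inot}, now with slope $-1/2$ recorded on the $K_2$-side and with three sub-diagrams, one for each value $-1/2,3/2,3/2$ of $d_3$. For each diagram I would: reduce the upper branch to an unknot and split the lower branches by the trick of Lemma~\ref{Lemma:topological}, then use \cite{go}-style Kirby calculus to check that $K_0\cup K_1\cup K_2$ is topologically the $A_3$ link; compute the Thurston--Bennequin and rotation numbers from the diagram via \cite[Lemma~6.6]{loss}; and compute the $d_3$-invariant of the ambient $S^3$ via \cite{DGS}. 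Strong exceptionality is then automatic: a Legendrian $A_3$ link presented by a contact surgery diagram of this kind is strongly exceptional (Section~3), since the surgery produces a tight $3$-manifold while a Giroux-torsion domain in the complement would produce an overtwisted disk.

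For $t_0=3$ (and as a consistency check in general) I would instead proceed as in the proof of Lemma~\ref{t0=3t1<0t2=1}. First, list from \cite[Theorem~1.2]{go} the strongly exceptional Legendrian Hopf links $K_0\cup K_2$ with $t_0\ge3$, $t_2=2$: one in $(S^3,\xi_{-1/2})$ with $(r_0,r_2)=(\pm(t_0+1),\pm3)$ and two in $(S^3,\xi_{3/2})$ with $(r_0,r_2)=(\pm(t_0-1),\pm1)$ and $(\pm(t_0-3),\mp1)$, six oriented links in all; attaching to each a local Legendrian meridian $K_1$ of $K_0$ gives, by Lemma~\ref{Lemma:meridian}, $-t_1$ strongly exceptional Legendrian $A_3$ links with $r_1\in\{t_1+1,t_1+3,\dots,-t_1-1\}$ and the same $d_3$ (the meridian lying in a Darboux ball), hence $-6t_1$ links, namely all the non-extremal members. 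The extremal members $r_1=\pm(1-t_1)$ of the first and third families then come straight from Lemma~\ref{Lemma:3.3.1}: with $t_2=2$ its sign patterns $\pm(+--)$ and $\pm(+-+)$ carry rotation numbers $(\pm(t_0+1),\pm(1-t_1),\pm3)$ and $(\pm(t_0-3),\pm(1-t_1),\mp1)$, and the contact $\Sigma\times S^1$'s of Lemma~\ref{Lemma:tight} underlying them are universally tight hence appropriate (Lemma~\ref{Lemma:construction}). The two extremal members of the middle family, with decoration $\pm(+)(\underbrace{-\cdots-}_{-t_1})(-+)$ obtained by flipping one basic slice of the continued-fraction block $L'_2$ (the slices $-1/2,0,\infty$ being pairwise Farey-adjacent, hence shuffleable), I would handle by embedding this exterior, via the Gluing Theorem \cite[Theorem~1.3]{h2}, into one of the appropriate tight $\Sigma\times S^1$'s already constructed and then invoking Lemma~\ref{Lemma:appro}, exactly as in the proofs of Lemmas~\ref{t0=4t1>2t2=1} and~\ref{t0=3t1>2t2=1}; a relative-Euler-class computation of the type in Lemma~\ref{Lemma:relEuler1} and Lemma~\ref{Lemma:3.2.1} gives their rotation numbers $(\pm(t_0-1),\pm(1-t_1),\pm1)$.

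I expect the only genuinely delicate point to be the $d_3$-bookkeeping together with this middle ``mixed-sign'' family. Lemma~\ref{Lemma:3.3.1} does not produce decorations with $r_0=\pm(t_0-1)$, so one must argue tightness of a non-standard decoration by hand (via the explicit embedding above, or directly from the surgery diagram) and then verify that the $6-6t_1$ resulting invariant-tuples are exactly the prescribed ones; in particular one must check, via \cite{DGS}, that the three surgery branches give $d_3=-1/2,3/2,3/2$, and that this is consistent with the fact that attaching a local meridian leaves $d_3$ unchanged while flipping the relevant basic slice in $L'_2$ moves $(r_0,r_2;d_3)$ from $(t_0+1,3;-1/2)$ through $(t_0-1,1;3/2)$ to $(t_0-3,-1;3/2)$. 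Once the count of constructed links reaches $6-6t_1$, comparison with the upper bound of Lemma~\ref{Lemma:t_1<0t_2>1} closes the argument.
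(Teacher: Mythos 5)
Your main construction is exactly the paper's proof: the paper realizes all $6-6t_1$ links, for every $t_0\geq3$, by the contact surgery diagrams of Figure~\ref{Figure:link17inot} (the $t_2=2$ analogue of the Figure~\ref{Figure:link15inot} diagrams you describe, with three sub-diagrams and $k_1+l_1=-t_1$ stabilization choices), computes the rotation numbers and $d_3$ via \cite[Lemma 6.6]{loss} and \cite{DGS}, and gets strong exceptionality for free from the surgery presentation. Since those diagrams contain a chain of $t_0-3$ unknots they are valid down to $t_0=3$, so your separate treatment of $t_0=3$ is unnecessary.

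The $t_0=3$ detour, as written, does contain a genuine gap, concentrated where you yourself flagged it. First, ``universally tight hence appropriate'' is not a valid implication (Giroux torsion domains are universally tight); for the Lemma~\ref{Lemma:3.3.1} links you must get zero torsion from Lemma~\ref{Lemma:appro}, comparing with the exteriors of the meridian-constructed links of Lemma~\ref{Lemma:meridian}, which are strongly exceptional and have the same boundary slopes --- this is how the paper argues in the $t_0\le 2$ cases, and it is an easy fix. Second, and more seriously, the embedding argument for the two extremal members of the middle family (the decoration with mixed signs inside the $t_2=2$ continued fraction block $L'_2$) has no valid target: the paper's gluing trick always adds basic slices to $T_1$ so as to land in the $t_1=2$ problem, where every decoration is already known to be appropriate tight; there is no analogous known case on the $K_2$ side (adding a slice $(T^2\times[0,1],-\tfrac12,-1)$ to $T_2$ produces the boundary slopes of the $t_2=1$ problem, whose factorizations contain no such region and which carries strictly fewer, namely $4-4t_1$, appropriate tight structures), and Lemma~\ref{Lemma:appro} cannot be invoked until tightness of the mixed-sign structure is established. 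In the sibling lemmas with $t_0=2,1$ the paper realizes precisely this family by Legendrian connected sums (Lemma~\ref{Lemma:connected}), and for $t_0\geq3$ it is realized directly by the diagrams; the clean repair of your argument is therefore simply to let your $t_0\geq4$ surgery diagrams run down to $t_0=3$ rather than patching that case by hand.
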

\begin{proof}
There are  $6-6t_1$ strongly exceptional Legendrian $A_3$ links shown in Figure~\ref{Figure:link17inot}. Using the trick of Lemma~\ref{Lemma:topological} and the proof of \cite[Theorem 1.2, (b1), (c3)]{go}, we can show that $K_0\cup K_1\cup K_2$ is a topological $A_3$ link. Their rotation numbers and corresponding $d_3$-invariants are as listed.
\end{proof}
\begin{figure}[htb]
\begin{overpic}
{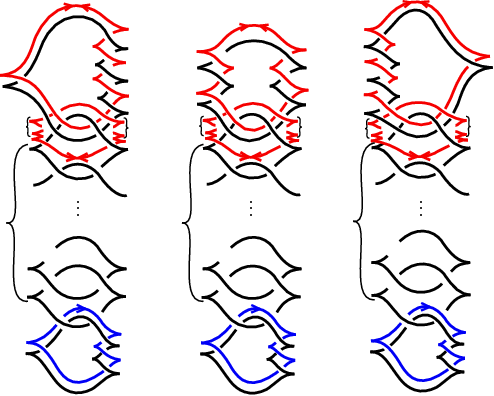}
\put(0, 29){$K_{0}$}
\put(86, 29){$K_{0}$}
\put(166, 29){$K_{0}$}
\put(30, 143){$K_{1}$}
\put(115, 143){$K_{1}$}
\put(197, 143){$K_{1}$}
\put(-2, 165){$K_{2}$}
\put(90, 172){$K_{2}$}
\put(170, 183){$K_{2}$}
\put(2, 128){$k_{1}$}
\put(63, 128){$l_{1}$}
\put(86, 128){$k_{1}$}
\put(145, 128){$l_{1}$}
\put(166, 128){$k_{1}$}
\put(228, 128){$l_{1}$}
\put(-27, 80){$t_{0}-3$}
\put(56, 80){$t_{0}-3$}
\put(140, 80){$t_{0}-3$}
\put(64, 165){$+1$}
\put(64, 112){$-1$}
\put(64, 92){$-1$}
\put(64, 57){$-1$}
\put(64, 45){$-1$}
\put(64, 20){$+1$}
\end{overpic}
\caption{$t_0\geq3$, $t_1\leq 0$, $t_2=2$. $k_1+l_1=-t_1$. If $t_{0}$ is even, then $K_{0}$ and $K_{2}$ bear the same orientation, if $t_{0}$ is odd, then the opposite one.  If $t_{0}$ is odd, then $K_{0}$ and $K_{1}$ bear the same orientation, if $t_{0}$ is even, then the opposite one.}
\label{Figure:link17inot}
\end{figure}

(2) Suppose $t_0=t_2=2$.

\begin{lemma}
If $t_0=t_2=2$ and $t_1<0$, then there exist $6-4t_1$ strongly exceptional Legendrian $A_3$ links whose rotation numbers and corresponding $d_3$-invariants are $$r_{0}=\pm3, r_{1}\in\pm\{t_{1}+1, \cdots,-t_{1}-1, -t_{1}+1\}, r_{2}=\pm3; d_3=-\frac{1}{2},$$
$$r_{0}=\pm1, r_{1}=\pm(1-t_{1}), r_{2}=\pm1; d_3=\frac{3}{2},$$ 
$$r_{0}=\mp1, r_{1}\in\pm\{t_{1}+1, \cdots,-t_{1}-1, -t_{1}+1\}, r_{2}=\mp1; d_3=\frac{3}{2}.$$    
\end{lemma}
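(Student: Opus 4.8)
The plan is to exhibit $6-4t_1$ pairwise non-coarsely-equivalent strongly exceptional Legendrian $A_3$ links with $t_0=t_2=2$, $t_1<0$ and the stated $(r_0,r_1,r_2;d_3)$, and then invoke Lemma~\ref{Lemma:t_1<0t_2>1} (specialized to $t_0=t_2=2$) for the matching upper bound. The links fall into the three families in the statement, and I would build them by combining local Legendrian meridians with Lemma~\ref{Lemma:3.3.1}, exactly as in the neighboring cases (1)--(3) of this subsection.

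First, the $d_3=-\tfrac12$ family. By \cite[Theorem 1.2]{go} there are strongly exceptional Legendrian Hopf links $K_0\cup K_2$ in $(S^3,\xi_{-1/2})$ with $(t_0,r_0)=(2,\pm3)$ and $(t_2,r_2)=(2,\pm3)$. Taking $K_1$ to be a local Legendrian meridian of $K_0$ inside a Darboux ball disjoint from $K_2$, Lemma~\ref{Lemma:meridian} produces strongly exceptional Legendrian $A_3$ links with $t_1<0$ and $r_1\in\{t_1+1,t_1+3,\dots,-t_1-1\}$; together with the two Hopf links this realizes all of the first family except the extremal value $r_1=\pm(1-t_1)$. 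For the $d_3=\tfrac32$ families I would similarly start from the strongly exceptional Legendrian Hopf links $K_0\cup K_2$ with $t_0=t_2=2$ in $(S^3,\xi_{3/2})$ from \cite[Theorem 1.2]{go} and again attach a local Legendrian meridian $K_1$ of $K_0$ to obtain the bulk of the third family (the range $r_1\in\{t_1+1,\dots,-t_1-1\}$), while the two links of the second family $(\pm1,\pm(1-t_1),\pm1)$ are to be realized separately (either from a dedicated Hopf link in $\xi_{3/2}$ or from a contact surgery diagram verified to be the topological $A_3$ link by the trick of Lemma~\ref{Lemma:topological}).

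The remaining links --- those with $r_1=\pm(1-t_1)$ in the first and third families --- are exactly the exceptional $A_3$ links of Lemma~\ref{Lemma:3.3.1} with decorations $\pm(+--)$ and $\pm(+-+)$ after setting $t_0=t_2=2$ in its rotation-number formulas. Their exteriors are tight by Lemma~\ref{Lemma:tight}; to upgrade this to \emph{appropriate} tight I would apply Lemma~\ref{Lemma:appro}, comparing each mixed-sign decoration with one already known to be appropriate tight (for instance one coming from a surgery diagram, since contact surgery forces strong exceptionality), if necessary after enlarging the $L_1'$-factor by gluing on the basic slices $(T^2\times[0,1],-\tfrac1{t_1},-\tfrac1{t_1-1}),\dots,(T^2\times[0,1],-\tfrac13,-\tfrac12)$ via the Gluing Theorem \cite[Theorem 1.3]{h2}, as was done in Lemmas~\ref{t0=4t1>2t2=1} and \ref{t0=2t1>2t2=2}. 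Rotation numbers are then computed from relative Euler classes as in the proof of Lemma~\ref{Lemma:3.2.1}, and $d_3$-invariants via \cite{DGS} for the surgery pictures, via additivity of $d_3$ under contact connected sum, and via invariance under the extension of Lemma~\ref{Lemma:construction} in the remaining cases; finally one counts $2(-t_1)$ links from the first family's meridians, $2(-t_1)+2$ from the third and second families, and $2+2$ extremal ones, totaling $6-4t_1$.

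The hard part will be the exact count rather than the individual constructions. Concretely, one must (i) check that the three constructions overlap only in the places intended, so that no link is produced twice --- this reduces to the fact that an appropriate tight $\Sigma\times S^1$ with these boundary slopes is determined by its decoration up to the sign symmetries of Lemma~\ref{Lemma:slopes2}, so distinct $(t_i,r_i)$ together with $d_3$ force distinct coarse classes --- and (ii) verify appropriate-tightness of the mixed-sign decoration $\pm(+-+)$, which does not come from a surgery diagram and so needs the embedding/gluing argument above. Everything else is the routine Euler-class and $d_3$ bookkeeping already carried out for the adjacent cases.
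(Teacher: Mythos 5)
Your construction matches the paper's for most of the list: the non-extremal links of the first and third families come from local Legendrian meridians attached to the Hopf links $(t_0,r_0)=(2,\pm3)$, $(t_2,r_2)=(2,\pm3)$ in $\xi_{-1/2}$ and $(2,\pm1)$, $(2,\pm1)$ in $\xi_{3/2}$, and the four extremal links $(\pm3,\pm(1-t_1),\pm3)$ and $(\mp1,\pm(1-t_1),\mp1)$ come from Lemma~\ref{Lemma:3.3.1} with decorations $\pm(+)(\underbrace{-\cdots-}_{-t_1})(--)$ and $\pm(+)(\underbrace{-\cdots-}_{-t_1})(++)$, upgraded to appropriate tightness by Lemma~\ref{Lemma:appro}; this is exactly what the paper does (and the extra gluing of slices $(T^2\times[0,1],-\frac{1}{t_1},-\frac{1}{t_1-1}),\dots$ you invoke is not needed here, since with $t_2=2$ the factor $L_2'$ is already a continued fraction block).

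The genuine gap is the second family $(\pm1,\pm(1-t_1),\pm1;\,\tfrac32)$, which you leave to be ``realized separately'' without an actual construction. Neither of your two suggestions works as stated: a local Legendrian meridian on a Hopf link in $\xi_{3/2}$ can only produce $r_1\in\{t_1+1,t_1+3,\dots,-t_1-1\}$ by Lemma~\ref{Lemma:meridian}, so it never reaches the extremal value $\pm(1-t_1)$; and no surgery diagram is exhibited or verified. These two links are also outside the scope of Lemma~\ref{Lemma:tight}/Lemma~\ref{Lemma:3.3.1}, because their exteriors carry the mixed-sign decoration $\pm(+)(\underbrace{-\cdots-}_{-t_1})(+-)$ in $L_2'$, so tightness cannot be inferred from the all-same-sign lemmas plus Lemma~\ref{Lemma:appro} alone. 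The paper fills exactly this hole with the Legendrian connected sum of Lemma~\ref{Lemma:connected}: take $K_0'\cup K_1$ in $(S^3,\xi_{1/2})$ with $(t_0',r_0')=(1,0)$, $t_1<0$, $r_1=\pm(1-t_1)$, and $K_0''\cup K_2$ in $(S^3,\xi_{1/2})$ with $(t_0'',r_0'')=(0,\pm1)$, $(t_2,r_2)=(2,\pm1)$; the sum has $t_0=1+0+1=2$, $r_0=\pm1$, lives in $\xi_{3/2}$ by additivity of $d_3$, and is strongly exceptional by Lemma~\ref{Lemma:connected}. Without this (or an equivalent explicit construction) your count of $6-4t_1$ is not established, since only $6-4t_1-2$ links have actually been produced.
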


\begin{proof}
If $t_0=t_2=2$, then by \cite[Theorem 1.2, (c2)]{go}, there are two strongly exceptional Legendrian Hopf links $K_0\cup K_2$ in $(S^3, \xi_{-\frac{1}{2}})$ with $(t_0, r_0)=(2, \pm3)$ and $(t_2, r_2)=(2, \pm3)$, and two strongly exceptional Legendrian Hopf links $K_0\cup K_2$ in $(S^3, \xi_{\frac{3}{2}})$ with $(t_0, r_0)=(2, \pm1)$ and $(t_2, r_2)=(2, \pm1)$. Let $K_1$ be a local Legendrian meridian of $K_0$, then  by Lemma~\ref{Lemma:meridian} there are $-4t_1$ strongly exceptional Legendrian $A_3$ links. Their rotation numbers and corresponding $d_3$-invariants are
$$r_{0}=\pm3, r_{1}\in\{t_{1}+1, t_{1}+3,\cdots,-t_{1}-1\}, r_{2}=\pm3; d_3=-\frac{1}{2},$$ 
$$r_{0}=\pm1, r_{1}\in\{t_{1}+1, t_{1}+3,\cdots,-t_{1}-1\}, r_{2}=\pm1; d_3=\frac{3}{2}. $$
By Lemma~\ref{Lemma:3.3.1} and Lemma~\ref{Lemma:appro}, there are  $4$ strongly exceptional Legendrian $A_3$ links whose rotation numbers and corresponding $d_3$-invariants $(r_0, r_1, r_2; d_3)$  are $(\pm3, \pm(1-t_{1}), \pm3; -\frac{1}{2})$ and $(\mp1, \pm(1-t_{1}), \mp1; \frac{3}{2}).$
The decorations of their exteriors are  $$\pm(+)(\underbrace{- \cdots -}_{-t_{1}})(--)~\text{and}~\pm(+)(\underbrace{- \cdots -}_{-t_{1}})(++),$$ respectively.

By \cite[Theorem 1.2, (b2), (d)]{go}, there are two Legendrian Hopf links in $(S^3, \xi_{\frac{1}{2}})$ with $(t'_0, r'_0)=(1,0), t_{1}<0, r_{1}=\mp(t_{1}-1)$, there are two Legendrian Hopf links in $(S^3, \xi_{\frac{1}{2}})$ with $(t'_0, r'_0)=(0,\pm1), (t_2, r_2)=(2,\pm1)$. By Lemma~\ref{Lemma:connected}, we can construct  $2$  strongly exceptional Legendrian $A_3$ in $(S^3, \xi_{\frac{3}{2}})$ links with $t_0=t_2=2, t_1<0$. Their rotation numbers $(r_0, r_1, r_2)$  are 
$(\pm1, \pm(1-t_{1}), \pm1).$ 
The decorations of their exteriors are  $$\pm(+)(\underbrace{- \cdots -}_{-t_{1}})(+-).$$
\end{proof}

So there are $6-4t_1$ strongly exceptional Legendrian $A_3$ links with $t_{0}=2, t_{1}<0, t_{2}=2$. As a corollary, the $6-4t_1$ contact structures on $\Sigma\times S^1$ with boundary slopes $s_{0}=2, s_{1}=-\frac{1}{t_1}, s_{2}=-\frac{1}{2}$  listed in Lemma~\ref{Lemma:t_1<0t_2>1} are all appropriate tight.

(3) Suppose $t_0=1$ and $t_2=2$.

\begin{lemma}
If $t_0=1$, $t_1<0$ and $t_2=2$, there exist $6-2t_1$ strongly exceptional Legendrian $A_3$ links whose rotation numbers and corresponding $d_3$-invariants are $$r_{0}=\pm2, r_{1}\in\pm\{t_{1}+1, \cdots,-t_{1}-1, -t_{1}+1\}, r_{2}=\pm3; d_3=-\frac{1}{2},$$ 
$$r_{0}=\mp2, r_{1}=\pm(1-t_{1}), r_{2}=\mp1; d_3=\frac{3}{2},$$
$$r_{0}=0, r_{1}=\pm(1-t_{1}), r_{2}=\pm1; d_3=\frac{3}{2}.$$
\end{lemma}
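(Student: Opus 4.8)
The plan is to realize $6-2t_1$ strongly exceptional Legendrian $A_3$ links carrying the listed data $(r_0,r_1,r_2;d_3)$. Since Lemma~\ref{Lemma:t_1<0t_2>1} already supplies $6-2t_1$ as an upper bound in this case and the listed triples are pairwise distinct, this is all that is needed. I would organize the construction exactly as in the previously treated case $t_0=t_2=2$, splitting the list into three groups according to which tool from Section~3 produces them.

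First I would handle the family with $r_0=\pm 2$, $r_2=\pm 3$, $d_3=-\tfrac12$. Starting from the strongly exceptional Legendrian Hopf link $K_0\cup K_2$ in $(S^3,\xi_{-1/2})$ with $(t_0,r_0)=(1,\pm 2)$ and $(t_2,r_2)=(2,\pm 3)$ provided by \cite[Theorem 1.2]{go} and attaching a local Legendrian meridian $K_1$ of $K_0$, Lemma~\ref{Lemma:meridian} yields $2(-t_1)=-2t_1$ strongly exceptional Legendrian $A_3$ links realizing every value $r_1\in\{t_1+1,t_1+3,\dots,-t_1-1\}$; the two remaining members of this family, those with $r_1=\pm(1-t_1)$, I would obtain from the decoration $\pm(+)(\underbrace{- \cdots -}_{-t_{1}})(--)$ of Lemma~\ref{Lemma:3.3.1} with $t_0=1$, $t_2=2$, and I would check that these exteriors are appropriate tight by comparing, via Lemma~\ref{Lemma:appro}, with a meridian-construction exterior whose factorization differs from it only in the signs of the basic slices of $L'_1$.

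Next I would treat the two links with $(r_0,r_1,r_2;d_3)=(\mp 2,\pm(1-t_1),\mp 1;\tfrac32)$ using the decoration $\pm(+)(\underbrace{- \cdots -}_{-t_{1}})(+-)$ of Lemma~\ref{Lemma:3.3.1}, whose $d_3$-invariant is independent of $t_0$ and equals $\tfrac32$ just as in the case $t_0=t_2=2$, with appropriateness again coming from Lemma~\ref{Lemma:appro}. Finally, the two links with $r_0=0$, namely $(0,\pm(1-t_1),\pm 1;\tfrac32)$, have the mixed $L'_2$-block $(+-)$ because $r_2=\pm 1$ is the middle value when $t_2=2$, so they are not among the six tight models of Lemma~\ref{Lemma:3.3.1}; I would produce them as a Legendrian connected sum $(K'_0\# K''_0)\cup K_1\cup K_2$ via Lemma~\ref{Lemma:connected}, choosing strongly exceptional Legendrian Hopf links $K'_0\cup K_1$ and $K''_0\cup K_2$ from \cite[Theorem 1.2]{go} with $t_1<0$, $t_2=2$, $r_1=\pm(1-t_1)$, $r_2=\pm 1$ and with $t'_0+t''_0+1=1$, $r'_0+r''_0=0$, $d_3(\xi')=d_3(\xi'')=\tfrac12$, so that $d_3=\tfrac12+\tfrac12+\tfrac12=\tfrac32$ by the connected-sum formula. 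Because the constraint $r_0=0$ couples the sign choices of the two factors, exactly the two desired links arise.

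I expect the main difficulty to be the bookkeeping that makes these three groups exhaust the $6-2t_1$ links exactly once each: pinning down the sign conventions on the Hopf-link factors from \cite[Theorem 1.2]{go} so that the connected sum does not also produce the spurious triples $(0,\pm(1-t_1),\mp 1)$, identifying the decoration of each meridian-construction exterior precisely enough for Lemma~\ref{Lemma:appro} to be invoked, and carrying out the rotation-number computations from the decorations by the relative-Euler-class recipe of Lemma~\ref{Lemma:3.2.1} together with the $d_3$-computations by the algorithm of \cite{DGS}. These steps are routine in principle but delicate in detail.
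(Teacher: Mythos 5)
Your overall strategy is the same as the paper's: the $-2t_1$ links with $d_3=-\tfrac12$ come from attaching a local Legendrian meridian $K_1$ to the Hopf link $K_0\cup K_2$ with $(t_0,r_0)=(1,\pm2)$, $(t_2,r_2)=(2,\pm3)$; four more links come from Lemma~\ref{Lemma:3.3.1} together with Lemma~\ref{Lemma:appro}; and the pair with $r_0=0$ comes from a Legendrian connected sum of two Hopf links in $(S^3,\xi_{\frac12})$ via Lemma~\ref{Lemma:connected}, giving $d_3=\tfrac12+\tfrac12+\tfrac12=\tfrac32$. The first and third steps are exactly what the paper does.

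The middle step, however, contains a genuine error as written. You propose to obtain the links $(\mp2,\pm(1-t_1),\mp1;\tfrac32)$ from "the decoration $\pm(+)(\underbrace{-\cdots-}_{-t_1})(+-)$ of Lemma~\ref{Lemma:3.3.1}". That decoration is not among the six models of Lemma~\ref{Lemma:3.3.1}: by Lemma~\ref{Lemma:tight} all basic slices of $L'_2$ must carry the same sign there, so the mixed block $(+-)$ is excluded. Moreover, the relative Euler class computation assigns to $\pm(+)(\underbrace{-\cdots-}_{-t_1})(+-)$ the rotation numbers $(0,\pm(1-t_1),\pm1)$ — precisely the $r_0=0$ pair that you yourself (correctly) attribute to the mixed block and then build by connected sum, so your step 2 and step 3 are in contradiction. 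The links $(\mp2,\pm(1-t_1),\mp1)$ are instead realized by the third family $\pm(+-+)$ of Lemma~\ref{Lemma:3.3.1}, i.e.\ decoration $\pm(+)(\underbrace{-\cdots-}_{-t_1})(++)$, whose rotation numbers at $t_0=1$, $t_2=2$ are $r_0=\pm(t_0-3)=\mp2$, $r_1=\pm(1-t_1)$, $r_2=\pm(1-t_2)=\mp1$. Note also that you cannot patch the step by appealing to Lemma~\ref{Lemma:appro} alone: that lemma only transfers the zero-torsion (appropriate) property between contact structures that are already known to be tight, so it cannot supply tightness of a decoration not covered by Lemma~\ref{Lemma:3.3.1} or by an explicit construction; this is exactly why the mixed decoration $\pm(+)(\underbrace{-\cdots-}_{-t_1})(+-)$ must be handled by the connected-sum construction, as both you and the paper do for the $r_0=0$ pair. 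Replacing $(+-)$ by $(++)$ in your second step repairs the argument and recovers the paper's proof.
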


\begin{proof}

If $t_0=1$ and $t_2=2$, then, by \cite[Theorem 1.2]{go}, there are two strongly exceptional Legendrian Hopf links $K_{0}\cup K_{2}$ with $(t_0, r_0)=(1, \pm2)$ and $(t_2,r_2)=(2,\pm3)$ in $(S^3, \xi_{-\frac{1}{2}})$. Let $K_1$ be a local Legendrian meridian of $K_0$, then by Lemma~\ref{Lemma:meridian}  we can realize $-2t_1$ strongly exceptional Legendrian $A_3$ links in $(S^3, \xi_{-\frac{1}{2}})$ whose rotation numbers are $$r_{0}=\pm2, r_{1}\in\{t_{1}+1, t_{1}+3, \cdots, -t_{1}-1\}, r_{2}=\pm3.$$

By Lemma~\ref{Lemma:3.3.1} and Lemma~\ref{Lemma:appro}, there are $4$ strongly exceptional Legendrian $A_3$ links whose rotation numbers and corresponding $d_3$-invariants $(r_0, r_1, r_2; d_3)$ are $(\pm2, \pm(1-t_{1}), \pm3; -\frac{1}{2}) $ and $ (\mp2, \pm(1-t_{1}), \mp1; \frac{3}{2}).$ The decorations of their exteriors are  $$\pm(+)(\underbrace{- \cdots -}_{-t_{1}})(--)~\text{and}~\pm(+)(\underbrace{- \cdots -}_{-t_{1}})(++),$$ respectively.

By \cite[Theorem 1.2, (d)]{go}, there are two Legendrian Hopf links in $(S^3, \xi_{\frac{1}{2}})$ with $(t'_0, r'_0)=(0,\mp1), t_{1}<0, r_{1}=\mp(t_{1}-1)$, there are two Legendrian Hopf links in $(S^3, \xi_{\frac{1}{2}})$ with $(t'_0, r'_0)=(0,\pm1), (t_2, r_2)=(2,\pm1)$. By Lemma~\ref{Lemma:connected}, we can construct  $2$  strongly exceptional Legendrian $A_3$ links in $(S^3, \xi_{\frac{3}{2}})$ with $t_0=1, t_1<0, t_2=2$. Their rotation numbers $(r_0, r_1, r_2)$ are  
$(0, \pm(1-t_{1}), \pm1).$
The decorations of their exteriors are  $$\pm(+)(\underbrace{- \cdots -}_{-t_{1}})(+-).$$
\end{proof}

So there are $6-2t_1$ strongly exceptional Legendrian $A_3$ links with $t_{0}=1, t_{1}<0, t_{2}=2$. As a corollary, the $6-2t_1$ contact structures on $\Sigma\times S^1$ with boundary slopes $s_{0}=1, s_{1}=-\frac{1}{t_1}, s_{2}=-\frac{1}{2}$  listed in Lemma~\ref{Lemma:t_1<0t_2>1} are all appropriate tight.

(4) Suppose $t_0\geq3$ and $t_2\geq3$.

\begin{lemma}
If $t_0\geq3$, $t_1<0$ and $t_2\geq3$, then there are $8-8t_1$ strongly exceptional Legendrian $A_3$ links whose rotation numbers and corresponding $d_3$-invariants are 
$$r_{0}=\pm(t_{0}+1), r_{1}\in\pm\{t_{1}+1, t_{1}+3,\cdots,-t_{1}+1\}, r_{2}=\pm(t_{2}+1); d_3=-\frac{1}{2},$$
$$r_{0}=\pm(t_{0}-1), r_{1}\in\pm\{t_{1}+1, t_{1}+3,\cdots,-t_{1}+1\}, r_{2}=\pm(t_{2}-1); d_3=\frac{3}{2},$$
$$r_{0}=\pm(t_{0}-3), r_{1}\in\pm\{t_{1}+1, t_{1}+3,\cdots,-t_{1}+1\}, r_{2}=\pm(1-t_{2}); d_3=\frac{3}{2},$$ 
$$r_{0}=\pm(t_{0}-1), r_{1}\in\pm\{t_{1}+1, t_{1}+3,\cdots,-t_{1}+1\}, r_{2}=\pm(3-t_{2}); d_3=\frac{3}{2}.$$
\end{lemma}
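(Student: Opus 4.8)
The plan is to realize all $8-8t_1$ links simultaneously from a single contact surgery diagram, in exactly the spirit of the cases $t_0\geq 3,\ t_2=2$ (Figure~\ref{Figure:link17inot}) and $t_0\geq 4,\ t_1\geq 3,\ t_2\geq 3$ (Lemma~\ref{t0>3t1>2t2>2}). First, note that since $t_0\geq 3$, $t_1<0$ and $t_2\geq 3$ we have $t_0+\lceil-\tfrac1{t_1}\rceil+\lceil-\tfrac1{t_2}\rceil=t_0+1\geq 4$, so by Lemma~\ref{vertical} every appropriate tight contact structure on $\Sigma\times S^1$ here has a $0$-twisting vertical Legendrian circle, and by Lemma~\ref{Lemma:t_1<0t_2>1} there are at most $8-8t_1$ of them; thus it suffices to produce $8-8t_1$ pairwise non-coarsely-equivalent strongly exceptional Legendrian $A_3$ links. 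I would draw a diagram with three blocks: a $K_0$-block built from a chain of $t_0-3$ unknots together with two Legendrian push-offs carrying contact coefficient $+1$; a $K_2$-block built from a chain of $t_2-3$ unknots realizing the slope $-\tfrac1{t_2}$; and a $K_1$-block built from $-t_1$ meridional strands split as $k_1+l_1=-t_1$, which gives the $(1-t_1)$-fold family. The two sign choices on the $K_0$-block and the four tight structures on the $L'_2$ part (which for $t_2\geq 3$ is a union of two single-basic-slice continued fraction blocks) then account for the $2\cdot 4\cdot(1-t_1)=8-8t_1$ diagrams, matching the upper bound and the decorations $(\pm)(\underbrace{\pm\cdots\pm}_{-t_1})((\pm)(\pm))$.

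Second, I would check that the link underlying each such diagram is the $A_3$ link. This is the usual Kirby-calculus reduction: using the trick of Lemma~\ref{Lemma:topological}, the $(t_0-3)$-chain on $K_0$ and the $(t_2-3)$-chain on $K_2$ blow down to unknots, and each $(+1)$-framed push-off cancels the adjacent $(-1)$-framed unknot, leaving $K_0\cup K_1\cup K_2$ with $K_1,K_2$ meridians of $K_0$, exactly as in the proofs of \cite[Theorem~1.2, (b1), (c3), (c4)]{go}.

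Third come the classical invariants. The Thurston--Bennequin numbers are $t_0,t_1,t_2$ by construction. The rotation numbers are read off from the diagram using \cite[Lemma~6.6]{loss}, or equivalently as relative Euler numbers on the Seifert surfaces of $K_0,K_1,K_2$ as in Lemmas~\ref{Lemma:relEuler1} and \ref{Lemma:3.2.1}; the computation is the $t_2\geq 3$ bookkeeping of \cite[Theorem~1.2, (c4)]{go} with the additional $K_1$-meridian contributing the range $\pm\{t_1+1,t_1+3,\dots,-t_1+1\}$, and it produces exactly the four listed families. The $d_3$-invariant of the ambient $S^3$ is then computed for each family by the algorithm of \cite{DGS}, giving $d_3=-\frac12$ in the family with $r_2=\pm(t_2+1)$ and $d_3=\frac32$ in the three families with $r_2=\pm(t_2-1),\ \pm(1-t_2),\ \pm(3-t_2)$.

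The step I expect to be the main obstacle is the diagrammatic one: arranging a single picture that carries a \emph{negative} slope $-\tfrac1{t_1}$ on $K_1$ (via parallel meridional strands, whence the split $k_1+l_1=-t_1$) together with a slope $-\tfrac1{t_2}$ with $t_2\geq 3$ on $K_2$ (via a $(t_2-3)$-stabilization chain), in such a way that the reduction to the $A_3$ link stays transparent and the $8-8t_1$ resulting links are visibly pairwise inequivalent. Exactness then reduces to the finite check that the listed triples $(r_0,r_1,r_2)$ — together with the $d_3$ values — are pairwise distinct: the $r_0$-coordinate ranges over $\pm(t_0+1),\pm(t_0-1),\pm(t_0-3)$, the value $t_0-1$ occurring only with $r_2\in\{\pm(t_2-1),\pm(3-t_2)\}$, which are distinct because $t_2\geq 3$, so no two families collide, and within a family the links are separated by $r_1$ or by an overall sign. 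Combined with the upper bound of Lemma~\ref{Lemma:t_1<0t_2>1}, this gives exactly $8-8t_1$.
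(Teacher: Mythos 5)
Your proposal follows essentially the same route as the paper: the paper proves this lemma by exhibiting exactly the surgery diagrams you describe (its Figure with the $(t_0-3)$-chain on $K_0$, the $(t_2-3)$-chain on $K_2$, and $k_1+l_1=-t_1$ meridional strands for $K_1$), verifying the topological link type via Lemma~\ref{Lemma:topological} and the arguments of \cite[Theorem 1.2, (b1), (c4)]{go}, computing $(r_0,r_1,r_2)$ and $d_3$ from the diagrams, and matching the count $8-8t_1$ against the upper bound of Lemma~\ref{Lemma:t_1<0t_2>1}. No essential difference or gap.
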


\begin{proof}
If $t_0\geq3$ and $t_2\geq3$, then there are $8-8t_1$ strongly exceptional Legendrian $A_3$ links shown in Figure~\ref{Figure:link16inot}. Using the trick of Lemma~\ref{Lemma:topological} and the proof of \cite[Theorem 1.2, (b1), (c4)]{go}, we can show that $K_0\cup K_1\cup K_2$ is a topological $A_3$ link. Their rotation numbers are as listed.
\end{proof}

\begin{figure}[htb]
\begin{overpic}
{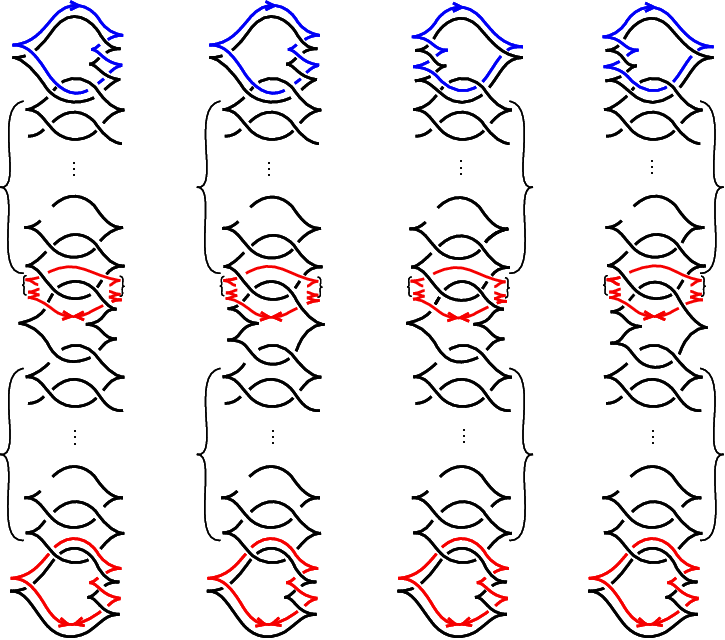}
\put(0, 292){$K_{0}$}
\put(95, 292){$K_{0}$}
\put(245, 292){$K_{0}$}
\put(335, 292){$K_{0}$}
\put(26, 145){$K_{1}$}
\put(128, 145){$K_{1}$}
\put(215, 145){$K_{1}$}
\put(313, 145){$K_{1}$}
\put(26, 14){$K_{2}$}
\put(122, 14){$K_{2}$}
\put(215, 14){$K_{2}$}
\put(305, 14){$K_{2}$}
\put(1, 166){$k_{1}$}
\put(61, 166){$l_{1}$}
\put(96, 166){$k_{1}$}
\put(155, 166){$l_{1}$}
\put(187, 166){$k_{1}$}
\put(247, 166){$l_{1}$}
\put(280, 166){$k_{1}$}
\put(341, 166){$l_{1}$}
\put(60, 235){$-1$}
\put(60, 250){$-1$}
\put(60, 280){$+1$}
\put(-30, 85){$t_{2}-3$}
\put(-30, 215){$t_{0}-3$}
\put(350, 85){$t_{2}-3$}
\put(350, 215){$t_{0}-3$}
\put(65, 85){$t_{2}-3$}
\put(65, 215){$t_{0}-3$}
\put(260, 85){$t_{2}-3$}
\put(260, 215){$t_{0}-3$}
\put(60, 197){$-1$}
\put(60, 180){$-1$}
\put(60, 140){$-1$}
\put(60, 120){$-1$}
\put(60, 105){$-1$}
\put(60, 65){$-1$}
\put(60, 45){$-1$}
\put(60, 5){$+1$}

\end{overpic}
\caption{$t_0\geq3$, $t_1\leq 0$, $t_2\geq3$. $k_1+l_1=-t_1$. For $t_{0}+t_{2}$ even, $K_0$ and $K_2$ bear the same orientation, for $t_{0}+t_{2}$ odd, the opposite one. For $t_{0}$ odd, $K_0$ and $K_1$ bear the same orientation, for $t_{0}$ even, the opposite one. }
\label{Figure:link16inot}
\end{figure}

(5) Suppose $t_0=2$ and $t_2\geq3$.

\begin{lemma}
If $t_0=2$, $t_1<0$ and $t_2\geq3$, then there exist $8-6t_1$ strongly exceptional Legendrian $A_3$ links whose rotation numbers and corresponding $d_3$-invariants are 
$$r_{0}=\pm3, r_{1}\in\pm\{t_{1}+1, t_{1}+3,\cdots,-t_{1}+1\}, r_{2}=\pm(t_{2}+1); d_3=-\frac{1}{2},$$
$$r_{0}=\pm1, r_{1}\in\pm\{t_{1}+1, t_{1}+3,\cdots,-t_{1}+1\}, r_{2}=\pm(t_{2}-1); d_3=\frac{3}{2},$$
$$r_{0}=\mp1, r_{1}=\pm(1-t_{1}), r_{2}=\pm(1-t_{2}); d_3=\frac{3}{2},$$ 
$$r_{0}=\pm1, r_{1}\in\pm\{t_{1}+1, t_{1}+3,\cdots,-t_{1}+1\}, r_{2}=\pm(3-t_{2}); d_3=\frac{3}{2}.$$
\end{lemma}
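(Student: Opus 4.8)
The upper bound $8-6t_1$ is supplied by Lemma~\ref{Lemma:t_1<0t_2>1}, so the plan is to construct $8-6t_1$ strongly exceptional Legendrian $A_3$ links realizing exactly the listed Thurston-Bennequin invariants, rotation numbers and $d_3$-invariants. Because these triples $(r_0,r_1,r_2)$ together with the $d_3$-invariants are pairwise distinct, this also makes the count sharp and, as a corollary, shows that all of the contact structures on $\Sigma\times S^1$ with boundary slopes $2,-\frac{1}{t_1},-\frac{1}{2}$ counted in Lemma~\ref{Lemma:t_1<0t_2>1} are appropriate tight. I would organize the construction by the value of $r_1$: the links with $r_1$ in the interior range $\{t_1+1,t_1+3,\dots,-t_1-1\}$, which occur on lines one, two and four, and the eight ``extremal'' links with $r_1=\pm(1-t_1)$, namely the ends of lines one, two and four together with all of line three.

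For the interior links I would start from the strongly exceptional Legendrian Hopf links $K_0\cup K_2$ with $t_0=2$, $t_2\geq3$ furnished by \cite[Theorem 1.2]{go}: those with $(r_0,r_2)=(\pm3,\pm(t_2+1))$ in $(S^3,\xi_{-1/2})$, those with $(r_0,r_2)=(\pm1,\pm(t_2-1))$ in $(S^3,\xi_{3/2})$, and those with $(r_0,r_2)=(\pm1,\pm(3-t_2))$ in $(S^3,\xi_{3/2})$. Adjoining to each a local Legendrian meridian $K_1$ of $K_0$ and applying Lemma~\ref{Lemma:meridian} yields, for each sign, $-t_1$ strongly exceptional Legendrian $A_3$ links with $r_1\in\{t_1+1,t_1+3,\dots,-t_1-1\}$ in the same overtwisted $S^3$; this accounts for $3\cdot2\cdot(-t_1)=-6t_1$ of the links, matching lines one, two and four away from the extremal value of $r_1$.

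The remaining eight links have $r_1=\pm(1-t_1)$, so their exteriors carry an $L'_1$ whose $-t_1$ basic slices all have one sign. The four that in addition have monochromatic $L'_2$, namely $(\pm3,\pm(1-t_1),\pm(t_2+1);-1/2)$ (ends of line one) and $(\mp1,\pm(1-t_1),\pm(1-t_2);3/2)$ (line three), are delivered by Lemma~\ref{Lemma:3.3.1} (which relies on Lemma~\ref{Lemma:tight}) together with Lemma~\ref{Lemma:appro}, upon specializing $t_0=2$ in the decorations $\pm(+)(\underbrace{-\cdots-}_{-t_1})((-)(-))$ and $\pm(+)(\underbrace{-\cdots-}_{-t_1})((+)(+))$ and reading off the rotation numbers from the relative Euler class as in Lemma~\ref{Lemma:relEuler1}. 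The four that carry a mixed $L'_2$-decoration, namely $(\pm1,\pm(1-t_1),\pm(t_2-1);3/2)$ and $(\pm1,\pm(1-t_1),\pm(3-t_2);3/2)$ (ends of lines two and four), I would treat exactly as in Lemmas~\ref{t0=3t1>2t2>2} and~\ref{t0=2t1>2t2>2}: write down the contact $\Sigma\times S^1$ with boundary slopes $2,-\frac{1}{t_1},-\frac{1}{t_2}$ and decoration $\pm(+)(\underbrace{-\cdots-}_{-t_1})((-)(+))$, respectively $\pm(+)(\underbrace{-\cdots-}_{-t_1})((+)(-))$, and show it is appropriate tight by embedding it, via the Gluing Theorem \cite[Theorem 1.3]{h2}, into the appropriate tight $\Sigma\times S^1$ with slopes $2,-\frac{1}{t_1},-\frac{1}{2}$ and decoration $\pm(+)(\underbrace{-\cdots-}_{-t_1})(+-)$ constructed in the case $t_0=t_2=2$ above, by adjoining the basic slices $(T^2\times[0,1],-\frac{1}{t_2},-\frac{1}{t_2-1}),\dots,(T^2\times[0,1],-\frac{1}{3},-\frac{1}{2})$ along $T_2$; for the line-two links a direct Legendrian connected sum as in Lemma~\ref{Lemma:connected} is also available. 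Capping off each exterior as in Lemma~\ref{Lemma:construction} then produces the eight links, whose $d_3$-invariants I would compute by the algorithm of \cite{DGS} and check against the stated values.

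The main obstacle I anticipate is this last step: the mixed-$L'_2$ exteriors are not handed to us by a contact surgery diagram, so their strong exceptionality is not automatic and has to be deduced from the embedding into the already-understood $t_0=t_2=2$ model; one must verify that the adjoined basic slices really do fit inside that appropriate tight $\Sigma\times S^1$ and create no boundary-parallel convex torus of positive Giroux torsion. A minor additional point is a short separate check of the boundary case $t_2=3$. Once all $8-6t_1$ links are in place, comparison with the upper bound of Lemma~\ref{Lemma:t_1<0t_2>1} finishes the proof.
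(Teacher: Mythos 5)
Your proposal follows essentially the same route as the paper: the $-6t_1$ non-extremal links come from adjoining a local Legendrian meridian $K_1$ to the Geiges--Onaran Hopf links with $t_0=2$, $t_2\geq3$ via Lemma~\ref{Lemma:meridian}, the four extremal links with monochromatic $L'_2$ come from Lemma~\ref{Lemma:3.3.1} together with Lemma~\ref{Lemma:appro}, and the four with mixed $L'_2$-decoration are shown appropriate tight exactly as in the paper by gluing the basic slices $(T^2\times[0,1],-\frac{1}{t_2},-\frac{1}{t_2-1}),\dots,(T^2\times[0,1],-\frac{1}{3},-\frac{1}{2})$ onto the $t_2=2$ model (your $(+-)$ versus the paper's $(-+)$ is immaterial since that $L'_2$ is a continued fraction block). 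Your extra caution about $t_2=3$ is unnecessary here (the decoration identification only arises when $t_0=1$), and the $d_3$-values of the non-diagram links are pinned down by Lemma~\ref{Lemma:3.3.1}/\ref{Lemma:appro} rather than a direct DGS computation, but these are cosmetic differences.
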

\begin{proof}
If $t_0=2$ and $t_2\geq3$, then, by \cite[Theorem 1.2, (c3)]{go}, there are two Legendrian Hopf links $K'_0\cup K_2$ in  $(S^3, \xi_{-\frac{1}{2}})$ with $(t'_0, r'_0)=(2,\pm3), t_2\geq3, r_{2}=\pm(t_{2}+1)$, two Legendrian Hopf links $K'_0\cup K_2$ in  $(S^3, \xi_{\frac{3}{2}})$ with $(t'_0, r'_0)=(2,\pm1), t_2\geq3, r_{2}=\pm(t_{2}-1)$, two Legendrian Hopf links $K'_0\cup K_2$ in  $(S^3, \xi_{\frac{3}{2}})$ with $(t'_0, r'_0)=(2,\mp1), t_2\geq3, r_{2}=\pm(t_{2}-3)$.  Let $K_1$ be a local Legendrian meridian of $K_0$; then by Lemma~\ref{Lemma:meridian}, we can realize $-6t_1$ strongly exceptional Legendrian representatives. There are $-2t_1 $ of them belong to $(S^3, \xi_{-\frac{1}{2}})$ with rotation numbers 
$$r_{0}=\pm3, r_{1}\in\{t_{1}+1, t_{1}+3,\cdots,-t_{1}-1\}, r_{2}=\pm(t_{2}+1). $$
There are $-4t_1 $ of them belong to $(S^3, \xi_{\frac{3}{2}})$ with rotation numbers
$$r_{0}=\pm1, r_{1}\in\{t_{1}+1, t_{1}+3,\cdots,-t_{1}-1\}, r_{2}=\pm(t_{2}-1); $$ 
$$r_{0}=\mp1, r_{1}\in\{t_{1}+1, t_{1}+3,\cdots,-t_{1}-1\}, r_{2}=\pm(t_{2}-3). $$ 

By Lemma~\ref{Lemma:3.3.1} and Lemma~\ref{Lemma:appro}, there are  $4$ strongly exceptional Legendrian $A_3$ links whose rotation numbers and corresponding $d_3$-invariants $(r_0, r_1, r_2; d_3)$ are $(\pm3, \pm(1-t_{1}), \pm(t_{2}+1); -\frac{1}{2}) $ and $(\mp1, \pm(1-t_{1}), \pm(1-t_{2}); \frac{3}{2}).$
The decorations of their exteriors are  $$\pm(+)(\underbrace{- \cdots -}_{-t_{1}})((-)(-))~\text{and}~ \pm(+)(\underbrace{- \cdots -}_{-t_{1}})((+)(+)),$$ respectively.

There are $4$ strongly exceptional Legendrian $A_3$ links  whose rotation numbers and corresponding $d_3$-invariants  $(r_0, r_1, r_2; d_3)$  are 
$(\pm1, \pm(1-t_{1}), \pm(t_{2}-1); \frac{3}{2})  $ and  $(\pm1, \pm(1-t_{1}), \pm(3-t_{2}); \frac{3}{2}).$
The decorations of their exteriors are  $$\pm(+)(\underbrace{- \cdots -}_{-t_{1}})((-)(+))~ \text{and} ~\pm(+)(\underbrace{- \cdots -}_{-t_{1}})((+)(-)),$$ respectively. 
These exteriors are appropriate tight since they can be embedded into an appropriate tight contact $\Sigma\times S^1$ with boundary slopes $2, -\frac{1}{t_1}, -\frac{1}{2}$ and decorations $\pm(+)(\underbrace{- \cdots -}_{-t_{1}})(-+)$ by adding basic slices $(T^2\times[0,1], -\frac{1}{t_{2}}, -\frac{1}{t_{2}-1})$, $\cdots$, $(T^2\times[0,1], -\frac{1}{3}, -\frac{1}{2})$ to the boundary $T_2$, respectively. 
\end{proof}


(6)  Suppose $t_0=1$ and $t_2\geq3$.

\begin{lemma}
If $t_0=1$, $t_1<0$ and $t_2\geq4$ (resp. $t_2=3$), then there exist $8-4t_1$ (resp. $8-3t_1$) strongly exceptional Legendrian $A_3$ links whose rotation numbers and corresponding $d_3$-invariants are 
$$r_{0}=\pm2, r_{1}\in\{t_{1}+1, t_{1}+3, \cdots, -t_{1}-1\}\cup\{\pm(1-t_1)\}, r_{2}=\pm(t_{2}+1); d_3=-\frac{1}{2},$$
$$r_{0}=0, r_{1}\in\{t_{1}+1, t_{1}+3, \cdots, -t_{1}-1\}\cup\{\pm(t_{1}-1)\}, r_{2}=\pm(t_{2}-3); d_3=\frac{3}{2},$$
$$r_{0}=\mp2, r_{1}=\pm(1-t_{1}), r_{2}=\pm(1-t_{2}); d_3=\frac{3}{2},$$
$$r_{0}=0, r_{1}=\pm(1-t_{1}), r_{2}=\pm(t_{2}-1); d_3=\frac{3}{2}.$$
\end{lemma}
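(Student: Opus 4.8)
The strategy is to realize exactly the upper bound of Lemma~\ref{Lemma:t_1<0t_2>1} ($8-4t_1$ when $t_2\geq4$, $8-3t_1$ when $t_2=3$), organizing the links by the sign pattern of the basic slices in a factorization $\Sigma\times S^1=L'_0\cup L'_1\cup L'_2\cup\Sigma'\times S^1$. In every case the Thurston--Bennequin invariants are forced, the rotation numbers are read off from the decoration by the relative Euler class count of Lemma~\ref{Lemma:relEuler1} and Lemma~\ref{Lemma:3.2.1}, and the $d_3$-invariant of the ambient contact $S^3$ is computed by the algorithm of \cite{DGS}.

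First I would use the local meridian construction. By \cite[Theorem 1.2, (c3)]{go} there are two strongly exceptional Legendrian Hopf links $K_0\cup K_2$ with $(t_0,r_0)=(1,\pm2)$, $r_2=\pm(t_2+1)$ in $(S^3,\xi_{-\frac{1}{2}})$, and two of them (only one when $t_2=3$, as then $r_2=\pm(t_2-3)=0$) with $(t_0,r_0)=(1,0)$, $r_2=\pm(t_2-3)$ in $(S^3,\xi_{\frac{3}{2}})$. Taking $K_1$ to be a local Legendrian meridian of $K_0$ and applying Lemma~\ref{Lemma:meridian}, each such Hopf link produces $-t_1$ strongly exceptional $A_3$ links with $r_1\in\{t_1+1,t_1+3,\dots,-t_1-1\}$; this gives the ``interior'' members of the first two families, namely $-2t_1$ links with $d_3=-\frac{1}{2}$ and $-2t_1$ (respectively $-t_1$ when $t_2=3$) links with $d_3=\frac{3}{2}$. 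The links whose exteriors have pure-signed $L'_1$ and $L'_2$ then come from Lemma~\ref{Lemma:3.3.1} together with Lemma~\ref{Lemma:appro}: for $t_0=1$ the decoration types $\pm(+--)$ and $\pm(+-+)$ carry rotation numbers $(\pm2,\pm(1-t_1),\pm(t_2+1))$ with $d_3=-\frac{1}{2}$ and $(\mp2,\pm(1-t_1),\pm(1-t_2))$ with $d_3=\frac{3}{2}$, supplying the endpoint of family one and the whole of family three.

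What is left are the $d_3=\frac{3}{2}$ links with a mixed $L'_2$-block, with decoration $\pm(+)(-\cdots-)((-)(+))$ or $\pm(+)(-\cdots-)((+)(-))$ and rotation numbers $(0,\pm(1-t_1),\pm(t_2-1))$ (family four) and $(0,\pm(t_1-1),\pm(t_2-3))$ (endpoint of family two). These I would obtain by bootstrapping from the case $t_0=1$, $t_2=2$ treated above, where the $\Sigma\times S^1$ with boundary slopes $1,-\frac{1}{t_1},-\frac{1}{2}$ and decorations $\pm(+)(-\cdots-)(-+)$ and $\pm(+)(-\cdots-)(+-)$ were shown appropriate tight: by the Gluing Theorem \cite[Theorem 1.3]{h2}, attaching the chain of basic slices $(T^2\times[0,1],-\frac{1}{t_2},-\frac{1}{t_2-1}),\dots,(T^2\times[0,1],-\frac{1}{3},-\frac{1}{2})$ along $T_2$ realizes these $t_2\geq3$ exteriors as submanifolds of appropriate tight contact $\Sigma\times S^1$, hence they are appropriate tight (using that Giroux torsion along a boundary-parallel torus depends only on the slopes of parallel convex tori, as in Lemma~\ref{Lemma:appro}), and Lemma~\ref{Lemma:construction} extends each to an exceptional $A_3$ link in an overtwisted contact $S^3$. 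Tallying: for $t_2\geq4$ the four families contribute $(-2t_1+2)+(-2t_1+2)+2+2=8-4t_1$, and for $t_2=3$ the collapse $r_2=\pm(t_2-3)=0$ cuts family two from $-2t_1+2$ to $-t_1+2$, giving $8-3t_1$; since these equal the upper bounds of Lemma~\ref{Lemma:t_1<0t_2>1}, the constructed links are pairwise coarsely inequivalent. I expect the main obstacle to be this certification-and-counting step: propagating appropriate tightness through the gluing chain, and checking that in the degenerate $t_2=3$ case the construction produces neither too few links nor two that secretly coincide.
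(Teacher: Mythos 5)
Your proposal follows essentially the same route as the paper: local Legendrian meridians on the Hopf links with $(t_0,r_0)=(1,\pm2)$ and $(1,0)$ from \cite{go}, the four extremal links from Lemma~\ref{Lemma:3.3.1} together with Lemma~\ref{Lemma:appro}, and the remaining four mixed-decoration links certified appropriate tight by embedding into the $t_2=2$ exterior with decoration $\pm(+)(\underbrace{-\cdots-}_{-t_1})(+-)$ via adding basic slices along $T_2$, exactly as in the paper. The only small correction is at the end: pairwise inequivalence of the constructed links follows from their distinct rotation-number/$d_3$ data (including in the $t_2=3$ case, where the two links with $r_0=r_2=0$, $r_1=\pm(t_1-1)$ stay distinct), not from the fact that the count matches the upper bound of Lemma~\ref{Lemma:t_1<0t_2>1}.
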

\begin{proof}
If $t_0=1$ and $t_2=3$, then, by \cite[Theorem 1.2, (c2)]{go}, there are two  strongly exceptional Legendrian Hopf links $K_0\cup K_2$ in $(S^3, \xi_{-\frac{1}{2}})$ with $(t_0, r_0)=(1, \pm2)$ and $(t_2,r_2)=(3,\pm4)$, and one strongly exceptional Legendrian Hopf links $K_0\cup K_2$ in $(S^3, \xi_{\frac{3}{2}})$ with $(t_0, r_0)=(1, 0)$ and $(t_2,r_2)=(3,0)$. Let $K_1$ be a local Legendrian meridian of $K_0$, then by Lemma~\ref{Lemma:meridian}  we can realize $-3t_1$ strongly exceptional Legendrian $A_3$ links  whose rotation numbers and corresponding $d_3$-invariants are 
$$r_{0}=\pm2, r_{1}\in\{t_{1}+1, t_{1}+3, \cdots, -t_{1}-1\}, r_{2}=\pm4; d_3=-\frac{1}{2},$$
$$r_{0}=0, r_{1}\in\{t_{1}+1, t_{1}+3, \cdots, -t_{1}-1\}, r_{2}=0; d_3=\frac{3}{2}.$$

If $t_0=1$ and $t_2\geq4$, then, by \cite[Theorem 1.2, (c3)]{go}, there are two strongly exceptional Legendrian Hopf links $K_0\cup K_2$ in $(S^3, \xi_{-\frac{1}{2}})$ with $(t_0, r_0)=(1, \pm2)$, $t_2\geq4$, and $r_2=\pm(t_{2}+1)$, and two strongly exceptional Legendrian Hopf links $K_0\cup K_2$ in $(S^3, \xi_{\frac{3}{2}})$ with $(t_0, r_0)=(1, 0)$, $t_2\geq4$ and $r_2 =\pm(t_{2}-3)$. Let $K_1$ be a local Legendrian meridian of $K_0$, then by Lemma~\ref{Lemma:meridian} we can realize $-4t_1$ strongly exceptional Legendrian $A_3$ links whose rotation numbers and corresponding $d_3$-invariants are 
$$r_{0}=\pm2, r_{1}\in\{t_{1}+1, t_{1}+3, \cdots, -t_{1}-1\}, r_{2}=\pm(t_{2}+1); d_3=-\frac{1}{2},$$
$$r_{0}=0, r_{1}\in\{t_{1}+1, t_{1}+3, \cdots, -t_{1}-1\}, r_{2}=\pm(t_{2}-3); d_3=\frac{3}{2}.$$

For any $t_2\geq3$, by Lemma~\ref{Lemma:3.3.1} and Lemma~\ref{Lemma:appro}, there are $4$ strongly exceptional Legendrian $A_3$ links whose rotation numbers and corresponding $d_3$-invariants $(r_0, r_1, r_2; d_3)$  
 are $(\pm2, \pm(1-t_{1}), \pm(t_{2}+1); -\frac{1}{2})$ and $ (\mp2, \pm(1-t_{1}), \pm(1-t_{2}); \frac{3}{2}).$
The decorations of their exteriors are  $$\pm(+)(\underbrace{- \cdots -}_{-t_{1}})((-)(-))~\text{and}~\pm(+)(\underbrace{- \cdots -}_{-t_{1}})((+)(+)),$$ respectively.

For any $t_2\geq3$, there are $4$ strongly exceptional Legendrian $A_3$ links whose rotation numbers and corresponding $d_3$-invariants $(r_0, r_1, r_2; d_3)$   are 
$(0, \pm(1-t_{1}), \pm(t_{2}-1); \frac{3}{2}) $  and $ (0, \pm(t_{1}-1), \pm(t_{2}-3); \frac{3}{2}).$
The decorations of their exteriors are  $$\pm(+)(\underbrace{- \cdots -}_{-t_{1}})((+)(-))~ \text{and} ~\pm(+)(\underbrace{- \cdots -}_{-t_{1}})((-)(+)),$$ respectively.
These exteriors are appropriate tight since they can be embedded into an appropriate  tight contact $\Sigma\times S^1$ with boundary slopes $1, -\frac{1}{t_1}, -\frac{1}{2}$ and decorations $\pm(+)(\underbrace{- \cdots -}_{-t_{1}})(+-)$ by adding basic slices $(T^2\times[0,1], -\frac{1}{t_{2}}, -\frac{1}{t_{2}-1})$, $\cdots$, $(T^2\times[0,1], -\frac{1}{3}, -\frac{1}{2})$ to the boundary $T_2$, respectively. 
\end{proof}

So, there are exactly $8-4t_1$ (resp. exactly $8-3t_1$) strongly exceptional Legendrian $A_3$ links with $t_0=1, t_1<0, t_2\geq4$ (resp. $t_2=3$). If $t_0=1$, $t_1<0$ and $t_2=3$, then the decorations $$(+)(\underbrace{+ \cdots +}_{l}\underbrace{- \cdots -}_{k})((-)(+)) ~\text{and}~ (-)(\underbrace{- \cdots -}_{k+1}\underbrace{+ \cdots +}_{l-1})((+)(-))$$ correspond to the same Legendrian $A_3$ links with rotation numbers $r_0=r_2=0, r_1=l-k-1$, where $k\geq0, l\geq1, k+l=-t_1$.

(7) Suppose $t_0\leq0$. 

\begin{lemma}\label{t0<1t1<0t2>1}
If $t_{0}\leq0$, $t_{1}<0$ and $t_{2}>1$, then there exist $2t_{0}t_{1}-2t_{1}$ strongly exceptional Legendrian $A_3$ links in $(S^3, \xi_{\frac{1}{2}})$ whose rotation numbers are $$r_{0}\in\pm\{t_{0}+1, t_{0}+3, \cdots, -t_{0}-1, -t_{0}+1\},$$ $$ r_{1}\in\{t_{1}+1, t_{1}+3, \cdots, -t_{1}-1\}, r_{2}=\pm(t_{2}-1).$$
\end{lemma}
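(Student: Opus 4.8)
The plan is to realize all $2t_0t_1-2t_1$ links by attaching a local Legendrian meridian to a suitable family of strongly exceptional Legendrian Hopf links, following the template of the proofs of Lemma~\ref{t0<2t1<0t2=1} and Lemma~\ref{t0<2t1>1t2>1}. First I would invoke \cite[Theorem 1.2, (b1)]{go}: since $t_0\leq0$ and $t_2>1$, there are exactly $2(1-t_0)=2-2t_0$ strongly exceptional Legendrian Hopf links $K_0\cup K_2$ in $(S^3,\xi_{\frac{1}{2}})$ with
$$r_0\in\pm\{t_0+1,t_0+3,\dots,-t_0-1,-t_0+1\},\qquad r_2=\pm(t_2-1).$$
These are exactly the Hopf links that already serve as the ``$K_0\cup K_2$'' building blocks in the proof of Lemma~\ref{t0<2t1>1t2>1}, so this input requires nothing new.

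Next, for each such Hopf link I would take $K_1$ to be a local Legendrian meridian of the central component $K_0$, placed in a Darboux ball meeting $K_0$ in a simple arc. By Lemma~\ref{Lemma:meridian}, $K_0\cup K_1\cup K_2$ is then a strongly exceptional Legendrian $A_3$ link with $t_1<0$ and $r_1\in\{t_1+1,t_1+3,\dots,-t_1-1\}$, the meridian providing $-t_1$ distinct values of $r_1$. Because $K_1$ is supported in a Darboux ball, the ambient contact structure is unchanged and the rotation numbers $r_0,r_2$ of the underlying Hopf link are not affected, so every one of these links lies in $(S^3,\xi_{\frac{1}{2}})$ and the triples $(r_0,r_1,r_2)$ sweep out precisely the set listed in the statement. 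Counting, this produces $(2-2t_0)\cdot(-t_1)=2t_0t_1-2t_1$ strongly exceptional Legendrian $A_3$ links.

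Finally I would check that the links so obtained are pairwise inequivalent, so that the count $2t_0t_1-2t_1$ is genuine: the $2-2t_0$ Hopf links of \cite{go} are pairwise distinct and hence carry $2-2t_0$ distinct pairs $(r_0,r_2)$, the $-t_1$ meridian rotation numbers are distinct, and therefore the $2t_0t_1-2t_1$ triples $(r_0,r_1,r_2)$ are distinct; by Theorem~\ref{classification} the corresponding Legendrian $A_3$ links are pairwise non-coarsely equivalent. I expect this last bookkeeping to be the only mildly delicate point --- in the Hopf-link family the sign ambiguity in the $r_0$-range and the sign in $r_2=\pm(t_2-1)$ are correlated rather than independent, so one should cite the sharp count $2-2t_0$ from \cite{go} rather than read it off naively --- while everything else is a direct application of Lemma~\ref{Lemma:meridian}. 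As in the analogous cases, one may also note that all of these links are obtained by stabilizations along $K_0$ from the $A_3$ links with $t_0=1$, $t_1<0$, $t_2>1$, which explains the affine dependence of the count on $t_0$.
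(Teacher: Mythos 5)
Your proposal follows essentially the same route as the paper: cite \cite[Theorem 1.2, (b1)]{go} for the $2(1-t_0)$ strongly exceptional Legendrian Hopf links $K_0\cup K_2$ in $(S^3,\xi_{\frac{1}{2}})$, add a local Legendrian meridian $K_1$ of $K_0$, and apply Lemma~\ref{Lemma:meridian} to get $(2-2t_0)(-t_1)=2t_0t_1-2t_1$ links with the stated invariants. The only quibble is your appeal to Theorem~\ref{classification} for pairwise inequivalence, which is unnecessary (and logically backwards, since that theorem is deduced from these lemmas): distinct rotation-number triples already distinguish the links, because the classical invariants are preserved under coarse equivalence.
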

\begin{proof}
By \cite[Theorem 1.2. (b1)]{go}, there are $2(1-t_0)$ Legendrian Hopf links $K_0\cup K_2$ in $(S^3, \xi_{1/2})$ whose rotation numbers are
$$r_{0}\in\pm\{t_{0}+1, t_{0}+3, \cdots, -t_{0}-1, -t_{0}+1\}, t_2\geq2, r_{2}=\pm(t_{2}-1).$$ Let $K_1$ be a local Legendrian meridian of $K_0$. Then by Lemma~\ref{Lemma:meridian} there are $2(1-t_0)(-t_1)=2t_{0}t_{1}-2t_{1}$ isotopy classes. Their rotation numbers are as listed.
\end{proof}
These $2t_{0}t_{1}-2t_{1}$ strongly exceptional Legendrian $A_3$ links are stabilizations of the Legendrian $A_3$ links with $t_{0}=0, t_1=-1, t_2>1$.

The proof of Theorem~\ref{Theorem:t_1<0t_2>1} is completed.
\end{proof}

\subsection{$t_{1}=0$.}  The boundary slopes of $\Sigma\times S^1$ are $s_0=t_0$, $s_1=\infty$ and $s_2=-\frac{1}{t_2}$. The appropriate tight contact structures on $\Sigma\times S^1$ can be decomposed as  $L'_0\cup L'_2 \cup \Sigma'\times S^1$. 

\begin{lemma}\label{Lemma:3.4.1}
For any $t_0\in\mathbb{Z}$, there are $4$  exceptional Legendrian $A_3$ links whose signs of basic slices in $L'_0,  L'_2$ are $\pm(+-)$ and $\pm(++)$, respectively. Their rotation numbers are $$r_0=\pm(t_{0}+1), r_{1}=\pm1, r_{2}=\pm(t_{2}+1); r_0=\pm(t_{0}-3), r_{1}=\pm1, r_{2}=\pm(1-t_{2}).$$
The corresponding $d_3$-invariants are independent of $t_0$ if  $t_2$ is fixed.
\end{lemma}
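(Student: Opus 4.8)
The plan is to follow, almost verbatim, the proofs of Lemma~\ref{Lemma:relEuler1} and Lemma~\ref{Lemma:3.2.1}, with the single simplification that the boundary slope $s_1=\infty$ here means there is no continued-fraction block sitting against $T_1$. First I would dispose of existence. The four sign choices ``$\pm(+-)$ and $\pm(++)$'' are precisely the patterns ``$L'_0,L'_2$ have signs $\pm\pm$ or $\pm\mp$'' appearing in Lemma~\ref{Lemma:tight1}; that lemma produces, for each of them, a tight contact structure on $\Sigma\times S^1$ with boundary slopes $t_0,\infty,-\frac{1}{t_2}$ decomposing as $L'_0\cup L'_2\cup\Sigma'\times S^1$ with $L'_0$ a basic slice, $L'_2$ a thickened torus all of whose basic slices share a sign, and $\Sigma'\times S^1$ at slopes $\infty$. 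This decomposition supplies a $0$-twisting vertical Legendrian circle in $\Sigma'\times S^1$, so Lemma~\ref{Lemma:construction} applies, and gluing in three tight solid tori yields an overtwisted $S^3$ whose three cores form an exceptional Legendrian $A_3$ link carrying the prescribed decoration. That produces the four links.

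Next I would compute the rotation numbers by evaluating the relative Euler class on Seifert surfaces, using the $\ominus$ and $\bullet$ operations of \cite[Section 2.5]{emm} exactly as in the proof of Lemma~\ref{Lemma:3.2.1}. The Seifert surface of $K_0$ is $\Sigma$ capped by meridian disks along $c_1$ and $c_2$, so only the Farey geodesics crossing $L'_0$ (from $\infty$ to $t_0$) and $L'_2$ (from $-\frac{1}{t_2}$ to $\infty$) contribute, with the dividing slopes of $L'_2$ reversed in sign for this computation. The Seifert surface of $K_1$ is a meridian disk of $K_0$ together with an annulus; since $T_1$ already has slope $\infty$ there is no $L'_1$-contribution and only the single basic slice $L'_0$ enters, which pins down $r_1=\pm1$ (consistent with $K_1$ being an exceptional Legendrian unknot of Thurston-Bennequin invariant $0$, cf.\ \cite{ef}, \cite{d}). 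The Seifert surface of $K_2$ is a meridian disk together with an annulus running through $L'_2$, with the slopes of $L'_0$ reversed. Running the arithmetic on the two decoration classes should return $r_0=\pm(t_0+1)$, $r_1=\pm1$, $r_2=\pm(t_2+1)$ in one case and $r_0=\pm(t_0-3)$, $r_1=\pm1$, $r_2=\pm(1-t_2)$ in the other; in particular the four triples are pairwise distinct. I expect this bookkeeping to be the only delicate point: keeping straight which slopes feed into the relative Euler class and the orientation reversals on $L'_0$ versus $L'_2$; it is, however, entirely routine.

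Finally, for the independence of the $d_3$-invariant from $t_0$ I would argue as in the closing line of the proof of Lemma~\ref{Lemma:relEuler1}. Each of the four links contains a $0$-twisting vertical Legendrian circle, so by Lemma~\ref{Lemma:stabilization} the component $K_0$ destabilizes; concretely, $L'_0=(T^2\times[0,1],\infty,t_0)$ contains the basic slice $(T^2\times[0,1],t_0+1,t_0)$ that realizes the destabilization, and a (de)stabilization of a Legendrian knot takes place inside a fixed contact $3$-sphere. Hence the $t_0$-family is obtained from the neighbouring $(t_0\pm1)$-family by (de)stabilizing $K_0$, which does not alter the ambient contact $S^3$, so the $d_3$-invariant depends only on $t_2$ and the chosen signs. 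This completes the plan.
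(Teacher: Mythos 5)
Your proposal follows essentially the same route as the paper: existence of the four links from Lemma~\ref{Lemma:tight1} together with Lemma~\ref{Lemma:construction}, rotation numbers via the relative Euler class computation with the $\ominus$ and $\bullet$ operations (the paper carries out the arithmetic explicitly for the sign pattern $+-$ and notes the other cases are similar), and $t_0$-independence of $d_3$ via destabilization of $K_0$ as in Lemma~\ref{Lemma:stabilization}. This is correct and matches the paper's argument.
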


\begin{proof}
 The first statement follows from  Lemma~\ref{Lemma:tight1} and Lemma~\ref{Lemma:construction}.  Suppose the signs of the basic slices in  $L'_0$ and $L'_2$ are $+$ and $-$, respectively. Then $$r_{0}=-(\frac{1}{t_{2}}\ominus\frac{0}{1})\bullet \frac{0}{1}-(\frac{0}{1}\ominus\frac{-1}{0})\bullet\frac{0}{1}+(\frac{1}{0}\ominus\frac{t_0}{1})\bullet\frac{0}{1}=-(t_{0}+1),$$
$$r_{1}=(\frac{-t_{0}}{1}\ominus\frac{-1}{0})\bullet\frac{1}{0}=-1,$$
$$r_{2}=(\frac{-t_{0}}{1}\ominus\frac{-1}{0})\bullet\frac{1}{0}-(\frac{1}{0}\ominus\frac{0}{1})\bullet\frac{0}{1}-(\frac{0}{1}\ominus\frac{-1}{t_2})\bullet\frac{1}{0}=-(t_{2}+1).$$
The computation of other cases are similar.
\end{proof}

\begin{lemma}\label{Lemma:3.4.2}
Suppose $t_0\leq2$, $t_1=0$ and $t_2\geq2$. Then there are $4$ strongly exceptional Legendrian $A_3$ links  in $(S^3, \xi_{\frac{3}{2}})$ whose rotation numbers are 
$$r_0=\pm(t_{0}-1), r_{1}=\pm1, r_{2}=\pm(t_{2}-1); r_0=\pm(t_{0}-3), r_{1}=\pm1, r_{2}=\pm(1-t_{2}).$$   
\end{lemma}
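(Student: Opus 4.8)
The plan is to realise all four links first at $t_0=2$ by a single Legendrian connected sum construction, and then to obtain the cases $t_0<2$ by stabilising the central component $K_0$. At $t_0=2$ the four prescribed rotation-number triples are $(\pm1,\pm1,\pm(t_2-1))$ and $(\mp1,\pm1,\mp(t_2-1))$ with correlated signs, all to lie in $(S^3,\xi_{3/2})$. I would produce each of them as a Legendrian connected sum $(K'_0\#K''_0)\cup K_1\cup K_2$ using Lemma~\ref{Lemma:connected} with the two Hopf summands interchanged; this is legitimate since the proof of that lemma is symmetric in the summands, using only that one of them satisfies the $(1,0)$- or $(0,\pm1,\,t\ge2)$-hypothesis while the other admits a contact surgery diagram (or has universally tight thickened-torus exterior). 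Concretely, by \cite[Theorem 1.2]{go} there is a strongly exceptional Legendrian Hopf link $K'_0\cup K_2$ in $(S^3,\xi_{1/2})$ with $(t'_0,r'_0)=(0,\pm1)$, $t_2\ge2$ and $r_2=\pm(t_2-1)$, so that $K_2$ occupies the meridian slot of Lemma~\ref{Lemma:connected}; and there is a strongly exceptional Legendrian Hopf link $K''_0\cup K_1$ in $(S^3,\xi_{1/2})$ with $(t''_0,r''_0)=(1,0)$ and $(t_1,r_1)=(0,\pm1)$, which admits a contact surgery diagram because the pair $(t''_0,t_1)=(1,0)$ is not the obstructed pair $(2,1)$ or $(1,2)$. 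The connected sum is then strongly exceptional, has $t_0=t'_0+t''_0+1=2$, $r_0=r'_0+r''_0=\pm1$, $r_1=\pm1$, $r_2=\pm(t_2-1)$, and sits in the sphere with $d_3=\tfrac12+\tfrac12+\tfrac12=\tfrac32$. The four independent choices of the signs of $r'_0$ and of $r_1$ give the four links at $t_0=2$.

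For $t_0<2$ the four links are the stabilisations along $K_0$ of these four, with the sign of each stabilisation fixed within its family (so that $r_0$ tracks $t_0-1$, respectively $t_0-3$); performing $2-t_0$ such stabilisations realises exactly the triples $\pm(t_0-1,1,t_2-1)$ and $\pm(t_0-3,1,1-t_2)$, and does not change the ambient $d_3=\tfrac32$. Strong exceptionality is preserved: passing from $t_0=2$ to general $t_0<2$ attaches to the boundary $T_0$ of the exterior the basic slices $(T^2\times[0,1];\,t,t-1)$ for $t=2,\dots,t_0+1$ with the appropriate signs, and by the Gluing Theorem \cite[Theorem 1.3]{h2} the exterior stays an appropriate tight $\Sigma\times S^1$ — it still carries a $0$-twisting vertical Legendrian circle — exactly as in the stabilisation arguments for the other families in this section (cf.\ Lemma~\ref{Lemma:stabilization} and Remark~\ref{Remark:Stabilization}).

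As an alternative for the two links with $r_2=\pm(t_2-1)$, one can realise them directly as the extensions (Lemma~\ref{Lemma:construction}) of the contact structures on $\Sigma\times S^1$ with the mixed-sign decorations $\pm(+)(+-)$ when $t_2=2$, respectively $\pm(+)((-)(+))$ when $t_2\ge3$; appropriate tightness of these is checked by cutting $\Sigma\times S^1$ along a convex vertical annulus joining $T_0$ to the $\infty$-sloped boundary $T_1$ and reducing to a thickened torus in the style of the proof of Lemma~\ref{Lemma:t_1>1t_2=1}, and the rotation numbers are then read off by the relative-Euler-class computation of the proof of Lemma~\ref{Lemma:3.4.1} (the two links with $r_2=\pm(1-t_2)$ are already given, for all $t_0$, by Lemma~\ref{Lemma:3.4.1}, so only their $d_3=\tfrac32$ needs the connected-sum identification above). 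The step I expect to be the main obstacle is securing from the classification in \cite{go} the input Hopf link $K''_0\cup K_1$ with a Thurston--Bennequin-$0$ component and the exact rotation data, and confirming that Lemma~\ref{Lemma:connected} carries over with its two summands swapped; once that is in place, the bookkeeping of rotation numbers, of $d_3$, and of the forced stabilisation signs is routine.
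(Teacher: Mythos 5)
Your construction at $t_0=2$ is essentially the paper's: you connect-sum a Hopf link $K'_0\cup K_2$ with $(t'_0,r'_0)=(0,\pm1)$, $r_2=\pm(t_2-1)$ (the summand satisfying the hypothesis of Lemma~\ref{Lemma:connected}) with a Hopf link from case (d) of \cite[Theorem 1.2]{go} whose second component has $t_1=0$, $r_1=\pm1$; the sign bookkeeping, the value $t_0=t'_0+t''_0+1$, and $d_3=\tfrac12+\tfrac12+\tfrac12=\tfrac32$ all check out, and using the lemma with the two summands interchanged is harmless (the paper does the same). The gap is in your passage from $t_0=2$ to $t_0<2$ by stabilizing $K_0$. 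Stabilization \emph{enlarges} the exterior: the complement of the stabilized link is the old exterior with a basic slice $(T^2\times[0,1];t_0,t_0-1)$ glued onto $T_0$, so appropriate tightness of the new, bigger manifold does not follow from appropriate tightness of the old one, and neither Lemma~\ref{Lemma:stabilization} (which is about \emph{de}stabilization) nor \cite[Theorem 1.3]{h2} as you invoke it supplies this. Indeed Remark~\ref{Remark:Stabilization} states that a stabilization of a strongly exceptional $A_3$ link is strongly exceptional if and only if the resulting invariants are those of a strongly exceptional link --- which is exactly the content of the lemma you are proving, so appealing to it here is circular. (That the wrong sign of stabilization genuinely fails is visible from Theorem~\ref{Theorem:t1=0}: at fixed $t_0$ only $r_0\in\pm\{t_0+1,t_0-1,t_0-3\}$ occur, so e.g.\ positively stabilizing the link with $r_0=t_0+1$ leaves the classified range.) Your alternative third-paragraph route does not close this either: the paper's method for certifying appropriate tightness of mixed-sign decorations and for pinning down $d_3$ at low $t_0$ uses Lemma~\ref{Lemma:appro} together with an already-constructed anchor at the same boundary slopes, and for $t_1=0$, $t_0\le 1$ that anchor is precisely the connected-sum links of this lemma.

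The fix is the paper's own proof, which is shorter than your detour: instead of fixing $(t''_0,r''_0)=(1,0)$ and stabilizing afterwards, take the second summand from \cite[Theorem 1.2, (d)]{go} in its full range, namely the strongly exceptional Hopf links $K''_0\cup K_1$ in $(S^3,\xi_{\frac12})$ with $t''_0\le 1$, $r''_0=\pm(t''_0-1)$, $(t_1,r_1)=(0,\pm1)$. Then Lemma~\ref{Lemma:connected} applied with the summand $K'_0\cup K_2$ as above produces, for every $t_0=t''_0+1\le 2$ directly, four strongly exceptional links in $(S^3,\xi_{\frac32})$ with $r_0=r'_0+r''_0$ equal to $\pm(t_0-1)$ or $\pm(t_0-3)$ and the stated $r_1,r_2$. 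In other words, the stabilizations you want have already been performed at the level of the two-component Hopf links in \cite{go}, where the classification is known; importing only the top of that mountain range and redoing the stabilization at the three-component level is exactly the step that cannot be taken for free.
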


\begin{proof}
By \cite[Theorem 1.2, (d)]{go}, there are two Legendrian Hopf links $K'_0\cup K_1$ in $(S^3, \xi_{\frac{1}{2}})$ with $t'_0 \leq1, r'_0=\pm(t'_0-1), (t_1, r_1)=(0, \pm1)$, and two Legendrian Hopf links $K''_0\cup K_2$  in $(S^3, \xi_{\frac{1}{2}})$ with $(t''_0, r''_0)=(0, \pm1), t_2\geq2, r_2=\pm(t_{2}-1)$. By Lemma~\ref{Lemma:connected}, we can obtain strongly exceptional Legendrian $A_3$ links with $t_0\leq2, t_1=0, t_2\geq2$. So there are $4$ strongly exceptional Legendrian $A_3$ links  in $(S^3, \xi_{\frac{3}{2}})$ whose rotation numbers are as listed. 
\end{proof}

\begin{proof}[Proof of Theorem~\ref{Theorem:t1=0}] The upper bound of strongly exceptional Legendrian $A_3$ links is given by Lemma~\ref{Lemma:t1eq0}. 
We will show that these upper bounds can be attained.

(1) Suppose $t_2\leq 0$.

\begin{lemma} \label{t1=0t2<1}
If $t_1=0$ and $t_2\leq 0$, then there exist $2-2t_{2}$ strongly exceptional Legendrian $A_3$ links in $(S^3, \xi_{\frac{1}{2}})$ whose rotation numbers are $$r_0=\pm(t_{0}-1),  r_{1}=\pm1, r_{2}\in\pm\{t_{2}+1, t_{2}+3, \cdots,-t_{2}+1\}.$$ 
\end{lemma}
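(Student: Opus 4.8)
The plan is to establish the upper bound by citing Lemma~\ref{Lemma:t1eq0} and then to realize all $2-2t_2$ of the asserted contact structures on $\Sigma\times S^1$ as exteriors of strongly exceptional Legendrian $A_3$ links in $(S^3,\xi_{1/2})$. Note first that when $t_1=0$ the torus $T_1$ already has slope $\infty=-\frac{1}{t_1}$, so an appropriate tight exterior decomposes as $L'_0\cup L'_2\cup(\Sigma'\times S^1)$ with $\Sigma'\times S^1$ having all boundary slopes $\infty$, with $L'_0$ a single basic slice with slopes $\infty,t_0$, and with $L'_2$ a continued fraction block of $-t_2$ basic slices whose slopes run monotonically from $\infty$ to $-\frac{1}{t_2}$; in particular the $2-2t_2$ candidate structures all have the same ``shape'' and differ only in the signs of their basic slices, which will let Lemma~\ref{Lemma:appro} do most of the work of propagating appropriate tightness.

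The main construction is in two stages. First I would fix, for each sign, a strongly exceptional Legendrian Hopf link $K_0\cup K_1$ in $(S^3,\xi_{1/2})$ with $(t_1,r_1)=(0,\pm1)$ and $(t_0,r_0)=(t_0,\pm(t_0-1))$; for $t_0\le 1$ these are exactly the Hopf links of part (d) of the classification in \cite{go}, and for $t_0\ge2$ one produces them either by Dehn filling the solid torus $N(K_2)$ in an appropriate tight $\Sigma\times S^1$ from Lemma~\ref{Lemma:t1eq0} (the filling is by a standard neighbourhood, and the resulting $T^2\times I$ is again appropriate tight) or by an explicit contact surgery diagram of the type in Figure~\ref{Figure:link51}. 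Then, for $t_2\le-1$, I would attach a local Legendrian meridian $K_2$ of $K_0$: by Lemma~\ref{Lemma:meridian} the link $K_0\cup K_1\cup K_2$ is strongly exceptional with $r_2\in\{t_2+1,t_2+3,\dots,-t_2-1\}$, so together with the two sign choices this yields $-2t_2$ of the desired links, all with $r_0=\pm(t_0-1)$, $r_1=\pm1$. The remaining two links — those with the extreme value $r_2=\pm(1-t_2)$, which is the entire list when $t_2=0$ — are obtained either by a Legendrian connected sum argument parallel to Lemma~\ref{Lemma:3.4.2} or, more uniformly, by extending to an overtwisted $S^3$ via Lemma~\ref{Lemma:construction} the appropriate tight $\Sigma\times S^1$ whose $L'_2$-block is all of one sign (its tightness coming from Lemma~\ref{Lemma:tight1}, and its appropriateness from Lemma~\ref{Lemma:appro} since it differs from the exteriors already built only in the signs of the basic slices of $L'_2$); the rotation numbers are then read off from relative Euler classes exactly as in the proof of Lemma~\ref{Lemma:3.4.1}.

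It remains to identify the ambient contact sphere and to see that the $2-2t_2$ links are pairwise inequivalent. For the former I would either run the algorithm of \cite{DGS} on the surgery diagrams or simply observe that adding a local Legendrian meridian does not change the ambient manifold, so these links live in the same $(S^3,\xi_{1/2})$ (i.e. $d_3=\frac12$) as the Hopf link $K_0\cup K_1$, and the two extremal links are checked to have the same $d_3$; for the latter, the triples $(r_0,r_1,r_2)$ are pairwise distinct — the sign of $r_1$ separates the two mirror families, and within a family the $1-t_2$ values of $r_2$ are distinct — so by Lemma~\ref{classification}-type reasoning (or directly) the links are coarsely inequivalent, and the count matches the upper bound. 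The step I expect to be the real obstacle is the very first one: unlike every case handled through Lemma~\ref{Lemma:connected}, here the meridian $K_1$ has $t_1=0$ and so cannot be absorbed as a local Legendrian meridian, so one must separately secure the existence of the exceptional Hopf link $K_0\cup K_1$ with $(t_1,r_1)=(0,\pm1)$ for \emph{all} $t_0$ (not just the small values directly listed in \cite{go}) and confirm that every all-one-sign configuration of $L'_2$ is genuinely appropriate tight rather than merely carrying a tight, torsion-bearing structure.
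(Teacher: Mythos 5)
Your route is genuinely different from the paper's: the paper proves this lemma by exhibiting all $2-2t_2$ links at once in explicit contact surgery diagrams (Figure~\ref{Figure:link5inot} for $t_0\leq0$, and Figures~\ref{Figure:link11inot}, \ref{Figure:link10inot} with $k_1=l_1=0$ for $t_0=1$, $t_0\geq2$), so that strong exceptionality is automatic and the rotation numbers and $d_3$ are computed directly as in \cite{go} and \cite{DGS}. Your meridian-plus-convex-surface scheme does produce the $-2t_2$ non-extremal links correctly (and your worry about the Hopf links with $(t_1,r_1)=(0,\pm1)$ is unfounded, since \cite[Theorem 1.2 (d)]{go} supplies exactly two such exceptional Hopf links in $\xi_{1/2}$ with $r_0=\pm(t_0-1)$ for \emph{every} $t_0$; note, though, that your fallback via Lemma~\ref{Lemma:t1eq0} is circular, as that lemma is only an upper bound and asserts no existence). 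The genuine gap is at the two extremal links, and it is fatal when $t_2=0$. In that case the meridian step produces nothing, so Lemma~\ref{Lemma:appro} has no appropriate tight reference structure to compare with; Lemma~\ref{Lemma:tight1} is stated only for $t_2\neq0$ and so does not give tightness of $L'_0\cup\Sigma'\times S^1$ with slopes $t_0,\infty,\infty$; and the ``connected sum argument parallel to Lemma~\ref{Lemma:3.4.2}'' cannot work, because Lemma~\ref{Lemma:connected} requires the first summand to be either the $(1,0),(1,0)$ Hopf link or a $(0,\pm1)$ Hopf link whose other component has $t\geq2$, and with $t_1=0$, $t_2\leq0$ neither summand qualifies; moreover $d_3$-additivity would put such a sum in $\xi_{3/2}$ with $r_0=\pm(t_0-3)$, not the claimed invariants. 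So the two $t_2=0$ links are simply not constructed in your proposal.

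Second, even for $t_2\leq-1$ your identification of the ambient sphere for the two extremal links is unsupported. They arise from Lemma~\ref{Lemma:construction}, not from a surgery diagram and not from adding a meridian, so there is nothing to run the \cite{DGS} algorithm on, and ``checked to have the same $d_3$'' is not an argument: changing only the signs of the basic slices in $L'_2$ does in general change the ambient $d_3$ (compare Lemma~\ref{t1=0t2=2}, where the decorations $\pm(+-)$ and $\pm(++)$ land in $\xi_{-1/2}$ and $\xi_{3/2}$ respectively). Every use of Lemma~\ref{Lemma:construction} in the paper pins down $d_3$ by matching against an explicitly constructed diagrammatic anchor; you would need such an anchor here, e.g.\ a diagram as in Figure~\ref{Figure:link5inot}, or a destabilization of $K_2$ back to an explicitly constructed $t_2=0$ link --- which returns you to the first gap. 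Your counting and inequivalence remarks are fine once existence, the rotation numbers and the $d_3$-values are actually in place.
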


\begin{proof}
If $t_2\leq 0$ and $t_0\leq0$, there exist $2(1-t_{2})$ strongly exceptional Legendrian $A_3$ links shown in Figure~\ref{Figure:link5inot}. Similar to the proof of \cite[Lemma 5.1, part (iii), Figure 6]{go}, we can show that the link $K_0\cup K_1\cup K_2$ in Figure~\ref{Figure:link5inot} is indeed a topological $A_3$ link. By performing the same calculations as in the proof of Theorem 1.2 (d) in \cite{go}, we can determine that their rotation numbers are as listed. Moreover, 
the corresponding $d_3$-invariant is $\frac{1}{2}$. 

If $t_2\leq 0$ and $t_0=1$ (resp. $t_0\geq2$), then there exist $2(1-t_{2})$ strongly exceptional Legendrian $A_3$ links shown in  Figure~\ref{Figure:link11inot} (resp. Figure~\ref{Figure:link10inot}) with $k_1=l_1=0$. Their rotation numbers and the corresponding $d_3$-invariants are as listed.
\end{proof}

\begin{figure}[htb]
\begin{overpic}
{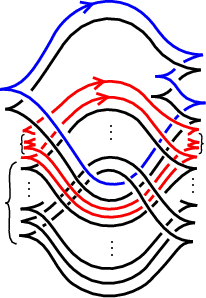}
\put(-27, 42){$2-t_{0}$}
\put(96, 105){$+1$}
\put(96, 42){$+1$}
\put(96, 32){$+1$}
\put(96, 22){$+1$}
\put(96, 58){$+1$}
\put(-3, 74){$k_2$}
\put(99, 73){$l_2$}
\put(80, 130){$K_0$}
\put(50, 107){$K_2$}
\put(-7, 64){$K_1$}
\end{overpic}
\caption{$t_0\leq 0$,  $t_{1}=0$, $t_{2}\leq0$, $k_{2}+l_{2}=-t_2$. }
\label{Figure:link5inot}
\end{figure}

(2) Suppose $t_2=1$.

\begin{lemma}\label{t1=0t2=1}
If $t_1=0$ and $t_2=1$, then there exist $4$ strongly exceptional Legendrian $A_3$ links whose rotation numbers and corresponding $d_3$-invariants $(r_0, r_1, r_2; d_3)$ are $$(\pm(t_{0}-3), \pm1, 0; \frac{3}{2}), (\pm(t_{0}+1), \pm1, \pm2; -\frac{1}{2}).$$
\end{lemma}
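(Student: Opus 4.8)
The plan is to show that the upper bound of $4$ from Lemma~\ref{Lemma:t1eq0} is attained by four strongly exceptional Legendrian $A_3$ links carrying the asserted invariants. First I would specialize Lemma~\ref{Lemma:3.4.1} to $t_2=1$: this already produces four \emph{exceptional} Legendrian $A_3$ links, whose exteriors carry the contact structures on $\Sigma\times S^1$ with decorations $\pm(+-)$ and $\pm(++)$ on $L'_0\cup L'_2$, with rotation numbers $(\pm(t_0+1),\pm1,\pm2)$ and $(\pm(t_0-3),\pm1,0)$ respectively, and with $d_3$-invariants independent of $t_0$. What remains is to promote ``exceptional'' to ``strongly exceptional'', i.e.\ to check that these four contact $\Sigma\times S^1$'s are appropriate tight, and to identify the two $d_3$-values as $\tfrac32$ and $-\tfrac12$.

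For appropriateness, the key point is that the four decorations $(+-)$, $(-+)$, $(++)$, $(--)$ differ from one another only in the signs of the basic slices in $L'_0\cup L'_2$, so by Lemma~\ref{Lemma:appro} it suffices to realize a single one of the four links as a genuinely strongly exceptional $A_3$ link. I would realize the links with $r_2=0$ by a Legendrian connected sum, applying Lemma~\ref{Lemma:connected} with the roles of the two Hopf summands exchanged: take $K''_0\cup K_2$ to be the Legendrian Hopf link in $(S^3,\xi_{1/2})$ with $(t''_0,r''_0)=(t_2,r_2)=(1,0)$ (which satisfies the first hypothesis of Lemma~\ref{Lemma:connected}, with $K_2$ playing the role of the meridian there), and take $K'_0\cup K_1$ to be a strongly exceptional Legendrian Hopf link in $(S^3,\xi_{1/2})$ with $(t_1,r_1)=(0,\pm1)$ and $r'_0=\pm(t'_0-1)$, supplied by \cite[Theorem 1.2]{go}. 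Then $(K'_0\# K''_0)\cup K_1\cup K_2$ is a strongly exceptional Legendrian $A_3$ link with $t_0=t'_0+2$, $t_1=0$, $t_2=1$, rotation numbers $r_0=r'_0=\pm(t_0-3)$, $r_1=\pm1$, $r_2=0$, and $d_3=\tfrac12+\tfrac12+\tfrac12=\tfrac32$. By Lemma~\ref{Lemma:appro} the two links with $r_2=\pm2$ then also have appropriate tight exteriors, and their common $d_3$-invariant is computed to be $-\tfrac12$ by the algorithm of \cite{DGS} (equivalently, by evaluating the $t_0$-independent invariant of Lemma~\ref{Lemma:3.4.1} on a convenient representative, e.g.\ $t_0=-1$). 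Comparing with the bound of Lemma~\ref{Lemma:t1eq0} shows these four links are all of them.

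The step I expect to be the main obstacle is confirming that the auxiliary strongly exceptional Legendrian Hopf link $K'_0\cup K_1$ with $(t_1,r_1)=(0,\pm1)$ and $r'_0=\pm(t'_0-1)$ is available for \emph{every} integer $t'_0=t_0-2$, and not just for $t'_0\le1$ (which is the range used in Lemma~\ref{Lemma:3.4.2}); for $t'_0\ge2$ one must check that the relevant exceptional Hopf link occurs in \cite[Theorem 1.2]{go} with $d_3=\tfrac12$, or else realize those $A_3$ links directly by a contact surgery diagram and compute via \cite[Lemma 6.6]{loss} and \cite{DGS}. A secondary bookkeeping point is the orientation/sign convention: one has to verify that the Hopf link $K'_0\cup K_1$ used here is the one whose exterior glues onto the pair-of-pants factor so that the resulting meridian $K_1$ acquires $t_1=0$ and $r_1=\pm1$ rather than the opposite signs, and likewise that the $r_2=0$ computation is correct rather than off by a sign.
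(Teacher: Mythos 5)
Your strategy coincides with the paper's treatment of the range $t_0\le 3$: the paper also obtains the two links with $r_2=0$ by applying Lemma~\ref{Lemma:connected} with the $(1,0),(1,0)$ Hopf link as the distinguished summand and the $t_1=0$ Hopf link of \cite[Theorem 1.2 (d)]{go} as the other, and then invokes Lemma~\ref{Lemma:3.4.1} together with Lemma~\ref{Lemma:appro} for the two links with $r_2=\pm2$. The difference is that the paper treats $t_0\ge 4$ separately, exhibiting all four links by the explicit contact surgery diagrams of Figure~\ref{Figure:link15inot} with $k_1=l_1=0$ and reading off both the rotation numbers and the $d_3$-invariants from those diagrams, whereas you run the connected-sum argument uniformly in $t_0$. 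The uniform treatment is legitimate, and the obstacle you flag is in fact not one: \cite[Theorem 1.2 (d)]{go} provides the two exceptional Hopf links with one component of Thurston-Bennequin invariant $0$ and rotation number $\pm1$ for \emph{every} value $t'_0$ of the other component's Thurston-Bennequin invariant, always in $(S^3,\xi_{\frac12})$ (a tb-zero unknot already violates the Bennequin bound, so the ambient structure is overtwisted, and the exterior is a single basic slice for each $t'_0$); so your construction of the $r_2=0$ links, with $d_3=\frac12+\frac12+\frac12=\frac32$, works for all $t_0$, and the role exchange in Lemma~\ref{Lemma:connected} is used exactly as in the paper.

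The genuine gap is the value $d_3=-\frac12$ for the two links with rotation numbers $(\pm(t_0+1),\pm1,\pm2)$. Lemma~\ref{Lemma:3.4.1} only gives independence of $t_0$; to pin down the value you need an anchor computation for at least one $t_0$, and neither of your suggestions is actually carried out. The algorithm of \cite{DGS} requires a contact surgery diagram for these particular links, which you do not produce, and which Lemma~\ref{Lemma:connected} cannot supply (neither factor of the would-be splitting, a Hopf link with meridian invariants $(t_2,r_2)=(1,\pm2)$ or one with meridian invariants $(0,\pm1)$, has the form demanded by its hypothesis); likewise ``evaluating at $t_0=-1$'' presupposes that the invariant is already known for some representative, which in your argument it is not. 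The paper closes exactly this point with the surgery diagrams of Figure~\ref{Figure:link15inot} ($k_1=l_1=0$, $t_0\ge4$), from which $d_3=-\frac12$ is computed and then transported to all $t_0$ by the independence statement. Apart from this missing computation, your argument — the upgrade of the remaining two structures to appropriate tightness via Lemma~\ref{Lemma:appro}, and the comparison with the upper bound of Lemma~\ref{Lemma:t1eq0} — is correct.
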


\begin{proof}
If $t_2=1$ and $t_0\geq4$, then there exist $4$ strongly exceptional Legendrian $A_3$ links shown in Figure~\ref{Figure:link15inot} with $k_1=l_1=0$. Their rotation numbers and corresponding $d_3$-invariants are as listed.

Suppose $t_2=1$ and $t_0\leq3$. By \cite[Theorem 1.2, (d), (c1)]{go}, there are two Legendrian Hopf links $K'_0\cup K_1$ in $(S^3, \xi_{\frac{1}{2}})$ with $t'_0\leq1, r'_0=\pm(t'_0-1), (t_1, r_1)=(0, \pm1)$, and one Legendrian Hopf links $K''_0\cup K_2$ in $(S^3, \xi_{\frac{1}{2}})$ with $(t''_0, r''_0)=(t_2, r_2)=(1, 0)$. By Lemma~\ref{Lemma:connected}, we can obtain strongly exceptional Legendrian $A_3$ links with $t_0\leq3, t_1=0, t_2=1$. So there are $2$ strongly exceptional Legendrian $A_3$ links in $(S^3, \xi_{\frac{3}{2}})$  whose rotation numbers $(r_0, r_1, r_2)$   are 
$(\pm(t_{0}-3), \pm1, 0).$

Moreover, by Lemma~\ref{Lemma:3.4.1} and Lemma~\ref{Lemma:appro}, there are other $2$ Legendrian $A_3$ links in $(S^3, \xi_{-\frac{1}{2}})$ whose rotation numbers $(r_0, r_1, r_2)$ are  $(\pm(t_{0}+1), \pm1, \pm2).$
\end{proof}

(3) Suppose $t_2=2$.

\begin{lemma}\label{t1=0t2=2}
If $t_1=0$ and $t_2=2$, then there exist $6$ strongly exceptional Legendrian $A_3$ links whose rotation numbers and corresponding $d_3$-invariants $(r_0, r_1, r_2; d_3)$ are $$(\pm(t_{0}+1), \pm1, \pm3; -\frac{1}{2}), (\pm(t_{0}-1), \pm1, \pm1; \frac{3}{2}), (\pm(t_{0}-3), \pm1, \mp1; \frac{3}{2}).$$
\end{lemma}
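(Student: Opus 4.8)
The proof will follow the template of Lemma~\ref{t1=0t2=1}: since Lemma~\ref{Lemma:t1eq0} already caps the number of strongly exceptional Legendrian $A_3$ links with $t_1=0$, $t_2=2$ at $6$, it suffices to exhibit $6$ pairwise non-coarsely-equivalent ones, and the natural split is between $t_0\ge 3$ and $t_0\le 2$.

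For $t_0\ge 3$, the plan is to read the six links off the surgery diagram of Figure~\ref{Figure:link17inot} (which is drawn for $t_0\ge 3$, $t_1\le 0$, $t_2=2$) specialized to $k_1=l_1=0$, the only choice consistent with $t_1=0$. Just as in the $t_0\ge3$, $t_2=2$ case of Theorem~\ref{Theorem:t_1<0t_2>1}, the trick of Lemma~\ref{Lemma:topological} together with the argument of \cite[Theorem 1.2, (b1), (c3)]{go} shows $K_0\cup K_1\cup K_2$ is topologically the $A_3$ link, and since a contact surgery diagram realizes it, each resulting Legendrian link is strongly exceptional. Computing the Thurston--Bennequin and rotation numbers from the diagram via \cite[Lemma 6.6]{loss} and the $d_3$-invariant of the ambient sphere via \cite{DGS} should produce the three families $(\pm(t_0+1),\pm1,\pm3;-\frac{1}{2})$, $(\pm(t_0-1),\pm1,\pm1;\frac{3}{2})$, $(\pm(t_0-3),\pm1,\mp1;\frac{3}{2})$.

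For $t_0\le 2$, the plan is to first invoke Lemma~\ref{Lemma:3.4.2} with $t_2=2$ to obtain four strongly exceptional links in $(S^3,\xi_{\frac{3}{2}})$ with rotation numbers $(\pm(t_0-1),\pm1,\pm1)$ and $(\pm(t_0-3),\pm1,\mp1)$; matching rotation numbers with Lemma~\ref{Lemma:3.4.1} (where $\pm(1-t_2)=\mp1$ when $t_2=2$) identifies the second family with the $\pm(++)$-decorated links, so those exteriors are appropriate tight. Since appropriateness is insensitive to the signs of the basic slices in the factorization $L'_0\cup L'_2\cup\Sigma'\times S^1$ (Lemma~\ref{Lemma:appro}), the remaining two exceptional links of Lemma~\ref{Lemma:3.4.1}, those with decoration $\pm(+-)$, $r_2=\pm(t_2+1)=\pm3$ and $d_3=-\frac{1}{2}$, are strongly exceptional as well. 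These six realizations are pairwise non-coarsely-equivalent because for fixed $(t_0,t_1,t_2)$ they are separated within a sign class by $r_2\in\{3,1,-1\}$ and across sign classes by $r_1=\pm1$; together with the bound from Lemma~\ref{Lemma:t1eq0} this completes the argument.

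The main obstacle is keeping the three construction methods consistent and nailing the $d_3$-invariants: in the $t_0\ge3$ range this means executing the \cite{DGS} algorithm carefully, in the $t_0\le2$ range it means tracking the additivity $d_3(\xi)+d_3(\xi')+\frac{1}{2}$ through the connected-sum Lemma~\ref{Lemma:connected} and, crucially, reading the rotation numbers of the connected-sum links against the decoration bookkeeping of Lemma~\ref{Lemma:3.4.1} so that Lemma~\ref{Lemma:appro} can be applied to promote the $\pm(+-)$-decorated exceptional links to strongly exceptional ones. Verifying that the two $t_0$-ranges yield the same uniform list $(\pm(t_0+1),\pm1,\pm3;-\frac{1}{2})$, $(\pm(t_0-1),\pm1,\pm1;\frac{3}{2})$, $(\pm(t_0-3),\pm1,\mp1;\frac{3}{2})$ is the final consistency check.
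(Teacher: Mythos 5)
Your proposal is correct and takes essentially the same route as the paper: for $t_0\geq 3$ the six links are read off Figure~\ref{Figure:link17inot} with $k_1=l_1=0$, and for $t_0\leq 2$ the four links in $(S^3,\xi_{\frac{3}{2}})$ come from Lemma~\ref{Lemma:3.4.2} while the two with $r_2=\pm 3$ come from Lemma~\ref{Lemma:3.4.1} together with Lemma~\ref{Lemma:appro}. Your explicit identification of the appropriate-tight anchor for Lemma~\ref{Lemma:appro} merely spells out what the paper leaves implicit.
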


\begin{proof}
If $t_2=2$ and $t_0\geq3$, then there exist $6$ strongly exceptional Legendrian $A_3$ links shown in Figure~\ref{Figure:link17inot} with $k_1=l_1=0$. Their rotation numbers and corresponding $d_3$-invariants are as listed.

If $t_2=2$ and $t_0\leq2$, then by Lemma~\ref{Lemma:3.4.1} and Lemma~\ref{Lemma:appro}, there exist $2$ strongly exceptional Legendrian $A_3$ links in $(S^3, \xi_{-\frac{1}{2}})$ whose rotation numbers $(r_0, r_1, r_2)$ are  $(\pm(t_{0}+1), \pm1, \pm3).$

Moreover, by Lemma ~\ref{Lemma:3.4.2}, there exist $4$ strongly exceptional Legendrian $A_3$ links in $(S^3, \xi_{\frac{3}{2}})$  whose rotation numbers $(r_0, r_1, r_2)$ are 
$(\pm(t_{0}-1), \pm1, \pm1)$ and $  (\pm(t_{0}-3), \pm1, \mp1).$ 
\end{proof}

(4) Suppose $t_2\geq3$.

\begin{lemma}\label{t1=0t2>2}
If $t_1=0$ and $t_2\geq3$, then there exist $8$ strongly exceptional Legendrian $A_3$ links whose rotation numbers and corresponding $d_3$-invariants $(r_0, r_1, r_2; d_3)$ are $$(\pm(t_{0}+1), \pm1, \pm(t_2+1); -\frac{1}{2}), (\pm(t_{0}-1), \pm1, \pm(t_2-1); \frac{3}{2}), $$ $$(\pm(t_{0}-1), \pm1, \pm(3-t_{2}); \frac{3}{2}), (\pm(t_{0}-3), \pm1, \pm(1-t_2); \frac{3}{2}).$$
\end{lemma}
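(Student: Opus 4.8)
The plan is to run the argument exactly as in the proofs of Lemmas~\ref{t1=0t2=1} and~\ref{t1=0t2=2}. Lemma~\ref{Lemma:t1eq0} already supplies the ceiling: for $t_1=0$ and $t_2\geq3$ there are at most $8$ appropriate tight contact structures on $\Sigma\times S^1$ with boundary slopes $t_0,\infty,-\frac{1}{t_2}$, they all carry a $0$-twisting vertical Legendrian circle, and they are distinguished by the signs of the basic slices in the factorization $L'_0\cup L'_2\cup\Sigma'\times S^1$, i.e.\ by the decorations $(\pm)((\pm)(\pm))$. By Lemma~\ref{Lemma:construction}, once such a structure is known to be appropriate tight it produces a strongly exceptional Legendrian $A_3$ link in an overtwisted $S^3$; thus the work is to realize $8$ of them and to compute their rotation numbers and the $d_3$-invariants of the ambient spheres.

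For $t_0\geq3$ I would realize all $8$ at once from the surgery diagram of Figure~\ref{Figure:link16inot} with $k_1=l_1=0$, which is precisely the $t_1=0$ specialization of the diagram used in the case $t_1<0,\,t_2\geq3$ and displays $8-8\cdot0=8$ links. Topological $A_3$-ness follows from the trick of Lemma~\ref{Lemma:topological}, and the rotation numbers and $d_3$-invariants are obtained as in \cite[Theorem~1.2, (b1), (c4)]{go} and by the algorithm of \cite{DGS}; since the middle slot $\pm\{t_1+1,\dots,-t_1+1\}$ collapses to $\{\pm1\}$ when $t_1=0$, one lands on exactly the four sign-classes listed, with $d_3$-invariants $-\frac12,\frac32,\frac32,\frac32$. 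For $t_0\leq2$ I would assemble the $8$ links from pieces already available: the two with decoration $\pm(+)((-)(-))$ and rotation numbers $(\pm(t_0+1),\pm1,\pm(t_2+1))$ from Lemma~\ref{Lemma:3.4.1} together with Lemma~\ref{Lemma:appro}; the four with rotation numbers $(\pm(t_0-1),\pm1,\pm(t_2-1))$ and $(\pm(t_0-3),\pm1,\pm(1-t_2))$ from Lemma~\ref{Lemma:3.4.2}; and the last two, with rotation numbers $(\pm(t_0-1),\pm1,\pm(3-t_2))$ and decoration $\pm(+)((+)(-))$, by a Legendrian connected sum as in Lemma~\ref{Lemma:connected}, built on a Legendrian Hopf link $K''_0\cup K_2$ in $(S^3,\xi_{\frac32})$ with $r_2=\pm(t_2-3)$ coming from \cite[Theorem~1.2, (c3)]{go}.

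Appropriate tightness of all $8$ exteriors is then cheap: by Lemma~\ref{Lemma:appro} it depends only on the signs of the basic slices, so a single appropriate tight representative — provided by any one of the explicit constructions above — forces the remaining $7$ to be appropriate tight as well. For the two ``extra'' links I would nonetheless verify it directly, as in the proofs of Lemmas~\ref{t0=2t1>2t2>2} and~\ref{t0=3t1>2t2>2}, by embedding their exteriors into an appropriate tight $\Sigma\times S^1$ with boundary slopes $t_0,\infty,-\frac12$ and decoration $\pm(+)((+)(-))$ through gluing the basic slices $(T^2\times[0,1],-\frac{1}{t_2},-\frac{1}{t_2-1}),\dots,(T^2\times[0,1],-\frac13,-\frac12)$ along $T_2$, via the Gluing Theorem \cite[Theorem~1.3]{h2}. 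The rotation numbers are read off from the decorations by the relative Euler class computation of Lemma~\ref{Lemma:3.4.1} (reversing the dividing slopes in the complementary pieces as there), and the $d_3$-invariants from the connected-sum formula $d_3(\xi)+d_3(\xi')+\frac12$ together with the Hopf-link data of \cite{go}, or from \cite{DGS} directly for the surgery pictures.

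Finally one checks that the $8$ triples are pairwise distinct: for $t_2\geq3$ the integers $t_2+1,\ t_2-1,\ 3-t_2,\ 1-t_2$ are all different, which separates the four sign-classes of $r_2$, and the two signs of $r_0$ distinguish the rest; this matches the upper bound of Lemma~\ref{Lemma:t1eq0}, so the list is exhaustive. The main obstacle I expect is the bookkeeping for the two ``extra'' links: identifying their decorations precisely, confirming appropriate tightness of their exteriors through the gluing argument, and keeping the signs straight in the $d_3$-computation so that they end up in $(S^3,\xi_{3/2})$; the rest is a direct transcription of the $t_2=1$ and $t_2=2$ arguments.
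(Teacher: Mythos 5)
Your plan for $t_0\geq3$ (Figure~\ref{Figure:link16inot} with $k_1=l_1=0$, invariants as in \cite{go} and \cite{DGS}) and for six of the eight links when $t_0\leq2$ (two from Lemma~\ref{Lemma:3.4.1} plus Lemma~\ref{Lemma:appro}, four from Lemma~\ref{Lemma:3.4.2}) is exactly the paper's argument. The genuine gap is your construction of the remaining two links $(\pm(t_{0}-1),\pm1,\pm(3-t_{2});\frac{3}{2})$ for $t_0\leq2$ by a Legendrian connected sum built on the Hopf link $K''_0\cup K_2\subset(S^3,\xi_{\frac{3}{2}})$ with $r_2=\pm(t_2-3)$ from \cite[Theorem 1.2, (c3)]{go}. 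Lemma~\ref{Lemma:connected} requires one summand to be a strongly exceptional Hopf link in $(S^3,\xi_{\frac{1}{2}})$ with $(t'_0,r'_0)=(t_1,r_1)=(1,0)$, or $(t'_0,r'_0)=(0,\pm1)$, $t_1\geq2$, $r_1=\pm(t_1-1)$. Since your $A_3$ link has $t_1=0$, the $K_1$-summand cannot be of this type, and your $K_2$-summand (which lives in $\xi_{\frac{3}{2}}$ with $(t''_0,r''_0)=(2,\mp1)$, $r_2=\pm(t_2-3)$) is not of this type either, so the lemma does not apply. Worse, even if one formally performed the sum with the only available $t_1=0$ Hopf links (those in $\xi_{\frac{1}{2}}$ with $r'_0=\pm(t'_0-1)$, $r_1=\pm1$), the ambient sphere would have $d_3=\frac{1}{2}+\frac{3}{2}+\frac{1}{2}=\frac{5}{2}$ and the forced sign correlations give $r_0=\pm(t_0-3)$, i.e.\ the triple $(\pm(t_0-3),\pm1,\pm(3-t_2))$ in $\xi_{\frac{5}{2}}$, which is not on the list and cannot be strongly exceptional without violating the bound of Lemma~\ref{Lemma:t1eq0}. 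So this route cannot produce the two missing links.

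A second, smaller overreach: Lemma~\ref{Lemma:appro} only transfers appropriateness between contact structures that are already known to be tight, so it cannot by itself upgrade one appropriate tight representative to all eight decorations; for the mixed-sign decoration $\pm(+)((-)(+))$ tightness is not supplied by Lemma~\ref{Lemma:tight1} and must be proved. Your backup embedding/gluing argument (adding the basic slices $(T^2\times[0,1],-\frac{1}{t_2},-\frac{1}{t_2-1}),\dots,(T^2\times[0,1],-\frac{1}{3},-\frac{1}{2})$ along $T_2$ to land in the appropriate tight structure with slopes $t_0,\infty,-\frac{1}{2}$ coming from the $t_2=2$ case, Lemma~\ref{t1=0t2=2}) is precisely the paper's argument and does the job — note only that the target decoration should be written $\pm(+)(-+)$, since for $t_2=2$ the piece $L'_2$ is a single continued fraction block. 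If you keep that argument, the rotation numbers of the two extra links should be read off from the relative Euler class as in Lemma~\ref{Lemma:3.4.1} (as you suggest), but the value $d_3=\frac{3}{2}$ still needs a source independent of the connected-sum formula and of a surgery picture, neither of which exists for these links when $t_0\leq2$; for instance, it follows from the $t_0\geq3$ realizations in Figure~\ref{Figure:link16inot} together with the fact that (de)stabilizing $K_0$ (possible by Lemma~\ref{Lemma:stabilization}) does not change the ambient contact sphere.
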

\begin{proof}
If $t_2\geq3$ and  $t_0\geq3$, then there are exactly $8$ strongly exceptional Legendrian $A_3$ links shown in Figure~\ref{Figure:link16inot} with $k_1=l_1=0$. Their rotation numbers and corresponding $d_3$-invariants are as listed.

Suppose $t_2\geq3$ and $t_0\leq2$. By Lemma~\ref{Lemma:3.4.1} and Lemma~\ref{Lemma:appro}, there exist $2$ strongly exceptional Legendrian $A_3$ links in $(S^3, \xi_{-\frac{1}{2}})$ whose rotation numbers $(r_0, r_1, r_2)$ are $(\pm(t_{0}+1), \pm1, \pm(t_{2}+1)).$

By Lemma ~\ref{Lemma:3.4.2}, there exist $4$ strongly exceptional Legendrian $A_3$ links in $(S^3, \xi_{\frac{3}{2}})$ whose rotation numbers $(r_0, r_1, r_2)$ are
$(\pm(t_{0}-1), \pm1, \pm(t_{2}-1)) $ and $(\pm(t_{0}-3), \pm1, \pm(1-t_{2})).$ 

Moreover, there are $2$ strongly exceptional Legendrian $A_3$ links in $(S^3, \xi_{\frac{3}{2}})$  whose rotation numbers $(r_0, r_1, r_2)$ are 
$(\pm(t_{0}-1), \pm1, \pm(3-t_{2})).$
The decorations of their exteriors are $\pm(+)((-)(+))$. These exteriors are appropriate tight since they can be embedded into an appropriate tight contact $\Sigma\times S^1$ with boundary slopes $t_0, \infty, -\frac{1}{2}$ and decoration $\pm(+)(-+)$ by adding   basic slices $(T^2\times[0,1], -\frac{1}{t_{2}}, -\frac{1}{t_{2}-1})$, $\cdots$, $(T^2\times[0,1], -\frac{1}{3}, -\frac{1}{2})$ to the boundary $T_2$. 
\end{proof}
The proof of Theorem~\ref{Theorem:t1=0} is completed.
\end{proof}

\begin{proof}[Proof of Theorem~\ref{classification}]
It follows from the proof of Theorems~\ref{Theorem:t_1<0t_2<0}, ~\ref{Theorem:t_1=t_2=1}, ~\ref{Theorem:t_1>1t_2=1}, ~\ref{Theorem:t_1>1t_2>1},
~\ref{Theorem:t_1<0t_2=1}, ~\ref{Theorem:t_1<0t_2>1}, ~\ref{Theorem:t1=0}.
\end{proof}

\section{Stabilizations}\label{Section:Stabilizations}
The aim of this section is to elucidate Remark~\ref{Remark:Stabilization}.
\subsection{Stabilizations of the component $K_0$.}
For the strongly exceptional Legendrian $A_3$ links with $t_{1}, t_{2}\neq0$ and $t_{0}+\lceil-\frac{1}{t_1}\rceil+\lceil-\frac{1}{t_2}\rceil\geq2$, their exteriors have $0$-twisting vertical Legendrian circles.  So by Lemma \ref{Lemma:stabilization}, the component $K_0$ can always be destabilized. For the strongly  exceptional Legendrian $A_3$ links with $t_{1}=0$, their exteriors obviously have $0$-twisting vertical Legendrian circles. By the same reason, the component $K_0$ can be destabilized.

As examples, we list the mountain ranges of the component $K_0$ in some Legendrian $A_3$ links with fixed $t_1, t_2$.

(1) Strongly exceptional Legendrian $A_3$ links in $(S^3, \xi_{\frac{5}{2}})$ with $r_0=\pm(t_0-5), r_1=r_2=0$, where $t_0\geq5, t_1=t_2=1$. See Lemmas~\ref{t0>5t1=1t2=1} and ~\ref{t0=5t1=1t2=1}. Their exteriors have decorations $\pm(+)(+)(+)$. The mountain range is depicted in the upper left of Figure~\ref{Figure:Mountain}. It is infinite on the upper side. 

(2) Strongly exceptional Legendrian $A_3$ links in $(S^3, \xi_{\frac{5}{2}})$ with $r_0=\pm(t_0-3), r_1=\pm(t_1-1), r_2=\pm(1-t_2)$, where $t_0, t_1, t_2\geq3$. See Lemmas~\ref{t0>3t1>2t2>2} and ~\ref{t0=3t1>2t2>2}. Their exteriors have decorations $\pm(+)((+)(-))((+)(+))$. The mountain range is depicted in the lower left of Figure~\ref{Figure:Mountain}. It is infinite on the upper side. 

(3) Strongly exceptional Legendrian $A_3$ links in $(S^3, \xi_{\frac{5}{2}})$  with $r_0=\pm(t_0-5), r_1=\pm(1-t_1), r_2=\pm(1-t_2)$, where $t_0, t_1, t_2\geq3$. See Lemmas~\ref{t0>3t1>2t2>2} and ~\ref{t0=3t1>2t2>2}. 
Their exteriors have decorations $\pm(+)((+)(+))((+)(+))$. The mountain range is  depicted in the upper right of Figure~\ref{Figure:Mountain}. It is infinite on the upper side. 

(4) Exceptional Legendrian $A_3$ links in $(S^3, \xi_{\frac{1}{2}})$  with $r_0=\pm(t_0-1), r_1=\pm(1-t_1), r_2=\pm(t_{2}+1)$, where $t_1, t_2\geq3$. Their exteriors have decorations $\pm(+)((+)(+))((-)(-))$. See Lemmas~\ref{Lemma:3.2.1}, ~\ref{t0>3t1>2t2>2}, ~\ref{t0=3t1>2t2>2} and ~\ref{t0=2t1>2t2>2}. The mountain range of such links is  depicted in the lower right of Figure~\ref{Figure:Mountain}. It is infinite on both the upper and lower sides. The exteriors of such $A_3$ links have decorations $\pm(+)((+)(+))((-)(-))$. If $t_0\geq2$, then they are strongly exceptional. If $t_0<2$, then, based on Lemma~\ref{Lemma:3.2.1} and Lemma~\ref{Lemma:tor>0}, they are exceptional but not strongly exceptional.

In a more general setting, with a fixed decoration and nonzero integers $t_1$ and $t_2$, if $L'_0$ and the innermost basic slices of $L'_1$ and $L'_2$ have the same signs (possibly after shuffling), then the components $K_0$ of the strongly exceptional Legendrian $A_3$ links exhibit mountain ranges with shapes resembling a `V' or an `X' truncated from the lower side, as shown in the first three subfigures in Figure~\ref{Figure:Mountain}.

For the strongly exceptional Legendrian $A_3$ links with fixed  $t_{1}, t_{2}\neq0$ and $t_{0}+\lceil-\frac{1}{t_1}\rceil+\lceil-\frac{1}{t_2}\rceil\leq1$, the mountain ranges of the component $K_0$ can be observed through Lemmas~\ref{t0<0t1<0t2<0}, Lemma~\ref{t0<4t1=1t2=1},
Lemma~\ref{t0<3t1>1t2=1}, 
Lemma~\ref{t0<2t1>1t2>1}, 
Lemma~\ref{t0<2t1<0t2=1} and 
Lemma~\ref{t0<1t1<0t2>1}.

\begin{figure}[htb]
\begin{overpic}
{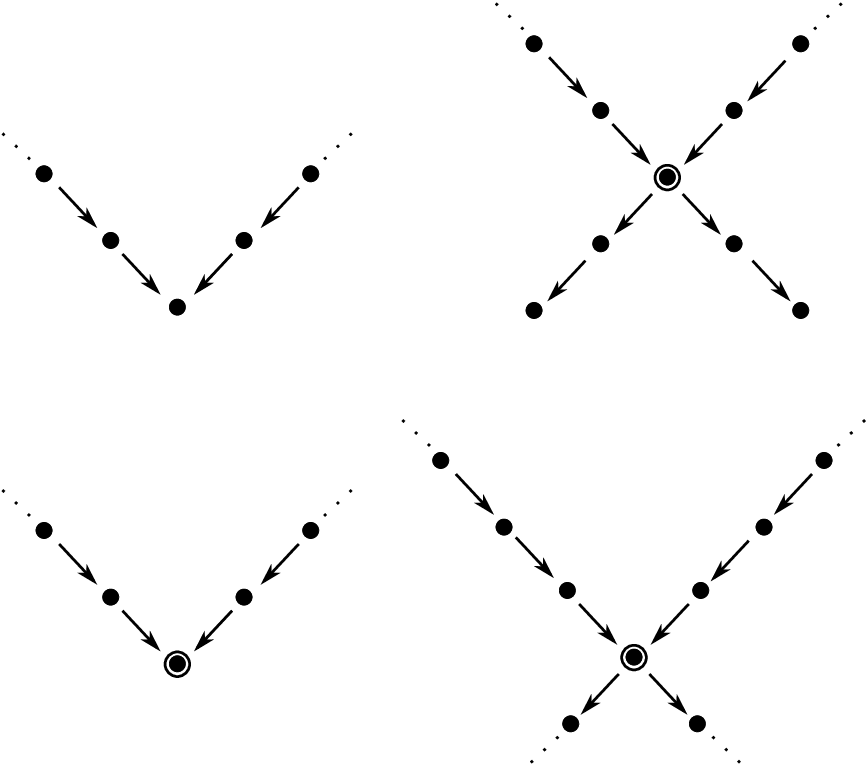}
\put(30, 217){$t_0=5$}
\put(0, 340){$(1)$}
\put(30, 46){$t_0=3$}
\put(0, 20){$(2)$}
\put(220, 217){$t_0=3$}
\put(220, 340){$(3)$}
\put(220, 82){$t_0=2$}
\put(220, 20){$(4)$}

\end{overpic}
\caption{The mountain ranges of some strongly exceptional Legendrian $A_3$ links with fixed $t_1$ and $t_2$. Each dot represents a Legendrian $A_3$ link. A dot with a circle represents  two Legendrian $A_3$ links. Each arrow represents a stabilization.  }
\label{Figure:Mountain}
\end{figure}

\begin{figure}[htb]
\begin{overpic}
{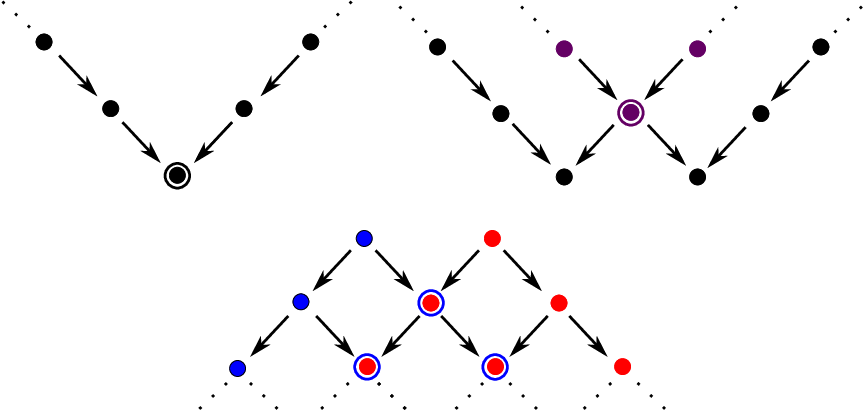}
\put(30, 110){$t_2=1$}
\put(220, 110){$t_2=2$}
\put(120, 80){$t_2=0$}
\end{overpic}
\caption{The mountain ranges of the strongly exceptional Legendrian $A_3$ links with fixed $t_{0}$ and $t_{1}=0$.
}
\label{Figure:Mountain1}
\end{figure}

\subsection{Stabilizations of the component $K_2$ when $t_1=0$.} 
The strongly exceptional Legendrian $A_3$ links with $t_1=0$ are classified in Theorem~\ref{Theorem:t1=0}.
The exteriors of such Legendrian $A_3$ links contain $0$-twisting vertical Legendrian circles.
By Lemma \ref{Lemma:stabilization}, the component $K_2$ is always  destabilizable unless $t_2=0$. 

We list the mountain range of the component $K_2$ of the strongly exceptional Legendrian $A_3$ links with fixed $t_{0}$ and $t_{1}=0$.  The exteriors of such Legendrian $A_3$ links can be decomposed into $L'_0\cup L'_2\cup \Sigma'\times S^1$.  Recall that if $t_{2}\geq3$ then $L'_2$ consists of $2$ basic slices and is not a continued fraction block. If $t_{2}=2$ then $L'_2$ is a continued fraction block consisting of $2$ basic slices. If $t_{2}=1$ then $L'_2$ is a basic slice. If $t_{2}=0$ then $L'_2$ is an empty set. If $t_{2}\leq -1$ then $L'_2$ is a continued fraction block consisting of $-t_{2}$ basic slices.

(1) Strongly exceptional Legendrian $A_3$ links in $(S^3, \xi_{\frac{3}{2}})$ with $r_0=\pm(t_0-3), r_1=\pm1, r_{2}=\pm(1-t_2)$, where $t_1=0, t_2\geq1$. See Lemmas~\ref{t1=0t2=1}, ~\ref{t1=0t2=2} and ~\ref{t1=0t2>2}. The signs of the basic slices in $L'_0$ and $L'_2$ are all the same. 
The mountain range is depicted in the upper left of Figure~\ref{Figure:Mountain1}.

(2) Strongly exceptional Legendrian $A_3$ links in $(S^3, \xi_{-\frac{1}{2}})$ with $r_0=\pm(t_0 +1), r_1=\pm1, r_{2}=\pm(t_2 +1)$, where $t_1=0, t_2\geq1$. See Lemmas~\ref{t1=0t2=1}, ~\ref{t1=0t2=2} and ~\ref{t1=0t2>2}. The sign of $L'_0$ and the sign of each of the basic slices in $L'_2$ are opposite. 
The mountain range can be depicted in the upper left of Figure~\ref{Figure:Mountain1} either.

(3) Strongly exceptional Legendrian $A_3$ links in $(S^3, \xi_{\frac{3}{2}})$ with $r_0=\pm(t_0 -1), r_1=\pm1, r_{2}=\pm(t_2 -1)$ (or $ r_{2}=\pm(3-t_2)$), where $t_1=0, t_2\geq2$. See Lemmas~\ref{t1=0t2=2} and ~\ref{t1=0t2>2}.  Their exteriors have decorations $\pm(+)((+)(-))$ (or $\pm(+)((-)(+))$) if $t_2\geq3$, and $\pm(+)(+-)$ if $t_2=2$. Note that when $t_2=2$, $L'_2$ is a continued fraction block, and hence the two decorations $(+)(+-)$ and $(+)(-+)$ (or $(-)(-+)$ and $(-)(+-)$) stand for the same Legendrian $A_3$ link. So the mountain range can be depicted in the upper right of Figure~\ref{Figure:Mountain1}.

(4) Strongly exceptional Legendrian $A_3$ links in $(S^3, \xi_{\frac{1}{2}})$ with $r_0=\pm(t_0 -1), r_1=\pm1, r_{2}\in\pm\{t_2 +1, t_2 +3,\cdots, -t_{2}+1\}$, where $t_1=0, t_2\leq0$. See Lemma~\ref{t1=0t2<1}. It is easy to know that the mountain range can be depicted in the lower of Figure~\ref{Figure:Mountain1}.

In conclusion, the whole mountain range of the strongly exceptional Legendrian $A_3$ links with  fixed $t_{0}$ and $t_{1}=0$ consists of two copies of the upper left subfigure, the upper right subfigure and  the lower subfigure of Figure~\ref{Figure:Mountain1}.

\section{Some Computations}
Here we summarise how to compute the classical invariants of Legendrian realizations $A_3 = K_0 \cup K_1 \cup K_2$ of the connected sum of two Hopf links, and the $d_3$-invariant of the contact $3$-sphere $S^3$ containing the realizations. We compute the invariants of the first surgery diagram on the top left of Figure  ~\ref{Figure:link23inot}. Similar arguments apply to all remaining examples. For the example in Figure  ~\ref{Figure:link23inot}, the linking matrix $M$ is the $(t_0-1) \times (t_0-1)$-matrix, which we form by ordering the surgery curves from bottom to top where all are oriented clockwise:

\begin{displaymath}
M =
\begin{bmatrix}
-2 & -1 & -1  \\
-1 & -2 & -1  \\
-1 & -1 & -2 & -1 \\
& & -1 & -2 & -1 \\
& & & -1 & -2 & -1 \\
& & & & & & \ddots \\
& & & & & & & & -1 \\
& & & & & & & -1 & -2 & -1 \\
& & & & & & & & -1 & -1 \\
\end{bmatrix}.
\end{displaymath} 
The determinant of $M$ is ${\tt det}\, M = (-1)^{t_0 -1}$.
\subsection{The $d_3$-invariant.} Let $(Y, \xi) = \partial X$ be a contact $3$-manifold given by contact $(\pm 1)$-surgeries on a Legendrian link $\mathbb{L}\in(S^3, \xi_{st})$, all of which have non-vanishing Thurston-Bennequin invariant. We compute the $d_3$-invariant of $(Y, \xi)$ with $c_1(\xi)$ torsion by following the formula from \cite[Corollary 3.6]{DGS}:
$$ d_3(\xi) = \frac{1}{4}(c^2 -3 \sigma(X) - 2 \chi(X)) + q,$$
where $q$ is the number of $(+1)$-surgery components in $\mathbb{L}$, and $c \in H^2(X)$ is the cohomology class determined by $c(\Sigma_i)$, for each $L_i \in \mathbb{L}$ where $\Sigma_i$ Seifert surface of $L_i$ glued with the core disk of the corresponding handle. We read $\sigma$ and $\chi$ from the surgery diagram in Figure~\ref{Figure:link23inot}. The signature $\sigma$ is the signature of the linking matrix $M$. The surgery diagram is topologically  equivalent to $(t_0 -1)$ unlinked $-1$-framed unknots, so the signature is $\sigma(X) = -(t_0 -1)$. The Euler characteristic is $\chi(X) = t_0 -1 +1= t_0$ since each surgery knot corresponds to attaching a $2$-handle. We compute $c^2$ by following the algorithm in \cite{DGS}, $c^2 = {\tt \bold{x}}^tM{\tt \bold{x}} = <{\tt \bold{x}},{\tt \underline{rot}}>$ where $\underline{{\tt rot}} = (rot(L_1), \ldots, rot(L_n))$ is the vector rotation number of the Legendrian surgery knots $L_i \subset \mathbb{L}$, and $\tt \bold{x}$ is the solution vector of $M{\tt \bold{x}} = {\tt \underline{rot}}$. For the surgery diagram on top left of Figure~\ref{Figure:link23inot}, the vector rotation number is 
 $$\underline{{\tt rot}} = (2, -2, 0, \ldots, 0, 1)^t .$$
 The solution vector $\tt \bold{x}$ is
 \begin{center}
   $ {\tt \bold{x}} = (-1, 3,*,\ldots,*,-(t_0-1))^t$ \, for $t_0$ even,   
 \end{center}
 and 
 \begin{center}
   $ {\tt \bold{x}} = (-3,1,*,\ldots,*,-(t_0-1))^t$ \, for $t_0$ odd.  
 \end{center}
This gives $c^2 = <{\tt \bold{x}},{\tt \underline{rot}}> = -6 -2 +0+\cdots +0 -(t_0-1) = -7 -t_0.$
Observing that $q=3$ in this example, we compute
$$ d_3 = \frac{1}{4}(-7-t_0 -3(-(t_0-1))-2t_0) + 3 
=\frac{1}{2}.$$

\subsection{The Thurston-Bennequin invariant and the rotation number.} We use the formulae in \cite[Lemma 6.6]{loss}  to compute the Thurston-Bennequin invariant and the rotation number of a Legendrian knot $L$ in a contact $(\pm 1)$-surgery diagram of surgery link $\mathbb{L}$ with the linking matrix $M$. The Thurston-Bennequin invariant is 
$$ tb(L) = tb(L_0) + \frac{{\tt det}\, M_0}{{\tt det}\, M}, $$
where $tb(L_0)$ is the Thurston-Bennequin invariant of $L$ as a knot in $(S^3, \xi_{st})$ before the contact surgeries, and $M_0$ is the extended linking matrix which is the linking matrix of $ L_0 \cup \mathbb{L}$ with the convention that $lk(L_0, L_0) = 0$. The rotation number of $L$ after surgery is 
$$ rot(L) = rot(L_0) - < \underline{{\tt rot}}, M^{-1}\underline{{\tt lk}}>,$$

 where $rot(L_0)$ is the rotation number of $L$ before surgeries, and
$\underline{{\tt rot}}$ is the vector rotation number of the Legendrian surgery knots $L_i \subset \mathbb{L}$, and $\underline{{\tt lk}} = (lk(L,L_1), \ldots, lk(L,L_n))$ is the vector of the linking numbers.

For the surgery diagram on the top left of Figure~\ref{Figure:link23inot}, we assume 
that $K_0$, $K_1$ and $K_2$ are oriented clockwise. So the extended linking matrices for $K_0$, $K_1$ and $K_2$ are respectively:
\begin{displaymath}
M_0 =
\begin{bmatrix}
0 & 0 &  \cdots & 0 & -1 & -2  \\
0 &   \\
\vdots & & & M  \\
0  \\
-1 & \\
-2 &  \\
\end{bmatrix}, \; M_1 =
\begin{bmatrix}
0 & -1 & -3 & -1 & 0 \cdots & 0 \\
-1 &   \\
-3  \\
-1 & & & M    \\
0 &   \\
\vdots   \\
0  \\
\end{bmatrix}, 
\end{displaymath} 

\begin{displaymath}
M_2 =
\begin{bmatrix}
0 & -3 & -1 & -1 & 0 \cdots & 0 \\
-3 &   \\
-1  \\
-1 & & & M    \\
0 &   \\
\vdots   \\
0  \\
\end{bmatrix}.
\end{displaymath} 

The determinants are ${\tt det}\, M_0 = (-1) ^{t_0 - 1} (t_0 + 2)$ and ${\tt det}\, M_1 = {\tt det}\, M_2 = 5 (-1) ^{t_0 - 1}$. We compute the Thurston-Bennequin invariants as follows,
\begin{center}
  $\displaystyle tb(K_0) = -2 + \frac{(-1) ^{t_0 - 1}(t_0 + 2)}{(-1) ^{t_0 - 1}} = t_0$, and 
$\displaystyle tb(K_1) = tb(K_2) = -3 + \frac{5(-1) ^{t_0 - 1}}{(-1) ^{t_0 - 1}} = 2.$  \end{center}

Recall that for $t_0$ odd, $K_{0}$ and $K_{i}$ are given the same orientation, for $t_0$ even, the opposite one, where $i=1,2$.
If $t_0$ is odd, then $K_i$ is oriented clockwise.  If $t_0$ is even, then  $K_i$ is oriented counter-clockwise. We compute the rotation numbers as follows,
\begin{equation} \displaystyle
\begin{array}{ccl}
 r_0 &=&  1 - \langle \begin{bmatrix}
2 \\
-2 \\
0
\\
\vdots \\
0\\
1

\end{bmatrix}, M^{-1} \begin{bmatrix}
0 \\
0\\
\vdots \\
0 \\
-1\\
-2

\end{bmatrix}\rangle \nonumber  = 

\bigskip  1 - \langle \begin{bmatrix}
2 \\
-2 \\
0
\\
\vdots \\
0\\
1

\end{bmatrix}, \begin{bmatrix}
(-1)^{t_0 -1} \\
(-1)^{t_0 -1} \\
* \\
* \\
* \\
t_0

\end{bmatrix}\rangle \\  &=& 
\displaystyle  1 - (2-2+0+\cdots +0 + t_0) = - (t_0-1),
\end{array}
\end{equation}

\begin{equation} \displaystyle
\begin{array}{ccl}
 r_1 &=& 2(-1)^{t_0}  - \langle \begin{bmatrix}
2 \\
-2 \\
0
\\
\vdots \\
0\\
1

\end{bmatrix}, M^{-1}  \begin{bmatrix}
(-1)^{t_0} \\
3(-1)^{t_0}\\
(-1)^{t_0}\\
0\\
\vdots \\
0

\end{bmatrix} \rangle \nonumber  = 

\bigskip 2(-1)^{t_0} - \langle  \begin{bmatrix}
2 \\
-2 \\
0
\\
\vdots \\
0\\
1

\end{bmatrix},  \begin{bmatrix}
0\\
2(-1)^{t_0 -1} \\
* \\
* \\
* \\
1

\end{bmatrix} \rangle \\  &=& 
\displaystyle 2(-1)^{t_0} - (0 +4(-1)^{t_0}+0+\cdots +0 + 1) = 
\left \{ \begin{array}{ll}
1 & \textrm{ if $t_0$ is odd,}\\
-3 & \textrm{ if $t_0$ is even,}
\end{array} \right.
\end{array}
\end{equation}

\begin{equation} \displaystyle
\begin{array}{ccl}
 r_2 &=&  2(-1)^{t_0 -1} - \langle  \begin{bmatrix}
2 \\
-2 \\
0
\\
\vdots \\
0\\
1

\end{bmatrix} , M^{-1} 
\begin{bmatrix}
3(-1)^{t_0} \\
(-1)^{t_0}\\
(-1)^{t_0}\\
0\\
\vdots \\
0

\end{bmatrix} \rangle \nonumber  = 

\bigskip 2(-1)^{t_0-1}  - \langle  \begin{bmatrix}
2 \\
-2 \\
0
\\
\vdots \\
0\\
1

\end{bmatrix},  \begin{bmatrix}
2(-1)^{t_0-1}\\
0 \\
* \\
* \\
* \\
1

\end{bmatrix} \rangle \\  &=& 
\displaystyle 2(-1)^{t_0-1}  - (4(-1)^{t_0-1}+0+\cdots +0 + 1) = 
\left\{ \begin{array}{ll}
-3 & \textrm{if $t_0$ is odd,}\\
1 & \textrm{if $t_0$ is even.}
\end{array} \right.
\end{array}
\end{equation}

\end{document}